\setlist{topsep=0ex}
\def\namedlabel#1#2{\begingroup
    #2%
    \def\@currentlabel{#2}%
    \phantomsection\label{#1}\endgroup
}
\definecolor{sub0}{RGB}{29,32,137}
\definecolor{sub1}{RGB}{1,71,157}
\definecolor{sub2}{RGB}{1,104,183}
\definecolor{sub3}{RGB}{0,160,234}
\definecolor{sug}{RGB}{0,154,68}
\definecolor{suy}{RGB}{208,219,1}
    \DeclareMathOperator{\dom}{{\rm dom}}
    \newcommand{\thzfc}{\mathrm{ZFC}}
    \newcommand{\Bwf}{\mathcal{B}}
    \newcommand{\Cwf}{\mathcal{C}}
    \newcommand{\Dwf}{\mathcal{D}}
    \newcommand{\Ewf}{\mathcal{E}}
    \newcommand{\Fwf}{\mathcal{F}}
    \newcommand{\Iwf}{\mathcal{I}}
    \newcommand{\Mwf}{\mathcal{M}}
    \newcommand{\Nwf}{\mathcal{N}}
    \newcommand{\Pwf}{\mathcal{P}}
    \newcommand{\Scal}{\mathcal{S}}
    \newcommand{\Swf}{\mathcal{S}}
    \newcommand{\afrak}{\mathfrak{a}}
    \newcommand{\bfrak}{\mathfrak{b}}
    \newcommand{\cfrak}{\mathfrak{c}}
    \newcommand{\dfrak}{\mathfrak{d}}
    \newcommand{\efrak}{\mathfrak{e}}
    \newcommand{\gfrak}{\mathfrak{g}}
    \newcommand{\hfrak}{\mathfrak{h}}
    \newcommand{\mfrak}{\mathfrak{m}}
    \newcommand{\pfrak}{\mathfrak{p}}
    \newcommand{\rfrak}{\mathfrak{r}}
    \newcommand{\sfrak}{\mathfrak{s}}
    \newcommand{\tfrak}{\mathfrak{t}}
    \newcommand{\ufrak}{\mathfrak{u}}
    \newcommand{\Cbf}{\mathbf{C}}
    \newcommand{\Bor}{\mathbb{B}}
    \newcommand{\Cbb}{\mathbb{C}}
    \newcommand{\Cor}{\mathbb{C}}
    \newcommand{\Dor}{\mathbb{D}}
    \newcommand{\Ebb}{\mathbb{E}}
    \newcommand{\Ibb}{\mathbb{I}}
    \newcommand{\Loc}{\mathbb{L}\mathrm{c}}
    \newcommand{\Por}{\mathbb{P}}
    \newcommand{\Qor}{\mathbb{Q}}
    \newcommand{\Qnm}{\dot{\mathbb{Q}}}
    \newcommand{\menos}{\smallsetminus}
    \DeclareMathOperator{\pts}{\mathcal{P}}
    \newcommand{\Q}{\mathbb{Q}}
    \newcommand{\R}{\mathbb{R}}
    \newcommand{\imp}{\mathrel{\mbox{$\Rightarrow$}}}
    \newcommand{\sii}{\mathrel{\mbox{$\Leftrightarrow$}}}
    \newcommand{\la}{\langle}
    \newcommand{\ra}{\rangle}
    \DeclareMathOperator{\cf}{\mbox{\rm cf}}
    \newcommand{\frestr}{{\upharpoonright}}
    \DeclareMathOperator{\limdir}{\mathrm{limdir}}
    \DeclareMathOperator{\Fn}{\mbox{\rm Fn}}
    \DeclareMathOperator{\add}{\mathrm{add}}
    \DeclareMathOperator{\non}{\mbox{\rm non}}
    \DeclareMathOperator{\cov}{\mbox{\rm cov}}
    \DeclareMathOperator{\cof}{\mbox{\rm cof}}
    \newcommand{\leqT}{\mathrel{\mbox{$\preceq_{\mathrm{T}}$}}}
    \newcommand{\eqT}{\mathrel{\mbox{$\cong_{\mathrm{T}}$}}}
    \newcommand\subsetdot{\mathrel{\ooalign{$\subset$\cr
  \hidewidth\hbox{$\cdot\mkern3mu$}\cr}}}
    \newcommand{\Mg}{\mathrm{M}}
    \newcommand{\Ed}{\mathrm{Ed}}
    \newcommand{\seq}[2]{\la #1 \colon  #2\ra}
    \newcommand{\set}[2]{\{#1 \colon  #2\}}
    \newcommand{\lset}[2]{\left\{#1 \colon  #2\right\}}
    \newcommand{\Lc}{\mathrm{Lc}}
    \newcommand{\Lb}{\mathrm{Lb}}
    \newcommand{\id}{\mathrm{id}}
\newcommand{\baire}{\omega^\omega}
\newcommand{\cantor}{2^\omega}
    \definecolor{carrotorange}{rgb}{0.93, 0.57, 0.13}
    \definecolor{dodger}{rgb}{0.0,0.5,1.0}
    \definecolor{dred}{RGB}{139,0,0}
    \newcommand{\Spl}{\mathrm{Spl}}
    \newcommand{\nsplit}{\sqsubset^{\mathrm{nsp}}}
    \newcommand{\Pred}{\mathrm{Pred}}
    \newcommand{\predict}{\sqsubset^{\mathrm{pr}}}
    \DeclareMathOperator{\sigbar}{\boldsymbol{\bar\Sigma}}
    \newcommand{\varp}{\varepsilon}
    \newcommand{\Cn}{\mathrm{Cn}}
    \newcommand{\relCn}{\not\blacktriangleright}
    \newcommand{\relM}{\sqsubset^{\mathrm m}}
    \newcommand{\Mcal}{\mathcal{M}}
    \newcommand{\Ncal}{\mathcal{N}}
    \newcommand{\Ecal}{\mathcal{E}}
    \DeclareMathOperator{\inter}{int}
    \newcommand{\ltrg}{\mathrel{\lhd}}
\title{Forcing techniques for Cichoń's Maximum\\[1ex] \Large Lecture notes for the mini-course at the University of Vienna}
\author{Diego A.~Mejía%
        \thanks{Email: \href{mailto:damejiag@people.kobe-u.ac.jp}{\texttt{damejiag@people.kobe-u.ac.jp}}
}}
\date{{\normalsize
Graduate School of System Informatics, Kobe University\\ 
1-1 Rokkodai-cho, Nada-ku, Kobe, Hyogo 657-8501, Japan}\\
\ \\
\today
}
\begin{document}

\makeatletter
\def\@roman#1{\romannumeral #1}
\makeatother

\newcounter{enuAlph}
\renewcommand{\theenuAlph}{\Alph{enuAlph}}

\numberwithin{equation}{section}
\renewcommand{\theequation}{\thesection.\arabic{equation}}

\theoremstyle{plain}
  \newtheorem{theorem}[equation]{Theorem}
  \newtheorem{corollary}[equation]{Corollary}
  \newtheorem{lemma}[equation]{Lemma}
  \newtheorem{mainlemma}[equation]{Main Lemma}
  \newtheorem{prop}[equation]{Proposition}
  \newtheorem{clm}[equation]{Claim}
  \newtheorem{fct}[equation]{Fact}
  \newtheorem{fact}[equation]{Fact}
  \newtheorem{question}[equation]{Question}
  \newtheorem{problem}[equation]{Problem}
  \newtheorem{conjecture}[equation]{Conjecture}
  \newtheorem*{thm}{Theorem}
  \newtheorem{teorema}[enuAlph]{Theorem}
  \newtheorem*{corolario}{Corollary}
  \newtheorem*{scnmsc}{(SCNMSC)}
\theoremstyle{definition}
  \newtheorem{definition}[equation]{Definition}
  \newtheorem{example}[equation]{Example}
  \newtheorem{remark}[equation]{Remark}
  \newtheorem{notation}[equation]{Notation}
  \newtheorem{context}[equation]{Context}
  \newtheorem{exer}[equation]{Exercise}
  \newtheorem{exerstar}[equation]{Exercise*}

  \newtheorem*{defi}{Definition}
  \newtheorem*{acknowledgements}{Acknowledgements}
  
\def\sectionautorefname{Section}
\def\subsectionautorefname{Subsection}

\maketitle

\begin{abstract}
Cichoń's diagram describes the connections between combinatorial notions related to measure, category, and compactness of sets of irrational numbers. In the second part of the 2010's, Goldstern, Kellner and Shelah constructed a forcing model of Cichoń's Maximum (meaning that all non-dependent cardinal characteristics are pairwise different) by using large cardinals. Some years later, we eliminated this large cardinal assumption. In this mini-course, we explore the forcing techniques to construct the Cichoń's Maximum model and much more.
\end{abstract}

\section{Tukey connections and cardinal characteristics of the continuum}\label{sec:tukey}

A large part of the contents of this section are taken almost verbatim from: Section 1, up to Figure 3, of~\cite{CM22}; 
and Section 1, up to Fact 1.2, of~\cite{GKMSsplit}.



Many cardinal characteristics of the continuum and their relations can be represented by relational systems as follows. This presentation is based on~\cite{Vojtas,BartInv,blass}.

\begin{definition}\label{def:relsys}
We say that $R=\la X, Y, \sqsubset\ra$ is a \textit{relational system} if it consists of two non-empty sets $X$ and $Y$ and a relation $\sqsubset$.
\begin{enumerate}[label=(\arabic*)]
    \item A set $F\subseteq X$ is \emph{$R$-bounded} if $\exists\, y\in Y\ \forall\, x\in F\colon x \sqsubset y$. 
    \item A set $E\subseteq Y$ is \emph{$R$-dominating} if $\forall\, x\in X\ \exists\, y\in E\colon x \sqsubset y$. 
\end{enumerate}

We associate two cardinal characteristics with this relational system $R$: 
\begin{itemize}
    \item[{}] $\bfrak(R):=\min\{|F|\colon  F\subseteq X  \text{ is }R\text{-unbounded}\}$ the \emph{unbounding number of $R$}, and
    
    \item[{}] $\dfrak(R):=\min\{|D|\colon  D\subseteq Y \text{ is } R\text{-dominating}\}$ the \emph{dominating number of $R$}.
\end{itemize}
\end{definition}

Note that $\dfrak(R)=1$ iff $\bfrak(R)$ is undefined (i.e.\ there are no $R$-unbounded sets, which is the same as saying that $X$ is $R$-bounded).  Dually, $\bfrak(R)=1$ iff $\dfrak(R)$ is undefined (i.e.\ there are no $R$-dominating families).

A very representative general example of relational systems is given by directed preorders.

\begin{definition}\label{examSdir}
We say that $\la S,\leq_S\ra$ is a \emph{directed preorder} if it is a preorder (i.e.\ $\leq_S$ is a reflexive and transitive relation on $S$) such that 
\[\forall\, x, y\in S\ \exists\, z\in S\colon x\leq_S z\text{ and }y\leq_S z.\] 
A directed preorder $\la S,\leq_S\ra$ is seen as the relational system $S=\la S, S,\leq_S\ra$, and its associated cardinal characteristics are denoted by $\bfrak(S)$ and $\dfrak(S)$. The cardinal $\dfrak(S)$ is actually the \emph{cofinality of $S$}, typically denoted by $\cof(S)$ or $\cf(S)$.
\end{definition}

\begin{fact}\label{basicdir}
If a directed preorder $S$ has no maximum element then $\bfrak(S)$ is infinite and regular, and $\bfrak(S)\leq\cf(\dfrak(S))\leq\dfrak(S)\leq|S|$. Even more, if $L$ is a linear order without maximum then $\bfrak(L)=\dfrak(L)=\cof(L)$.
\end{fact}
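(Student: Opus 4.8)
The plan is to prove everything by the standard pigeonhole arguments for directed preorders.

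\textbf{Basic bounds and infinitude.} Since $S$ has no maximum, $S$ itself is $S$-unbounded (an upper bound of $S$ lying in $S$ would be a maximum), so $\bfrak(S)$ is defined; and $S$ is trivially $S$-dominating, so $\dfrak(S)\le|S|$, while $\cf(\dfrak(S))\le\dfrak(S)$ is automatic for a cardinal. To see that $\bfrak(S)$ is infinite, I would show by induction on $|F|$ that every finite $F\subseteq S$ is $S$-bounded, using directedness to pass from $n$ to $n+1$ elements; hence every $S$-unbounded set is infinite. The same observation shows $\dfrak(S)$ is infinite: a finite dominating set would have an upper bound $z$, and then $z\in S$ would dominate all of $S$, i.e.\ be a maximum.

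\textbf{Regularity of $\bfrak(S)$.} Let $F$ be $S$-unbounded with $|F|=\bfrak(S)=:\kappa$, and suppose toward a contradiction that $\cf(\kappa)<\kappa$. Decompose $F=\bigcup_{\alpha<\cf(\kappa)}F_\alpha$ with each $|F_\alpha|<\kappa$. By minimality of $\kappa$ each $F_\alpha$ has an upper bound $y_\alpha$; then $\{y_\alpha:\alpha<\cf(\kappa)\}$ has size $<\kappa=\bfrak(S)$, hence an upper bound $z$, and $z$ bounds all of $F$ by transitivity of $\leq_S$ --- a contradiction. So $\bfrak(S)$ is regular.

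\textbf{The inequality $\bfrak(S)\le\cf(\dfrak(S))$.} Fix an $S$-dominating $D$ with $|D|=\dfrak(S)=:\delta$ and write $D$ as an increasing union $D=\bigcup_{\alpha<\cf(\delta)}D_\alpha$ with $|D_\alpha|<\delta$ for all $\alpha$. Since $|D_\alpha|<\dfrak(S)$, no $D_\alpha$ is $S$-dominating, so choose $x_\alpha\in S$ with $x_\alpha\not\sqsubset d$ for every $d\in D_\alpha$. If we had $\cf(\delta)<\bfrak(S)$, then $\{x_\alpha:\alpha<\cf(\delta)\}$ would be $S$-bounded, say $x_\alpha\sqsubset z$ for all $\alpha$; by domination of $D$ there is $d\in D$ with $z\sqsubset d$, and $d\in D_{\alpha_0}$ for some $\alpha_0$; then transitivity gives $x_{\alpha_0}\sqsubset d$ with $d\in D_{\alpha_0}$, contradicting the choice of $x_{\alpha_0}$. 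This closes the chain $\bfrak(S)\le\cf(\dfrak(S))\le\dfrak(S)\le|S|$ (and in particular $\bfrak(S)\le\dfrak(S)$).

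\textbf{The linear case.} If $L$ is a linear order without maximum, I would show that $C\subseteq L$ is $L$-unbounded iff it is $L$-dominating (cofinal). Cofinal $\Rightarrow$ unbounded as above. Conversely, if $C$ is not cofinal there is $x\in L$ with $x\not\leq_L c$ for all $c\in C$; by totality $c\leq_L x$ for all $c\in C$, so $x$ bounds $C$, i.e.\ $C$ is not unbounded. Hence the families of $L$-unbounded and of $L$-dominating subsets of $L$ coincide, so $\bfrak(L)=\dfrak(L)$, which together with $\dfrak(L)=\cof(L)$ from \Cref{examSdir} gives the claim. The only mildly delicate point in all of this is the $\bfrak(S)\le\cf(\dfrak(S))$ step: one must choose the decomposition of $D$ so that each piece is too small to dominate, and keep track of which piece the dominating element $d$ of $z$ lands in; everything else is routine bookkeeping with directedness and transitivity.
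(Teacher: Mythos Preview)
Your proof is correct and follows essentially the same pigeonhole strategy as the paper: pick elements not dominated by small pieces, bound them using $\bfrak(S)$, and derive a contradiction via transitivity. Your write-up is in fact more complete than the paper's, which only spells out the $\bfrak(S)\le\cf(\dfrak(S))$ step and leaves the regularity of $\bfrak(S)$, the infinitude claims, and the linear case to the reader.
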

\begin{proof}
    First notice that $\dfrak(S)$ is infinite, otherwise, by directedness, $\dfrak(S)=1$ and $S$ would have a top element.

    We prove the less obvious $\bfrak(S) \leq \cf(\dfrak(S))$. Assume that $\lambda< \bfrak(S)$ is a cardinal and $\la A_\alpha\colon  \alpha<\lambda\ra$ is a sequence of subsets of $S$ of size ${<}\dfrak(S)$. It is enough to show that $A:= \bigcup_{\alpha<\lambda} A_\alpha$ is not cofinal in $S$. For each $\alpha<\lambda$, since $|A_\alpha|<\dfrak(S)$, $A_\alpha$ is not cofinal in $S$, so there is some $x_\alpha\in S$ such that $x_\alpha \nleq_S y$ for all $y\in A_\alpha$. Now, $|\set{x_\alpha}{\alpha<\lambda}|\leq \lambda < \bfrak(S)$, so there is some $x^*\in S$ such that $x_\alpha \leq_S x^*$ for all $\alpha<\lambda$. Then, $x^*\nleq_S y$ for all $y\in A$, i.e.\ $A$ is not cofinal in $S$.

    A similar argument shows that $\bfrak(S)$ is regular.
\end{proof}

\begin{example}\label{exm:Baire}
Consider $\omega^\omega=\la\omega^\omega,\leq^*\ra$, 
which is a directed preorder. The cardinal characteristics $\bfrak:=\bfrak(\omega^\omega)$ and $\dfrak:=\dfrak(\omega^\omega)$ are the well-known \emph{bounding number} and \emph{dominating number}, respectively.
\end{example}

\begin{example}\label{exm:Iwf}
For any ideal $\Iwf$ on $X$, we consider the following relational systems. 
\begin{enumerate}[label=(\arabic*)]
    \item $\Iwf:=\la\Iwf,\subseteq\ra$ is a directed partial order. Note that 
    \begin{align*}
        \bfrak(\Iwf) & =\add(\Iwf) := \min\lset{|\Fwf|}{\Fwf\subseteq\Iwf,\ \bigcup\Fwf\notin\Iwf} \text{ the \emph{additivity of $\Iwf$},}\\
        \dfrak(\Iwf) & =\cof(\Iwf).
    \end{align*}
    
    \item $\Cbf_\Iwf:=\la X,\Iwf,\in\ra$. 
    When $\bigcup\Iwf = X$,
    \begin{align*}
        \bfrak(\Cbf_\Iwf) & =\non(\Iwf) = \min\set{|F|}{F\subseteq X,\ F\notin \Iwf} \text{ the \emph{uniformity of $\Iwf$},}\\
        \dfrak(\Cbf_\Iwf) & =\cov(\Iwf) = \min\lset{|\Cwf|}{\Cwf\subseteq\Iwf,\ \bigcup\Cwf = X} \text{ the \emph{covering of $\Iwf$}.}
    \end{align*}    
\end{enumerate}
\end{example}





\begin{definition}
    Let $\Xi\colon \Bor\to[0,\infty]$ be a fam (finitely additive measure) on a Boolean algebra $\Bor$. We define the \emph{$\Xi$-null ideal} by
    \[\Nwf(\Xi):=\set{a\in\Bor}{\Xi(a)=0}.\]
    When $\Bor$ is a field of sets over $X$, we extend the definition to
    \[\Nwf(\Xi):=\set{a\subseteq X}{\exists\, b\in\Bor\colon a\subseteq b \text{ and }\Xi(b) = 0}.\]
    This is clearly an ideal on $X$. When $\bigcup\Nwf(\Xi)= X$, i.e.\ every singleton has measure zero, we say that the fam $\Xi$ is \emph{free}.

    Denote by $\Lb$ the Lebesgue measure on $\R$, and let $\Nwf:=\Nwf(\Lb)$.
\end{definition}

\begin{definition}
    Let $X$ be a topological space. We say that $F\subseteq X$ is \emph{nowhere dense (nwd)} if, for any non-empty open $U\subseteq X$, there is some non-empty open $U'\subseteq U$ disjoint from $F$. We say that $A\subseteq X$ is \emph{meager (or of first category)} if $A=\bigcup_{n<\omega}F_n$ for some nwd $F_n$ ($n<\omega$).

    Denote by $\Mwf(X)$ the collection of all meager subsets of $X$, and let $\Mwf:=\Mwf(\R)$.
\end{definition}

\begin{definition}
    Define by $\Ewf$ the ideal generated by the $F_\sigma$ measure zero subsets of $\R$.
\end{definition}

It is clear that $\Ewf\subseteq\Mwf\cap\Nwf$, even more, $\Ewf\subsetneq \Mwf\cap\Nwf$ (\cite[Lem.~2.6.1]{BJ}, see also~\cite[Thm.~3.7]{GaMej}).

\begin{definition}\label{exm:Spl}
\ 
\begin{enumerate}[label= (\arabic*)]
     \item For $a,b\in[\omega]^{\aleph_0}$, we define $a\subseteq^* b$ iff $a\smallsetminus b$ is finite;
     \item and we say that \emph{$a$ splits $b$} if both $a\cap b$ and $b\smallsetminus a$ are infinite, that is, $a\nsupseteq^* b$ and $\omega\smallsetminus a\nsupseteq^* b$.

    \item $F\subseteq[\omega]^{\aleph_0}$ is a \emph{splitting family} if every $y\in[\omega]^{\aleph_0}$ is split by some $x\in F$. The \emph{splitting number} $\sfrak$ is the smallest size of a splitting family.
    \item $D\subseteq[\omega]^{\aleph_0}$ is an \emph{unreaping family} if no $x\in[\omega]^{\aleph_0}$ splits every member of $D$. The \emph{reaping number} $\rfrak$ is the smallest size of an unreaping family.
    
    \item Define the relational system $\Spl:=\la[\omega]^{\aleph_0},[\omega]^{\aleph_0},\nsplit\ra$ by
\[a\nsplit b \text{ iff either $a\supseteq^* b$ or $\omega\menos a \supseteq^* b$.}\]
Note that $a\not\nsplit b$ iff $a$ splits $b$, so $\bfrak(\Spl) = \sfrak$ and $\dfrak(\Spl) = \rfrak$. 
\end{enumerate}
\end{definition}

Inequalities between cardinal characteristics associated with relational systems can be determined by the dual of a relational system and also via Tukey connections, which we introduce below.

\begin{definition}\label{def:dual}
If $R=\la X,Y,\sqsubset\ra$ is a relational system, then its \emph{dual relational system} is defined by $R^\perp:=\la Y,X,\sqsubset^\perp\ra$ where $y \sqsubset^\perp x$ if $\neg(x \sqsubset y)$.
\end{definition}

\begin{fact}\label{fct:dual}
Let $R=\la X,Y,\sqsubset\ra$ be a relational system.
\begin{enumerate}[label=(\alph*)]
    \item $(R^\perp)^\perp=R$.
    \item The notions of $R^\perp$-dominating set and $R$-unbounded set are equivalent.
    \item The notions of $R^\perp$-unbounded set and $R$-dominating set are equivalent.
    \item $\dfrak(R^\perp)=\bfrak(R)$ and $\bfrak(R^\perp)=\dfrak(R)$.
\end{enumerate}
\end{fact}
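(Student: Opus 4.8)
The plan is to prove all four parts by directly unfolding the definitions of $R^\perp$ and of ``$R$-bounded''/``$R$-dominating'', using nothing beyond double-negation elimination and the interchange of $\forall\,\neg$ with $\neg\,\exists$; I expect no substantial obstacle, so the main content is careful bookkeeping of which ground set each quantifier ranges over after dualizing.

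For (a), write $(R^\perp)^\perp=\la X,Y,(\sqsubset^\perp)^\perp\ra$ and observe that $x\,(\sqsubset^\perp)^\perp\,y$ holds iff $\neg(y\sqsubset^\perp x)$ iff $\neg\neg(x\sqsubset y)$ iff $x\sqsubset y$. Hence the relation of $(R^\perp)^\perp$ is exactly $\sqsubset$, while its first two coordinates are visibly $X$ and $Y$, so $(R^\perp)^\perp=R$.

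For (b), recall that in $R^\perp=\la Y,X,\sqsubset^\perp\ra$ a dominating set is a subset $E\subseteq X$, and $E$ is $R^\perp$-dominating iff $\forall\,y\in Y\ \exists\,x\in E\colon y\sqsubset^\perp x$, i.e.\ $\forall\,y\in Y\ \exists\,x\in E\colon\neg(x\sqsubset y)$, i.e.\ $\neg\,\exists\,y\in Y\ \forall\,x\in E\colon x\sqsubset y$. This last line is precisely the negation of ``$E$ is $R$-bounded'', i.e.\ ``$E$ is $R$-unbounded''; so the collection of $R^\perp$-dominating subsets of $X$ equals, as a set, the collection of $R$-unbounded subsets of $X$. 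Part (c) then follows by applying (b) to $R^\perp$ in place of $R$ and rewriting $(R^\perp)^\perp$ as $R$ via (a): the $R^\perp$-unbounded sets are exactly the $(R^\perp)^\perp$-dominating sets, which are exactly the $R$-dominating sets.

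Finally, (d) is immediate from (b) and (c). Since $\bfrak(R)$ is the minimum of $|F|$ over $R$-unbounded $F\subseteq X$ and $\dfrak(R^\perp)$ is the minimum of $|D|$ over $R^\perp$-dominating $D\subseteq X$, and by (b) these two families of subsets of $X$ coincide, the two cardinals agree—with the standing convention that both are simultaneously undefined exactly when that family is empty, i.e.\ when $X$ is $R$-bounded. Symmetrically $\bfrak(R^\perp)=\dfrak(R)$ by (c). The only point deserving care is that the equivalences in (b) and (c) are equalities of set-families, so they transfer to the cardinal invariants including the degenerate cases; beyond that the argument is purely formal.
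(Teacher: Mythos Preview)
Your proof is correct and complete. The paper itself states this result as a \emph{Fact} without proof, treating it as immediate from the definitions; your argument is exactly the routine unfolding one would expect, with appropriate care about which coordinate the quantifiers range over after dualizing and about the degenerate (undefined) cases.
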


\begin{definition}\label{def:Tukey}
Let $R=\la X,Y,\sqsubset\ra$ and $R'=\la X',Y',\sqsubset'\ra$ be relational systems. We say that $(\Psi_-,\Psi_+)\colon R\to R'$ is a \emph{Tukey connection from $R$ into $R'$} if 
 $\Psi_-\colon X\to X'$ and $\Psi_+\colon Y'\to Y$ are functions such that  \[\forall\, x\in X\ \forall\, y'\in Y'\colon \Psi_-(x) \sqsubset' y' \Rightarrow x \sqsubset \Psi_+(y').\]
The \emph{Tukey order} between relational systems is defined by
$R\leqT R'$ iff there is a Tukey connection from $R$ into $R'$. \emph{Tukey equivalence} is defined by $R\eqT R'$ iff $R\leqT R'$ and $R'\leqT R$
\end{definition}

\begin{fact}\label{fct:Tukey}
Assume that $R=\la X,Y,\sqsubset\ra$ and $R'=\la X',Y',\sqsubset'\ra$ are relational systems and that $(\Psi_-,\Psi_+)\colon R\to R'$ is a Tukey connection.
\begin{enumerate}[label=(\alph*)]
    \item If $D'\subseteq Y'$ is $R'$-dominating, then $\Psi_+[D']$ is $R$-dominating.
    \item $(\Psi_+,\Psi_-)\colon (R')^\perp\to R^\perp$ is a Tukey connection.
    \item If $E\subseteq X$ is $R$-unbounded then $\Psi_-[E]$ is $R'$-unbounded.
\end{enumerate}
\end{fact}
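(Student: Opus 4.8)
The statement to prove is \Cref{fct:Tukey}, which collects three basic facts about how a Tukey connection $(\Psi_-,\Psi_+)\colon R\to R'$ transports domination and unboundedness. Each part is a direct unfolding of \Cref{def:Tukey,def:dual}, so the plan is simply to chase the definitions, being careful about the direction of the maps ($\Psi_-$ goes on the $X$-side, $\Psi_+$ on the $Y$-side).

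\textit{Proof plan.}

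\textbf{Part (a).} Assume $D'\subseteq Y'$ is $R'$-dominating; I must show $\Psi_+[D']\subseteq Y$ is $R$-dominating, i.e.\ that for every $x\in X$ there is $y\in\Psi_+[D']$ with $x\sqsubset y$. Given $x\in X$, apply $\Psi_-$ to get $\Psi_-(x)\in X'$; since $D'$ is $R'$-dominating there is $y'\in D'$ with $\Psi_-(x)\sqsubset' y'$. Now invoke the defining property of the Tukey connection: $\Psi_-(x)\sqsubset' y'$ implies $x\sqsubset\Psi_+(y')$, and $\Psi_+(y')\in\Psi_+[D']$, as required.

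\textbf{Part (b).} Unwinding \Cref{def:dual}, $R^\perp=\la Y,X,\sqsubset^\perp\ra$ and $(R')^\perp=\la Y',X',(\sqsubset')^\perp\ra$, so a Tukey connection $(R')^\perp\to R^\perp$ should be a pair of maps $Y'\to Y$ and $X\to X'$; the natural candidate is exactly $(\Psi_+,\Psi_-)$. The condition to verify is: for all $y'\in Y'$ and all $x\in X$, if $\Psi_+(y')\sqsubset^\perp x$ then $y'(\sqsubset')^\perp\Psi_-(x)$; that is, if $\neg(x\sqsubset\Psi_+(y'))$ then $\neg(\Psi_-(x)\sqsubset'y')$. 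This is the contrapositive of the defining implication of $(\Psi_-,\Psi_+)\colon R\to R'$, so it holds. (One also uses $(R^\perp)^\perp=R$ from \Cref{fct:dual}(a) implicitly, but it is not strictly needed here.)

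\textbf{Part (c).} This follows by combining (a) and (b) with the duality dictionary of \Cref{fct:dual}. If $E\subseteq X$ is $R$-unbounded, then by \Cref{fct:dual}(b) (applied with $R$ in place of $R^\perp$, or directly: $E$ is $R$-unbounded iff $E$ is $R^\perp$-dominating) $E$ is $(R^\perp)$-dominating; by part (b) the pair $(\Psi_+,\Psi_-)\colon (R')^\perp\to R^\perp$ is a Tukey connection, so by part (a) its $Y$-side map $\Psi_-$ sends the $R^\perp$-dominating set $E$ to an $(R')^\perp$-dominating set $\Psi_-[E]$; finally, again by \Cref{fct:dual}(c) (an $(R')^\perp$-dominating set is the same as an $R'$-unbounded set), $\Psi_-[E]$ is $R'$-unbounded. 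Alternatively, one can prove (c) by a direct definition chase parallel to (a): given an $R$-unbounded $E$, suppose toward a contradiction that $\Psi_-[E]$ is $R'$-bounded, witnessed by some $y'\in Y'$ with $\Psi_-(x)\sqsubset'y'$ for all $x\in E$; then the Tukey condition gives $x\sqsubset\Psi_+(y')$ for all $x\in E$, contradicting $R$-unboundedness of $E$.

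\textit{Main obstacle.} There is no real obstacle here — everything is a one-step unfolding of definitions. The only thing to be careful about is bookkeeping the variance: remembering that in a Tukey connection the "forward" map acts on the domain side $X$ and the "backward" map on the codomain side $Y'$, so that under dualization the two maps swap roles. Getting the direction of the contrapositive right in (b) is the single point where a sign error could creep in, and once (b) is in place, (c) is immediate from (a)–(b) and \Cref{fct:dual}.
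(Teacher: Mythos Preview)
Your proof is correct; the paper states this result as a fact without proof, and your direct definition-chase for (a) and (b) together with the duality argument (or the alternative direct contrapositive) for (c) is exactly the standard verification one would expect.
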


\begin{corollary}\label{cor:Tukeyval}
\begin{enumerate}[label=(\alph*)]
    \item $R\leqT R'$ implies $(R')^\perp\leqT R^\perp$.
    \item $R\leqT R'$ implies $\bfrak(R')\leq\bfrak(R)$ and $\dfrak(R)\leq\dfrak(R')$.
    \item $R\eqT R'$ implies $\bfrak(R')=\bfrak(R)$ and $\dfrak(R)=\dfrak(R')$.
\end{enumerate}
\end{corollary}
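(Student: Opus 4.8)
The plan is to obtain all three items as immediate formal consequences of Facts~\ref{fct:dual} and~\ref{fct:Tukey}, so no new construction is needed; the work is purely bookkeeping with the definitions of $\bfrak$, $\dfrak$, and the dual relational system.

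For part~(a), suppose $R\leqT R'$, witnessed by a Tukey connection $(\Psi_-,\Psi_+)\colon R\to R'$. By Fact~\ref{fct:Tukey}(b), the swapped pair $(\Psi_+,\Psi_-)$ is a Tukey connection from $(R')^\perp$ into $R^\perp$, which is exactly a witness for $(R')^\perp\leqT R^\perp$.

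For part~(b), again fix a Tukey connection $(\Psi_-,\Psi_+)\colon R\to R'$. To see $\dfrak(R)\leq\dfrak(R')$, take any $R'$-dominating set $D'\subseteq Y'$ with $|D'|=\dfrak(R')$; by Fact~\ref{fct:Tukey}(a), $\Psi_+[D']$ is $R$-dominating, and since $|\Psi_+[D']|\leq|D'|$, we get $\dfrak(R)\leq|D'|=\dfrak(R')$. For the inequality $\bfrak(R')\leq\bfrak(R)$, one route is to apply Fact~\ref{fct:Tukey}(c): if $E\subseteq X$ is $R$-unbounded with $|E|=\bfrak(R)$, then $\Psi_-[E]$ is $R'$-unbounded of size $\leq\bfrak(R)$, giving $\bfrak(R')\leq\bfrak(R)$. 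Alternatively, and perhaps more cleanly, one can derive it from part~(a) together with the dominating inequality already proved and Fact~\ref{fct:dual}(d): from $(R')^\perp\leqT R^\perp$ we get $\dfrak((R')^\perp)\leq\dfrak(R^\perp)$, and rewriting both sides via $\dfrak(S^\perp)=\bfrak(S)$ yields $\bfrak(R')\leq\bfrak(R)$. One should be slightly careful about the degenerate cases where $\bfrak$ or $\dfrak$ is undefined (cf.\ the remark after Definition~\ref{def:relsys}), but in those cases the inequalities hold vacuously or trivially, so they cause no real trouble.

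Part~(c) is then immediate: $R\eqT R'$ means $R\leqT R'$ and $R'\leqT R$, so applying part~(b) in both directions gives $\bfrak(R')\leq\bfrak(R)\leq\bfrak(R')$ and $\dfrak(R)\leq\dfrak(R')\leq\dfrak(R)$, hence equality. I do not anticipate a genuine obstacle here; the only point requiring mild attention is keeping the directions of the inequalities straight when passing to duals, since dualizing reverses $\leqT$ and swaps the roles of $\bfrak$ and $\dfrak$.
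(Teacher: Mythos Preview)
Your proposal is correct and matches the paper's intended approach: the paper states this as a corollary immediately following Fact~\ref{fct:Tukey} without giving an explicit proof, precisely because all three items are immediate consequences of Facts~\ref{fct:dual} and~\ref{fct:Tukey} as you outline. Your handling of the degenerate cases and the two alternative routes for $\bfrak(R')\leq\bfrak(R)$ are both fine and slightly more thorough than what the paper records.
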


\begin{example}\label{ex:trivialTukey}
The diagram in \autoref{diag:idealI} can be expressed in terms of the Tukey order since $\Cbf_\Iwf\leqT\Iwf$ and $\Cbf_\Iwf^\perp\leqT\Iwf$ when $\Iwf$ is an ideal on $X$ such that $\bigcup\Iwf = X$. The first inequality is obtained via the Tukey connection $x\in X\mapsto \{x\}\in\Iwf$ and $A\in\Iwf \mapsto A\in \Iwf$, and the second is obtained via $A\in\Iwf \mapsto A\in\Iwf$ and $B\in\Iwf\mapsto y_B\in X$ such that $y_B\notin B$.
\end{example}

\begin{figure}[ht]
  \centering
\begin{tikzpicture}[xscale=2/1]
\small{
\node (azero) at (0,1) {$\aleph_0$};
\node (addI) at (1,1) {$\add(\Iwf)$};
\node (covI) at (2,2) {$\cov(\Iwf)$};
\node (nonI) at (2,0) {$\non(\Iwf)$};
\node (cofI) at (3,2) {$\cof(\Iwf)$};
\node (sizX) at (3,0) {$|X|$};
\node (sizI) at (4,1) {$|\Iwf|$};

\draw (azero) edge[->] (addI);
\draw (addI) edge[->] (covI);
\draw (addI) edge[->] (nonI);
\draw (covI) edge[->] (sizX);
\draw (nonI) edge[->] (sizX);
\draw (covI) edge[->] (cofI);
\draw (nonI) edge[-,draw=white,line width=3pt] (cofI);
\draw (nonI) edge[->] (cofI);
\draw (sizX) edge[->] (sizI);
\draw (cofI) edge[->] (sizI);
}
\end{tikzpicture}
    \caption{Diagram of the cardinal characteristics associated with $\Iwf$. An arrow  $\mathfrak x\rightarrow\mathfrak y$ means that (provably in ZFC) 
    $\mathfrak x\le\mathfrak y$.}
    \label{diag:idealI}
\end{figure}
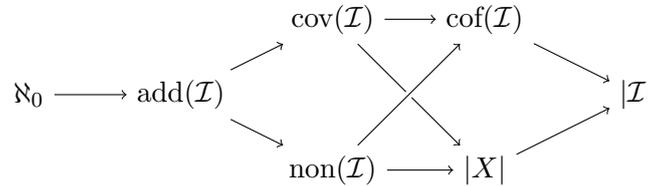



\begin{figure}[ht]
\centering
\begin{tikzpicture}[scale=1.0, transform shape]
\small{
\node (aleph1) at (-1,3) {$\aleph_1$};
\node (addn) at (1,3){$\add(\Ncal)$};
\node (covn) at (1,6){$\cov(\Ncal)$};
\node (nonn) at (7,3) {$\non(\Ncal)$} ;
\node (cfn) at (7,6) {$\cof(\Ncal)$} ;
\node (addm) at (3,3) {$\add(\Mcal)$} ;
\node (covm) at (5,3) {$\cov(\Mcal)$} ;
\node (nonm) at (3,6) {$\non(\Mcal)$} ;
\node (cfm) at (5,6) {$\cof(\Mcal)$} ;
\node (b) at (3,4.5) {$\bfrak$};
\node (d) at (5,4.5) {$\dfrak$};
\node (c) at (9,6) {$\cfrak$};
\draw (aleph1) edge[->] (addn)
      (addn) edge[->] (covn)
      (covn) edge [->] (nonm)
      (nonm)edge [->] (cfm)
      (cfm)edge [->] (cfn)
      (cfn) edge[->] (c);
\draw
   (addn) edge [->]  (addm)
   (addm) edge [->]  (covm)
   (covm) edge [->]  (nonn)
   (nonn) edge [->]  (cfn);
\draw (addm) edge [->] (b)
      (b)  edge [->] (nonm);
\draw (covm) edge [->] (d)
      (d)  edge[->] (cfm);
\draw (b) edge [->] (d);


}
\end{tikzpicture}
  \caption{Cicho\'n's diagram. The arrows mean $\leq$ and dotted arrows represent
  $\add(\Mwf)=\min\{\bfrak,\cov(\Mwf)\}$ and $\cof(\Mwf)=\max\{\dfrak,\non(\Mwf)\}$, which we call the \emph{dependent values}.}
  \label{FigCichon}
\end{figure}

Cicho\'n's diagram (\autoref{FigCichon}) illustrates the inequalities between the cardinal characteristics associated with measure and category of the real numbers. The initial study of this diagram was completed between~1981 and~1993. Inequalities were proved by Bartoszy\'nski, Fremlin, Miller, Rothberger and Truss. The name ``Cicho\'n's diagram" was given by Fremlin~\cite{FrCichon}. On the other hand, the diagram is complete in the sense that no more arrows can be added. Moreover, for any $\aleph_1$-$\aleph_2$ assignment to the cardinals in Cicho\'n's diagram that does not contradict the arrows (and the dependent values), there is a forcing poset that forces the corresponding model. This part of the study was completed by Bartoszy\'nski, Judah, Miller and Shelah.
In fact, the inequalities in Cicho\'n's diagram can be obtained via the Tukey connections as illustrated in \autoref{fig:cichontukey}. See e.g.~\cite{BJ,blass} for all the details.

\begin{figure}[ht]
\centering
\begin{tikzpicture}
\small{
\node (aleph1) at (-1,3) {$\Cbf_{[\R]^{<\aleph_1}}^\perp$};
\node (addn) at (1,3){$\Nwf^\perp$};
\node (covn) at (1,7){$\Cbf_\Nwf$};
\node (nonn) at (9,3) {$\Cbf_\Nwf^\perp$} ;
\node (cfn) at (9,7) {$\Nwf$} ;
\node (addm) at (3,3) {$\Mwf^\perp$} ;
\node (covm) at (7,3) {$\Cbf_\Mwf$} ;
\node (nonm) at (3,7) {$\Cbf_\Mwf^\perp$} ;
\node (cfm) at (7,7) {$\Mwf$} ;
\node (b) at (3,5) {$(\baire)^\perp$};
\node (d) at (7,5) {$\baire$};
\node (c) at (11,7) {$\Cbf_{[\R]^{<\aleph_1}}$};
\draw (aleph1) edge[->] (addn)
      (addn) edge[->] (covn)
      (covn) edge [->] (nonm)
      (nonm)edge [->] (cfm)
      (cfm)edge [->] (cfn)
      (cfn) edge[->] (c);
\draw
   (addn) edge [->]  (addm)
   (addm) edge [->]  (covm)
   (covm) edge [->]  (nonn)
   (nonn) edge [->]  (cfn);
\draw (addm) edge [->] (b)
      (b)  edge [->] (nonm);
\draw (covm) edge [->] (d)
      (d)  edge[->] (cfm);
\draw (b) edge [->] (d);
}
\end{tikzpicture}
\caption{Cicho\'n's diagram via Tukey connections. Any arrow represents a Tukey connection in the given direction.}\label{fig:cichontukey}
\end{figure}
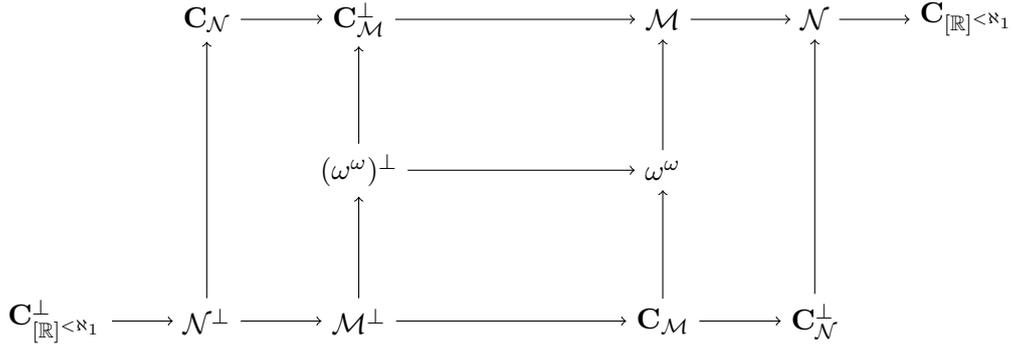

We look at more classical cardinal characteristics. Concerning those associated with $\Ewf$:

\begin{lemma}[{\cite[Lem. 7.4.3]{BJ}}]
    $\Cbf_\Ewf\leqT \Spl$.
\end{lemma}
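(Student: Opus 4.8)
The claim $\Cbf_\Ewf\leqT\Spl$ asks for a Tukey connection $(\Psi_-,\Psi_+)\colon \Cbf_\Ewf\to\Spl$, i.e.\ maps $\Psi_-\colon \R\to[\omega]^{\aleph_0}$ and $\Psi_+\colon[\omega]^{\aleph_0}\to\Ewf$ such that whenever $\Psi_-(r)\nsplit a$ (i.e.\ $\Psi_-(r)$ does \emph{not} split $a$, meaning $\Psi_-(r)\supseteq^* a$ or $\omega\menos\Psi_-(r)\supseteq^* a$), then $r\in\Psi_+(a)$. The plan is to encode reals as infinite subsets of a countable base in such a way that ``$\Psi_-(r)$ is not split by $a$'' forces $r$ into an explicit $F_\sigma$ null set determined by $a$. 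The natural arena is to identify $\R$ (up to a null, meager modification that does not affect $\Ewf$) with $\cantor$ or with $\baire$, fix a bijection between $\omega$ and a suitable countable index set (e.g.\ finite binary strings, or pairs $\la n,s\ra$), and let $\Psi_-(r)$ be the ``trace'' of $r$ on that index set.

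**Constructing $\Psi_-$.**
First I would work in $\cantor$ and fix a partition of $\omega$ into consecutive intervals $I_n$ with $|I_n|\to\infty$ (say $|I_n|=n+1$). For $x\in\cantor$, record on each interval $I_n$ the restriction $x\frestr I_n\in 2^{I_n}$; more precisely, fix for each $n$ an injection of $2^{I_n}$ into a block $J_n\subseteq\omega$ of the same size, with the $J_n$ pairwise disjoint and exhausting $\omega$, and let $\Psi_-(x)\subseteq\omega$ be the set that picks out, in block $J_n$, the single element coding $x\frestr I_n$. Thus $\Psi_-(x)$ meets each $J_n$ in exactly one point; the key combinatorial feature is that two reals $x,y$ agreeing on cofinitely many blocks have $\Psi_-(x)$ and $\Psi_-(y)$ eventually equal. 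For $\Psi_+$, given $a\in[\omega]^{\aleph_0}$ I would let $\Psi_+(a)$ be the set of all $x\in\cantor$ whose trace $\Psi_-(x)$ is \emph{not split by} $a$; the content of the lemma is then precisely that this set lies in $\Ewf$, and then $\Psi_+$ factors through $\Ewf$ by sending $a$ to (a canonical $F_\sigma$ null superset of) that set.

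**The heart: why the ``not split'' set is $F_\sigma$ and null.**
The set $N_a:=\{x\colon a\nsplit \Psi_-(x)\}$ splits as $\{x\colon \Psi_-(x)\subseteq^* a\}\cup\{x\colon \Psi_-(x)\subseteq^*\omega\menos a\}$. Consider the first piece. Since $\Psi_-(x)$ hits each block $J_n$ in one point, $\Psi_-(x)\subseteq^* a$ says that for all large $n$ that one point lies in $a\cap J_n$. For fixed $n$, ``the point of $\Psi_-(x)$ in $J_n$ lies in $a$'' is a clopen condition on $x$ constraining $x\frestr I_n$ to a prescribed subset $A_n^a\subseteq 2^{I_n}$ (the preimage of $a\cap J_n$), and its measure is $|a\cap J_n|/2^{|I_n|}$. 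The point is to arrange the coding (by choosing $|I_n|$ growing fast and the block structure) so that $\sum_n (1-|A_n^a\cup(2^{I_n}\menos\text{stuff}))/2^{|I_n|}$-type sums diverge unless $a$ or its complement is co-finite on blocks — more robustly, one shows each of the two pieces is contained in a countable union over thresholds $m$ of the closed sets $F_m:=\{x\colon \forall n\ge m,\ x\frestr I_n\in A_n^a\}$, each $F_m$ closed, and $F_m$ is null because it is an infinite product of proper-subset constraints with $\prod_{n\ge m}|A_n^a|/2^{|I_n|}\le \prod_{n\ge m}(1-1/2^{|I_n|})=0$ provided $|a\cap J_n|<2^{|I_n|}$ for infinitely many $n$. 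The case where $|a\cap J_n|=2^{|I_n|}=|J_n|$ for cofinitely many $n$ means $a\supseteq^*\bigcup J_n=^*\omega$, so $\omega\menos a$ is finite, and then symmetrically the other piece is handled; actually one must handle the genuinely mixed situation, which is where the ``$|I_n|\to\infty$'' is used: I would note that if $a$ contains a proper nonempty part of infinitely many blocks (on both sides) then both pieces are null, and the remaining case ($a$ or $\omega\menos a$ almost-misses all blocks) makes one of the two pieces empty. So $N_a$ is a finite union of sets each of which is $F_\sigma$ and null, hence $N_a\in\Ewf$, and I set $\Psi_+(a)$ to be $N_a$ viewed as an element of $\Ewf$ (or its $F_\sigma$-null hull).

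**Verifying the Tukey inequality and the main obstacle.**
With these definitions the Tukey condition is immediate: if $\Psi_-(x)\nsplit a$ then by definition $x\in N_a=\Psi_+(a)$, i.e.\ $x\in\Psi_+(a)$; so $(\Psi_-,\Psi_+)$ is a Tukey connection and $\Cbf_\Ewf\leqT\Spl$. The routine transfer from $\cantor$ to $\R$ uses that the ideals $\Ewf$ on $\cantor$ and on $\R$ are isomorphic modulo a countable set, which does not affect Tukey type. The step I expect to be the real obstacle — and the one deserving genuine care rather than hand-waving — is the divergence/product estimate showing $F_m$ is null, i.e.\ choosing the block sizes $|I_n|$ and verifying $\prod_{n\ge m}\bigl(|a\cap J_n|/2^{|I_n|}\bigr)=0$ (and the symmetric statement) uniformly over all infinite $a$, together with the careful case split on whether $a$ (resp.\ $\omega\menos a$) is co-finite on the block structure; getting the coding set up so that these two pieces are simultaneously $F_\sigma$ and null for \emph{every} $a$, with no exceptional $a$ slipping through, is the crux. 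Everything else (clopenness of the per-block constraints, countable unions, the trivial verification of the Tukey arrow) is bookkeeping.
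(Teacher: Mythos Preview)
There is a genuine gap. First, a direction slip: in your setup you correctly state that the Tukey condition reads $\Psi_-(x)\nsplit a$, i.e.\ $\Psi_-(x)\supseteq^* a$ or $\omega\menos\Psi_-(x)\supseteq^* a$; but in the heart of the argument you instead analyze $N_a=\{x\colon a\nsplit \Psi_-(x)\}=\{x\colon \Psi_-(x)\subseteq^* a\}\cup\{x\colon \Psi_-(x)\subseteq^*\omega\menos a\}$, which is the opposite relation. The two do not coincide, and your final ``by definition $x\in N_a$'' does not match the $N_a$ you actually studied.

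More seriously, the block-coding construction with $|I_n|\to\infty$ fails regardless of which direction is used. Your key estimate $\prod_{n\ge m}(1-2^{-|I_n|})=0$ is false once $|I_n|\to\infty$: for $|I_n|=n+1$ one has $\sum_n 2^{-(n+1)}<\infty$, so the product is strictly positive. Concretely, with the correct relation, take any infinite $a$ meeting each block $J_n$ in exactly one point. Then the piece $\{x\colon a\subseteq^*\omega\menos\Psi_-(x)\}$ asks that, for large $n$, the single coded point of $x$ in $J_n$ avoid the single point of $a$ there; its measure is $\prod_{n\ge m}(1-2^{-|I_n|})>0$. Hence $\{x\colon \Psi_-(x)\nsplit a\}\notin\Nwf$, and no $\Psi_+(a)\in\Ewf$ can cover it. Your ``$|I_n|\to\infty$'' intuition is exactly backwards here: large blocks make the one-point-per-block set $\Psi_-(x)$ too sparse to detect sparse $a$.

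The argument in \cite[Lem.~7.4.3]{BJ} (the paper gives no proof, only the citation) is far simpler and avoids blocks entirely: work in $2^\omega$ and set $\Psi_-(x):=\{n\colon x(n)=1\}$ (sending the countably many $x$ with finite or cofinite support to some fixed infinite set, and adding this countable set to every $\Psi_+(a)$). Then $\Psi_-(x)\nsplit a$ says precisely that $x\frestr a$ is eventually constant, and
\[
\Psi_+(a):=\{x\in 2^\omega\colon x\frestr a\text{ eventually constant}\}=\bigcup_{m<\omega}\bigcup_{i<2}\{x\colon \forall n\in a\menos m\ x(n)=i\}
\]
is a countable union of closed sets, each of measure $\prod_{n\in a\menos m}\tfrac12=0$ since $a$ is infinite. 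No growth hypotheses and no case analysis on $a$ are needed.
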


\begin{theorem}[{\cite{BartSh}}, see also~{\cite[Sec.~2.6]{BJ}}]\label{thm:E}
\ 
  \begin{enumerate}[label =\rm (\alph*)]
      \item $\min\{\bfrak,\non(\Nwf)\} \leq\non(\Ewf)\leq\min\{\non(\Mwf),\non(\Nwf)\}$.
      \item $\max\{\cov(\Mwf),\cov(\Nwf)\}\leq\cov(\Ewf)  \leq\max\{\dfrak,\cov(\Nwf)\}$.
      \item $\add(\Ewf)=\add(\Mwf)$ and $\cof(\Ewf)=\cof(\Mwf)$.
  \end{enumerate}
\end{theorem}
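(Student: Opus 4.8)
The plan is to reduce everything to a combinatorial description of $\Ewf$ and then run diagonalization arguments. Since the four cardinal invariants of $\Ewf$, $\Mwf$, $\Nwf$ are unchanged if computed in $\cantor$ (with its product measure $\mu$), I would work there. For an increasing $\bar\ell=\seq{\ell_k}{k<\omega}$ in $\omega$ and $\bar H=\seq{H_k}{k<\omega}$ with $H_k\subseteq 2^{\ell_k}$, put
\[ [\bar\ell,\bar H]:=\set{x\in\cantor}{\forall^\infty k\ x\frestr\ell_k\in H_k}=\bigcup_{m<\omega}\ \bigcap_{k\geq m}\set{x\in\cantor}{x\frestr\ell_k\in H_k}. \]
The first ingredient is the \emph{easy half of the combinatorial description}: if $|H_k|\,2^{-\ell_k}\to 0$, then $[\bar\ell,\bar H]\in\Ewf$, because each set in the displayed countable union is closed with measure $\leq\inf_{k\geq m}|H_k|2^{-\ell_k}=0$. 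For part (c) I would also invoke the converse — every member of $\Ewf$ is contained in some $[\bar\ell,\bar H]$ with $|H_k|2^{-\ell_k}\to 0$ — which I would quote from \cite[Sec.~2.6]{BJ}/\cite{BartSh}: morally, a closed null set $[T]$ has $|T\cap 2^n|2^{-n}\to 0$, so $H_k:=T\cap 2^{n_k}$ works for a fast $\seq{n_k}{k<\omega}$, and countably many such descriptions can be interleaved with the densities still tending to $0$. Dropping the density clause (and phrasing it correctly), the sets $[\bar\ell,\bar H]$ run through the $F_\sigma$ meager sets, so $\Ewf$ sits inside $\Mwf$ as ``the same shape with a density throttle'' — which is exactly what makes (c) true.

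\textbf{The trivial halves of (a) and (b).} As $\Ewf\subseteq\Mwf\cap\Nwf$, the identity maps are Tukey connections $\Cbf_\Mwf\leqT\Cbf_\Ewf$ and $\Cbf_\Nwf\leqT\Cbf_\Ewf$, so \autoref{cor:Tukeyval} yields $\non(\Ewf)\leq\min\{\non(\Mwf),\non(\Nwf)\}$ and $\max\{\cov(\Mwf),\cov(\Nwf)\}\leq\cov(\Ewf)$, which is the upper bound of (a) and the lower bound of (b).

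\textbf{The substantial halves of (a) and (b).} For $\min\{\bfrak,\non(\Nwf)\}\leq\non(\Ewf)$ it suffices, taking the contrapositive of the definition of $\non$, to prove that every $F\subseteq\cantor$ with $|F|<\bfrak$ and $|F|<\non(\Nwf)$ lies in $\Ewf$. Since $|F|<\non(\Nwf)$, $F$ is null, so for each $k$ pick open $U_k=\bigsqcup_i[s^k_i]\supseteq F$ with $\mu(U_k)<2^{-k}$; for $x\in F$ set $f_x(k):=|s^k_i|$ for the $i$ with $x\in[s^k_i]$, noting $f_x(k)>k$ because $2^{-f_x(k)}=\mu([s^k_i])<2^{-k}$. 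As $|\set{f_x}{x\in F}|<\bfrak$, fix increasing $g$ with $f_x\leq^* g$ for every $x\in F$. Let $W_k$ collect those $s^k_i$ of length $\leq g(k)$ — a finite set — and pad its members to a common length $\ell_k$, chosen increasing, to obtain $H_k\subseteq 2^{\ell_k}$ with $|H_k|2^{-\ell_k}=\mu(\bigcup_{s\in W_k}[s])<2^{-k}$; then $F\subseteq[\bar\ell,\bar H]\in\Ewf$, since for $x\in F$ and $k$ beyond the threshold of $f_x\leq^* g$, the basic clopen neighbourhood of $x$ inside $U_k$ has length $\leq g(k)$, hence lies in $W_k$, hence $x\frestr\ell_k\in H_k$. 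The bound $\cov(\Ewf)\leq\max\{\dfrak,\cov(\Nwf)\}$ is the exact dual: cover $\cantor$ by $\cov(\Nwf)$-many null sets $N_i$, fix open $U^i_k\supseteq N_i$ of measure $<2^{-k}$, and for each member $g_\alpha$ ($\alpha<\dfrak$) of a dominating family run the above construction on $\seq{U^i_k}{k<\omega}$ and $g_\alpha$ to get $E_{i,\alpha}\in\Ewf$; every $x$ lies in some $N_i$, the associated $f^i_x$ satisfies $f^i_x\leq^* g_\alpha$ for some $\alpha$, so $x\in E_{i,\alpha}$, and the $\max\{\dfrak,\cov(\Nwf)\}$-many sets $E_{i,\alpha}$ cover $\cantor$.

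\textbf{Part (c), and the main obstacle.} Here I would use the Miller--Truss identities $\add(\Mwf)=\min\{\bfrak,\cov(\Mwf)\}$ and $\cof(\Mwf)=\max\{\dfrak,\non(\Mwf)\}$ and argue that the combinatorial system above has matching additivity and cofinality. The ``interval'' inequalities $\add(\Ewf)\leq\bfrak$ and $\dfrak\leq\cof(\Ewf)$ are routine: attach to each $f$ a closed null set of ``speed $f$'' (forbidding the all-zero word on the blocks below $f$ along even coordinates), and note that an unbounded family of such sets cannot be placed inside one $[\bar\ell,\bar H]$ with $|H_k|2^{-\ell_k}\to 0$ (the $\bar\ell$ would have to dominate every $f$), while dually the $\bar\ell$'s appearing in a cofinal family of $\Ewf$ must themselves dominate $\baire$. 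For $\add(\Mwf)\leq\add(\Ewf)$, given fewer than $\min\{\bfrak,\cov(\Mwf)\}$ sets $[\bar\ell^\xi,\bar H^\xi]\in\Ewf$, use $<\bfrak$ to pass to a common $\bar\ell$ (refining preserves the density clause) and then $<\cov(\Mwf)$, through Bartoszyński's characterization of $\cov(\Mwf)$ by eventually different reals, to amalgamate the patterns into one density-$\to0$ sequence capturing the union; dually, for $\cof(\Ewf)\leq\cof(\Mwf)$ build a cofinal family from a $\dfrak$-sized cofinal set of $\bar\ell$'s and, over each, a $\non(\Mwf)$-sized mod-finite-cofinal family of density-$\to0$ pattern sequences. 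Combined with Miller--Truss this gives $\add(\Ewf)=\add(\Mwf)$ and $\cof(\Ewf)=\cof(\Mwf)$. The two genuinely delicate inputs are the converse in the Setup and the inequalities $\add(\Ewf)\leq\cov(\Mwf)$ and $\non(\Mwf)\leq\cof(\Ewf)$ hidden in the amalgamation and cofinal-family constructions: these say that the density throttle, responsible for the drop to $\non(\Nwf),\cov(\Nwf)$ in (a) and (b), nonetheless leaves $\add$ and $\cof$ pinned to those of $\Mwf$ — and this is precisely where I would follow \cite{BartSh} rather than re-derive anything. Everything else (the two Tukey connections, the measure computation, the $\bfrak$/$\dfrak$ diagonalizations) is bookkeeping.
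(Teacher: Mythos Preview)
The paper does not prove this theorem; it is stated with citations to \cite{BartSh} and \cite[Sec.~2.6]{BJ} only. Your proposal is a faithful sketch of the standard argument found there: the combinatorial representation of $\Ewf$ via sets $[\bar\ell,\bar H]$ with $|H_k|2^{-\ell_k}\to 0$, the diagonalization through $\bfrak$ (resp.\ $\dfrak$) and $\non(\Nwf)$ (resp.\ $\cov(\Nwf)$) for parts~(a) and~(b), and the reduction of part~(c) to the Miller--Truss identities together with the eventually-different characterization of $\cov(\Mwf)$. Your treatment of (a) and (b) is essentially complete and correct; for (c) you have accurately located the nontrivial content (the amalgamation step and the inequalities $\add(\Ewf)\leq\cov(\Mwf)$, $\non(\Mwf)\leq\cof(\Ewf)$) and are right to defer those to the cited sources rather than improvise. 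One minor comment: your description of the closed null sets witnessing $\add(\Ewf)\leq\bfrak$ (``forbidding the all-zero word on the blocks below $f$'') is a bit garbled --- the cleanest route is to show $\baire\leqT\Ewf$ directly via the combinatorial representation, sending $E=[\bar\ell,\bar H]$ to a function read off from $\bar\ell$ --- but this does not affect the overall soundness of the plan.
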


\begin{definition}\label{def:cardchar}
\ 
   \begin{enumerate}[label= (\arabic*)]
    \item $D\subseteq[\omega]^{\aleph_0}$ is \emph{groupwise dense} when:
      \begin{enumerate}[label= (\roman*)]
        \item if $a\in[\omega]^{\aleph_0}$, $b\in D$ and $a\subseteq^* b$, then $a\in D$,
        \item if $\seq{ I_n}{n<\omega}$ is an interval partition of $\omega$ then $\bigcup_{n\in a}I_n\in D$ for some $a\in[\omega]^{\aleph_0}$.
      \end{enumerate}
      The \emph{groupwise density number $\gfrak$} is the smallest size of a collection of groupwise dense sets whose intersection is empty.
    \item The \emph{distributivity number $\mathfrak{h}$} is the smallest size of a collection of dense subsets of $\la[\omega]^{\aleph_0},\subseteq^*\ra$ whose intersection is empty.
    \item Say that $a\in[\omega]^{\aleph_0}$ is a \emph{pseudo-intersection of $F\subseteq[\omega]^{\aleph_0}$} if $a\subseteq^* b$ for all $b\in F$.
    \item The \emph{pseudo-intersection number} $\pfrak$ is the smallest size of a filter base of subsets of $[\omega]^{\aleph_0}$ without pseudo-intersection.
    \item The \emph{tower number} $\tfrak$ is the smallest length of a (transfinite) $\subseteq^*$-decreasing sequence in $[\omega]^{\aleph_0}$ without pseudo-intersection.
    \item Given a class $\Pwf$ of forcing notions, $\mfrak(\Pwf)$ denotes the minimal cardinal $\kappa$ such that, for some $Q\in\Pwf$, there is some collection $\Dwf$ of size $\kappa$ of dense subsets of $Q$ without a filter in $Q$ intersecting every member of $\Dwf$.
    \item Let $\Por$ be a poset. A set $A\subseteq \Por$ is \emph{$k$-linked (in $\Por$)} if every $k$-element subset of $A$
    has a  lower bound in $\Por$.
    $A$ is \emph{centered} if it is $k$-linked for all $k\in\omega$.
    \item
    A poset $\Por$ is \emph{$k$-Knaster}, if for each uncountable $A\subseteq \Por$
    there is a $k$-linked uncountable $B\subseteq A$.
    And $\Por$ \emph{has precaliber $\aleph_1$} if such a $B$ can be chosen centered.
    For notational convenience, \emph{$1$-Knaster} means ccc, and \emph{$\omega$-Knaster} means precaliber $\aleph_1$.
    \item
    For $1\leq k\leq \omega$ denote $\mfrak_k:=\mfrak(k\text{-Knaster})$ and $\mfrak:=\mfrak_1$. We also set $\mfrak_0:=\aleph_1$.

    \item Define the relational system $\Pred=\la \omega^\omega, \Pr, \predict\ra$ where $\Pr$ is the set of functions $\pi$ (called \emph{predictors}) into $\omega$ with domain $\bigcup_{n\in D_\pi}\omega^n$ for some $D_\pi\in[\omega]^{\aleph_0}$, and
    \[x \predict \pi \text{ iff }\exists\, m<\omega\ \forall\, n\geq m\colon n\in D_\pi \imp x(n) = \pi(x\frestr n),\]
    in which case we say that \emph{$\pi$ predicts $x$}. We define $\efrak := \bfrak(\Pred)$ the \emph{evasion number}.
    
    \item Two sets $a$ and $b$ are \emph{almost disjoint} if $a\cap b$ is finite. A family $A$ of sets is an \emph{almost disjoint (a.d.) family} if any pair of members of $A$ are almost disjoint. We say that $A\subseteq[\omega]^{\aleph_0}$ is a \emph{maximal almost disjoint (mad) family} if $A$ is $\subseteq$-maximal among the a.d.~families contained in $[\omega]^{\aleph_0}$. The \emph{almost disjointness number $\afrak$} is the smallest size of an infinite mad family in $[\omega]^{\aleph_0}$.
    
    \item The \emph{ultrafilter number} $\ufrak$ is the smallest size of a filter base generating an ultrafilter contained in $[\omega]^{\aleph_0}$, i.e.\ a \emph{non-principal ultrafilter}.

    \item $\Fn(A,2)$ denotes the set of finite partial functions from $A$ into $2=\{0,1\}$ (see \autoref{Cohen}). When $A\subseteq \pts(\omega)$, for $s\in \Fn(A,2)$ denote
    \[a^s:= \bigcap_{x\in s^{-1}[\{0\}]} x \cap \bigcap_{x\in s^{-1}[\{1\}]}(\omega\smallsetminus x).\]
    A family $A\subseteq\pts(\omega)$ is said to be \emph{independent} if $a^s$ is infinite for all $s\in \Fn(A,2)$, and we say that it is a \emph{maximal independent family} if it is $\subseteq$-maximal among the independent families in $\pts(\omega)$. The \emph{independence number $\mathfrak{i}$} is defined as the smallest size of a maximal independent family.
  \end{enumerate}
\end{definition}

\begin{figure}[ht]
    \centering
    \includegraphics{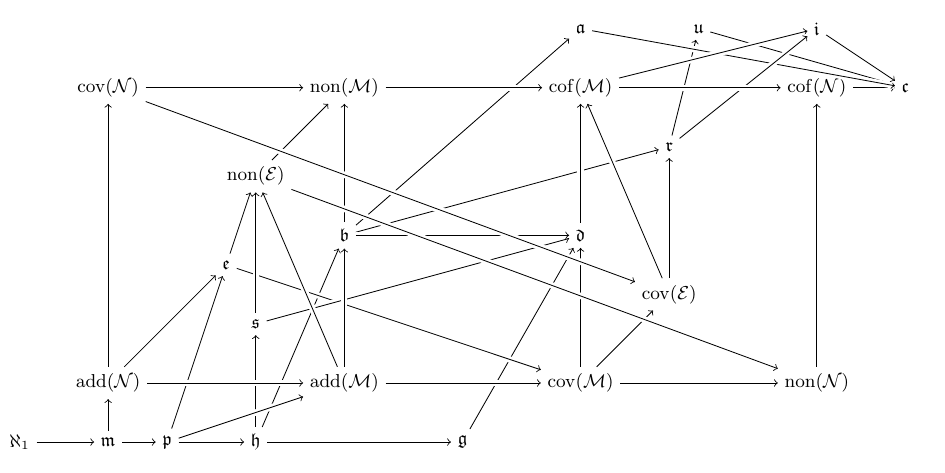}
    \caption{Diagram of inequalities between classical cardinal characteristics}
    \label{fig:manyinv}
\end{figure}

The inequalities between the cardinal characteristics presented so far are summarized in~\autoref{fig:manyinv}. See~\cite{blass,BJ} for the definitions and the proofs for the inequalities (with the exception of $\cof(\Mwf)\leq\mathfrak{i}$, which was proved in~\cite{BHH}). 
Below we list some additional properties of these cardinals.
Unless noted otherwise, proofs can be found in~\cite{blass}.
\begin{fact}\label{fact:blass}
\ 
\begin{multicols}{2} 
\begin{enumerate}[label =\normalfont (\arabic*)]
  \item \cite{MSpt} $\pfrak=\tfrak$.\footnote{Only the trivial inequality $\pfrak\leq\tfrak$ is used in this mini-course.}
  \item The cardinals $\add(\Nwf)$, $\add(\Mwf)$, $\bfrak$, $\tfrak$, $\hfrak$ and $\gfrak$ are regular.
  \item $\cf(\sfrak)\geq\tfrak$ (see \cite{DowShelah}).
  \item $2^{{<}\tfrak}=\cfrak$.
  \item $\cf(\cfrak)\geq\gfrak$.
  \item For $1\leq k\leq k'\leq\omega$, $\mfrak_k\leq\mfrak_{k'}$.
  \item\label{mart} For $1\leq k\leq\omega$, $\mfrak_k>\aleph_1$ implies $\mfrak_k=\mfrak_\omega$ (well-known, but see e.g.\ \cite[Lemma~4.2]{GKMSnoreal}).
\end{enumerate}
\end{multicols}
\end{fact}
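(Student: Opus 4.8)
The inequality $\mfrak_k\leq\mfrak_\omega$ is part~(6) with $k'=\omega$, so under the hypothesis only $\mfrak_\omega\leq\mfrak_k$ needs proof. Unravelling it, ``$\mfrak_k>\aleph_1$'' says exactly that for every $k$-Knaster poset $Q$ and every family $\Dwf$ of at most $\aleph_1$ dense subsets of $Q$ there is a filter of $Q$ meeting every member of $\Dwf$; abbreviate this as $\mathrm{MA}_{\aleph_1}(k\text{-Knaster})$. The plan is to reduce to:

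\emph{Key Lemma.} $\mathrm{MA}_{\aleph_1}(k\text{-Knaster})$ implies every $k$-Knaster poset has precaliber $\aleph_1$.

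Granting this, pick a $k$-Knaster poset $Q$ together with a family $\Dwf$ of dense subsets of $Q$ with $|\Dwf|=\mfrak_k$ that no filter of $Q$ meets entirely (one exists by the definition of $\mfrak_k$). By the Key Lemma $Q$ is in fact $\omega$-Knaster, i.e.\ has precaliber $\aleph_1$, so the very same pair $(Q,\Dwf)$ witnesses $\mfrak_\omega\leq|\Dwf|=\mfrak_k$; together with $\mfrak_k\leq\mfrak_\omega$ this yields the equality.

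For the Key Lemma, assume $\mathrm{MA}_{\aleph_1}(k\text{-Knaster})$, let $Q$ be $k$-Knaster and $A=\set{q_\alpha}{\alpha<\omega_1}\subseteq Q$ uncountable; we must extract an uncountable centered subfamily. Consider the auxiliary poset $\Por$ whose conditions are the finite $s\subseteq\omega_1$ such that $\set{q_\alpha}{\alpha\in s}$ has a lower bound in $Q$, ordered by reverse inclusion. Then $\Por$ is again $k$-Knaster: given uncountably many conditions, refine to a $\Delta$-system and attach to each condition $s$ a lower bound $t_s\in Q$ of $\set{q_\alpha}{\alpha\in s}$; if only countably many values $t_s$ occur, uncountably many conditions share one value, and since their finite unions are again conditions they form a centered (hence $k$-linked) set in $\Por$; otherwise, apply the $k$-Knasterness of $Q$ to the uncountably many distinct values $t_s$, and note that a lower bound in $Q$ of any $k$ of them bounds the union of the corresponding $s$'s, which is therefore a common lower bound in $\Por$. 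Putting $D_\alpha:=\set{s\in\Por}{s\cap[\alpha,\omega_1)\neq\vacio}$ for $\alpha<\omega_1$, a filter $G$ on $\Por$ meeting all of $\set{D_\alpha}{\alpha<\omega_1}$ gives $X:=\bigcup G$, an unbounded — hence uncountable — subset of $\omega_1$ with $\set{q_\alpha}{\alpha\in X}$ centered in $Q$ (every finite subset of $X$ is contained in a single $s\in G$, which has a lower bound); $\mathrm{MA}_{\aleph_1}(k\text{-Knaster})$ applied to $\Por$ supplies such a $G$.

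The step requiring care — and the expected main obstacle — is that the sets $D_\alpha$ need not be dense in $\Por$: a condition $s$ may admit no extension in $\Por$ meeting $[\alpha,\omega_1)$, which happens precisely when some (equivalently, every) lower bound in $Q$ of $\set{q_\gamma}{\gamma\in s}$ is incompatible with $q_\beta$ for all sufficiently large $\beta$. This is overcome by a preliminary combinatorial pruning — replacing $A$ by a suitable uncountable subfamily and restricting $\Por$ to its ``non-dead-end'' conditions — whose feasibility rests only on $Q$ being ccc (which $k$-Knasterness entails): an inescapable obstruction to extending conditions upward would, via the lower bounds in $Q$, produce an uncountable antichain, contradicting the countable chain condition. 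After this adjustment $D_\alpha$ is dense and $\Por$ (restricted) is still $k$-Knaster, and the argument above goes through. For the complete treatment see~\cite[Lem.~4.2]{GKMSnoreal}.
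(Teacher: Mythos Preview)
The paper does not prove this Fact at all; it simply points to the literature (to \cite{blass} in general, and specifically to \cite[Lemma~4.2]{GKMSnoreal} for item~(7)). Your proposal addresses only item~(7), and there it already gives considerably more than the paper.

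Your outline for~(7) is the standard argument and is correct in structure: the reduction to the Key Lemma is clean, the auxiliary poset $\Por$ is the right object, and your verification that $\Por$ is $k$-Knaster is sound (in fact the $\Delta$-system step you mention is unnecessary --- once $t_{s_1},\dots,t_{s_k}$ have a common lower bound in $Q$, that bound witnesses $s_1\cup\cdots\cup s_k\in\Por$, which extends each $s_i$ regardless of how the $s_i$ overlap). The density problem you flag is indeed the genuine technical content. Your informal justification (``an obstruction to extending would yield an uncountable antichain in $Q$'') points in the right direction but is not a proof as written: a single dead-end condition already kills density, and one must argue more carefully that dead ends can be pruned away while keeping an uncountable index set and keeping $\Por$ $k$-Knaster. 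You are explicit that this step is deferred to the same reference the paper cites, so your sketch is honest about where it is incomplete. In sum, your approach matches the intended one and is more detailed than what the paper provides; the only gap is exactly the one you identify.
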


Concerning cofinalities:

\begin{fact}
    Let $\Iwf$ be an ideal on $X$ such that $\bigcup\Iwf = X$.
    \begin{enumerate}[label = \normalfont (\arabic*)]
        \item $\add(\Iwf)$ is regular, $\cf(\cof(\Iwf))\geq \add(\Iwf)$ and $\cf(\non(\Iwf))\geq \add(\Iwf)$.
        \item $\cf(\cov(\Mwf))\geq\add(\Nwf)$ (Bartoszy\'nski and Judah~1989, see~\cite[Thm.~5.1.5]{BJ}).
        \item If $\cov(\Nwf)\leq \bfrak$ then $\cf(\cov(\Nwf))>\omega$ (Bartoszy\'nski~1988, see~\cite[Thm.~5.1.17]{BJ}).
        \item If $\cov(\Ewf)\leq \dfrak$ then $\cf(\cov(\Ewf))>\omega$ (Miller, see~\cite[Thm.~5.1.18]{BJ}).
    \end{enumerate}
\end{fact}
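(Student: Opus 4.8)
My plan is to treat the four items separately, since they are of very different character. Item~(1) is an abstract fact about an arbitrary ideal $\Iwf$ on $X$ with $\bigcup\Iwf=X$, which I would derive entirely from Fact~\ref{basicdir} plus a short pigeonhole argument. Items~(2)--(4) are the classical theorems of Bartoszyński--Judah, Bartoszyński, and Miller cited in the statement; for these I would refer to \cite[Thms.~5.1.5, 5.1.17, and 5.1.18]{BJ}, and below I only indicate the mechanism.

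First I would prove~(1). Since $\Iwf$ is a proper ideal with $\bigcup\Iwf=X$, the directed partial order $\la\Iwf,\subseteq\ra$ has no maximum element: a maximum would be a member of $\Iwf$ containing $\bigcup\Iwf=X$, contradicting $X\notin\Iwf$. So Fact~\ref{basicdir}, applied to $S=\la\Iwf,\subseteq\ra$, yields that $\bfrak(S)$ is infinite and regular and that $\bfrak(S)\leq\cf(\dfrak(S))\leq\dfrak(S)$. By Example~\ref{exm:Iwf}(1), $\bfrak(S)=\add(\Iwf)$ and $\dfrak(S)=\cof(\Iwf)$, which gives that $\add(\Iwf)$ is regular and that $\cf(\cof(\Iwf))\geq\add(\Iwf)$. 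For the last inequality, set $\kappa:=\non(\Iwf)$; note $\kappa\geq\aleph_0$, because $\bigcup\Iwf=X$ forces every singleton, hence every finite subset of $X$, into $\Iwf$. Fix $F\subseteq X$ with $|F|=\kappa$ and $F\notin\Iwf$, and write it as an increasing union $F=\bigcup_{\alpha<\cf(\kappa)}F_\alpha$ with $|F_\alpha|<\kappa$ for every $\alpha$; by minimality of $\non(\Iwf)$ each $F_\alpha\in\Iwf$. If $\cf(\kappa)<\add(\Iwf)$ held, then $\set{F_\alpha}{\alpha<\cf(\kappa)}$ would be a subfamily of $\Iwf$ of size below $\add(\Iwf)$, so its union $F$ would lie in $\Iwf$ --- a contradiction. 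Hence $\cf(\non(\Iwf))\geq\add(\Iwf)$.

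For~(2)--(4) I would only sketch. Item~(2) combines Bartoszyński's combinatorial characterization of $\cov(\Mwf)$ (an ``eventually different''-type cardinal on $\baire$) with the slalom characterization $\add(\Nwf)=\bfrak(\Lc)$: a cofinal decomposition of a witnessing family for $\cov(\Mwf)$ of length $<\add(\Nwf)$ produces fewer than $\add(\Nwf)$ reals, which a single slalom catches, and from that slalom one builds a single real defeating the whole decomposed family simultaneously, contradicting that it is covering. Items~(3) and~(4) use the hypotheses $\cov(\Nwf)\leq\bfrak$, resp.\ $\cov(\Ewf)\leq\dfrak$, to collapse a hypothetical countable cofinal cover of $\R$ by null sets, resp.\ by $F_\sigma$ measure zero sets, into a single such set with the help of an unbounded, resp.\ dominating, real, forcing the cofinality to be uncountable.

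The main obstacle is not in~(1), which is routine once Fact~\ref{basicdir} is available, but in~(2)--(4): a self-contained proof of those would first require developing Bartoszyński's combinatorial characterizations of $\add(\Nwf)$, $\cov(\Mwf)$, $\cov(\Nwf)$, and $\cov(\Ewf)$, which is beyond the scope of this statement. Accordingly I would present~(1) in full and defer~(2)--(4) to \cite{BJ}.
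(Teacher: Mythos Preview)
Your proposal is correct. The paper itself states this result as a \emph{Fact} with no accompanying proof, giving only the citations to~\cite{BJ} for items~(2)--(4); your treatment of~(1) via Fact~\ref{basicdir} and Example~\ref{exm:Iwf}, together with the pigeonhole argument for $\cf(\non(\Iwf))\geq\add(\Iwf)$, is exactly the intended elementary argument and uses precisely the tools the paper has set up, while your deferral of~(2)--(4) to~\cite{BJ} matches the paper's own stance.
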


The problem of the cofinality of $\cov(\Nwf)$ was settled with the following result.

\begin{theorem}[Shelah~{\cite{ShCov}}]
    It is consistent with $\thzfc$ that $\cf(\cov(\Nwf)) = \omega$.
\end{theorem}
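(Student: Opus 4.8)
The plan is to force, over a model of $\thzf$ + $\mathrm{GCH}$, a cardinal-preserving extension in which $\cov(\Nwf)=\aleph_\omega$; since $\cf(\aleph_\omega)=\omega$ this gives the theorem. Note that $\cov(\Nwf)$ cannot equal $2^{\aleph_0}$ here (König's inequality), so the content is to pin $\cov(\Nwf)$ at a \emph{singular} value strictly below the continuum — something no standard finite-support ccc iteration achieves, as those assign regular values to the cardinal characteristics. First I would pass to Bartoszyński's combinatorial form of $\cov(\Nwf)$: fixing a suitable interval partition $\seq{I_k}{k<\omega}$ of $\omega$ and $b_k:=2^{|I_k|}$, the null sets $N_S:=\set{x\in\prod_k b_k}{\exists^\infty k\ x(k)\in S(k)}$, with $S$ ranging over slaloms satisfying $\sum_k|S(k)|/b_k<\infty$, are cofinal in $\Nwf$; hence $\cov(\Nwf)=\dfrak(R)$ for $R:=\la\prod_k b_k,\mathcal S,\sqsubset\ra$ with $x\sqsubset S$ iff $\exists^\infty k\ x(k)\in S(k)$ (see~\cite{BJ}). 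So it suffices to produce a model where some $\aleph_\omega$-sized family of slaloms is $R$-dominating while no $\aleph_n$-sized family is, for every $n\ge1$.

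The motivating — but defective — attempt is a finite-support product $\Por_0:=\prod^{\mathrm{fs}}_{i<\aleph_\omega}\mathbb A$ of amoeba-type forcings, where the $i$-th coordinate adds a generic slalom $\eta_i$ such that $N_{\eta_i}$ contains every real not depending on coordinate $i$ (the ``amoeba-like catching'' property, using that the amoeba at $i$ is generic over the submodel given by the other coordinates and adds a random real there). If $\Por_0$ preserved cardinals one would be done: every real $x$ lives on a countable set $w\subseteq\aleph_\omega$ of coordinates, so $x\in N_{\eta_i}$ for any $i\notin w$, whence $\set{N_{\eta_i}}{i<\aleph_\omega}$ is $R$-dominating and $\cov(\Nwf)\le\aleph_\omega$; and for any $\aleph_n$-many slaloms, which live on $\le\aleph_n$ coordinates, a coordinate $i$ outside that set yields (via the random real there) a real escaping all of them, so no such family $R$-dominates and $\cov(\Nwf)>\aleph_n$. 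The trouble is exactly the chain condition: a finite-support product of $\aleph_\omega$-many amoeba-type forcings need not be ccc and may collapse cardinals in $(\aleph_1,\aleph_\omega]$, so ``$\cov(\Nwf)=\aleph_\omega$'' in $\Por_0$ is worthless.

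The real work, then, is to replace $\mathbb A$ and the way the coordinates are glued by a carefully engineered \emph{creature forcing} $\Por$ in the style of Ros\l anowski and Shelah, retaining the amoeba-like catching property but cardinal-preserving (ideally ccc, or at least proper with no collapse up to $\aleph_\omega$). Its conditions are finite approximations to a system $\seq{\eta_i}{i\in I}$, $|I|=\aleph_\omega$, with finitary creatures controlled by norms satisfying \emph{bigness} (a creature splits into many sub-creatures, each still able to avoid any prescribed finite piece of a slalom) and \emph{halving} (to make a fusion argument go through). From a fusion lemma I would extract: $\Por$ preserves cardinals, $2^{\aleph_0}=\aleph_{\omega+1}$ by the $\mathrm{GCH}$ bookkeeping, and every $\Por$-name for a real is read continuously off a countable subset of $I$ with no extra reals added. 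With this in hand the two estimates of the heuristic survive verbatim — the amoeba-like catching gives $\cov(\Nwf)\le\aleph_\omega$, and bigness-based diagonalization against any $\aleph_n$-many slaloms (which by continuous reading live on $\le\aleph_n$ coordinates, leaving fresh coordinates once we split $I=\bigsqcup_{n<\omega}I_n$ with $|I_n|=\aleph_n$, since $|I_m|>\aleph_n$ for $m>n$) gives $\cov(\Nwf)>\aleph_n$ for all $n$ — so $\cov(\Nwf)=\aleph_\omega$ and $\cf(\cov(\Nwf))=\omega$.

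The main obstacle is precisely this replacement step: designing the creature norms and the gluing so that $\Por$ simultaneously (i) has the amoeba-like catching property, so $\aleph_\omega$ generic null sets suffice to cover; (ii) supports a fusion / continuous-reading lemma, so names for reals and for $\aleph_n$-families of slaloms are localized; and (iii) preserves every cardinal in $(\aleph_1,\aleph_\omega]$, so that ``$\cov(\Nwf)=\aleph_\omega$'' genuinely records a singular value rather than the shadow of a collapse. Tuning the bigness and halving parameters so that all three hold at once — and verifying the chain-condition and preservation claims — is the technical heart of the argument, and the step I expect to be hardest.
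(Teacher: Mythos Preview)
The paper does not prove this theorem; it is stated as a cited result of Shelah without proof. The only indication the paper gives of the method comes later, in \autoref{sec:uflim}: ``Shelah~\cite{ShCov} introduced this method for random forcing to prove the consistency of ZFC with $\cf(\cov(\Nwf)) = \omega$,'' where ``this method'' is the technique of ccc iterations with finitely additive measures (fam-limits). Shelah's actual argument is a ccc iteration in which restricted random forcings appear as iterands, and a system of finitely additive measures on $\omega$ is maintained along the iteration to guarantee that no $\aleph_n$-sized family of null sets covers the reals, while $\aleph_\omega$-many do.

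Your outline heads in a genuinely different direction --- a creature-forcing product with fusion rather than a ccc iteration with fam-limits --- but it is not a proof. You explicitly flag the design of the creature norms and the verification that the resulting forcing simultaneously has amoeba-like catching, continuous reading of names, and cardinal preservation as ``the technical heart of the argument, and the step I expect to be hardest,'' and you do not carry it out. That is the gap. It is far from clear that such a creature forcing exists: amoeba-type catching pushes toward adding many random-type reals and toward non-ccc behaviour, which is precisely the tension Shelah's fam-limit machinery was invented to resolve --- by staying ccc throughout and controlling the random reals via measures rather than via fusion. Your heuristics for the upper and lower bounds on $\cov(\Nwf)$ are sound, but absent the actual construction and the preservation lemmas, what you have written is a research plan, not a proof.
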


The following question is still unsolved.

\begin{question}
    Is it consistent with $\thzfc$ that $\cf(\cov(\Ewf))=\omega$?
\end{question}

To solve this problem in the positive, it is necessary to force $\dfrak < \cov(\Ewf)$, which implies $\cov(\Ewf) = \cov(\Nwf)$~(see~\autoref{thm:E}), so it would be necessary to force $\cf(\cov(\Nwf)) = \omega$ via an $\omega^\omega$-bounding forcing.

\autoref{fig:manyinv} is quite complete, but the following is still unknown.

\begin{question}
    Is $\afrak\leq \mathfrak{i}$?
\end{question}

It is not even known how to solve:

\begin{question}
    Does $\mathfrak{i}=\aleph_1$ imply $\afrak =\aleph_1$?
\end{question}

A positive answer to this problem is implied by a positive answer to the following famous problem in set theory.

\begin{question}[Roitman's problem]
    Does $\dfrak=\aleph_1$ imply $\afrak = \aleph_1$?
\end{question}

The following strengthening of Roitman's problem was formulated by Brendle and Raghavan~\cite{BrR}.

\begin{question}
    Does $\bfrak=\sfrak =\aleph_1$ imply $\afrak = \aleph_1$?
\end{question}

\section{Finite Support iterations}

\subsection{Generic reals}\label{subsec:genreals}

We first look at the types of generic reals we intend to add by forcing.
Recall that a \emph{Polish space} is a separable completely metrizable space. The real line $\R$ and any product $\prod_{n<\omega} b(n)$ of countable discrete spaces, such as the Cantor space $2^\omega$ and the Baire space $\omega^\omega$, are canonical examples. Polish spaces share many of the  combinatorial and descriptive set-theoretic properties of the real line.

For a Polish space $Z$, denote by $\sigbar(Z)$ the field of sets generated by the analytic subsets of $Z$.

\begin{definition}\label{realsys}\label{DefPolish}
    We say that $R = \la X,Y,\sqsubset\ra$ is a \emph{relational system of the reals} if
    \begin{enumerate}[label = \normalfont (\roman*)]
        \item $X\in \sigbar(Z_1)$ and $Y\in\sigbar(Z_2)$ for some Polish spaces $Z_1$ and $Z_2$, and
        \item ${\sqsubset} \in \sigbar(Z_1\times Z_2)$.
    \end{enumerate}
    In most of the cases, $X = Z_1$ is a perfect Polish space and, for any $y\in Y$, $\set{x\in X}{x\sqsubset y}$ is meager in $X$. More precisely, we are interested in the following notion:  
We say that $R=\langle X,Y,\sqsubset\rangle$ is a \textit{Polish relational system (Prs)} if it satisfies the requirements below:
\begin{enumerate}[label=(\roman*)]
\item $X$ is a perfect Polish space,
\item $Y$ is a non-empty analytic subspace of some Polish space $Z$ and
\item\label{Fsigma} ${\sqsubset} =\bigcup_{n<\omega}{\sqsubset_{n}}$ where $\la{\sqsubset_{n}}\ra_{n<\omega}$ is some increasing sequence of closed subsets of $X\times Z$ such that $({\sqsubset_{n}})^{y}=\{x\in X\colon  x\sqsubset_{n}y \}$ is closed nowhere dense for any $n<\omega$ and $y\in Y$.
\end{enumerate}
\end{definition}

By~\ref{Fsigma}, the maps $x\mapsto x$ and $y\mapsto \set{x\in X}{x\sqsubset y}\in\Mwf(X)$ form a Tukey connection for $\Cbf_{\Mwf(X)}\leqT R$.

Although it is enough to look at Prs's for the applications, the results of this section are valid for relational systems of the reals (and sometimes with extra assumptions). The strength of the notion of Prs is necessary for the iteration theorems in \autoref{sec:pres}.

The reason we use $\sigbar$ in the definition of relational system of the reals is to have absoluteness of the statements ``$x\in X$", ``$y\in Y$" and ``$x\sqsubset y$". In general, we can just use definable sets $X$, $Y$ and $\sqsubset$ such that the previous statements are absolute for the arguments we are carrying out.

For the rest of this section, we fix a relational system of the reals $R = \la X,Y,\sqsubset\ra$. We introduce the following type of (generic) reals related to $R$.

\begin{definition}
    Let $M$ be a (transitive) model of ZFC.\footnote{Since such set models cannot exist, most of the time this expression means that $M$ satisfies a large enough fragment of ZFC to perform the arguments at hand.}
    \begin{enumerate}[label = \normalfont (\arabic*)]
        \item A point $y^*\in Y$ is \emph{$R$-dominating over $M$} if $\forall\, x\in X\cap M\colon x\sqsubset y^*$.
        \item A point $x^*\in X$ is \emph{$R$-unbounded over $M$} if $\forall\, y\in Y\cap M\colon x^*\not\sqsubset y$.
    \end{enumerate}
\end{definition}

We look at many Prs related to the cardinals in Cicho\'n's diagram.

\begin{definition}[Localization]\label{defLoc}
    For $h\in \omega^\omega$ and $H\subseteq \omega^\omega$, define
    \begin{align*}
        \Swf(\omega,h) & := \prod_{i<\omega}[\omega]^{\leq h(i)},\\
        \Swf(\omega,H) & := \bigcup_{h\in H}\Swf(\omega,h).
    \end{align*}
    Objects in these sets are usually called \emph{slaloms}.
    
    For functions $x$ and $y$ with domain $\omega$, we define the relation ``\emph{$y$ localizes $x$}" by
    \[x\in^* y \text{ iff }\exists\, m<\omega\ \forall\, i\geq m\colon x(i)\in y(i).\]
    Define the following \emph{localization relational systems}:
    \begin{align*}
        \Lc(\omega,h) & := \la\omega^\omega, \Swf(\omega,h), \in^*\ra,\\
        \Lc(\omega,H) & := \la\omega^\omega, \Swf(\omega,H), \in^*\ra.
    \end{align*}
    It is easy to check that these are Prs when $H$ is countable. We often work with $H_*:=\set{\id^{k+1}}{k<\omega}$ (powers of the identity function $\id$ on $\omega$)
\end{definition}

The localization relational systems work to easily characterize the cardinal characteristics associated with $\Nwf$.

\begin{theorem}[Bartoszy\'nski {\cite{Ba1984}}(1984), see also {\cite[Sec.~4]{CMlocalc}}]\label{Nchar}
\ \\
    If $h\to \infty$ and $H\subseteq\omega^\omega$ is a countable set containing some function diverging to infinity, then
    \[\Lc(\omega,h)\eqT \Lc(\omega,H) \eqT \Nwf.\]
    In particular, $\bfrak(\Lc(\omega,h))= \bfrak(\Lc(\omega,H)) =\add(\Nwf)$ and $\dfrak(\Lc(\omega,h))= \dfrak(\Lc(\omega,H)) =\cof(\Nwf)$.
\end{theorem}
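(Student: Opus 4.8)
The plan is to prove the chain of Tukey equivalences $\Lc(\omega,h)\eqT\Lc(\omega,H)\eqT\Nwf$ under the stated hypotheses ($h\to\infty$, $H$ countable containing a function diverging to infinity), from which the cardinal equalities follow by \autoref{cor:Tukeyval}. I would organize the argument around three Tukey connections: (i) $\Lc(\omega,h)\leqT\Lc(\omega,H)$, which is essentially trivial; (ii) $\Lc(\omega,H)\leqT\Nwf$, i.e.\ every member of $\mathcal N$ can be captured in terms of slaloms; and (iii) $\Nwf\leqT\Lc(\omega,h)$, i.e.\ a single slalom with width bound $h$ can be converted into a null set absorbing a given real.

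\textbf{Step (i): $\Lc(\omega,h)\leqT\Lc(\omega,H)$.} Since $H$ contains some $g$ diverging to infinity and $h\to\infty$ as well, one first reduces to comparing $\Lc(\omega,h)$ with $\Lc(\omega,g)$ for $g\in H$. A standard observation is that if $h_1\le^* h_2$ and both diverge to infinity then $\Lc(\omega,h_1)\eqT\Lc(\omega,h_2)$: given a slalom $\varphi$ with $|\varphi(i)|\le h_2(i)$ one can, by partitioning $\omega$ into consecutive blocks on which $h_1$ is large enough to accommodate the union of finitely many values of $\varphi$, build a slalom $\psi$ with $|\psi(i)|\le h_1(i)$ such that $x\in^*\varphi$ implies $x\in^*\psi$ for every $x$; the $\Psi_-$ map on the real side is the identity. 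This block-compression trick (together with its easy converse via $\Swf(\omega,h_1)\subseteq\Swf(\omega,h_2)$) gives $\Lc(\omega,h)\eqT\Lc(\omega,g)\leqT\Lc(\omega,H)$, since $\Swf(\omega,g)\subseteq\Swf(\omega,H)$ yields $\Lc(\omega,g)\leqT\Lc(\omega,H)$ with both maps the identity.

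\textbf{Steps (ii) and (iii): the equivalence with $\Nwf$.} For $\Lc(\omega,H)\leqT\Nwf$, represent a null set $N$ by a sequence of clopen sets $\langle U_n\colon n<\omega\rangle$ with $N\subseteq\bigcap_m\bigcup_{n\ge m}U_n$ and $\sum_n \Lb(U_n)<\infty$; by regrouping so that the tails have summable measure with fast decay, and viewing $\omega^\omega$ inside a suitable product space, one extracts from $\langle U_n\rangle$ a slalom $\varphi\in\Swf(\omega,H)$ (the width is controlled because on the $i$-th block only boundedly many ``coordinates'' can be constrained by sets of small measure) such that any $x\in\omega^\omega$ with $x\in N$ satisfies $x\in^*\varphi$; conversely one sends a real $x$ to the null set it should avoid. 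The map $\Psi_+$ on slaloms sends $\varphi\in\Swf(\omega,H)$ to the null set $\bigcap_m\bigcup_{i\ge m}\{x\colon x(i)\in\varphi(i)\}$, and one checks the Tukey inequality $\Psi_-(x)\in^*\varphi\Rightarrow x$ lies in this null set. For the reverse $\Nwf\leqT\Lc(\omega,h)$, given $x\in\omega^\omega$ one uses $h\to\infty$ to code $x$ as a real $\bar x$ in a product $\prod_i b(i)$ with $b(i)\le h(i)$ on a tail, pushed so that ``$x$ is not eventually captured'' corresponds to ``$\bar x$ avoids a fixed null set''; a slalom $\varphi$ with $|\varphi(i)|\le h(i)$ is turned into the null set $\{z\colon \exists^\infty i\ z(i)\in\varphi(i)\}$ in that product — this has measure zero because $\sum_i h(i)/b(i)$ can be arranged to converge by a further block reindexing — and one verifies the Tukey condition.

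\textbf{Main obstacle.} The routine parts are the identity-map reductions and the Borel-coding bookkeeping; the genuinely delicate point is the quantitative matching of measure and slalom width in steps (ii)–(iii), i.e.\ arranging the interval partitions and the target product $\prod_i b(i)$ so that ``summable measure'' and ``width $\le h(i)$ (resp.\ $\le g(i)$ for some $g\in H$)'' translate into each other without losing the eventual-capture quantifier $\exists m\,\forall i\ge m$. This is exactly the content of Bartoszyński's characterization, so the honest way to write the proof is to set up the two explicit Tukey connections and then cite \cite{Ba1984} (or \cite[Sec.~4]{CMlocalc}) for the verification that the width bounds can be met under $h\to\infty$ and the divergence hypothesis on $H$; the cardinal-characteristic consequences are then immediate from \autoref{cor:Tukeyval}(c).
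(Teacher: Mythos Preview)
The paper does not prove this theorem; it is stated with a citation to Bartoszy\'nski~\cite{Ba1984} and to~\cite{CMlocalc}, and is used as a black box thereafter. So there is no in-paper proof to compare your proposal against.

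On the merits of your sketch: the three-step cycle $\Lc(\omega,h)\leqT\Lc(\omega,H)\leqT\Nwf\leqT\Lc(\omega,h)$ is a clean way to organize the equivalences, and step~(i) together with the block-compression argument that $\Lc(\omega,h_1)\eqT\Lc(\omega,h_2)$ for any two diverging bounds is correct and standard. However, your description of step~(iii) has the maps in the wrong direction. For $\Nwf\leqT\Lc(\omega,h)$ one needs $\Psi_-\colon\Nwf\to\omega^\omega$ (encode a null set as a single real) and $\Psi_+\colon\Swf(\omega,h)\to\Nwf$ (turn a slalom into a null set absorbing every null set whose code it captures); you instead begin ``given $x\in\omega^\omega$'', which is the domain for the opposite Tukey connection. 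Likewise in step~(ii) the sentence ``The map $\Psi_+$ on slaloms sends $\varphi$ to the null set $\bigcap_m\bigcup_{i\ge m}\{x\colon x(i)\in\varphi(i)\}$'' belongs to the connection $\Lc(\omega,h)\leqT\Nwf$ (or its role in the cycle), not to $\Lc(\omega,H)\leqT\Nwf$ as you have placed it. The genuine content of Bartoszy\'nski's argument --- coding an arbitrary $G_\delta$ null set by a single real via a combinatorial reorganization on a product measure space, and conversely reading off a covering null set from a capturing slalom --- is exactly the part you defer to the citations, so as written the proposal is an outline with the bookkeeping of the two hard Tukey maps still to be sorted out rather than a proof.
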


We now introduce a forcing to modify $\Lc(\omega,h)$. In the context of forcing, $V$ always refers to the \emph{ground model}.

\begin{definition}[Localization forcing]\label{Locposet}
    For $h\in\omega^\omega$, define the poset\footnote{The $m$ in $\Swf(\omega,m)$ refers to the constant function with value $m$.}
    \[\Loc_h := \lset{(n,\varphi)\in\omega\times \Swf(\omega,h)}{\exists\, m<\omega\colon \varphi \in \Swf(\omega,m)}\]
    ordered by
    \[(n',\varphi')\leq (n,\varphi) \text{ iff } n\leq n',\ \varphi'\frestr n = \varphi\frestr n \text{ and } \forall\, i<\omega\colon \varphi(i) \subseteq \varphi'(i).\]
    When $h\to\infty$ we have that $\Loc_h$ is ccc (even $\sigma$-$k$-linked for any $k<\omega$) and it adds a generic slalom $\varphi_*\in\Swf(\omega,h)$ which localizes all functions in the ground model, i.e.\ it is $\Lc(\omega,h)$-dominating over the ground model. If $G$ is $\Lc(\omega,h)$-generic over $V$, the generic slalom is defined by $\varphi_*(i):=\varphi(i)$ when $(n,\varphi)\in G$ and $i<n$ (this value is the same for any such $(n,\varphi)$). 
\end{definition}

We present a Prs that represents the relational system $\Cbf_\Nwf$ (more precisely, its dual). For this purpose, we code measure zero sets as follows.

\begin{definition}\label{Lb2}
    For any topological space $X$, denote by $\Bwf(X)$ the $\sigma$-algebra of Borel subsets of $X$. 
    Let $\Lb_2$ be the measure on $\Bwf(2^\omega)$ defined as the product measure of the uniform measure on $2=\{0,1\}$.\footnote{The uniform measure on a finite non-empty set $b$ assigns probability $\frac{1}{|b|}$ to each point.}
    Recall that $\set{[s]}{s\in 2^{<\omega}}$ forms a base of $2^\omega$ and that each $[s]$ is clopen in $2^\omega$. Then, $\Lb_2$ is the unique measure on $\Bwf(2^\omega)$ such that $\Lb_2([s]) = 2^{-|s|}$ for any $s\in 2^{<\omega}$.\footnote{For $s\in 2^{<\omega}$, $|s| = |\dom s|$ is the length of $s$ as a sequence.}

    We abuse notation and denote $[F]:=\bigcup_{s\in F}[s]$ for $F\subseteq 2^{<\omega}$. Since $2^\omega$ is compact, we have that the clopen sets are precisely of the form $[c]$ for $c\subseteq 2^{<\omega}$ finite.

    We code measure zero subsets of $2^\omega$ in the following way. Fix a sequence $\bar \varp = \la \varp_n\colon  n<\omega\ra$ of positive real numbers such that $\sum_{n<\omega}\varp_n < \infty$. Define
    \[\Omega_{\bar\varp} := \lset{\bar c = \la c_n \colon n<\omega\ra}{\forall\, n<\omega\colon c_n\in[2^{<\omega}]^{<\aleph_0} \text{ and }\Lb_2([c_n])<\varp_n}.\]
    For any sequence $\bar c=\la c_n\colon n<\omega\ra$ of finite subsets of $2^{<\omega}$, denote
    \[N(\bar c) = \bigcap_{m<\omega}\bigcup_{n\geq m}[c_n],\]
    i.e.\ for $x\in 2^\omega$, $x\in N(\bar c)$ iff $x\in [c_n]$ for infinitely many $n$.

    Define the relational system $\Cn:=\la \Omega_{\bar \varp},2^\omega,\relCn\ra$  such that $\bar c \relCn y$ iff $y \notin N(\bar c)$. This is a Prs. 
\end{definition}

The sequences in $\Omega_{\bar \varp}$ are simple codes of (a base of) measure zero sets in $2^\omega$.

\begin{fact}[See e.g.\ {\cite[Lemma~2.3.10]{BJ}}]\label{Cncodes}
    If $\bar{c} \in \Omega_{\bar{\varp}},$ then $N(\bar{c}) \in \mathcal{N}(\Lb_{2})$ and, for any $A \in \mathcal{N}(\Lb_{2}),$ there exists $\bar{c} \in \Omega_{\bar{\varp}}$ such that $A \subseteq N(\bar{c})$. 
\end{fact}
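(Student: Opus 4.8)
The plan is to prove the two assertions separately; only the second requires an argument.

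\emph{That $N(\bar c)\in\Nwf(\Lb_2)$ for $\bar c=\la c_n:n<\omega\ra\in\Omega_{\bar\varp}$.} Each $[c_n]$ is clopen, so $N(\bar c)=\bigcap_{m<\omega}\bigcup_{n\geq m}[c_n]$ is Borel, and for every $m$ countable subadditivity gives $\Lb_2(N(\bar c))\leq\Lb_2\big(\bigcup_{n\geq m}[c_n]\big)\leq\sum_{n\geq m}\Lb_2([c_n])<\sum_{n\geq m}\varp_n$. Since $\sum_{n<\omega}\varp_n<\infty$ the right-hand side tends to $0$ as $m\to\infty$, hence $\Lb_2(N(\bar c))=0$.

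\emph{Covering an arbitrary $A\in\Nwf(\Lb_2)$.} Replacing $A$ by a Borel superset of measure $0$, we may assume $A$ is Borel and null. First I would fix a partition $\omega=\bigsqcup_{m<\omega}S_m$ into infinitely many pairwise disjoint infinite sets. For each $m$, using outer regularity of $\Lb_2$ (a finite Borel measure on the compact space $2^\omega$), choose an open $U_m\supseteq A$ with $\Lb_2(U_m)<\sum_{n\in S_m}\varp_n$, and decompose it as a disjoint union $U_m=\bigsqcup_{k<\omega}[u^m_k]$ of basic clopen sets (for instance via the $\subseteq$-minimal $s\in 2^{<\omega}$ with $[s]\subseteq U_m$), so that $\sum_k 2^{-|u^m_k|}=\Lb_2(U_m)$.

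The main step is then, for each fixed $m$, to distribute the pieces $[u^m_k]$ ($k<\omega$) among the indices $n\in S_m$: I want finite sets $c_n\subseteq 2^{<\omega}$ ($n\in S_m$) with $\Lb_2([c_n])<\varp_n$ and $\bigcup_{n\in S_m}[c_n]=U_m$. This is a greedy bin-packing of the pieces into the infinitely many bins of $S_m$, where bin $n$ has capacity $\varp_n$: one processes the indices of $S_m$ in increasing order and fills each $c_n$ with basic clopen pieces while keeping $\Lb_2([c_n])<\varp_n$, \emph{splitting} a piece into finitely many basic sub-pieces whenever it is too large for the current bin (any $[u]$ is a finite disjoint union $[u{\frown}0^r]\sqcup\bigsqcup_{r'<r}[u{\frown}0^{r'}{\frown}1]$, so a piece can be broken into sub-pieces of measure below any prescribed $\varp_n>0$). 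To see that every piece is eventually deposited: if some piece were never placed, then every $c_n$ ($n\in S_m$) would have been ``closed'', i.e.\ filled to within a small error of its capacity $\varp_n$; arranging these errors to sum over $S_m$ to less than $\sum_{n\in S_m}\varp_n-\Lb_2(U_m)$ would give $\sum_{n\in S_m}\Lb_2([c_n])>\Lb_2(U_m)$, contradicting that the $[c_n]$ are pairwise disjoint subsets of $U_m$. Setting $c_n:=\varnothing$ for any $n$ not handled this way, we obtain $\bar c=\la c_n:n<\omega\ra\in\Omega_{\bar\varp}$.

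Finally I would check $A\subseteq N(\bar c)$: given $x\in A$, for every $m$ we have $x\in U_m=\bigcup_{n\in S_m}[c_n]$, so $x\in[c_n]$ for some $n\in S_m$; since the $S_m$ are infinite and pairwise disjoint this yields $x\in[c_n]$ for infinitely many $n$, i.e.\ for every $l$ there is $n\geq l$ with $x\in[c_n]$, so $x\in\bigcap_{l<\omega}\bigcup_{n\geq l}[c_n]=N(\bar c)$. The step I expect to be the main obstacle is the distribution: a single basic clopen constituent of $U_m$ may have measure far larger than the (arbitrarily small) budgets $\varp_n$ available inside its block, so one genuinely must split pieces before packing and then argue, via the strict inequality $\Lb_2(U_m)<\sum_{n\in S_m}\varp_n$, that the greedy procedure exhausts the pieces, so that the block's union is exactly $U_m$. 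Everything else (the first assertion, the reduction to Borel null $A$, outer regularity, the disjoint clopen decompositions, and the final $\limsup$ computation) is routine.
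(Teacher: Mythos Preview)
The paper does not supply its own proof of this fact; it simply records it with a reference to \cite[Lemma~2.3.10]{BJ}. So there is nothing to compare against, and your argument stands or falls on its own.

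Your proof is correct. The first part is the Borel--Cantelli computation, and the second part---partitioning $\omega$ into infinite blocks $S_m$, covering $A$ by an open $U_m$ of measure below the block's total budget $\sum_{n\in S_m}\varp_n$, and then distributing the clopen constituents of $U_m$ among the $c_n$ ($n\in S_m$)---is exactly the right idea. The packing step is the only place that needs care, and your contradiction argument is sound once the ``errors'' are fixed \emph{in advance}: choose $\delta_n>0$ with $\sum_{n\in S_m}\delta_n<\sum_{n\in S_m}\varp_n-\Lb_2(U_m)$, and declare bin $n$ closed as soon as $\Lb_2([c_n])\geq\varp_n-\delta_n$. Splitting lets you always land in the window $[\varp_n-\delta_n,\varp_n)$ in finitely many steps while pieces remain, so each $c_n$ stays finite; and if pieces remained forever, every bin would be closed, forcing $\sum_{n\in S_m}\Lb_2([c_n])>\Lb_2(U_m)$, contradicting disjointness inside $U_m$. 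You describe this correctly but somewhat elliptically; making the choice of $\delta_n$ and the stopping rule explicit would remove any doubt that the greedy procedure terminates on each bin and exhausts the pieces.
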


In combinatorics of the reals, working in the Cantor space is the same as working in $\R$ because functions in $2^\omega$ represent the numbers in $[0,1]$ when expressed in base $2$. For this reason, the measure theory of $2^\omega$ is equivalent to the one of $[0,1]$ (with the Lebesgue measure), so $\Nwf(\R) \eqT \Nwf(2^\omega)$ and $\Cbf_{\Nwf(\R)} \eqT \Cbf_{\Nwf(2^\omega)}$. See details in~\cite[Ch.~VII,~\S3]{Levy}.

As a direct consequence of \autoref{Cncodes}, we obtain:

\begin{fact}\label{Cn=CN}
    $\Cn\eqT \Cbf^\perp_{\Nwf(2^\omega)}$, so $\bfrak(\Cn) = \cov(\Nwf)$ and $\dfrak(\Cn) = \non(\Nwf)$.
\end{fact}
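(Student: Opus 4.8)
The plan is to establish the Tukey equivalence by exhibiting explicit Tukey connections in both directions, each built from the identity map on $2^\omega$ together with \autoref{Cncodes}; the cardinal equalities then follow formally from \autoref{cor:Tukeyval}, \autoref{fct:dual} and \autoref{exm:Iwf}. Recall from \autoref{def:dual} that $\Cbf^\perp_{\Nwf(2^\omega)} = \la\Nwf(2^\omega),2^\omega,\in^\perp\ra$, where $A\in^\perp y$ iff $y\notin A$, so in both connections the relevant implication is about membership in a null set versus membership in some $N(\bar c)$.

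For $\Cn\leqT\Cbf^\perp_{\Nwf(2^\omega)}$, I would take $\Psi_-(\bar c):=N(\bar c)$ and $\Psi_+:=\id$ on $2^\omega$. By \autoref{Cncodes}, $N(\bar c)\in\Nwf(2^\omega)$ for every $\bar c\in\Omega_{\bar\varp}$, so $\Psi_-$ does land in $\Nwf(2^\omega)$; and if $\Psi_-(\bar c)\in^\perp y$, i.e.\ $y\notin N(\bar c)$, then $\Psi_+(y)=y\notin N(\bar c)$, which is precisely $\bar c\relCn\Psi_+(y)$. For the reverse connection $\Cbf^\perp_{\Nwf(2^\omega)}\leqT\Cn$, I would use the other half of \autoref{Cncodes}: for each $A\in\Nwf(2^\omega)$ pick (by the axiom of choice) some $\bar c_A\in\Omega_{\bar\varp}$ with $A\subseteq N(\bar c_A)$, and set $\Psi'_-(A):=\bar c_A$, $\Psi'_+:=\id$. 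If $\Psi'_-(A)\relCn y$, i.e.\ $y\notin N(\bar c_A)$, then $y\notin A$ since $A\subseteq N(\bar c_A)$, so $A\in^\perp\Psi'_+(y)$. This gives $\Cn\eqT\Cbf^\perp_{\Nwf(2^\omega)}$.

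Finally, \autoref{cor:Tukeyval}(c) yields $\bfrak(\Cn)=\bfrak(\Cbf^\perp_{\Nwf(2^\omega)})$ and $\dfrak(\Cn)=\dfrak(\Cbf^\perp_{\Nwf(2^\omega)})$; by \autoref{fct:dual}(d) these equal $\dfrak(\Cbf_{\Nwf(2^\omega)})=\cov(\Nwf(2^\omega))$ and $\bfrak(\Cbf_{\Nwf(2^\omega)})=\non(\Nwf(2^\omega))$ respectively (using \autoref{exm:Iwf}(2), valid since $\bigcup\Nwf(2^\omega)=2^\omega$). The identification $\Cbf_{\Nwf(\R)}\eqT\Cbf_{\Nwf(2^\omega)}$ recorded earlier then gives $\cov(\Nwf)=\cov(\Nwf(2^\omega))$ and $\non(\Nwf)=\non(\Nwf(2^\omega))$, finishing the argument. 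I do not anticipate a genuine obstacle here — the only thing to be careful about is the bookkeeping with the dual relational system $\Cbf^\perp_{\Nwf(2^\omega)}$ and the (harmless) passage between $\R$ and $2^\omega$; the substantive content is entirely contained in \autoref{Cncodes}.
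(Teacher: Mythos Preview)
Your proof is correct and follows exactly the approach the paper intends: the paper merely states that the fact is a direct consequence of \autoref{Cncodes}, and what you have written is precisely the spelling-out of that consequence via the obvious Tukey connections $\bar c\mapsto N(\bar c)$ and $A\mapsto \bar c_A$ with the identity on $2^\omega$.
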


\begin{definition}\label{random}


    Random forcing is $\mathcal{B}(\cantor) \menos \mathcal{N}(\cantor)$ ordered by $\subseteq.$  If $G$ is a generic set over $V$, then we can define $r\in 2^\omega$ by $r:=\bigcup \set{s\in 2^{<\omega}}{[s]\in G}$. Such $r$ is called a \emph{random real (over $V$)}.

    Random forcing is ccc (even $\sigma$-$k$-linked for any $k<\omega$).
\end{definition}

\begin{fact}\label{randomreal}
    If $r$ is a random real over $V$, then $r\notin N(\bar c)$ for any $\bar c\in \Omega_{\bar \varp}\cap V$, i.e.\ any random real over $V$ is $\Cn$-dominating over $V$.
\end{fact}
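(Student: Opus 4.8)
The plan is to unwind the definitions and use the basic characterization of randomness in terms of measure. Recall from \autoref{Cncodes} that for $\bar c \in \Omega_{\bar\varp}$ we have $N(\bar c) = \bigcap_{m<\omega}\bigcup_{n\geq m}[c_n]$ with $\Lb_2([c_n]) < \varp_n$ and $\sum_n \varp_n < \infty$; in particular $N(\bar c) \in \Nwf(2^\omega)$, indeed $\Lb_2\left(\bigcup_{n\geq m}[c_n]\right) \leq \sum_{n\geq m}\varp_n \to 0$. Since $\bar c \in V$, the set $N(\bar c)$ is coded by a real in $V$, so $N(\bar c)$ (as a Borel null set) has a canonical Borel code in $V$, and by absoluteness of Borel codes the reinterpretation $N(\bar c)^{V[G]}$ is still a measure zero Borel set with the same code. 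By the definition of $\relCn$ (namely $\bar c \relCn y$ iff $y \notin N(\bar c)$), it suffices to show $r \notin N(\bar c)$ for every $\bar c \in \Omega_{\bar\varp}\cap V$.

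First I would recall the standard fact that a random real avoids every ground-model coded null set: if $B$ is a Borel null set coded in $V$, then the condition $[2^{<\omega}]$ (the whole space) forces $\dot r \notin B$, because $B^{V[G]}$ is still null, and the set of conditions in $\Bwf(2^\omega)\menos\Nwf(2^\omega)$ below the complement of $B$ is dense — indeed, $2^\omega \menos B$ is a positive-measure Borel set, hence a condition, and it forces $\dot r \in 2^\omega\menos B$. This is exactly \cite[Lemma~3.2.x]{BJ}-style reasoning, or can be phrased: $\set{p \in \Bor(2^\omega)\menos\Nwf}{p \subseteq 2^\omega\menos B}$ is dense below $2^\omega$ since $p \cap (2^\omega\menos B)$ has the same measure as $p$ for any condition $p$.

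Then I would apply this with $B := N(\bar c)$. Since $\bar c \in V$, $N(\bar c)$ has a Borel code in $V$, and by \autoref{Cncodes} applied in $V$ it is null; by Shoenfield/Borel-code absoluteness it remains null in $V[G]$, in fact $\Lb_2(N(\bar c))^{V[G]} = 0$ is a $\Pi^1_1$-style absolute assertion once we fix the code. Hence $r \notin N(\bar c)$. As $\bar c$ was arbitrary in $\Omega_{\bar\varp}\cap V$, this says precisely that $r$ is $\Cn$-dominating over $V$ in the sense of \autoref{DefPolish}, i.e.\ $\forall\, \bar c \in \Omega_{\bar\varp}\cap V\colon \bar c \relCn r$.

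The only real subtlety — and thus the main point to be careful about — is the absoluteness of ``$N(\bar c)$ is null'' between $V$ and $V[G]$: one must observe that the sets $c_n$ are finite subsets of $2^{<\omega}$, so each $[c_n]$ is clopen with $\Lb_2([c_n])$ a rational computed identically in both models, and the bound $\Lb_2\left(\bigcup_{n\geq m}[c_n]\right) \leq \sum_{n\geq m}\varp_n$ is a countable sum of ground-model reals, hence its value (and the fact that it tends to $0$) transfers verbatim to $V[G]$. Everything else is a direct density argument in random forcing, so I do not expect any genuine obstacle beyond this bookkeeping.
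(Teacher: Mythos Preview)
The paper states this as a \emph{Fact} without proof, so there is no argument in the paper to compare against. Your proof is correct and is the standard one: $N(\bar c)$ is a Borel null set with a code in $V$, and for any condition $p$ the set $p\smallsetminus N(\bar c)$ has the same measure as $p$, hence is still a condition and forces $\dot r\notin N(\bar c)$; thus the set of conditions contained in $2^\omega\smallsetminus N(\bar c)$ is dense and genericity finishes it. Your remarks on absoluteness (that each $[c_n]$ is clopen with rational measure computed identically in $V$ and $V[G]$, so the nullity of $N(\bar c)$ transfers) are the right justification for the one nontrivial step.
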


This says that any random real over $V$ evades the Borel measure zero sets coded in the ground model.

Concerning the directed preorder $\la\omega^\omega,\leq^*\ra$:

\begin{fact}
	$\la\omega^\omega,\leq^*\ra$ is a Prs.
\end{fact}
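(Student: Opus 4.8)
The plan is to check the three defining clauses of a Polish relational system directly for $R=\la\baire,\baire,\leq^*\ra$. Clauses (i) and (ii) are immediate: the Baire space $\baire$ is a perfect Polish space, and the set $Y=\baire$ is (trivially) an analytic — indeed Borel — subspace of the Polish space $Z=\baire$. So all the content is in the third clause, namely exhibiting an increasing sequence of closed sets $\la{\sqsubset_n}\ra_{n<\omega}$ with $\bigcup_{n<\omega}{\sqsubset_n}={\leq^*}$ and with each vertical section $({\sqsubset_n})^y$ closed nowhere dense.

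For $n<\omega$ I would set
\[
{\sqsubset_n}:=\lset{(x,y)\in\baire\times\baire}{\forall\, i\geq n\colon x(i)\leq y(i)}.
\]
First, each $\sqsubset_n$ is closed: it is the intersection over $i\geq n$ of the sets $\set{(x,y)}{x(i)\leq y(i)}$, and each of these is the preimage of a clopen subset of $\omega\times\omega$ (discrete) under the continuous coordinate map $(x,y)\mapsto(x(i),y(i))$, hence clopen in $\baire\times\baire$; an intersection of closed sets is closed. Second, the sequence is increasing, since the requirement defining $\sqsubset_{n+1}$ constrains fewer coordinates than that defining $\sqsubset_n$, so $\sqsubset_n\subseteq\sqsubset_{n+1}$. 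Third, $\bigcup_{n<\omega}{\sqsubset_n}={\leq^*}$ is exactly the definition of $\leq^*$. The section $({\sqsubset_n})^y=\set{x\in\baire}{\forall\, i\geq n\colon x(i)\leq y(i)}$ is closed by the same coordinate argument.

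The only mildly delicate point — and the step I would write out most carefully, although it is routine — is that each section $({\sqsubset_n})^y$ is \emph{nowhere dense}. Given any basic clopen set $[s]$ with $s\in\omega^{<\omega}$, put $k:=\max\{n,|s|\}$ and let $t\in\omega^{k+1}$ extend $s$ by choosing the values of $t$ on the interval $[\,|s|,k)$ arbitrarily and setting $t(k):=y(k)+1$. Then $[t]\subseteq[s]$, and every $x\in[t]$ satisfies $x(k)=y(k)+1>y(k)$ with $k\geq n$, so $x\notin({\sqsubset_n})^y$; thus $({\sqsubset_n})^y$ is disjoint from a nonempty open subset of $[s]$. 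Since $[s]$ was arbitrary, $({\sqsubset_n})^y$ is nowhere dense. This verifies clause (iii) and completes the argument; in particular, by the remark following the definition of Prs, this also yields $\Cbf_{\Mwf(\baire)}\leqT R$.
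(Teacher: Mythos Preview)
Your verification is correct and is exactly the natural decomposition ${\sqsubset_n}=\set{(x,y)}{\forall\, i\geq n\colon x(i)\leq y(i)}$ one would expect; the paper itself states this fact without proof, so there is nothing further to compare.
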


\begin{definition}\label{Hechler}
    \emph{Hechler forcing} is the poset $\Dor := \omega^{<\omega}\times \omega^\omega$ ordered by
    \[(t,g) \leq (s,f) \text{ iff }s\subseteq t,\ \forall\, i<\omega\colon f(i)\leq g(i), \text{ and }\forall\, i\in |t|\menos |s|\colon t(i)\geq f(i).\]
    This poset is ccc (even $\sigma$-centered). 
    
    If $G$ is $\Dor$-generic over $V$, then $d:= \bigcup\set{s}{\exists\, f\colon (s,f)\in G}$ is $\la\omega^\omega,\leq^*\ra$-dominating over $V$.
\end{definition}

We now turn to $\Cbf_\Mwf$. First, we introduce a useful characterization of its cardinal characteristics.

\begin{definition}\label{Ed}
    Define the relational system $\Ed:=\la\omega^\omega,\omega^\omega,\neq^*\ra$ where
    \[x \neq^* y \text{ iff }\exists\,  m<\omega\ \forall\, i\geq m\colon x(i)\neq y(i).\]
    This is a Prs.
\end{definition}

\begin{theorem}[{Miller~\cite{Mi1982}, Bartoszy\'nski~\cite{Ba1987}}, see also~{\cite[Thm.~5.3]{CMlocalc}}]\label{BarMill}
\ \\
    $\bfrak(\Ed)=\non(\Mwf)$ and $\dfrak(\Ed)=\cov(\Mwf)$.   
\end{theorem}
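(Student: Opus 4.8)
My plan is to establish $\Ed\eqT\Cbf_\Mwf$ and then read off the two equalities from \autoref{cor:Tukeyval}.

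First I would dispatch the easy inequalities $\bfrak(\Ed)\leq\non(\Mwf)$ and $\cov(\Mwf)\leq\dfrak(\Ed)$. As already noted in \autoref{Ed}, $\Ed$ is a Prs: setting $x\sqsubset_n y$ iff ``$x(i)\neq y(i)$ for all $i\geq n$'', each $\sqsubset_n$ is closed, its section $(\sqsubset_n)^y=\set{x\in\baire}{\forall\, i\geq n\colon x(i)\neq y(i)}$ is closed nowhere dense (the complement is open dense), and $\neq^*=\bigcup_{n<\omega}\sqsubset_n$. Hence the Tukey connection noted right after \autoref{DefPolish} (the maps $x\mapsto x$ and $y\mapsto\set{x}{x\neq^* y}\in\Mwf(\baire)$) gives $\Cbf_{\Mwf(\baire)}\leqT\Ed$; since $\baire$ is a perfect Polish space (homeomorphic to the irrationals), $\Cbf_{\Mwf(\baire)}\eqT\Cbf_\Mwf$, and so $\Cbf_\Mwf\leqT\Ed$.

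The substantial direction is $\Ed\leqT\Cbf_\Mwf$. Here I would invoke Bartoszy\'nski's combinatorial characterization of the meager ideal: a set $A\subseteq\baire$ is meager iff there are an interval partition $\la I_n\colon n<\omega\ra$ of $\omega$ and $z\in\baire$ with $A\subseteq M_{\la I_n\ra, z}:=\set{x\in\baire}{\forall^\infty\, n\colon x\frestr I_n\neq z\frestr I_n}$, and conversely each $M_{\la I_n\ra, z}$ is meager (see~\cite[Sec.~2.2]{BJ}); so the sets $M_{\la I_n\ra, z}$ form a cofinal family in $(\Mwf(\baire),\subseteq)$. The key recoding is that, for a \emph{fixed} interval partition $\la I_n\ra$ and fixed bijections $b_{I_n}\colon \omega^{I_n}\to\omega$, the regrouping homeomorphism $x\mapsto\la b_{I_n}(x\frestr I_n)\colon n<\omega\ra$ of $\baire$ turns ``$x\in M_{\la I_n\ra, z}$'' into coordinatewise eventual difference; combining this with the monotonicity that coarsening an interval partition enlarges $M_{\la I_n\ra, z}$, one builds the desired Tukey connection through the ``chopped reals'' relational system. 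This is carried out in~\cite[Sec.~2.4]{BJ} (and originally in~\cite{Mi1982,Ba1987}; see also~\cite[Thm.~5.3]{CMlocalc}), so I would either reproduce that argument or cite it.

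The hard part will be precisely this last step. A Tukey connection $\Ed\leqT\Cbf_\Mwf$ must use a single map $\Psi_-$ on the reals, whereas the natural recoding above depends on the interval partition --- which itself depends on the meager set $\Psi_+$ is evaluated at --- and, more seriously, ``$x\frestr I_n\neq z\frestr I_n$'' is \emph{existential} (some coordinate of the block is bad) while ``$x\neq^*\Psi_+(M)$'' is \emph{universal} (all large coordinates are bad), so no coordinatewise recoding alone can bridge the two; forcing the block-disagreements to propagate into a genuine pointwise eventual difference along suitably sparse, aligned index sets is the technical heart of the argument. Once $\Ed\eqT\Cbf_\Mwf$ is established, \autoref{cor:Tukeyval} yields $\bfrak(\Ed)=\bfrak(\Cbf_\Mwf)=\non(\Mwf)$ and $\dfrak(\Ed)=\dfrak(\Cbf_\Mwf)=\cov(\Mwf)$.
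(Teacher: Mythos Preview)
Your plan has a genuine gap. The paper does not prove \autoref{BarMill} (it is only cited), but it does contain, in the remark following \autoref{Cohenreal}, an explicit warning that your strategy cannot work as stated: the Tukey inequality $\Ed\leqT\Cbf_\Mwf$ is \emph{open}, and moreover ZFC cannot prove the existence of a Borel Tukey connection witnessing it (by Zapletal's result that there is a proper poset adding an $\Ed$-unbounded real without adding Cohen reals). So ``establish $\Ed\eqT\Cbf_\Mwf$ and read off the equalities'' is not a viable proof plan, and the sources you cite (Bartoszy\'nski--Judah, Miller, Bartoszy\'nski) do not carry out that Tukey connection either.

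What the classical proofs actually do is establish the cardinal equalities directly, without a uniform Tukey map. For instance, for $\non(\Mwf)\leq\bfrak(\Ed)$ one takes an $\Ed$-unbounded family $F\subseteq\baire$ and argues that a derived family of reals is non-meager, using the chopped-real characterization of meager sets; dually for $\dfrak(\Ed)\leq\cov(\Mwf)$. The point you yourself flag --- that the recoding depends on the interval partition, which in turn depends on the meager set --- is exactly why no single $\Psi_-$ arises; the argument is inherently non-uniform in the $\Ed\to\Cbf_\Mwf$ direction. Your easy direction $\Cbf_\Mwf\leqT\Ed$ is fine, but for the remaining two inequalities you should abandon the Tukey-equivalence framing and argue at the level of cardinals, as in \cite{BJ} or \cite{CMlocalc}.
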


\begin{definition}\label{edposet}
    Define the \emph{eventually different real forcing} by 
    \[\Ebb:=\lset{(s,\varphi)}{s\in\omega^{<\omega},\ \varphi\in\bigcup_{m<\omega}\Scal(\omega,m)}\]
    ordered by
    \[(t,\psi)\leq (s,\varphi) \text{ iff } s\subseteq t,\ \forall\, i<\omega\colon \varphi(i)\subseteq \psi(i), \text{ and }\forall\, i\in |t|\menos |s|\colon t(i)\notin \varphi(i).\]
    This forcing is ccc (even $\sigma$-centered). 
    
    If $G$ is $\Ebb$-generic over $V$ then $e:= \bigcup\set{s}{\exists\, \varphi\colon (s,\varphi)\in G}$ is $\Ed$-dominating over $V$.
\end{definition}

Therefore, by~\autoref{BarMill}, $\Ebb$ can be used to increase $\non(\Mwf)$. But it actually does more:

\begin{theorem}[Cardona \& Mej\'ia~{\cite[Clm.~4.11]{CarE-RIMS}}]\label{edE}
    $\Ebb$ adds a $\Cbf_\Ewf$-dominating real over $V$, i.e.\ an $F_\sigma$ null subset of $\R$ that covers $\R\cap V$.
\end{theorem}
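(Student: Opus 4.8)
The idea is to build the required $F_\sigma$ null set directly from the eventually different real $e$ added by $\Ebb$, using essentially only that $e$ is $\Ed$-dominating over $V$ (plus the combinatorics of $\Ebb$ for one density argument). As usual we pass to the Cantor space: since $\Nwf(\R)\eqT\Nwf(2^\omega)$ — and a ground-model Borel measure isomorphism together with $\R=\bigcup_{k\in\Z}(k+[0,1])$ transports the ideal $\Ewf$ and the system $\Cbf_\Ewf$ as well — it suffices to produce in $V[G]$ an $F_\sigma$ set $A\subseteq 2^\omega$ with $\Lb_2(A)=0$ and $2^\omega\cap V\subseteq A$. I will use the following elementary presentation. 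Fix in $V$ an interval partition $\seq{I_n}{n<\omega}$ of $\omega$. If $\seq{T_n}{n<\omega}$ is any sequence with $T_n\subseteq 2^{I_n}$ and $\sum_{n<\omega}|T_n|/2^{|I_n|}=\infty$, then
\[A:=\bigcup_{N<\omega}\set{x\in 2^\omega}{\forall\, n\geq N\colon x\frestr I_n\notin T_n}\]
is $F_\sigma$, and since distinct blocks are $\Lb_2$-independent the $N$-th set in this union has measure $\prod_{n\geq N}(1-|T_n|/2^{|I_n|})=0$; hence $A$ is $F_\sigma$ and null, i.e.\ $A\in\Ewf(2^\omega)$. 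So the problem reduces to finding in $V[G]$ such a sequence $\seq{T_n}{n<\omega}$ which is moreover \emph{eventually avoided by every ground-model real}: $\forall\, x\in 2^\omega\cap V\ \exists\, N\ \forall\, n\geq N\colon x\frestr I_n\notin T_n$.

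Here is the point where care is needed, and the reason the obvious attempt fails: a bounded reading of $e$ such as $i\mapsto e(i)\bmod R$ is \emph{not} eventually different from the ground-model reals, because $\Ebb$ can densely steer $e(i)$ into any prescribed residue class (a finite-width slalom forbids only finitely many values). The fix is to let large values of $e$ simply drop out of the construction. Choose in $V$ a sequence $R_n\to\infty$ (say $R_n=n$) and an interval partition with $|I_n|$ large relative to $R_n$ (say $|I_n|=n^2$), together with maps $\gamma_{n,i}\colon 2^{I_n}\to R_n$ (for $n<\omega$, $i\in I_n$) enjoying the following \emph{spreading property}: for every $n$ and all $F_i\subseteq R_n$ $(i\in I_n)$ with $|F_i|<R_n/2$ there is $\seq{v_i}{i\in I_n}$ with $v_i\in R_n\smallsetminus F_i$ and $\bigl|\bigcup_{i\in I_n}\gamma_{n,i}^{-1}(v_i)\bigr|\geq 2^{|I_n|}/4$. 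Such $\gamma_{n,i}$ exist in $V$ by a routine probabilistic argument: pick them uniformly at random; for fixed $\seq{v_i}{i}$, the quantity $\bigl|\bigcup_i\gamma_{n,i}^{-1}(v_i)\bigr|$ is a sum of $2^{|I_n|}$ independent $\{0,1\}$-variables each of mean $\geq 1-(1-1/R_n)^{|I_n|}\geq 1-e^{-1}>\tfrac12$, so by a Chernoff bound it falls below $2^{|I_n|}/4$ with probability at most $e^{-c\,2^{|I_n|}}$ for some fixed $c>0$, and a union bound over the $\leq(2^{R_n}R_n)^{|I_n|}$ relevant configurations is $<1$ once $2^{|I_n|}$ dominates $|I_n|R_n$. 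Now, in $V[G]$, put
\[T_n:=\bigcup_{i\in I_n}\gamma_{n,i}^{-1}(e(i))\subseteq 2^{I_n},\]
so that any coordinate $i\in I_n$ with $e(i)\geq R_n$ contributes nothing to $T_n$ — this absence of ``wrap-around'' is the crucial feature.

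Covering is then immediate from $\Ed$-domination. Given $x\in 2^\omega\cap V$, the function $g_x$ with $g_x(i):=\gamma_{n,i}(x\frestr I_n)$ for $i\in I_n$ lies in $V$, so $g_x\neq^* e$; if this holds past $N$, then for every block $I_n$ with $\min I_n\geq N$ we get $e(i)\neq\gamma_{n,i}(x\frestr I_n)$ for all $i\in I_n$, that is, $x\frestr I_n\notin T_n$. For nullity it suffices that $|T_n|\geq 2^{|I_n|}/4$ for infinitely many $n$, and this is forced: given $(s,\varphi)\in\Ebb$ with $\varphi\in\Scal(\omega,m)$ and given $k$, pick $n>k$ with $R_n>2m$ and $\min I_n\geq|s|$; applying the spreading property to $F_i:=\varphi(i)$ (so $|F_i|\leq m<R_n/2$) yields $\seq{v_i}{i\in I_n}$ with $v_i\notin\varphi(i)$ and $\bigl|\bigcup_i\gamma_{n,i}^{-1}(v_i)\bigr|\geq 2^{|I_n|}/4$; extending $s$ to domain $\max I_n+1$ by $s(i):=v_i$ on $I_n$ and $s(i)\notin\varphi(i)$ on the remaining new coordinates gives a condition below $(s,\varphi)$ deciding $e\frestr I_n=\seq{v_i}{i}$, hence forcing $|T_n|\geq 2^{|I_n|}/4$. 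Thus the conditions forcing ``$|T_n|\geq 2^{|I_n|}/4$ for some $n>k$'' are dense, so $\sum_n|T_n|/2^{|I_n|}=\infty$ in $V[G]$, and the resulting set $A$ is an $F_\sigma$ null subset of $2^\omega$ containing $2^\omega\cap V$; transporting $A$ to $\R$ produces the $\Cbf_\Ewf$-dominating real.

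The genuinely nontrivial step is the second paragraph's observation: one must encode $e$ through the fibres $\gamma_{n,i}^{-1}(e(i))$ so that \emph{large} values of $e$ become harmless, because this is simultaneously what makes the covering argument go through (via $\Ed$-domination applied to $g_x$) and what keeps a density argument for ``$T_n$ large infinitely often'' available. Once that design choice is in place, the probabilistic existence of the $\gamma_{n,i}$ and the block combinatorics are standard, and no property of $\Ebb$ beyond adding an $\Ed$-dominating real (together with its explicit condition structure for the one density claim) is used.
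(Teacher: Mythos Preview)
The paper does not include a proof of this theorem; it is stated as a result of Cardona and Mej\'ia and used as a black box in the later constructions. Your argument is correct and self-contained. The key design choice---encoding the eventually different real $e$ through fibres $\gamma_{n,i}^{-1}(e(i))$ into a \emph{finite} codomain $R_n$, so that values $e(i)\geq R_n$ contribute the empty set rather than wrapping around---is precisely what reconciles the two requirements: the covering step reduces cleanly to $e\neq^* g_x$ (no modular arithmetic, so no spurious collisions), while enough freedom remains for the density argument that $|T_n|\geq 2^{|I_n|}/4$ infinitely often. The measure computation and the $F_\sigma$ structure of $A$ are routine, and the transport from $2^\omega$ to $\R$ via binary expansion is standard.

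One minor simplification: in your probabilistic step the union bound need only range over the $R_n^{|I_n|}$ sequences $\langle v_i\rangle_{i\in I_n}$, not over the sets $F_i$ as well---once every choice of $\langle v_i\rangle$ gives a large union, the existential quantifier over $v_i\notin F_i$ is trivially satisfied for any $F_i$. Your looser bound $(2^{R_n}R_n)^{|I_n|}$ still works, but the tighter count makes the arithmetic more transparent.
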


As for measure, we have that $\Mwf(\R)\eqT \Mwf(2^\omega)$ and $\Cbf_{\Mwf(\R)}\eqT \Cbf_{\Mwf(2^\omega)}$, so we obtain the same cardinal characteristics for the meager ideal using the Cantor space instead of $\R$. More generally, as a consequence of~\cite[Ex.~8.32 \&~Thm.~15.10]{Ke2}:

\begin{theorem}
    For any perfect Polish space $X$, $\Mwf(X) \eqT \Mwf(\R)$ and $\Cbf_{\Mwf(X)} \eqT \Cbf_{\Mwf(\R)}$.
\end{theorem}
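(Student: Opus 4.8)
My plan is to deduce both Tukey equivalences from a single instance of a general principle about comeager subspaces, combined with the classical fact that every perfect Polish space contains a comeager copy of the Baire space.

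\textbf{The comeager lemma.} Suppose $Z$ is a Polish space and $Y\subseteq Z$ is comeager (equivalently, $Z\menos Y$ is meager), with $Y$ carrying the subspace topology. Then $\Mwf(Y)\eqT\Mwf(Z)$ and $\Cbf_{\Mwf(Y)}\eqT\Cbf_{\Mwf(Z)}$. The two facts that make this work are: (i) a subset of $Y$ that is nowhere dense in $Y$ is also nowhere dense in $Z$ — here one uses that $Z\menos Y$ has empty interior, which holds since it is meager in the Baire space $Z$ — so $\Mwf(Y)\subseteq\Mwf(Z)$; and (ii) for $B\in\Mwf(Z)$ one has $B\cap Y\in\Mwf(Y)$, since a nowhere dense subset of $Z$ meets $Y$ in a nowhere dense subset of $Y$. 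Granting (i)--(ii), all four required Tukey connections are immediate. For $\Mwf(Y)\leqT\Mwf(Z)$ take $\Psi_-(A)=A$ and $\Psi_+(B)=B\cap Y$: if $A\subseteq B$ and $A\subseteq Y$ then $A\subseteq B\cap Y$. For $\Mwf(Z)\leqT\Mwf(Y)$ take $\Psi_-(B)=B\cap Y$ and $\Psi_+(A)=A\cup(Z\menos Y)$ (which is meager in $Z$ by (i) and comeagerness): if $B\cap Y\subseteq A$ then $B\subseteq A\cup(Z\menos Y)$. For $\Cbf_{\Mwf(Y)}\leqT\Cbf_{\Mwf(Z)}$ take the inclusion $Y\hookrightarrow Z$ together with $B\mapsto B\cap Y$; and for $\Cbf_{\Mwf(Z)}\leqT\Cbf_{\Mwf(Y)}$ fix $y_0\in Y$, send $z\in Z$ to $z$ if $z\in Y$ and to $y_0$ otherwise, paired with $A\mapsto A\cup(Z\menos Y)$. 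In each case the implication $\Psi_-(u)\sqsubset v\Rightarrow u\sqsubset\Psi_+(v)$ is a one-line verification.

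Next I would invoke the fact — which is essentially what the cited results provide, and which in any case follows from a standard Cantor-scheme construction inside $X$ together with the Alexandrov--Urysohn characterization of $\baire$ — that every perfect Polish space $X$ has a dense $G_\delta$ subset $Y_X$ homeomorphic to $\baire$. A dense $G_\delta$ set is comeager, so the comeager lemma gives $\Mwf(X)\eqT\Mwf(Y_X)$ and $\Cbf_{\Mwf(X)}\eqT\Cbf_{\Mwf(Y_X)}$; and since a homeomorphism $Y_X\to\baire$ carries $\Mwf(Y_X)$ order-isomorphically onto $\Mwf(\baire)$ (and likewise for the $\Cbf$-systems), we obtain $\Mwf(X)\eqT\Mwf(\baire)$ and $\Cbf_{\Mwf(X)}\eqT\Cbf_{\Mwf(\baire)}$. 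Applying the same to $X=\R$ — whose set of irrationals $\R\menos\Q$ is a dense $G_\delta$ homeomorphic to $\baire$ via continued fractions — gives $\Mwf(\R)\eqT\Mwf(\baire)$ and $\Cbf_{\Mwf(\R)}\eqT\Cbf_{\Mwf(\baire)}$. Transitivity of $\eqT$ then finishes both claims.

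The only genuine content is the realization of $\baire$ as a comeager subspace of an arbitrary perfect Polish space; after that the argument is formal. The step one must be careful about is (i) of the comeager lemma, since that is precisely where comeagerness of $Y$ (rather than merely density or the $G_\delta$ property) enters. As a sanity check, note that one of the two inequalities is cheap: a perfect Polish space contains a closed nowhere dense homeomorphic copy $C$ of $\cantor$, and then $A\mapsto A$, $B\mapsto B\cap C$ is a Tukey connection witnessing $\Mwf(\cantor)\leqT\Mwf(X)$ (every subset of the nowhere dense set $C$ is already meager in $X$), with the analogous connection for $\Cbf$; so only the reverse direction really needs the comeager Baire space.
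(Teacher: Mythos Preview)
Your main argument is correct and is essentially a fleshed-out version of what the paper's citation is pointing to: the paper gives no proof beyond the reference ``as a consequence of \cite[Ex.~8.32 \& Thm.~15.10]{Ke2}'', and the relevant content there is precisely that any perfect Polish space contains a dense $G_\delta$ copy of $\omega^\omega$, from which your comeager lemma extracts the Tukey equivalences. So approach and paper agree.

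One correction to your closing aside, though: the ``sanity check'' Tukey connection $\Mwf(\cantor)\leqT\Mwf(X)$ via a closed nowhere dense copy $C\cong\cantor$ does not work as written. The map $\Psi_+(B)=B\cap C$ need not land in $\Mwf(C)$: take $X=\R$, let $C$ be the middle-thirds Cantor set, and let $B=C\in\Mwf(\R)$; then $B\cap C=C$, which is not meager in $C$. More generally, a meager subset of $X$ can cover a nowhere dense closed set entirely, so restriction to $C$ does not send $\Mwf(X)$ into $\Mwf(C)$. Thus the ``cheap'' direction via a nowhere dense copy fails, and both directions genuinely need the comeager embedding. This does not affect the validity of your main proof.
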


In particular:

\begin{fact}\label{Cohenunb}
Let $R=\la X,Y,\sqsubset\ra$ be a Prs in $V$. Then, 
any Cohen real $x\in X$ over $V$ is $R$-unbounded over $V$. 
\end{fact}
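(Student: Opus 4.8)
The plan is to prove \autoref{Cohenunb}: if $R = \la X, Y, \sqsubset\ra$ is a Prs in $V$ and $x \in X$ is a Cohen real over $V$, then $x$ is $R$-unbounded over $V$, i.e.\ $x \not\sqsubset y$ for every $y \in Y \cap V$.

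The key observation is condition~\ref{Fsigma} in the definition of a Prs: the relation $\sqsubset$ decomposes as $\bigcup_{n<\omega} \sqsubset_n$ where each $\sqsubset_n$ is closed in $X \times Z$ and each vertical section $(\sqsubset_n)^y = \{x \in X : x \sqsubset_n y\}$ is closed nowhere dense in $X$. First I would fix $y \in Y \cap V$ and observe that, working in $V$, the set $\{x \in X : x \sqsubset y\} = \bigcup_{n<\omega}(\sqsubset_n)^y$ is a meager $F_\sigma$ set coded in $V$ (its code is obtained from the closed sets $\sqsubset_n$, which are absolutely definable since $X, Z$ and the $\sqsubset_n$ lie in the appropriate $\sigbar$-algebras). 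Equivalently, $X \setminus \{x : x \sqsubset y\}$ contains a dense $G_\delta$ set $G^y = \bigcap_{n<\omega} U^y_n$ coded in $V$, where $U^y_n := X \setminus (\sqsubset_n)^y$ is open dense in $X$ (open because $\sqsubset_n$ is closed, dense because $(\sqsubset_n)^y$ is nowhere dense).

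The next step is the standard fact that a Cohen real over $V$ lands in every dense $G_\delta$ subset of $X$ coded in $V$ (here $X$ is a perfect Polish space, so Cohen forcing on $X$ — equivalently $\Fn(\omega,2)$ — adds such a real, and genericity meets each of the countably many dense open sets $U^y_n$ of $V$). Hence $x \in G^y \subseteq X \setminus \{x' : x' \sqsubset y\}$, which gives $x \not\sqsubset y$. Since $y \in Y \cap V$ was arbitrary, $x$ is $R$-unbounded over $V$, as required. One should note that the absoluteness built into \autoref{DefPolish} (via $\sigbar$) guarantees that the statements ``$x \in X$'', ``$y \in Y$'', and the meaning of $\sqsubset_n$ transfer between $V$ and the extension, so the $F_\sigma$ meager set is genuinely the same set interpreted in both models.

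The only mild subtlety — and the point I would be most careful about — is matching up the Cohen real added abstractly on $X$ with the remark (made just before the statement) that $\Mwf(X) \eqT \Mwf(\R)$ for all perfect Polish $X$: one wants ``Cohen real in $X$'' to mean the canonical generic for the meager-ideal forcing on $X$, and to know that such a real avoids every $V$-coded meager set. This is routine but deserves a sentence, since the phrase ``Cohen real $x \in X$'' in the statement is using the Tukey/Borel-isomorphism identification implicitly. Everything else is a direct unwinding of the definition of Prs together with the defining property of Cohen genericity.
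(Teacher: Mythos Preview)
Your proof is correct and matches the paper's intended argument. The paper does not give an explicit proof of this fact; it states it as a consequence of the observation (made right after \autoref{DefPolish}) that $\{x\in X: x\sqsubset y\}$ is meager in $X$ for each $y\in Y$, together with the defining property of Cohen reals in a perfect Polish space. Your write-up simply unpacks these two ingredients in detail, so there is nothing to add.
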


We now look at the effect of Cohen forcing on meager sets. As we did with measure zero, we introduce a coding of (a base of) meager subsets of $2^\omega$.

\begin{definition}\label{matching}
    Let $\Ibb$ be the set of interval partitions of $\omega$. Define the relational system $\Mg := \la 2^\omega, 2^\omega\times \Ibb, \relM\ra$ where
    \[x \relM (y,\bar I) \text{ iff }\exists\, m<\omega\ \forall\, n\geq m\colon x\frestr I_n \neq y\frestr I_n.\]
    This is a Prs. The members of $2^\omega\times \Ibb$ are usually called \emph{matching reals}. For any matching real $(y,\bar I)$, define $M(y,\bar I):= \set{x\in 2^\omega}{x \relM (y,\bar I)}$.
\end{definition}

\begin{fact}[See e.g.~\cite{blass}]\label{codeM}
    For any matching real $(y,\bar I)$, $M(y,\bar I)$ is meager in $2^\omega$. And, for any $A\in\Mwf(2^\omega)$, there is some matching real $(y,\bar I)$ such that $A\subseteq M(y,\bar I)$.
\end{fact}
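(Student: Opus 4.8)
The plan is to prove the two assertions separately. That each $M(y,\bar I)$ is meager is the routine direction: write $M(y,\bar I)=\bigcup_{m<\omega}F_m$ where $F_m:=\set{x\in 2^\omega}{\forall\, n\geq m\colon x\frestr I_n\neq y\frestr I_n}$; each $\set{x\in 2^\omega}{x\frestr I_n\neq y\frestr I_n}$ is clopen, so $F_m$ is closed, and $F_m$ is nowhere dense since, given $s\in 2^{<\omega}$, one uses that $\bar I$ is an interval partition to pick $n\geq m$ with $\min I_n\geq|s|$ and extend $s$ to $s'\in 2^{<\omega}$ with $I_n\subseteq\dom s'$ and $s'\frestr I_n=y\frestr I_n$, so that $\varnothing\neq[s']\subseteq[s]$ and $[s']\cap F_m=\varnothing$. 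Hence $M(y,\bar I)$ is meager. The content lies in the other direction, which is the standard combinatorial characterisation of meagreness in $2^\omega$ (see~\cite{blass,BJ}).

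For that direction, fix $A\in\Mwf(2^\omega)$ and take closed nowhere dense sets with $A\subseteq\bigcup_{m<\omega}N_m$ and $N_0\subseteq N_1\subseteq\cdots$ (replace a witnessing sequence of nowhere dense sets by the closures of its finite unions). I would build $0=k_0<k_1<k_2<\cdots$ and $y\in 2^\omega$ recursively, setting $I_n:=[k_n,k_{n+1})$. At stage $n$, with $y\frestr k_n$ already fixed, the goal is to choose $k_{n+1}>k_n$ and $y\frestr I_n$ such that
\[\text{for every }s\in 2^{k_n}\colon\quad \bigl[\,s^\frown(y\frestr I_n)\,\bigr]\cap N_n=\varnothing.\]
Granting this: if $x\in N_n$ then, taking $s:=x\frestr k_n$, we get $x\in[s]$ but $x\notin\bigl[\,s^\frown(y\frestr I_n)\,\bigr]$, so $x\frestr I_n\neq y\frestr I_n$; since the $N_m$ increase, every $x\in A$ belongs to $N_n$ for all sufficiently large $n$, whence $x\in M(y,\bar I)$. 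Thus $A\subseteq M(y,\bar I)$, and $\bigcup_n I_n=\omega$ because $k_n\to\infty$.

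To realise the goal at stage $n$, run a finite sub-recursion over an enumeration $s_0,\dots,s_{L-1}$ of $2^{k_n}$. Begin with the empty block $t^{(0)}$; given a finite binary block $t^{(j)}$ with $[\,s_i^\frown t^{(j)}\,]\cap N_n=\varnothing$ for all $i<j$, apply nowhere density of $N_n$ to the basic clopen set $[\,s_j^\frown t^{(j)}\,]$ to obtain $w\supseteq s_j^\frown t^{(j)}$ with $[w]\cap N_n=\varnothing$, and let $t^{(j+1)}$ be the tail of $w$ beyond position $k_n$. Then $t^{(j+1)}$ extends $t^{(j)}$, and since passing to a sub-cylinder preserves disjointness from $N_n$, the invariant holds for all $i\leq j$. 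After $L$ steps, $t:=t^{(L)}$ witnesses the display for every $s\in 2^{k_n}$; set $y\frestr I_n:=t$ and $k_{n+1}:=k_n+|t|$, padding with one arbitrary bit when $|t|=0$ (which happens only if $N_n=\varnothing$) so that $k_{n+1}>k_n$.

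The hard part is exactly this sub-recursion: nowhere density of $N_n$ yields, for each individual $s\in 2^{k_n}$, some extension pushing $[s]$ off $N_n$, but the relation $\relM$ forces a single common block $y\frestr I_n$ doing this simultaneously for all $2^{k_n}$ strings $s$, and each new $s_j$ must be handled without reviving the disjointness already secured for $s_0,\dots,s_{j-1}$ — which is also why the length of $I_n$ cannot be prescribed in advance. Everything else (the properties of the $F_m$ and of the $N_m$, and the recursion bookkeeping) is straightforward.
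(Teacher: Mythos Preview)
Your proof is correct and is precisely the standard argument (as in~\cite{blass} or~\cite[Lemma~2.4.2]{BJ}). The paper does not give its own proof of this fact but simply records it with a citation, so there is nothing to compare against; your write-up supplies exactly what the reference would.
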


\begin{corollary}\label{M=CM}
    $\Mg \eqT \Cbf_{\Mwf(2^\omega)}$. In particular, $\bfrak(\Mg) = \non(\Mwf)$ and $\dfrak(\Mg) = \cov(\Mwf)$.
\end{corollary}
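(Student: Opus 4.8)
The plan is to read off the Tukey equivalence $\Mg\eqT\Cbf_{\Mwf(2^\omega)}$ directly from \autoref{codeM}, which packages the only nontrivial content (every meager set sits inside some $M(y,\bar I)$, and each $M(y,\bar I)$ is meager), and then deduce the values of $\bfrak(\Mg)$ and $\dfrak(\Mg)$ from \autoref{cor:Tukeyval} together with the computation of the cardinal characteristics of $\Cbf_{\Mwf(2^\omega)}$ in \autoref{exm:Iwf}. The structure is essentially the same as in \autoref{ex:trivialTukey}.

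For $\Mg\leqT\Cbf_{\Mwf(2^\omega)}$, I would take $\Psi_-$ to be the identity on $2^\omega$, and define $\Psi_+\colon\Mwf(2^\omega)\to 2^\omega\times\Ibb$ by choosing, for each meager $A$, a matching real $(y_A,\bar I_A)$ with $A\subseteq M(y_A,\bar I_A)$ (such exists by \autoref{codeM}) and setting $\Psi_+(A):=(y_A,\bar I_A)$. Then for any $x\in 2^\omega$ and any meager $A$: if $\Psi_-(x)=x\in A$, then $x\in M(y_A,\bar I_A)$, i.e.\ $x\relM\Psi_+(A)$, which is precisely the Tukey condition. For the reverse direction $\Cbf_{\Mwf(2^\omega)}\leqT\Mg$, again take $\Psi_-=\id$ on $2^\omega$ and define $\Psi_+\colon 2^\omega\times\Ibb\to\Mwf(2^\omega)$ by $\Psi_+(y,\bar I):=M(y,\bar I)$, which indeed lands in $\Mwf(2^\omega)$ since $M(y,\bar I)$ is meager by \autoref{codeM}; then $\Psi_-(x)=x\relM(y,\bar I)$ gives $x\in M(y,\bar I)=\Psi_+(y,\bar I)$, as required. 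Hence $\Mg\eqT\Cbf_{\Mwf(2^\omega)}$.

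For the ``in particular'' clause, \autoref{cor:Tukeyval} gives $\bfrak(\Mg)=\bfrak(\Cbf_{\Mwf(2^\omega)})$ and $\dfrak(\Mg)=\dfrak(\Cbf_{\Mwf(2^\omega)})$; by \autoref{exm:Iwf}(2) these equal $\non(\Mwf(2^\omega))$ and $\cov(\Mwf(2^\omega))$, and by the preceding theorem on perfect Polish spaces (equivalently $\Mwf(2^\omega)\eqT\Mwf(\R)$ and $\Cbf_{\Mwf(2^\omega)}\eqT\Cbf_{\Mwf(\R)}$) these are $\non(\Mwf)$ and $\cov(\Mwf)$. I do not expect any genuine obstacle: all the combinatorial work is already contained in \autoref{codeM}, and what remains is only the bookkeeping of assembling the two identity-based Tukey connections and invoking the functoriality of $\bfrak,\dfrak$ under $\eqT$.
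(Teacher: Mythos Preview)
Your proof is correct and is exactly the argument the paper has in mind: the corollary is stated without proof precisely because it follows immediately from \autoref{codeM} via the two identity-based Tukey connections you wrote down, together with \autoref{cor:Tukeyval} and the identification $\Cbf_{\Mwf(2^\omega)}\eqT\Cbf_{\Mwf(\R)}$. There is nothing to add.
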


\begin{definition}\label{Cohen}
    Let $I$ be a set and $\bar b =\la b(i)\colon  i\in I\ra$ a sequence of non-empty sets. Define the poset
    \[\Fn(\bar b) := \lset{p}{ p \text{ is a finite function, $\dom p \subseteq I$ and }\forall\, i\in \dom p\colon p(i)\in b(i)}\]
    ordered by $\supseteq$. The generic real added by this poset is $g:=\bigcup G \in \prod \bar b:= \prod_{i\in I} b(i)$ whenever $G$ is $\Fn(\bar b)$-generic over $V$.

    We use this forcing to add Cohen reals, not just over $2^\omega$ or $\omega^\omega$, but over any perfect space of the form $\prod_{n<\omega}b(n)$, endowed with the product topology for countable discrete spaces $b(n)$ ($n<\omega$).

    Fix a countable sequence $\bar b:= \la b(n)\colon  n<\omega\ra$ of countable non-empty sets. Note that $\prod \bar b$ is a perfect Polish space iff $|b(n)|\geq 2$ for infinitely many $n<\omega$. In this case, we call $\Fn(\bar b)$ the \emph{forcing adding a Cohen real in $\prod \bar b$}, usually referred to as \emph{Cohen forcing.} We use $c$ to denote the generic real in $\prod\bar b$ added by this poset, which we often call \emph{Cohen real}. For example, $\Omega_{\bar \varp}$ is such a space, and a Cohen real in $\Omega_{\bar\varp}$ over $V$ codes a measure zero set that covers $2^\omega\cap V$. The letter $\Cbb$ is reserved for any version of Cohen forcing.

    For any set $I$, denote $\Cor_I := \Fn(\hat b)$ where $\hat b := \la b(i,n)\colon i\in I,\ n<\omega \ra$ is defined by $b(i,n):= b(n)$. This poset adds a sequence $\la c_i\colon i\in I\ra$ where each $c_i\in \prod_{n<\omega}b(n)$ is a Cohen real over $V$ (and even over $V^{\Cor_{I\menos\{i\}}}$).    
\end{definition}

All the versions of Cohen forcing are forcing equivalent:

\begin{theorem}\label{Cohenctbl}
    Any countable atomless forcing notion is forcing equivalent with $\Cor$.
\end{theorem}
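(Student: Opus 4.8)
The plan is to pass to Boolean completions. Recall that two forcing notions are forcing equivalent exactly when their separative quotients have isomorphic Boolean completions, and that the canonical map $p\mapsto\llbracket p\rrbracket$ embeds the separative quotient of a poset as a dense subset of its completion $\mathrm{ro}(\cdot)$. Since $\Cor$ is already separative, it therefore suffices to prove $\mathrm{ro}(P)\cong\mathrm{ro}(\Cor)$.

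First I would record the structural properties of $B:=\mathrm{ro}(P)$. It is a complete Boolean algebra; it has a countable dense subset, namely $\{\llbracket p\rrbracket:p\in P\}$ (dense by construction of the completion, countable since $P$ is); it is atomless, because an atom of $B$ would have to equal $\llbracket p\rrbracket$ for some $p\in P$ (as $P$ is dense in $B$), and then $p$ would admit no two incompatible extensions in $P$, contradicting atomlessness of $P$; and, having a countable dense subset, it is automatically ccc. The same properties hold of $\mathrm{ro}(\Cor)$, the Cohen algebra.

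The core of the argument is the uniqueness statement: every atomless complete Boolean algebra $B$ with a countable dense subset $D$ is isomorphic to $\mathrm{ro}(\Cor)$. To see this, let $A$ be the subalgebra of $B$ generated by $D$. Then $A$ is countable (a countable set generates a countable subalgebra), it is dense in $B$ because $D\subseteq A$, and it is atomless: given $0<a\in A$, choose $d\in D$ with $d\le a$; since $B$ is atomless there is $0<b<d$, and then $d'\in D$ with $d'\le b$ yields $0<d'<a$ in $A$. Hence $B$ is the completion of $A$. By Cantor's back-and-forth theorem all countable atomless Boolean algebras are isomorphic, so $A$ is isomorphic to the clopen algebra $\mathrm{Clop}(\cantor)$; since $s\mapsto[s]$ embeds $2^{<\omega}$ (equivalently $\Fn(\omega,2)$) as a dense subset of $\mathrm{Clop}(\cantor)$, we get $B\cong\mathrm{ro}(A)\cong\mathrm{ro}(\mathrm{Clop}(\cantor))\cong\mathrm{ro}(\Cor)$. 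Applying this with $B=\mathrm{ro}(P)$ completes the proof.

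The only genuinely nontrivial ingredient is Cantor's theorem that a countable atomless Boolean algebra is unique up to isomorphism, proved by a standard back-and-forth in which a finite partial isomorphism is extended one generator at a time, using atomlessness on both sides to split each newly considered element; I expect this, rather than the routine transfers of atomlessness, density and ccc, to be the main point. In a self-contained treatment one would simply fold this back-and-forth directly into a recursive construction of an isomorphism between a countable dense subalgebra of $\mathrm{ro}(P)$ and $\mathrm{Clop}(\cantor)$, bypassing any appeal to an external statement.
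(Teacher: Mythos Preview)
Your argument is correct and is the standard route to this classical fact: pass to Boolean completions, observe that $\mathrm{ro}(P)$ is an atomless ccc complete Boolean algebra with a countable dense subset, and invoke the uniqueness of the countable atomless Boolean algebra (via Cantor back-and-forth) to identify a countable dense subalgebra with $\mathrm{Clop}(2^\omega)$, whence $\mathrm{ro}(P)\cong\mathrm{ro}(\Cor)$. The only small wording issue is in the atomlessness step: an atom $a$ of $B$ need not literally \emph{equal} some $\llbracket p\rrbracket$ a priori, but density gives $\llbracket p\rrbracket\le a$, hence $\llbracket p\rrbracket=a$ since $a$ is an atom; the rest then goes through as you wrote.

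As for comparison: the paper states this theorem without proof, treating it as folklore. So there is nothing to compare against; your write-up would serve perfectly well as the omitted argument.
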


In general, for any perfect Polish space $X$, it is possible to define a countable atomless forcing that adds a generic real $c\in X$.\footnote{Using finite fragments of Cauchy sequences coming from a countable dense subset of $X$.} The main property of this generic real is that it evades all the Borel meager subsets of $X$ coded in the ground model. In particular,

\begin{theorem}\label{Cohenreal}
    If $c\in 2^\omega$ is a Cohen real over $V$, then $c\notin M(y,\bar I)$ for any matching real $(y,\bar I)\in V$. In particular, any Cohen real is $\Mg$-unbounded over $V$. 
\end{theorem}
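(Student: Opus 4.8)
The plan is to prove that a Cohen real $c \in 2^\omega$ over $V$ avoids $M(y,\bar I)$ for every matching real $(y,\bar I) \in V$, and then conclude $\Mg$-unboundedness from \autoref{Cohenunb} (or directly). First I would recall that $\Fn(\bar b)$ for $\bar b$ the constant sequence with value $2$ is forcing equivalent to ordinary Cohen forcing on $2^{<\omega}$ (this is \autoref{Cohenctbl}, and in any case is immediate), so I may as well think of $c$ as the generic branch through $2^{<\omega}$ obtained from a filter meeting all dense sets of $V$.

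The main step is a genericity/density argument. Fix $(y,\bar I) \in V$ with $\bar I = \la I_n : n < \omega\ra$ an interval partition and $y \in 2^\omega$. Recall that $c \in M(y,\bar I)$ means $\exists\, m\ \forall\, n \geq m\colon c\frestr I_n \neq y\frestr I_n$; so $c \notin M(y,\bar I)$ means $\forall\, m\ \exists\, n \geq m\colon c\frestr I_n = y\frestr I_n$, i.e.\ $c$ agrees with $y$ on infinitely many of the blocks $I_n$. For each $m < \omega$, consider the set
\[
D_m := \lset{p \in 2^{<\omega}}{\exists\, n\colon I_n \subseteq [m, |p|) \text{ and } p\frestr I_n = y\frestr I_n}.
\]
I claim each $D_m$ is dense: given any $p \in 2^{<\omega}$, pick $n$ large enough that $\min I_n \geq \max\{m, |p|\}$ (possible since the $I_n$ are intervals exhausting $\omega$), and extend $p$ by zeros up to $\min I_n$ and then by $y\frestr I_n$ on the block $I_n$; the resulting condition lies in $D_m$ and extends $p$. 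Since $(y,\bar I) \in V$, each $D_m$ is a dense subset of Cohen forcing lying in $V$, so the generic filter meets every $D_m$; hence for every $m$ there is a block $I_n$ with $\min I_n \geq m$ on which $c$ agrees with $y$. Therefore $c$ agrees with $y$ on infinitely many blocks, so $c \notin M(y,\bar I)$.

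For the ``in particular'' clause: by \autoref{codeM}, every $A \in \Mwf(2^\omega)$ is contained in some $M(y,\bar I)$; equivalently, being $\Mg$-unbounded over $V$ (i.e.\ $\forall\, (y,\bar I) \in 2^\omega \times \Ibb$ in $V\colon c \not\relM (y,\bar I)$) is exactly what we have just shown, since $c \not\relM (y,\bar I)$ unravels to $c \notin M(y,\bar I)$. Alternatively, one can simply cite \autoref{Cohenunb} together with the fact (\autoref{matching}) that $\Mg$ is a Prs: a Cohen real in the perfect Polish space $2^\omega$ is $R$-unbounded over $V$ for any Prs $R$ with underlying space $2^\omega$. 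I expect the only real content to be the verification that the $D_m$ are dense, which is the routine interval-partition bookkeeping above; there is no genuine obstacle, the proof being a direct density argument.
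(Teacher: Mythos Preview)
Your proof is correct. The density argument for the sets $D_m$ is exactly the standard way to verify that a Cohen real avoids $M(y,\bar I)$ for ground-model matching reals, and your conclusion that agreement on blocks $I_n$ with $\min I_n \geq m$ for every $m$ yields infinitely many agreeing blocks is fine (since only finitely many intervals of the partition begin below any given $m$).

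The paper does not supply a proof of this theorem; it is stated as a basic fact following the general remark that a Cohen real in a perfect Polish space evades all Borel meager sets coded in the ground model. Your direct density argument is the intended (and standard) verification, so there is nothing to compare beyond noting that you have filled in what the paper leaves implicit.
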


\begin{remark}
	Both $\Mg$ and $\Ed$ have the same cardinal characteristics as $\Cbf_\Mwf$, $\Mg\eqT\Cbf_\Mwf$ and $\Cbf_\Mwf\leqT\Ed$. 
	The converse Tukey-inequality is not provable in $\thzfc$. 
	This follows by Zapletal's result stating that there is a proper poset adding an $\Ed$-unbounded real but not adding Cohen reals~\cite{ZapDim}.
\end{remark}

\subsection{FS iterations}\label{subsec:FS}

We now turn to FS (finite support) iterations. 
Any FS iteration $\la \Por_\alpha,\Qnm_\beta\colon \alpha\leq\pi,\ \beta<\pi\ra$ of length $\pi$ is defined by recursion as follows: 

\begin{enumerate}[label = \normalfont (\Roman*)]
    \item $\Por_0:=\{\la\ \ra\}$ is the poset containing the empty sequence $\la\ \ra$, usually called the \emph{trivial poset}.
    \item When $\Por_\alpha$ has been defined, we pick a $\Por_\alpha$-name $\Qnm_\alpha$ of a poset and define $\Por_{\alpha+1} = \Por_\alpha\ast \Qnm_\alpha$.
    \item For limit $\gamma\leq\pi$, $\Por_\gamma:= \limdir_{\alpha<\gamma}\Por_\alpha = \bigcup_{\alpha<\gamma}\Por_\alpha$ ordered by
    \[q \leq_\gamma p \text{ iff }\exists\, \alpha<\gamma\colon p,q\in\Por_\alpha \text{ and }q\leq_\alpha p.\]
\end{enumerate}
Here, $\leq_\alpha$ denotes the preorder of $\Por_\alpha$. It can be proved by induction that $\Por_\alpha\subsetdot \Por_\beta$ whenever $\alpha\leq \beta\leq \pi$, where $\subsetdot$ denotes the \emph{complete-subposet} relation.\footnote{$\Por\subsetdot \Qor$ iff $\Por$ is a suborder of $\Qor$, the incompatibility relation is preserved, and any predense subset of $\Por$ is predense in $\Qor$.}

If $G$ is $\Por_\pi$-generic over $V$ and $\alpha\leq \pi$, then $G_\alpha:=\Por_\alpha\cap G$ is $\Por_\alpha$-generic over $V$, so $G_\pi = G$. In the context of FS iterations, we denote $V_\alpha:= V[G_\alpha]$, so $V_0 = V$. The relation $\subsetdot$ indicates that $V_\alpha\subseteq V_\beta$ whenever $\alpha\leq\beta\leq \pi$. So,
when $\alpha<\pi$, we call $V_\alpha$ an \emph{intermediate generic extension}, and $V_\pi$ the \emph{final generic extension}.

In this context, we abbreviate the forcing relation $\Vdash_{\Por_\alpha}$ by $\Vdash_\alpha$.

We review some basic facts about FS iterations of ccc posets.

\begin{lemma}\label{FSccc}
    Any FS iteration of ccc posets is ccc, i.e.\ if $\Vdash_\beta \Qnm_\beta$ is ccc for all $\beta<\pi$, then $\Por_\alpha$ is ccc for all $\alpha\leq\pi$.
\end{lemma}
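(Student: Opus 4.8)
The plan is to prove \autoref{FSccc} by induction on $\alpha\leq\pi$, following the three-case recursive definition of the iteration. The statement to establish is that $\Por_\alpha$ is ccc under the hypothesis that $\Vdash_\beta ``\Qnm_\beta \text{ is ccc}"$ for all $\beta<\pi$; the key classical input, which I would invoke without reproving it in full, is that a two-step iteration $\Por\ast\Qnm$ of a ccc poset $\Por$ with a $\Por$-name $\Qnm$ for a ccc poset is again ccc, and — the genuinely nontrivial point — that a \emph{direct limit} of a chain (or more generally a directed system) of ccc posets taken along a limit ordinal of uncountable cofinality is ccc, while the cofinality-$\omega$ limit case requires the standard $\Delta$-system argument.

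First, the base case: $\Por_0=\{\la\ \ra\}$ is trivial, hence ccc. Second, the successor case: assume $\Por_\alpha$ is ccc; since $\Vdash_\alpha ``\Qnm_\alpha\text{ is ccc}"$ by hypothesis, the two-step iteration lemma gives that $\Por_{\alpha+1}=\Por_\alpha\ast\Qnm_\alpha$ is ccc. Third, the limit case: let $\gamma\leq\pi$ be a limit ordinal and assume $\Por_\alpha$ is ccc for all $\alpha<\gamma$; we must show $\Por_\gamma=\bigcup_{\alpha<\gamma}\Por_\alpha$ is ccc. Take an uncountable $A\subseteq\Por_\gamma$, say $A=\set{p_\xi}{\xi<\omega_1}$. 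Since each $p_\xi$ lies in some $\Por_{\alpha_\xi}$ with $\alpha_\xi<\gamma$ and conditions in the direct limit have finite support, the natural thing is to refine: the "supports" $\dom p_\xi$ are finite subsets of $\gamma$, so by the $\Delta$-system lemma there is an uncountable $A'\subseteq A$ whose supports form a $\Delta$-system with root $R$, a finite subset of $\gamma$. Since $R$ is finite, $\sup R<\gamma$ by limit-ness, so $R\subseteq\alpha_0$ for some $\alpha_0<\gamma$. The restrictions $p_\xi\frestr\alpha_0$ (equivalently, $p_\xi\frestr R$) lie in the ccc poset $\Por_{\alpha_0}$; if all $p_\xi\in A'$ happened to have support contained in $\alpha_0$ we would be done immediately by the inductive hypothesis applied to $\Por_{\alpha_0}$. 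Otherwise, the heart of the argument is the observation that two conditions $p_\xi,p_\eta$ in a FS iteration whose supports agree on the root $R$ and are otherwise disjoint are compatible in $\Por_\gamma$ if and only if their restrictions $p_\xi\frestr R$ and $p_\eta\frestr R$ are compatible in $\Por_{\alpha_0}$ — the "outside the root" coordinates can be amalgamated coordinatewise because the supports are disjoint there. Hence an uncountable compatible (indeed $2$-linked) subfamily of $\{p_\xi\frestr R\}$ inside the ccc poset $\Por_{\alpha_0}$ lifts to an uncountable $2$-linked subfamily of $A'$, contradicting that $A$ was an antichain.

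The main obstacle I expect is making the last compatibility claim fully rigorous: one has to argue that if $q\leq_{\alpha_0} p_\xi\frestr R, p_\eta\frestr R$ then the condition $r$ defined by $r\frestr\alpha_0 := q$ and, for $\delta\in[\alpha_0,\gamma)$, $r(\delta):= p_\xi(\delta)$ or $p_\eta(\delta)$ according to which support $\delta$ belongs to (and $r(\delta):=\la\ \ra$, the trivial name, otherwise), is genuinely a condition in $\Por_\gamma$ extending both $p_\xi$ and $p_\eta$. This is where one uses that below $\alpha_0$ the relevant data is just $p_\xi\frestr R=p_\eta\frestr R\geq q$ and that the $\Por_\delta$-names appearing at coordinates $\delta\ge\alpha_0$ can be viewed, via $\Por_{\alpha_0}\subsetdot\Por_\delta$, as names in the larger iteration so that $q$ forces the required ordering at each coordinate — a routine but notationally heavy verification that I would state carefully and then leave the bookkeeping to the reader, citing a standard reference for FS iterations. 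For the cofinality-$\omega$ sub-case there is nothing special: the $\Delta$-system lemma on $\omega_1$ does not care about $\cf(\gamma)$, so the same argument goes through uniformly.
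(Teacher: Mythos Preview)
The paper states \autoref{FSccc} without proof, treating it as a standard fact about FS iterations. Your induction on $\alpha$ with a $\Delta$-system argument at limit stages is exactly the classical proof, and the overall strategy is correct.

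Two points need tightening. First, the parenthetical ``(equivalently, $p_\xi\frestr R$)'' is not right: the non-root part $d_\xi\smallsetminus R$ of a support can interleave with $R$, so in general $d_\xi\cap\alpha_0\supsetneq R$ and hence $p_\xi\frestr\alpha_0\neq p_\xi\frestr R$. If you build $q$ only below $p_\xi\frestr R$ and $p_\eta\frestr R$, your amalgam $r$ need not extend $p_\xi$ at coordinates of $(d_\xi\smallsetminus R)\cap\alpha_0$. The fix is simply to work with the genuine initial-segment restrictions $p_\xi\frestr\alpha_0$ throughout; the lifting argument you describe then goes through verbatim, since $(d_\xi\smallsetminus R)\cap(d_\eta\smallsetminus R)=\varnothing$ still handles the coordinates $\geq\alpha_0$.

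Second, you write that the ccc of $\Por_{\alpha_0}$ yields ``an uncountable \dots\ $2$-linked subfamily'' of the restrictions. That is Knaster's property, which is strictly stronger than ccc; with only ccc in the induction hypothesis you cannot conclude this. Fortunately you do not need it: ccc already guarantees that among the uncountably many restrictions $p_\xi\frestr\alpha_0$ there are \emph{two} compatible ones (either the map $\xi\mapsto p_\xi\frestr\alpha_0$ is not injective, or its uncountable image is not an antichain), and two compatible conditions lifted to $\Por_\gamma$ already contradict $A$ being an antichain.
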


\begin{lemma}\label{FSnoreal}
    In any FS iteration of ccc posets of length $\pi$: if $\cf(\pi)>\omega$ then $\R\cap V_\pi = \bigcup_{\alpha<\pi}\R\cap V_\alpha$.
\end{lemma}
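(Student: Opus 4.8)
The plan is to prove the nontrivial inclusion $\R\cap V_\pi\subseteq\bigcup_{\alpha<\pi}\R\cap V_\alpha$; the reverse inclusion is immediate, since $\Por_\alpha\subsetdot\Por_\pi$ gives $V_\alpha\subseteq V_\pi$ for every $\alpha\leq\pi$. Note first that $\cf(\pi)>\omega$ forces $\pi$ to be a limit ordinal, so by the limit clause (III) of the iteration, $\Por_\pi=\bigcup_{\alpha<\pi}\Por_\alpha$ as a \emph{set}; in particular every single condition $p\in\Por_\pi$ lies in some $\Por_\alpha$ with $\alpha<\pi$. The key combinatorial observation is the following ``continuity'' property: every countable subset $S\subseteq\Por_\pi$ is already contained in some $\Por_{\alpha^*}$ with $\alpha^*<\pi$. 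Indeed, for each $p\in S$ choose $\alpha_p<\pi$ with $p\in\Por_{\alpha_p}$; then $\set{\alpha_p}{p\in S}$ is a countable subset of $\pi$, so because $\cf(\pi)>\omega$ its supremum $\alpha^*$ is still $<\pi$, and then $S\subseteq\Por_{\alpha^*}$ using $\Por_{\alpha_p}\subsetdot\Por_{\alpha^*}$ for all $p\in S$.

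Now fix $r\in\R\cap V_\pi$; identifying $\R$ with $\baire$ (or $\cantor$), regard $r$ as a function $\omega\to\omega$, and let $\dot r$ be a $\Por_\pi$-name with $\dot r[G]=r$, where $G$ is the $\Por_\pi$-generic filter. By \autoref{FSccc}, $\Por_\pi$ is ccc, so for each $n<\omega$ we may fix a maximal antichain $A_n\subseteq\Por_\pi$ all of whose conditions decide the value $\dot r(n)$; being an antichain in a ccc poset, each $A_n$ is countable. Passing to the nice name determined by the family $\la A_n\colon n<\omega\ra$, we may assume that $\dot r$ mentions only conditions from $S:=\bigcup_{n<\omega}A_n$, a countable subset of $\Por_\pi$.

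By the continuity property, $S\subseteq\Por_{\alpha^*}$ for some $\alpha^*<\pi$, so $\dot r$ is (equivalent to) a $\Por_{\alpha^*}$-name. Since $\Por_{\alpha^*}\subsetdot\Por_\pi$, the restriction $G_{\alpha^*}=G\cap\Por_{\alpha^*}$ is $\Por_{\alpha^*}$-generic over $V$ and $r=\dot r[G]=\dot r[G_{\alpha^*}]\in V_{\alpha^*}$, as required. The only points requiring a little care are the reduction to a nice name mentioning only countably many conditions — this is precisely where ccc of $\Por_\pi$ (equivalently, countability of each relevant antichain) is invoked — and the fact, built into the definition of the limit step, that $\Por_\pi=\bigcup_{\alpha<\pi}\Por_\alpha$ literally as a set, so that conditions live at bounded stages; no genuine obstacle arises, the argument being the standard reflection argument for FS iterations of ccc posets.
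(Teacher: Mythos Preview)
Your proof is correct and is the standard argument. The paper itself states \autoref{FSnoreal} without proof, treating it as a basic well-known fact about FS iterations of ccc posets; your write-up supplies exactly the expected reasoning (nice names via ccc, then the $\cf(\pi)>\omega$ reflection into some $\Por_{\alpha^*}$).
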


\begin{lemma}\label{FSCohen}
    Any FS iteration of non-trivial\footnote{A poset is \emph{trivial} if all its conditions are pairwise compatible. This is equivalent to saying that the poset is forcing equivalent with the trivial poset.} posets adds Cohen reals at limit stages. Concretely, $\Por_{\alpha+\omega}$ adds a Cohen real over $V_\alpha$.
\end{lemma}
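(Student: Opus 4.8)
The plan is to write down an explicit $\Por_{\alpha+\omega}$-name $\dot c$ for an element of $2^\omega$ and check that it is $\Cor$-generic over $V_\alpha$, where we identify $\Cor$ with $2^{<\omega}$ ordered by $\supseteq$. Non-triviality is used only through the elementary observation that a non-trivial poset has two incompatible conditions: for each $n<\omega$, since $\Vdash_{\alpha+n}$ ``$\Qnm_{\alpha+n}$ is non-trivial'', the maximal principle gives $\Por_{\alpha+n}$-names $\dot q^{\,0}_n$, $\dot q^{\,1}_n$ with $\Vdash_{\alpha+n}$ ``$\dot q^{\,0}_n,\dot q^{\,1}_n\in\Qnm_{\alpha+n}$ are incompatible''. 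I would then let $\dot c$ be the name for the function in $2^\omega$ such that $\dot c(n)=1$ iff $\dot q^{\,1}_n$ belongs to the generic filter added at coordinate $\alpha+n$, and $\dot c(n)=0$ otherwise (in particular when that generic filter avoids both $\dot q^{\,0}_n$ and $\dot q^{\,1}_n$).

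The heart of the argument is a density computation. Fix a $\Por_\alpha$-name $\dot D$ forced to be a dense subset of $2^{<\omega}$ and an arbitrary $p\in\Por_{\alpha+\omega}$; the goal is to produce $q\leq p$ forcing $\exists\, t\in\dot D\colon t\subseteq\dot c$. Since $\Por_{\alpha+\omega}=\bigcup_{n<\omega}\Por_{\alpha+n}$, choose $n_0$ with $p\in\Por_{\alpha+n_0}$. Working entirely inside $\Por_{\alpha+n_0}$ (note that $\dot c\frestr n_0$ depends only on the first $n_0$ coordinates of the tail, and that $\dot D$, being a $\Por_\alpha$-name with $\Por_\alpha\subsetdot\Por_{\alpha+n_0}$, is still forced dense there), I would extend $p$ to some $p_1$ which decides $\dot c\frestr n_0=\check s_0$ for a genuine $s_0\in 2^{n_0}$ and which decides $\check t_0\in\dot D$ for a genuine $t_0\supseteq s_0$; set $m:=|t_0|$.

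Finally I would ``program'' the still-free tail coordinates: let $q\leq p_1$ be obtained from $p_1$ (which is trivial on coordinates $\geq\alpha+n_0$) by putting the name $\dot q^{\,t_0(n)}_n$ at coordinate $\alpha+n$ for each $n$ with $n_0\leq n<m$. This is a legitimate condition of $\Por_{\alpha+\omega}$ below $p_1$, since at each such coordinate we only replace the top element by a name forced to be a condition. Then $q\Vdash\dot c\frestr m=\check t_0$: the entries $n<n_0$ are inherited from $p_1$; if $t_0(n)=1$ then $q$ forces the generic at $\alpha+n$ to contain $\dot q^{\,1}_n$, whence $\dot c(n)=1$; and if $t_0(n)=0$ then $q$ forces the generic at $\alpha+n$ to contain $\dot q^{\,0}_n$, which is incompatible with $\dot q^{\,1}_n$, so the generic filter omits $\dot q^{\,1}_n$ and $\dot c(n)=0$. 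Thus $q\Vdash\check t_0\subseteq\dot c$ and $q\Vdash\check t_0\in\dot D$, as wanted, so by genericity $\dot c$ meets every dense subset of $2^{<\omega}$ lying in $V_\alpha$, i.e.\ $\dot c$ names a Cohen real over $V_\alpha$. The only real work is bookkeeping: tracking which tail coordinates remain free to be programmed --- handled by fixing $n_0$ up front and by deciding $\dot c\frestr n_0$ and the element of $\dot D$ before touching coordinates $\geq\alpha+n_0$ --- together with the routine check that the coordinatewise substitution yields a condition below $p_1$. The step I expect to be most error-prone is precisely the middle one: making sure that deciding $\dot c\frestr n_0$ and an element of $\dot D$ can be carried out simultaneously without escaping $\Por_{\alpha+n_0}$.
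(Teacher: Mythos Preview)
Your argument is correct and is precisely the standard proof of this folklore fact; the paper itself states the lemma without proof, so there is nothing to compare against. The one point you flag as delicate---deciding both $\dot c\frestr n_0$ and an element of $\dot D$ without leaving $\Por_{\alpha+n_0}$---is indeed fine, since $\dot c\frestr n_0$ is a $\Por_{\alpha+n_0}$-name and $\dot D$ is a $\Por_\alpha$-name, so both decisions can be made by extending within $\Por_{\alpha+n_0}$.
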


The Cohen reals added by a FS iteration determine a Tukey connection for $\Cbf_\Mwf$ as follows.

\begin{corollary}\label{FScfpi}
    Any FS iteration of ccc posets of length $\pi$ with uncountable cofinality forces $\non(\Mwf) \leq \cf(\pi) \leq \cov(\Mwf)$, even more, $\pi\leqT \Cbf_\Mwf$ and $\gfrak\leq\cf(\pi)$.
\end{corollary}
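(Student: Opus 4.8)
The plan is to exploit the Cohen reals that a FS iteration produces at limit stages (Lemma~\ref{FSCohen}) together with the cofinal structure of $\pi$. Write $\delta=\cf(\pi)$ and fix a strictly increasing cofinal sequence $\langle \pi_\xi\colon \xi<\delta\rangle$ in $\pi$ with $\pi_0=0$; without loss of generality each $\pi_\xi$ is a limit ordinal (replace $\pi_\xi$ by $\pi_\xi+\omega$ if necessary, using Lemma~\ref{FSCohen}), so that for each $\xi<\delta$ there is a Cohen real $c_\xi\in 2^\omega$ over $V_{\pi_\xi}$ lying in $V_{\pi_{\xi+1}}$. I will build a Tukey connection $\pi\leqT\Mg$ (recall $\Mg\eqT\Cbf_\Mwf$ by Corollary~\ref{M=CM}); combined with Corollary~\ref{cor:Tukeyval}(b) and Fact~\ref{basicdir} (which gives $\bfrak(\pi)=\dfrak(\pi)=\cf(\pi)$ when $\pi$ is viewed as a linear order, since $\cf(\pi)>\omega$ means $\pi$ has no maximum), this yields $\non(\Mwf)=\bfrak(\Mg)\leq\bfrak(\pi)=\cf(\pi)$ and $\cf(\pi)=\dfrak(\pi)\leq\dfrak(\Mg)=\cov(\Mwf)$.

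For the Tukey connection, here is the key point. Work in $V_\pi$. Define $\Psi_-\colon \pi\to 2^\omega$ by sending $\alpha<\pi$ to $c_\xi$ where $\xi$ is least with $\alpha<\pi_\xi$ (and anything fixed if no such $\xi$, which cannot happen by cofinality). For the other direction, given a matching real $(y,\bar I)\in V_\pi$, by Lemma~\ref{FSnoreal} (using $\cf(\pi)>\omega$) there is some $\xi<\delta$ with $(y,\bar I)\in V_{\pi_\xi}$; let $\Psi_+(y,\bar I):=\pi_\xi$ for such a $\xi$ (chosen by some fixed rule). Now suppose $\alpha<\pi$ and $(y,\bar I)\in 2^\omega\times\Ibb$ satisfy $\Psi_-(\alpha)\relM(y,\bar I)$; I must derive $\alpha<\Psi_+(y,\bar I)$. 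If not, then $\Psi_+(y,\bar I)=\pi_\xi\leq\alpha$, so $(y,\bar I)\in V_{\pi_\xi}$ and, writing $\eta$ for the least index with $\alpha<\pi_\eta$, we have $\pi_\xi\leq\alpha<\pi_\eta$, hence $\xi<\eta$ and in fact $\xi\leq$ the predecessor stage, so $(y,\bar I)\in V_{\pi_\xi}\subseteq V_{\pi_{\eta-1}}$ — more carefully, $(y,\bar I)$ lies in the model over which $c_{\eta-1}=\Psi_-(\alpha)$ is Cohen. But $\Psi_-(\alpha)\relM(y,\bar I)$ asserts $\Psi_-(\alpha)\in M(y,\bar I)$, a Borel meager set coded in that model, contradicting Theorem~\ref{Cohenreal} which says a Cohen real avoids every such $M(y,\bar I)$. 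This contradiction gives $\alpha<\Psi_+(y,\bar I)$, so $(\Psi_-,\Psi_+)$ is a Tukey connection $\pi\to\Mg$.

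For the extra claim $\gfrak\leq\cf(\pi)$, I would recall that $\gfrak$ can be characterized (or bounded) via Cohen-real–like reals: the standard fact is $\cf(\cfrak)\geq\gfrak$ and, more to the point, that adding Cohen reals cofinally forces the relevant groupwise-dense structure; concretely one uses that $\gfrak\leq\dfrak(\Mg^{\mathrm{gw}})$ for an appropriate groupwise variant, or simply cites that a FS iteration of non-trivial posets of cofinality $\delta$ forces $\gfrak\leq\delta$ because the cofinally many Cohen reals yield, for any $\delta$-indexed family of groupwise dense sets, a pattern forcing small intersection — this is where I expect to lean on a known lemma rather than reprove it. The main obstacle in the whole argument is bookkeeping the ``least $\xi$ with $\alpha<\pi_\xi$'' versus ``stage where $(y,\bar I)$ appears'' so that the model containing $(y,\bar I)$ genuinely precedes the stage where the relevant Cohen real is added; once the indices are lined up, the contradiction with Theorem~\ref{Cohenreal} is immediate, and the $\non(\Mwf)\leq\cf(\pi)\leq\cov(\Mwf)$ inequalities drop out of Corollary~\ref{cor:Tukeyval}.
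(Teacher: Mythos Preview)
Your Tukey-connection argument for $\pi\leqT\Mg$ is the same idea as the paper's, but your bookkeeping is tangled and contains an index slip. You define $\Psi_-(\alpha)=c_\eta$ with $\eta$ the \emph{least} index satisfying $\alpha<\pi_\eta$, yet in the verification you suddenly write $c_{\eta-1}=\Psi_-(\alpha)$ and invoke $V_{\pi_{\eta-1}}$; taken literally this is wrong (and meaningless when $\eta$ is a limit). With your own definition the argument is simply: if $\Psi_+(y,\bar I)=\pi_\zeta\leq\alpha$, then $\zeta<\eta$, so $(y,\bar I)\in V_{\pi_\zeta}\subseteq V_{\pi_\eta}$, and since $c_\eta$ is Cohen over $V_{\pi_\eta}$, Theorem~\ref{Cohenreal} gives $c_\eta\not\relM(y,\bar I)$---no predecessor needed. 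The paper avoids this fuss by indexing directly over $\pi$: set $\Psi_-(\alpha):=c_\alpha$ with $c_\alpha\in V_{\alpha+\omega}$ Cohen over $V_\alpha$ (via Lemma~\ref{FSCohen}), and $\Psi_+(y,\bar I):=\alpha_{y,\bar I}$ the first stage at which $(y,\bar I)$ appears (via Lemma~\ref{FSnoreal}). This makes the verification a one-liner.

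Your treatment of $\gfrak\leq\cf(\pi)$ is a genuine gap: you gesture at ``a known lemma'' and sketch something about groupwise-dense sets and Cohen reals that is not an argument. The paper does not improvise here either; it quotes a theorem of Blass (\cite[Thm.~2]{BlassSP}, see also Brendle~\cite[Lem.~1.17]{BrHejnice}): if $\langle\omega^\omega\cap V'_\xi:\xi<\nu\rangle$ is strictly increasing with union $\omega^\omega\cap V'_\nu$, then $V'_\nu\models\gfrak\leq\nu$. The paper then just takes $V'_\xi:=V_{\alpha_\xi}$ for an increasing cofinal sequence $\langle\alpha_\xi:\xi<\cf(\pi)\rangle$ of limit ordinals and $V'_\nu:=V_\pi$; the tower is strictly increasing because of the Cohen reals added along the way, and its union is all of $\omega^\omega\cap V_\pi$ by Lemma~\ref{FSnoreal}. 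You should cite this result rather than hand-wave.
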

\begin{proof}
    Work in $V_\pi$. For any matching real $(y,\bar I)$, by \autoref{FSnoreal} there is some $\alpha_{y,\bar I}<\pi$ such that $(y,\bar I)\in V_{\alpha_{y,\bar I}}$. On the other hand, by \autoref{FSCohen}, there is some Cohen real $c_\alpha\in 2^\omega\cap V_{\alpha+\omega}$ over $V_\alpha$. Then by \autoref{Cohenreal}, $c_\alpha \not\relM (y,\bar I)$ whenever $(y,\bar I) \in V_\alpha$, which happens when $\alpha_{y,\bar I}\leq\alpha$. This indicates that the maps $\alpha\mapsto c_\alpha$ and $(y,\bar I)\mapsto \alpha_{y,\bar I}$ form a Tukey connection for $\pi\leqT \Mg$.
    
    We now show that $\gfrak\leq \cov(\Mcal)$ in $V_\pi$. Let $\seq{\alpha_\xi}{\xi<\cf(\pi)}$ be an increasing sequence of limit ordinals with limit $\nu:=\cf(\pi)$. Set $V'_\xi:=V_{\alpha_\xi}$ for $\xi<\nu$ and $V'_\nu:=V_\pi$. Then $\seq{\baire\cap V'_\xi}{\xi<\nu}\in V'_\nu$ is strictly increasing and $\bigcup_{\xi<\nu}\baire\cap V'_\xi = \baire\cap V'_\nu$. Hence, by a result of Blass~\cite[Thm.~2]{BlassSP} (see also Brendle's proof~\cite[Lem.~1.17]{BrHejnice}), it follows that $\gfrak\leq\nu$ in $V'_\nu$.
\end{proof}

The previous result puts a restriction on the models of Cicho\'n's diagram that can be obtained via FS iterations of ccc posets (of uncountable cofinality), since they force the inequalities $\non(\Mwf)\leq\cov(\Mwf)$ and $\gfrak\leq\cov(\Mwf)$. Therefore, in such models, the diagram of cardinal characteristics presented in \autoref{fig:manyinv} takes the form as in \autoref{fig:FSmany}.

\begin{figure}[ht]
\centering
\begin{tikzpicture}
\small{
 \node (aleph1) at (-1.5,0) {$\aleph_1$};
 \node (addn) at (0,2.5){$\add(\Nwf)$};
 \node (covn) at (2,5){$\cov(\Nwf)$};
 \node (cove) at (7,3.75) {$\cov(\Ecal)$};
 \node (b) at (2,0) {$\bfrak$};
 \node (nonm) at (4,2.5) {$\non(\Mcal)$} ;
 \node (none) at (3,1.25) {$\non(\Ecal)$};
 \node (d) at (8,5) {$\dfrak$};
 \node (covm) at (6,2.5) {$\cov(\Mcal)$} ;
 \node (nonn) at (8,0) {$\non(\Ncal)$} ;
 \node (cfn) at (10,2.5) {$\cof(\Ncal)$} ;
 \node (s) at (3,0) {$\sfrak$};
 \node (r) at (7,5) {$\rfrak$};
 \node (c) at (11.5,5) {$\cfrak$};
 \node (m) at (0,0) {$\mathfrak m$};
 \node (p) at (1,-1) {$\mathfrak p$};
 \node (h) at (2.5,-1) {$\mathfrak h$};
 \node (g) at (5,1.5) {$\mathfrak g$};
 \node (e) at (2,2.5) {$\mathfrak e$};
 \node (a) at (3.5,6) {$\mathfrak a$};
 \node (u) at (8,6) {$\mathfrak u$};
 \node (i) at (10,6) {$\mathfrak i$};

\foreach \from/\to in {
m/addn, cfn/c, aleph1/m, m/p, p/h, h/s, h/g, r/u, u/c,
h/b, 
g/covm, p/e, addn/e, e/none, 
b/a, a/c, d/i, r/i, i/c,
s/none, 
cove/r,
b/none, cove/d,
covm/cove, none/nonm,
addn/covn, 
addn/b, covn/nonm, covm/nonn, d/cfn, nonn/cfn,
nonm/covm}
{
\path[-,draw=white,line width=3pt] (\from) edge (\to);
\path[->,] (\from) edge (\to);
}
\path[->,draw=red,line width=1pt] (nonm) edge (covm);
\path[->,draw=red,line width=1pt] (g) edge (covm);
}
\end{tikzpicture}
\caption{Cicho\'n's diagram with other classical cardinal characteristics after a FS iteration of ccc (non-trivial) posets of length with uncountable cofinality, as an effect of the forced inequalities $\non(\Mwf) \leq \cov(\Mwf)$ and $\gfrak \leq \cov(\Mwf)$.}\label{fig:FSmany}
\end{figure}
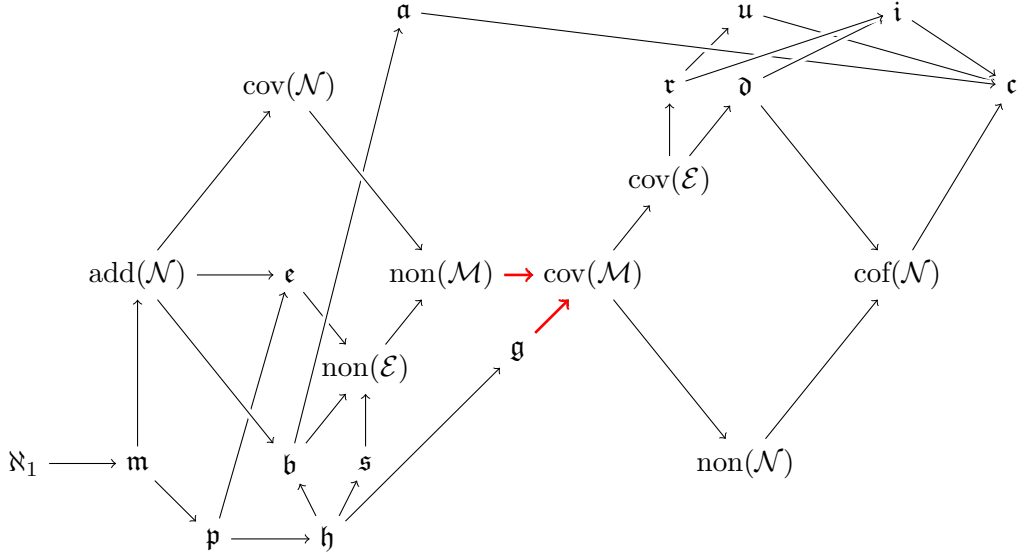

Below, we summarize the effect of the forcings introduced in this section to modify the cardinals in Cicho\'n's diagram:

\begin{enumerate}[label = \normalfont (\arabic*)]
    \item When $h\to \infty$, $\Loc_h$ adds $\Lc(\omega,h)$-dominating reals (so it affects $\add(\Nwf)$ and $\cof(\Nwf)$).
    \item Random forcing adds $\Cn$-dominating reals (affecting $\cov(\Nwf)$ and $\non(\Nwf)$).
    \item Hechler forcing adds $\la \omega^\omega,\leq^*\ra$-dominating reals (affecting $\bfrak$ and $\dfrak$).
    \item The forcing $\Ebb$ adds $\Ed$-dominating reals, and also $\Cbf_\Ewf$-dominating reals (affecting $\non(\Mwf)$, $\non(\Ewf)$ and $\cov(\Mwf)$, $\cov(\Ewf)$).
    \item Cohen forcing adds $\Mg$-unbounded reals (affecting $\cov(\Mwf)$ and $\non(\Mwf)$).
\end{enumerate}

We are going to use these forcings to modify the cardinals in Cicho\'n's diagram. However, we cannot just simply add dominating reals without any particular restriction, as indicated in the following result.

\begin{lemma}
    Let $\la \Por_\alpha,\Qnm_\beta\colon \alpha\leq\pi,\ \beta<\pi\ra$ be a FS iteration of ccc posets. 
    Assume that $\cf(\pi)>\omega$, $K\subseteq \pi$ is cofinal, $R = \la X,Y,\sqsubset\ra$ is a relational system of the reals (see \autoref{realsys}), and assume that, for $\alpha\in K$, $\Qnm_\alpha$ adds an $R$-dominating real over $V_\alpha$.

    Then $\Por_\pi$ forces $\dfrak(R) \leq \cf(\pi) \leq \bfrak(R)$, even $R\leqT \pi$. 
    
    Moreover, if $\thzfc$ proves $\Cbf_\Mwf\leqT R$, then $\Por_\pi$ forces $R\eqT \Cbf_\Mwf \eqT \pi$, so $\bfrak(R)=\dfrak(R)=\non(\Mwf)=\cov(\Mwf) = \cf(\pi)$.
\end{lemma}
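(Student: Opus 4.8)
The plan is to mimic the two-sided argument already established in \autoref{FScfpi}, which handled the special case $R = \Mg$. For the first part, work in $V_\pi$ and set $\nu := \cf(\pi)$. Fix an increasing cofinal sequence $\seq{\alpha_\xi}{\xi < \nu}$ in $\pi$ with each $\alpha_\xi \in K$ (possible since $K$ is cofinal and $\cf(\pi) > \omega$). For the inequality $\dfrak(R) \le \cf(\pi)$: since $\cf(\pi) > \omega$, by \autoref{FSnoreal} every $x \in X \cap V_\pi$ appears in some $V_{\alpha_\xi}$; for $\xi \in \nu$ let $y_\xi \in Y \cap V_{\alpha_\xi + 1}$ be the $R$-dominating real added by $\Qnm_{\alpha_\xi}$ over $V_{\alpha_\xi}$, so $x \sqsubset y_\xi$ for every $x \in X \cap V_{\alpha_\xi}$. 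Then $\set{y_\xi}{\xi < \nu}$ is $R$-dominating, giving $\dfrak(R) \le \nu$. This simultaneously exhibits the Tukey connection $R \leqT \pi$: the map $\Psi_- \colon X \to \pi$ sends $x$ to the least $\alpha_\xi$ with $x \in V_{\alpha_\xi}$, and $\Psi_+ \colon \pi \to Y$ sends $\alpha$ to $y_\xi$ for the least $\xi$ with $\alpha \le \alpha_\xi$ (and anything on a tail-less input, using that $\pi$ has no maximum); if $\Psi_-(x) \le \alpha_\xi$ then $x \in V_{\alpha_\xi}$, hence $x \sqsubset y_\xi = \Psi_+(\alpha_\xi)$. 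By \autoref{cor:Tukeyval}(b) this Tukey connection yields $\dfrak(R) \le \dfrak(\pi) = \cf(\pi)$ and $\cf(\pi) = \bfrak(\pi) \le \bfrak(R)$, where $\bfrak(\pi) = \dfrak(\pi) = \cf(\pi)$ by \autoref{basicdir} since $\pi$ (as a directed preorder, i.e.\ linear order) has no maximum and its cofinality is $\cf(\pi)$.

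For the ``Moreover'' part, suppose $\thzfc$ proves $\Cbf_\Mwf \leqT R$. By \autoref{FScfpi} (applicable since the iteration is of ccc nontrivial posets of uncountable cofinality), $\Por_\pi$ forces $\pi \leqT \Cbf_\Mwf$; combined with the hypothesis this gives $\pi \leqT \Cbf_\Mwf \leqT R$ in $V_\pi$. From the first part we have $R \leqT \pi$ in $V_\pi$. Chaining these Tukey connections (composition of Tukey connections is a Tukey connection) yields $R \eqT \Cbf_\Mwf \eqT \pi$ in $V_\pi$. Then \autoref{cor:Tukeyval}(c) gives $\bfrak(R) = \bfrak(\Cbf_\Mwf) = \non(\Mwf)$ and $\dfrak(R) = \dfrak(\Cbf_\Mwf) = \cov(\Mwf)$, and since $\bfrak(\pi) = \dfrak(\pi) = \cf(\pi)$ all of these equal $\cf(\pi)$.

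The only mildly delicate point is the definition of the ``backward'' map $\Psi_+$ in the Tukey connection $R \leqT \pi$: an element $\alpha \in \pi$ need not lie below any single $\alpha_\xi$ if we are careless, but since $\seq{\alpha_\xi}{\xi<\nu}$ is cofinal in $\pi$ every $\alpha$ does satisfy $\alpha \le \alpha_\xi$ for cofinally many $\xi$, so taking the least such $\xi$ is fine, and the Tukey inequality $\Psi_-(x) \sqsubset' \alpha \Rightarrow x \sqsubset \Psi_+(\alpha)$ holds because $\Psi_-(x) \le \alpha$ forces $x \in V_{\alpha_\xi}$ for the $\xi$ chosen by $\Psi_+(\alpha)$. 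Everything else is a routine invocation of the absoluteness built into the notion of relational system of the reals (so that ``$x \in X$'', ``$y \in Y$'', ``$x \sqsubset y$'' mean the same in all $V_\alpha$), of \autoref{FSccc} (to know $\Por_\pi$ is ccc, hence that reals are captured at proper initial stages via \autoref{FSnoreal}), and of \autoref{FScfpi} for the Cohen side. I expect no genuine obstacle; the content is entirely in correctly assembling the two Tukey connections.
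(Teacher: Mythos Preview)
Your proof is correct and follows essentially the same approach as the paper's. The paper is slightly more direct---it sends $x\mapsto\alpha_x$ (any stage with $x\in V_{\alpha_x}$) and $\alpha\mapsto y_\alpha$ (the dominating real added at some $\beta_\alpha\in K$ above $\alpha$) without first fixing a cofinal sequence in $K$, and it dispatches the ``Moreover'' part with a single reference to \autoref{FScfpi}; your version unpacks the same argument with a preselected cofinal sequence and an explicit check of the Tukey condition, but the content is identical.
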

\begin{proof}
Work in $V_\pi$. If $x\in X$, by \autoref{FSnoreal} there is some $\alpha_x<\pi$ such that $x\in V_{\alpha_x}$. On the other hand, for any $\alpha<\pi$, there is some $\beta_\alpha\in K$ above $\alpha$, so $\Qnm_{\beta_\alpha}$ adds an $R$-dominating real $y_\alpha\in Y$ over $V_{\beta_\alpha}$. Then the maps $x\mapsto \alpha_x$ and $\alpha\mapsto y_\alpha$ form the Tukey connection for $R\leqT \pi$.

The rest is a consequence of \autoref{FScfpi}.
\end{proof}

When aiming to force many different values to cardinal characteristics, we cannot add \emph{full} dominating reals as in the previous lemma. However, there is a way to add \emph{restricted} dominating reals, allowing better control of the cardinal characteristics. We develop this technique in the following part.

\subsection{Book-keeping arguments}

Fix, for the rest of this section:
\begin{enumerate}[label = \normalfont (\arabic*)]
    \item A relational system $R=\la X,Y,\sqsubset\ra$ of the reals (see~\autoref{realsys}) such that $|X|=\cfrak =2^{\aleph_0}$ 

    \item A very definable (i.e.\ Suslin) ccc poset $\Qor_R$ adding $R$-dominating reals over the ground model, such that $|\Qor_R|\leq\cfrak$. Note that $\Loc_h$, $\Dor$, $\Ebb$, random forcing and Cohen forcing satisfy these conditions (for certain $R$ as in the previous subsection).

    \item An infinite cardinal $\theta$.
\end{enumerate}

We aim to force $\bfrak(R) =\theta$. For the rest of this section we deal with $\bfrak(R)\geq\theta$, and from the following section onwards we deal with the converse inequality.

Forcing $\bfrak(R)\geq\theta$ means to force that $\forall\, F\in [X]^{<\theta}\ \exists\, y\in Y\ \forall\, x\in F\colon x\sqsubset y$. One way is to deal with one $F$ at a time along a FS iteration. Concretely, if we are at step $\alpha$ of a FS iteration, we pick some $F_\alpha\in [X]^{<\theta}\cap V_\alpha$ and aim to add a $y_\alpha\in Y\cap V_{\alpha+1}$ that $R$-dominates all members of $F_\alpha$. A very effective idea to do this comes from Brendle~\cite{Br}: in $V_\alpha$, pick a transitive model $N_\alpha$ of ZFC such that $F_\alpha\subseteq N_\alpha$ and $|N_\alpha|=\max\{\aleph_0,|F_\alpha|\}<\theta$.\footnote{This is possible because the members of $X$ are ``reals".} So forcing with $\Qor_\alpha=\Qor^{N_\alpha}_R$ (which is ccc) does the job: it adds an $R$-dominating $y_\alpha$ over $N_\alpha$, hence it dominates all members of $F_\alpha$. The hope is that this $y_\alpha$ does not dominate much larger fragments of $X$.

Now, assume that $\pi$ is an ordinal of uncountable cofinality, and that we perform a FS iteration of ccc posets of length $\pi$ as explained before. To force $\bfrak(R)\geq\theta$, it is enough to guarantee that, in $V_\pi$, $\set{F_\alpha}{\alpha<\pi}$ is \emph{cofinal in $[X]^{<\theta}$}. Indeed, if $F\in [X]^{<\theta}$ then $F\subseteq F_\alpha$ for some $\alpha<\pi$, so $y_\alpha$ dominates all members of $F_\alpha$, and then all members of $F$.

In the practice, we do not use all steps $\alpha<\pi$ to take care of $\bfrak(R)\geq\theta$, but only steps $\alpha\in K$ for some (cofinal) $K\subseteq \pi$, while other steps can be used to take care of something else.
So we explain how to construct an iteration as above ensuring that some choice of $\set{F_\alpha}{\alpha\in K}$ is cofinal in $[X]^{<\theta}$.

To do this, we first have to look at what happens to $|X|$ in the final extension. Recall that $|X|=\cfrak$. Assume $\theta\leq\lambda=\lambda^{\aleph_0}$. Then, in a FS iteration of length $\pi<\lambda^+$ of ccc posets, we can ensure that $|\Por_\alpha|\leq \lambda$ and $\Vdash_\alpha \cfrak\leq\lambda$ as long as we have $\Vdash_\alpha |\Qnm_\alpha|\leq\lambda$ for all $\alpha<\pi$. This is fine in this context because all forcings we use to iterate have size ${\leq}\cfrak$.

If $\lambda\leq\pi<\lambda^+$ then, in $V_\pi$, $|X| = \cfrak =\lambda$, so $\cof([X]^{<\theta}) = \cof([\lambda]^{<\theta})$. Now, the existence of a collection $\set{F_\alpha}{\alpha\in K}$ cofinal in $[X]^{<\theta}$ for some $K\subseteq \pi$, $K\in V$, implies that $\cof([\lambda]^{<\theta}) = \cof([X]^{<\theta}) \leq |K| \leq |\pi| \leq \lambda$. Hence, a requirement to obtain such a cofinal family is that $\cof([\lambda]^{<\theta}) = \lambda$ and $|K| = \lambda$.

We now show that the assumptions $\theta\leq\cf(\lambda)\leq\lambda=\lambda^{\aleph_0}$ and $\cof([\lambda]^{<\theta}) = \lambda$ are enough to construct such an iteration via a \emph{book-keeping argument}. We only explain the construction for $\pi=\lambda$, but it works when $\pi=\lambda\delta$ (ordinal product) for some $0<\delta<\lambda^+$ and $\cf(\pi)\geq\theta$. Let $K\subseteq\lambda$ of size $\lambda$ and
fix a bijection $h\colon K\to \lambda\times\lambda$ such that $h(\alpha) = (\xi,\eta)$ implies $\xi\leq \alpha$. Now, perform a FS iteration of ccc posets and assume we have reached the stage $\alpha<\lambda$. Since $\cov([\lambda]^{<\theta})$ is not modified by ccc forcing\footnote{Because, when $\theta$ is uncountable, in any ccc generic extension, any set of ordinals of size ${<}\theta$ is covered by a set in the ground model of size ${<}\theta$.}, in $V_\alpha$ we can pick a cofinal $\set{F_{\alpha,\eta}}{\eta<\lambda}$ in $[X]^{<\theta}\cap V_\alpha$ (because $|X|=\cfrak\leq \lambda$). In the previous steps $\xi\leq\alpha$, in the same way we had picked in $V_\xi$ a cofinal $\set{F_{\xi,\eta}}{\eta<\lambda}$ in $[X]^{<\theta}\cap V_\xi$. 
If $\alpha\notin K$ then we can force with any ccc poset, but when $\alpha\in K$,
the book-keeping function $h$ makes the choice: letting $h(\alpha)=(\xi,\eta)$, pick $F_{h(\alpha)} = F_{\xi,\eta}$ (which exists because $\xi\leq\alpha$). As before, let $N_\alpha$ be a transitive model of $\thzfc$ such that $F_{h(\alpha)} \subseteq N_\alpha$ and $|N_\alpha|<\theta$, and we force with $\Qor_\alpha:= \Qor_R^{N_\alpha}$ to go to the step $\alpha+1$.

At the end of the iteration, in $V_\lambda$, we have ensured that each member of $\set{F_{h(\alpha)}}{\alpha\in K} = \set{F_{\xi,\eta}}{\xi,\eta<\lambda}$ is $R$-bounded. It remains to ensure that this family is cofinal in $[X]^{<\theta}$: If $F\in[X]^{<\theta}$ then, since $\cf(\lambda)\geq\theta$, we have that $F\in V_\xi$ for some $\xi<\lambda$, so $F\subseteq F_{\xi,\eta}$ for some $\eta<\lambda$.

Concerning $\cof([\lambda]^{<\theta})$, by \autoref{basicdir} we have that $\cf(\cof([\lambda]^{<\theta})) \geq \add([\lambda]^{<\theta}) = \cf(\theta)$, so $\cof([\lambda]^{<\theta}) = \lambda$ implies $\cf(\lambda)\geq \cf(\theta)$, which is $\cf(\lambda)\geq\theta$ in the case when $\theta$ is regular.

Using the book-keeping argument presented above, we are now ready to present the first important construction of models with several pairwise different cardinal characteristics.

\begin{theorem}\label{main1}
    Let $\aleph_1\leq \theta_1\leq \theta_2\leq \theta_3 \leq \theta_4$ be regular cardinals, and assume $\lambda$ is a cardinal such that $\lambda = \lambda^{\aleph_0}$ and $\cf([\lambda]^{<\theta_i}) = \lambda$ for $i=1,\ldots,4$. Then, we can construct a FS iteration of length (and size) $\lambda$ of ccc posets forcing $\add(\Nwf) = \theta_1$, $\cov(\Nwf) = \theta_2$, $\bfrak \geq \theta_3$, $\non(\Ewf) =\non(\Mwf) = \theta_4$ and $\cov(\Mwf) = \cfrak = \lambda$ (see \autoref{fig:main1}). 
\end{theorem}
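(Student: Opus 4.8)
The plan is to construct the iteration by combining the book-keeping machinery just developed with the specific Suslin ccc forcings from the previous subsections, using a single FS iteration of length $\lambda$ that interleaves four ``tasks'', each on its own cofinal subset of $\lambda$. First I would partition (a cofinal subset of) $\lambda$ into four stationary-or-merely-cofinal pieces $S_1,S_2,S_3,S_4$, each of size $\lambda$, reserving $S_i$ for the task of forcing $\bfrak(R_i)\geq\theta_i$ where $R_1=\Lc(\omega,\mathrm{id})$ (equivalently $\Nwf$, giving $\add(\Nwf)\geq\theta_1$), $R_2=\Cn$ (giving $\cov(\Nwf)\geq\theta_2$), $R_3=\la\baire,\leq^*\ra$ (giving $\bfrak\geq\theta_3$), and $R_4=\Ed$ (giving $\non(\Mwf)\geq\theta_4$, and simultaneously $\non(\Ewf)\geq\theta_4$ by \autoref{edE} together with \autoref{thm:E}(a)). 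On each $S_i$ I run exactly the book-keeping argument of this subsection relative to $R_i$ and $\Qor_{R_i}\in\{\Loc_{\mathrm{id}},\text{random},\Dor,\Ebb\}$: at stage $\alpha\in S_i$ with $h(\alpha)=(\xi,\eta)$ I pick the bookkept set $F_{\xi,\eta}\in[X_i]^{<\theta_i}\cap V_\xi$, take a transitive model $N_\alpha\models\thzfc$ of size $<\theta_i$ containing it, and force with $\Qor_{R_i}^{N_\alpha}$. Since each such restricted forcing is ccc and of size ${\leq}\cfrak\leq\lambda$, \autoref{FSccc} keeps the whole iteration ccc and the size/\,$\cfrak\leq\lambda$ bookkeeping goes through as in the text; because $\lambda=\lambda^{\aleph_0}$ and $\cf([\lambda]^{<\theta_i})=\lambda$ for all $i$, each of the four bookkeeping families ends up cofinal in $[X_i]^{<\theta_i}$ in $V_\lambda$, yielding $\bfrak(R_i)\geq\theta_i$ simultaneously.

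For the reverse inequalities I would argue as follows. The Cohen reals added at limit stages (\autoref{FSCohen}, \autoref{FScfpi}) give $\cov(\Mwf)\geq\cf(\lambda)=\lambda$ and indeed $\lambda\leqT\Cbf_\Mwf$ in $V_\lambda$, while $\cfrak\leq\lambda$ from the size bookkeeping forces $\cov(\Mwf)=\cfrak=\lambda$. For the upper bounds $\add(\Nwf)\leq\theta_1$, $\cov(\Nwf)\leq\theta_2$, $\bfrak$ need only satisfy $\bfrak\geq\theta_3$ (no upper bound is claimed), and $\non(\Mwf)=\non(\Ewf)\leq\theta_4$, the standard method is to exhibit, for each relevant cardinal, a witnessing family of size $\theta_i$ produced by an initial segment of the iteration that is not destroyed later. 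Concretely, take the first $\theta_1$-many random-free, Hechler-free, $\Ebb$-free stages — i.e.\ I would use that the sub-iteration of length $\theta_i$ consisting only of the $R_i$-steps (with cofinal index set inside $\theta_i$) forces $\dfrak(R_i)\leq\theta_i$ by the ``full dominating reals'' lemma applied inside that sub-iteration, hence a $\dfrak(R_i)$-sized $R_i$-dominating family appears by stage $\theta_i$; then I must check this family remains $R_i$-dominating in $V_\lambda$, which is where a \emph{preservation theorem} for FS iterations is needed — precisely the kind of result flagged for \autoref{sec:pres}. Assuming those preservation theorems (that FS iterations of the relevant Suslin ccc posets do not add $R_i$-unbounded reals over suitable submodels, i.e.\ ``$\sqsubset$-good'' iterations preserve $R_i$-dominating families), one gets $\dfrak(R_i^\perp)=\bfrak(R_i)\leq\theta_i$ for $i=1,2,4$; monotonicity in Cichoń's diagram (\autoref{FigCichon}, \autoref{thm:E}(a)) then pins the four values to exactly $\theta_1\leq\theta_2$, $\bfrak\geq\theta_3$, $\theta_4$, and the chain $\theta_1\leq\theta_2\leq\theta_3\leq\theta_4$ guarantees consistency of all these constraints with the arrows.

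The main obstacle is precisely the preservation part: showing that the upper-bound-witnessing families built early are not spoiled by the later ($\theta_i$-bounded) forcings aimed at the \emph{other} three tasks. One must verify that $\Loc_{\mathrm{id}}$, random forcing, Hechler forcing and $\Ebb$ (and their Suslin restrictions to submodels) are ``good'' for the orthogonal relational systems in the appropriate sense — e.g.\ random forcing is $\baire$-bounding and $\in^*$-good, Hechler and $\Ebb$ are $\sqsubset_{\Lc}$-good and $\Cn$-good in the relevant framework — and that goodness is preserved under FS iteration. These facts are exactly the content of the iteration/preservation theory of \autoref{sec:pres}, so in this proof I would simply invoke them; the combinatorial heart of the present theorem is the four-way interleaved book-keeping, which is a routine but careful merge of the single-task argument above. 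A secondary technical point is arranging the cofinalities: since each $\theta_i$ is regular and $\cf([\lambda]^{<\theta_i})=\lambda$ forces $\cf(\lambda)\geq\theta_i$ (as noted just before the theorem), in particular $\cf(\lambda)\geq\theta_4$, so the ``$F\in V_\xi$ for some $\xi<\lambda$'' step of each bookkeeping goes through uniformly.
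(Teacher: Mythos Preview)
Your book-keeping half is essentially the paper's Part~1: partition $\lambda$ into four cofinal pieces and at stage $\alpha\in S_i$ force with $\Qor_{R_i}^{N_\alpha}$ for a small model $N_\alpha$ of size ${<}\theta_i$, exactly as written. That part is fine. The paper does insert one extra move you omit: a preliminary \emph{Step~1} in which one forces with $\Cor_\lambda$ before starting the book-keeping iteration. Those $\lambda$-many Cohen reals are not just a source of $\cov(\Mwf)\geq\lambda$; they are the raw material for the upper bounds.

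Your plan for the upper bounds has a genuine gap. You propose to obtain $\bfrak(R_i)\leq\theta_i$ by building, in an initial sub-iteration of length $\theta_i$, an $R_i$-\emph{dominating} family of size $\theta_i$ and then preserving it. But an $R_i$-dominating family of size $\theta_i$ witnesses $\dfrak(R_i)\leq\theta_i$, not $\bfrak(R_i)\leq\theta_i$; for $R_1=\Lc(\omega,\id)$ this would mean $\cof(\Nwf)\leq\theta_1$, which is false in the target model (there $\cof(\Nwf)=\lambda$). More concretely, such a family cannot survive: any Cohen real added at a later limit stage is $R_i$-unbounded over everything earlier, so no earlier family of slaloms (or of measure-zero codes, or of ev.-different reals) can remain dominating in $V_\lambda$. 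The switch you make at the end to ``$\dfrak(R_i^\perp)=\bfrak(R_i)$'' does not rescue this, because what you described building and preserving is an $R_i$-dominating family, not an $R_i^\perp$-dominating (i.e.\ $R_i$-unbounded) one.

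The paper's mechanism is the opposite one. The Step~1 Cohen reals $\{c_\alpha:\alpha<\lambda\}$ form, for each Prs $R$, an $\aleph_1$-$R$-\emph{unbounded} family (\autoref{Cohenthetaunb}), hence a $\theta_i$-$R_i$-unbounded family for every $i$. One then shows that each iterand of Step~2 is $\theta_i$-$R_i$-\emph{good} (small posets are good by \autoref{smallgood}; random restrictions are $\Lc$-good by \autoref{spfamLc}; $\sigma$-centered restrictions of $\Dor$ and $\Ebb$ are $\Lc$- and $\Cn$-good by \autoref{centLc} and \autoref{centCn}), so by \autoref{goodit} and \autoref{mainpres} the strong unbounded families persist to $V_\lambda$, yielding $\bfrak(R_i)\leq\theta_i$ via \autoref{strongTukey}. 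So the object to preserve is an \emph{unbounded} family coming from Cohen reals, not a dominating family coming from the $R_i$-steps.
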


The argument of the proof only gives $\bfrak\geq\theta_3$. In \autoref{sec:uflim} we are going to show how to obtain $\bfrak=\theta_3$, in addition.

In the first part of the proof we only deal with equalities of the form $\bfrak(R)\geq \theta_i$ and $\cfrak =\lambda$. In the next section, we deal with the rest of the proof.

\begin{proof}[Proof of~\autoref{main1}, part 1.]
 Partition $\lambda = K_1\cup K_2\cup K_3 \cup K_4$ with $|K_i| = \lambda$. Proceed in two steps:

 \textbf{Step 1.} Force with $\Cor_\lambda$ (i.e.\ add $\lambda$-many Cohen reals).

 \textbf{Step 2.} In $V_0:=V^{\Cor_\lambda}$, using book-keeping as before at each $K_i$, iterate with length $\lambda$ and at:
 \begin{enumerate}[label = $\alpha\in K_{\arabic*}$:, left=0pt]
     \item  force with $\Loc^{N_\alpha}_\id$, 
     $|N_\alpha|<\theta_1$, which guarantees $\add(\Nwf)\geq \theta_1$ in the final extension;

     \item force with $(\Bwf(2^\omega)\menos\Nwf(2^\omega))^{N_\alpha}$, $|N_\alpha|<\theta_2$, which guarantees $\cov(\Nwf)\geq \theta_2$ in the final extension;

     \item force with $\Dor^{N_\alpha}$, $|N_\alpha|<\theta_3$, which guarantees $\bfrak\geq \theta_3$ in the final extension;

     \item force with $\Ebb^{N_\alpha}$, $|N_\alpha|<\theta_4$, which guarantees $\non(\Mwf)\geq \theta_4$, and even $\non(\Ewf)\geq \theta_4$ (by \autoref{edE}), in the final extension.
 \end{enumerate}
 It is clear by the construction that, in $V_\lambda$, $\cfrak = \lambda$.
\end{proof}

Note that we have not used the Cohen reals from step~1. These will be used to prove the converse inequalities in the next section.

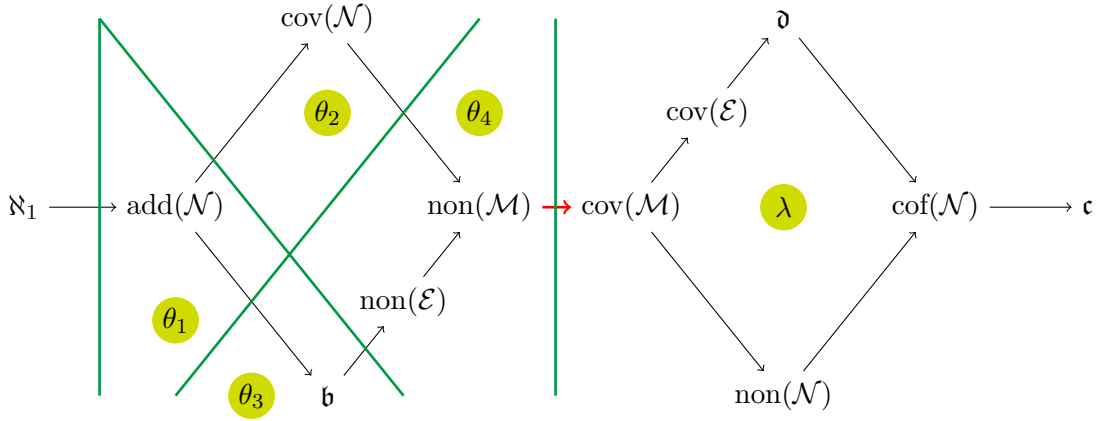
\begin{figure}[ht]
\centering
\begin{tikzpicture}
\small{
 \node (aleph1) at (-2,2.5) {$\aleph_1$};
 \node (addn) at (0,2.5){$\add(\Nwf)$};
 \node (covn) at (2,5){$\cov(\Nwf)$};
 \node (cove) at (7,3.75) {$\cov(\Ecal)$};
 \node (b) at (2,0) {$\bfrak$};
 \node (nonm) at (4,2.5) {$\non(\Mcal)$} ;
 \node (none) at (3,1.25) {$\non(\Ecal)$};
 \node (d) at (8,5) {$\dfrak$};
 \node (covm) at (6,2.5) {$\cov(\Mcal)$} ;
 \node (nonn) at (8,0) {$\non(\Ncal)$} ;
 \node (cfn) at (10,2.5) {$\cof(\Ncal)$} ;
  \node (c) at (12,2.5) {$\cfrak$};

\foreach \from/\to in {
aleph1/addn, addn/covn, addn/b, covn/nonm, b/none, none/nonm, covm/nonn, covm/cove, cove/d, nonn/cfn, d/cfn, cfn/c}
{
\path[-,draw=white,line width=3pt] (\from) edge (\to);
\path[->,] (\from) edge (\to);
}
\path[->,draw=red,line width=1pt] (nonm) edge (covm);

\draw[color=sug,line width=1]
(-1,0)--(-1,5)
(-1,5)--(3,0)
(0,0)--(4,5)
(5,5)--(5,0);

\draw[circle, fill=suy,color=suy] 
(0,1) circle (0.3)
(2,3.75) circle (0.3)
(1,0) circle (0.3)
(4,3.75) circle (0.3)
(8,2.5) circle (0.3);

\node at (0,1) {$\theta_1$};
\node at (2,3.75) {$\theta_2$};
\node at (1,0) {$\theta_3$};
\node at (4,3.75) {$\theta_4$};
\node at (8,2.5) {$\lambda$};

}
\end{tikzpicture}
\caption{The constellation of Cicho\'n's diagram forced in \autoref{main1}. For $\bfrak$ we can only guarantee that $\bfrak\leq\theta_3$, but the converse inequality is settled in \autoref{main2}.}\label{fig:main1}
\end{figure}

\section{Preservation theory for cardinal characteristics}\label{sec:pres}

We deal with the problem of forcing $\bfrak(R)\leq \theta$ (and much more) to conclude the proof of \autoref{main1}. We proceed in two steps: we first add a strong type of $R$-unbounded family (using Cohen reals), and then show that this strong unbounded family is not destroyed in the remaining part of the iteration. 

The strong type of unbounded family is defined as follows.

\begin{definition}
    Let $R=\la X,Y,\sqsubset\ra$ be a relational system, and $\theta$ an infinite  cardinal. We say that $\set{x_i}{i\in I}\subseteq X$ is a \emph{$\theta$-$R$-unbounded family} if $|I|\geq \theta$ and $|\set{i\in I}{x_i\sqsubset y}|<\theta$ for all $y\in Y$.
\end{definition}

Although a $\theta$-$R$-unbounded family is quite large, it has the property that any subset of size $\theta$ is $R$-unbounded, which guarantees $\bfrak(R)\leq \theta$. But we get much more, as indicated in the following result.

\begin{lemma}\label{strongTukey}
    Assume that $|I|\geq \theta$. Then there exists a $\theta$-$R$-unbounded family $\set{x_i}{i\in I}$ iff $\Cbf_{[I]^{<\theta}}\leqT R$. In particular, $\bfrak(R)\leq \non([I]^{<\theta}) = \theta$ and $\cov([I]^{<\theta})\leq \dfrak(R)$.
\end{lemma}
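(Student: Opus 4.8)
The plan is to unfold both directions of the equivalence via explicit Tukey connections, matching up the combinatorial data of a $\theta$-$R$-unbounded family with the relational system $\Cbf_{[I]^{<\theta}} = \la I, [I]^{<\theta}, \in\ra$ (recall $\bfrak(\Cbf_{[I]^{<\theta}}) = \non([I]^{<\theta}) = \theta$ and $\dfrak(\Cbf_{[I]^{<\theta}}) = \cov([I]^{<\theta})$, using $|I| \geq \theta$, by \autoref{exm:Iwf}). Once the equivalence $\Cbf_{[I]^{<\theta}} \leqT R$ is established, the ``in particular'' clause is immediate from \autoref{cor:Tukeyval}(b), so the real content is the biconditional.

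For the forward direction, suppose $\set{x_i}{i\in I}$ is $\theta$-$R$-unbounded. I would define $\Psi_- \colon I \to X$ by $\Psi_-(i) := x_i$, and $\Psi_+ \colon Y \to [I]^{<\theta}$ by $\Psi_+(y) := \set{i \in I}{x_i \sqsubset y}$; this lands in $[I]^{<\theta}$ precisely because of the defining property of the family. To check the Tukey condition: if $i \in I$, $y \in Y$ and $\Psi_-(i) = x_i \sqsubset y$, then by definition $i \in \Psi_+(y)$, i.e.\ $i \in \Psi_+(y)$, which is exactly the relation $\in$ in $\Cbf_{[I]^{<\theta}}$. So $(\Psi_-, \Psi_+)$ is a Tukey connection from $\Cbf_{[I]^{<\theta}}$ into $R$.

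For the converse, suppose $(\Psi_-, \Psi_+) \colon \Cbf_{[I]^{<\theta}} \to R$ is a Tukey connection, so $\Psi_- \colon I \to X$ and $\Psi_+ \colon Y \to [I]^{<\theta}$ satisfy: for all $i \in I$ and $y \in Y$, if $\Psi_-(i) \sqsubset y$ then $i \in \Psi_+(y)$. Put $x_i := \Psi_-(i)$. Then for any fixed $y \in Y$, the set $\set{i \in I}{x_i \sqsubset y}$ is contained in $\Psi_+(y) \in [I]^{<\theta}$, hence has size ${<}\theta$; together with $|I| \geq \theta$ this says $\set{x_i}{i \in I}$ is a $\theta$-$R$-unbounded family. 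I should note that the $x_i$ need not be distinct as points of $X$, but this is harmless: the family is indexed by $I$, and the definition of $\theta$-$R$-unbounded is stated in terms of the index set, so no injectivity is required.

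The argument is essentially a bookkeeping translation and there is no serious obstacle; the only point demanding a moment's care is making sure the map $\Psi_+$ genuinely takes values in $[I]^{<\theta}$ (rather than merely in $\pts(I)$) in the forward direction — this is exactly where the ``$<\theta$'' in the definition of the unbounded family is used — and, conversely, that one extracts the size bound from containment in $\Psi_+(y)$ in the backward direction. With the biconditional in hand, $\bfrak(R) \leq \bfrak(\Cbf_{[I]^{<\theta}}) = \non([I]^{<\theta}) = \theta$ and $\cov([I]^{<\theta}) = \dfrak(\Cbf_{[I]^{<\theta}}) \leq \dfrak(R)$ follow at once from \autoref{cor:Tukeyval}(b).
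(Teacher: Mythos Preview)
Your proof is correct and follows essentially the same approach as the paper: both directions use the maps $i\mapsto x_i$ and $y\mapsto\set{i\in I}{x_i\sqsubset y}$ (or, for the converse, $y\mapsto A_y\supseteq\set{i\in I}{x_i\sqsubset y}$), and the ``in particular'' clause is read off from \autoref{cor:Tukeyval}. Your write-up is in fact slightly more explicit than the paper's, spelling out why $\Psi_+$ lands in $[I]^{<\theta}$ and noting that injectivity of $i\mapsto x_i$ is not needed.
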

\begin{proof}
    If $\set{x_i}{i\in I}$ is a $\theta$-$R$-unbounded family then the maps 
    \[i\mapsto x_i \text{ and } y\mapsto \set{i\in I}{x_i\sqsubset y}\] 
    yield the desired Tukey connection.

    Conversely, assume that $\Cbf_{[I]^{<\theta}}\leqT R$ is witnessed by the Tukey connection $i\mapsto x_i$ and $y\mapsto A_y\in [I]^{<\theta}$, i.e.\ $x_i\sqsubset y$ implies $i\in A_y$. Therefore $\set{i\in I}{x_i\sqsubset y} \subseteq A_y$, so it has size ${<}\theta$. Hence, $\set{x_i}{i\in I}$ is a $\theta$-$R$-unbounded family.
\end{proof}

Concerning $\cov([I]^{<\theta})$, we have
\[\cov([I]^{<\theta}) = 
\begin{cases}
    |I| & \text{if $\theta<|I|$,}\\
    \cf(\theta) & \text{if $\theta=|I|$.}
\end{cases}\]
Hence $\cov([I]^{<\theta})=|I|$ when $\theta$ is regular.

%
%
%
%

Recall from the previous section the Polish relational systems describing the cardinal characteristics in Cicho\'n's diagram, as illustrated in \autoref{fig:diamRS}.

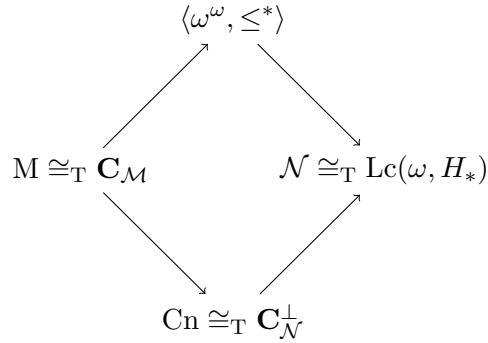
\begin{figure}[ht]
\centering
\begin{tikzpicture}
\small{
  \node (CM) at (0,0) {$\Mg \eqT \Cbf_\Mwf$};
  \node (CNp) at (2,-2) {$\Cn \eqT \Cbf_\Nwf^\perp$}; 
  \node (d) at (2,2) {$\la \omega^\omega,\leq^*\ra$};
  \node (Lc) at (4,0) {$\Nwf \eqT \Lc(\omega, H_*)$};
\foreach \from/\to in {
CM/CNp, CM/d, CNp/Lc, d/Lc}
{
\path[->,] (\from) edge (\to);
}
}
\end{tikzpicture}
\caption{Tukey connections between the relational systems determining the non-dependant values in Cicho\'n's diagram, along with their equivalent Polish relational systems.}\label{fig:diamRS}
\end{figure}

For the rest of this section, we fix a Polish relational system $R = \la X,Y,\sqsubset\ra$. In this case, $\theta$-$R$-unbounded families can easily be added using Cohen reals.

\begin{lemma}\label{Cohenthetaunb}
   Let $\lambda$ be an uncountable cardinal. Then the Cohen reals $\set{c_\alpha}{\alpha<\lambda}\subseteq X$ added by $\Cor_\lambda$ form an $\aleph_1$-$R$-unbounded family in $V^{\Cor_\lambda}$.  
\end{lemma}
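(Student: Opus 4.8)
The plan is to reduce the statement to \autoref{Cohenunb} together with a standard ``countable subproduct'' argument. Recall that, by definition, $\set{c_\alpha}{\alpha<\lambda}$ is $\aleph_1$-$R$-unbounded iff $\lambda\geq\aleph_1$ (which holds) and $\set{\alpha<\lambda}{c_\alpha\sqsubset y}$ is countable for every $y\in Y$ in $V^{\Cor_\lambda}$. So I would fix a $\Cor_\lambda$-generic $G$ over $V$, write $\Cor_\lambda$ as the product $\prod_{\alpha<\lambda}\Cbb$ of copies of the countable atomless forcing $\Cbb$ adding a generic real of $X$ (see \autoref{Cohen} and the paragraph after \autoref{Cohenctbl}), let $c_\alpha\in X$ be the $\alpha$-th generic real, and for $u\subseteq\lambda$ let $V^{\Cor_u}=V[G\frestr u]$ be the intermediate extension given by the coordinates in $u$. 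It suffices to show that for each $y\in Y\cap V[G]$ there is a \emph{countable} $u\subseteq\lambda$ with $\set{\alpha<\lambda}{c_\alpha\sqsubset y}\subseteq u$.

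First I would locate such a $u$ using ccc. Since $Y$ is an analytic subspace of a Polish space, $y$ is coded by a point of a countable discrete product (a ``real'') lying in $V[G]$; since $\Cor_\lambda$ is ccc, a nice $\Cor_\lambda$-name for that code uses only countably many maximal antichains, each countable, each of whose conditions has finite domain, so the union $u$ of all these domains is a countable subset of $\lambda$ with $y\in V^{\Cor_u}$. Because ``$y\in Y$'' is $\Sigma^1_1$ and the statement ``$R$ is a Prs'' is expressible from fixed analytic and closed (indeed $F_\sigma$) codes, Shoenfield/Mostowski absoluteness guarantees that $R$ is still a Prs in $V^{\Cor_u}$ and that $y\in Y$ holds there.

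Next I would exploit the mutual genericity of the Cohen reals. For any $\alpha\in\lambda\setminus u$ the factorization $\Cor_\lambda\cong\Cor_u\times\Cbb\times\Cor_{\lambda\setminus(u\cup\{\alpha\})}$ shows that $c_\alpha$ is a Cohen real of $X$ over $V^{\Cor_u}$. Applying \autoref{Cohenunb} inside $V^{\Cor_u}$ (legitimate by the absoluteness just noted) yields that $c_\alpha$ is $R$-unbounded over $V^{\Cor_u}$, hence $c_\alpha\not\sqsubset y'$ for every $y'\in Y\cap V^{\Cor_u}$; in particular $c_\alpha\not\sqsubset y$. Therefore $\set{\alpha<\lambda}{c_\alpha\sqsubset y}\subseteq u$, which is countable, and since $y\in Y$ was arbitrary the proof is complete.

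The only delicate points are bookkeeping, not genuine obstacles: the ccc factoring argument that places $y$ in a countable subproduct $V^{\Cor_u}$, and the absoluteness needed to invoke \autoref{Cohenunb} in $V^{\Cor_u}$ rather than in $V$. The former is the usual fact that nice names for reals in a ccc product have countable support; the latter is Shoenfield absoluteness applied to the analytic and $F_\sigma$ data defining $R$. These are exactly the two places where the hypothesis that $R$ is a Prs (and not merely a relational system of the reals) is used, since \autoref{Cohenunb} rests on that hypothesis.
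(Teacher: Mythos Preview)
Your proof is correct and follows essentially the same approach as the paper: find a countable $u\subseteq\lambda$ (the paper calls it $C$) with $y\in V^{\Cor_u}$ using ccc, observe that each $c_\alpha$ with $\alpha\notin u$ is Cohen over $V^{\Cor_u}$, and apply \autoref{Cohenunb} to conclude $c_\alpha\not\sqsubset y$. Your version spells out the absoluteness considerations more carefully than the paper, which simply takes them for granted, but the argument is the same.
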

\begin{proof}
    Working in $V^{\Cor_\lambda}$, let $y\in Y$. Since $y$ is a real, it only depends on countably many maximal antichains, so there is some $C\in [\lambda]^{<\aleph_1}\cap V$ such that $y\in V^{\Cor_{C}}$. For any $\alpha\in \lambda\menos C$, $c_\alpha$ is Cohen over $V^{\Cor_{C}}$, hence $R$-unbounded over $V^{\Cor_{C}}$ by \autoref{Cohenunb}, so $c_\alpha\not\sqsubset y$. Therefore, $\set{\alpha<\lambda}{c_\alpha \sqsubset y}\subseteq C$, which is countable.
\end{proof}

Note that $\theta\leq\theta'$ implies that any $\theta$-$R$-unbounded family $\set{x_i}{i\in I}$ is $\theta'$-$R$-unbounded, as long as $\theta'\leq |I|$. Therefore, the Cohen reals added by $\Cor_\lambda$ form a $\theta$-$R$-unbounded family for all $\aleph_1\leq\theta\leq\lambda$.

The reason we start with $\Cor_\lambda$ in Step 1 of the proof of \autoref{main1} is to add $\theta_i$-unbounded families. Now we aim to show how to preserve them in the iteration of Step 2. For this purpose, we introduce the preservation theory from Judah and Shelah~\cite{JS} and Brendle~\cite{Br}.

\begin{definition}\label{def:good}
Let $\kappa$ be an infinite cardinal.
A poset $\Por$ is \textit{$\kappa$-$R$-good} if, for any $\Por$-name $\dot{y}$ for a member of $Y$, there is a non-empty set $H\subseteq Y$ (in the ground model) of size ${<}\kappa$ such that, for any $x\in X$, if $x$ is $R$-unbounded over $H$ then $\Vdash x\not\sqsubset \dot{h}$.

We say that $\Por$ is \textit{$R$-good} if it is $\aleph_1$-$R$-good.
\end{definition}

Note that $\kappa\leq\kappa'$ implies that any $\kappa$-$R$-good poset is $\kappa'$-$R$-good.

Goodness guarantees the preservation of strong unbounded families as follows.

\begin{lemma}\label{mainpres}
    If $\kappa$ and $\theta$ are infinite cardinals, and $\kappa\leq\cf(\theta)$, then any $\kappa$-$R$-good poset preserves all the $\theta$-$R$-unbounded families from the ground model.
\end{lemma}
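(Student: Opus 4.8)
The goal is to show that a $\kappa$-$R$-good poset $\Por$ preserves $\theta$-$R$-unbounded families, under the assumption $\kappa \leq \cf(\theta)$. So let $\set{x_i}{i\in I}$ be a $\theta$-$R$-unbounded family in the ground model $V$ (in particular $|I|\geq\theta$), and let $G$ be $\Por$-generic over $V$. I must verify that the same family remains $\theta$-$R$-unbounded in $V[G]$, i.e.\ that for every $y\in Y\cap V[G]$ we have $|\set{i\in I}{x_i\sqsubset y}|<\theta$. Fix such a $y$ and a $\Por$-name $\dot y$ for it.

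\textbf{Main step.} Apply $\kappa$-$R$-goodness to the name $\dot y$: there is a non-empty $H\subseteq Y$ in $V$ with $|H|<\kappa$ such that, for every $x\in X$, if $x$ is $R$-unbounded over $H$ (meaning $x\not\sqsubset z$ for all $z\in H$), then $\Vdash x\not\sqsubset \dot y$. Now consider, in $V$, the set
\[
J := \set{i\in I}{\exists\, z\in H\colon x_i\sqsubset z} = \bigcup_{z\in H}\set{i\in I}{x_i\sqsubset z}.
\]
Since $\set{x_i}{i\in I}$ is $\theta$-$R$-unbounded in $V$, each set $\set{i\in I}{x_i\sqsubset z}$ has size ${<}\theta$; as $J$ is a union of ${<}\kappa\leq\cf(\theta)$ many such sets, we get $|J|<\theta$ (this is exactly where $\kappa\leq\cf(\theta)$ is used). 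For any $i\in I\smallsetminus J$, the real $x_i$ is $R$-unbounded over $H$, so by goodness $\Vdash x_i\not\sqsubset \dot y$, hence $x_i\not\sqsubset y$ in $V[G]$. Therefore $\set{i\in I}{x_i\sqsubset y}\subseteq J$, which has size ${<}\theta$, and it is covered by a set of size ${<}\theta$ lying in $V$. This shows $\set{x_i}{i\in I}$ is still $\theta$-$R$-unbounded in $V[G]$.

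\textbf{Remarks and the delicate point.} The only real subtlety is making sure the cardinal arithmetic ``union of ${<}\kappa$ sets each of size ${<}\theta$ has size ${<}\theta$'' is applied correctly: if $\theta$ is a successor cardinal this is automatic from $\kappa\leq\theta=\cf(\theta)$, and if $\theta$ is a limit cardinal it is precisely the content of $\kappa\leq\cf(\theta)$ that prevents the union from reaching $\theta$. Note also that the argument does not even require $\Por$ to be ccc or to preserve cardinals; goodness is all that is needed, and $|I|\geq\theta$ is preserved trivially since $\Por$ does not shrink $I$ (and if $\Por$ happened to collapse $\theta$, the statement is still formally fine, though in the intended applications $\Por$ is ccc so no cardinals move). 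The bound $|H|<\kappa$ together with $\kappa\leq\cf(\theta)$ is the heart of the matter; everything else is bookkeeping with the Tukey-style reformulation already established in \autoref{strongTukey}.
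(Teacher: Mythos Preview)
Your proof is correct and follows essentially the same route as the paper: apply $\kappa$-$R$-goodness to a name $\dot y$ to obtain $H\in[Y]^{<\kappa}$, form the union $J=\bigcup_{z\in H}\set{i\in I}{x_i\sqsubset z}$ (the paper calls it $A$), use $\kappa\leq\cf(\theta)$ to conclude $|J|<\theta$, and observe that goodness forces $x_i\not\sqsubset\dot y$ for every $i\notin J$. The paper's argument is phrased purely in terms of the forcing relation rather than passing to a generic extension, but the content is identical.
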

\begin{proof}
    Let $\Por$ be a $\kappa$-$R$-good poset. 
    Assume that $\set{x_i}{i\in I}\subseteq X$ is a $\theta$-$R$-unbounded family. Let $\dot y$ be a $\Por$-name of a member of $Y$. Find $H\in [Y]^{<\kappa}$ non-empty as in \autoref{def:good}. For each $y\in H$ let $A^y:=\set{i\in I}{x_i\sqsubset y}$ and $A:= \bigcup_{y\in H}A^y$. Then $|A^y|<\theta$ and $|A|<\theta$, the latter because $\cf(\theta)\geq \kappa$.

    We claim that $\Vdash \set{i\in I}{x_i\sqsubset \dot y}\subseteq A$. Indeed, if $i\in I\menos A$, $x_i\not\sqsubset y$ for all $y\in H$, so $\Vdash x_i\not \sqsubset \dot y$. 
\end{proof}

Now, goodness is preserved along FS iterations.

\begin{theorem}\label{goodit}
Let $\kappa$ be an uncountable regular cardinal. Then,
any FS iteration of $\kappa$-cc $\kappa$-$R$-good posets is again $\kappa$-$R$-good.
\end{theorem}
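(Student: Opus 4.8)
The plan is to prove by induction on the length $\pi$ of the iteration $\la \Por_\alpha,\Qnm_\beta\colon \alpha\leq\pi,\ \beta<\pi\ra$ that $\Por_\pi$ is $\kappa$-$R$-good, where each $\Qnm_\beta$ is (forced to be) $\kappa$-cc and $\kappa$-$R$-good. The successor step is easy: if $\Por_\alpha$ is $\kappa$-$R$-good and $\Vdash_\alpha \Qnm_\alpha$ is $\kappa$-$R$-good, then $\Por_{\alpha+1}=\Por_\alpha\ast\Qnm_\alpha$ is $\kappa$-$R$-good, because a two-step iteration of good posets is good (compose the two witnessing families: given a $\Por_{\alpha+1}$-name $\dot y$, first apply goodness of $\Qnm_\alpha$ inside $V_\alpha$ to get a $\Por_\alpha$-name $\dot H_1$ for a set in $[Y]^{<\kappa}$, and then, since $\kappa$ is regular and $\Por_\alpha$ is $\kappa$-cc, $\dot H_1$ is forced to be contained in some ground-model $H\in[Y]^{<\kappa}$; shrink/union using goodness of $\Por_\alpha$ applied to names for the members of $H$). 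The limit step $\gamma$ of cofinality ${\geq}\kappa$ is also easy, because any $\Por_\gamma$-name $\dot y$ for a real is, by $\kappa$-cc and $\cf(\gamma)\geq\kappa$, actually a $\Por_\alpha$-name for some $\alpha<\gamma$, so we invoke the induction hypothesis at stage $\alpha$.

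The heart of the matter is the limit step $\gamma$ with $\cf(\gamma)<\kappa$ (in particular $\cf(\gamma)=\omega$, the genuinely new case). Here a name $\dot y$ for a member of $Y\subseteq Z$ need not concentrate on any proper initial segment. The standard approach (Judah--Shelah, Brendle) is the following. Write $Y$ as an analytic set; since membership in $Y$ and the relation $\sqsubset$ are absolute, it suffices to handle a $\Por_\gamma$-name for a point of the Polish space $Z$ together with the evaluation of the closed relations $\sqsubset_n$. Fix an increasing sequence $\gamma_k\nearrow\gamma$ cofinal in $\gamma$. Using $\kappa$-cc and a fusion/reflection argument, one decomposes $\dot y$ as an ``interleaving'' of pieces over the intermediate models: more precisely, one builds, for each $k$, a $\Por_{\gamma_k}$-name $\dot y_k$ together with the data needed so that the ``limit'' of the $\dot y_k$ is forced to be (close enough to) $\dot y$ with respect to the relations $\sqsubset_n$. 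Apply the induction hypothesis at each $\gamma_k$ to the name $\dot y_k$, obtaining $H_k\in[Y]^{<\kappa}$ in the ground model; set $H:=\bigcup_{k<\omega}H_k$, which has size ${<}\kappa$ since $\kappa$ is uncountable and regular. Then one argues: if $x\in X$ is $R$-unbounded over $H$, then $x$ is $R$-unbounded over each $H_k$, hence $\Vdash_{\gamma_k} x\not\sqsubset\dot y_k$ for all $k$; and the structure of clause~\ref{Fsigma} of the definition of Prs (the $\sqsubset_n$ are closed, increasing, with nowhere dense sections), together with the way $\dot y$ was interleaved from the $\dot y_k$, forces $x\not\sqsubset\dot y$ in $V_\gamma$.

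The main obstacle is precisely this interleaving/reflection argument at limits of countable cofinality: making rigorous the claim that a $\Por_\gamma$-name for a real ``decomposes'' into pieces living in the intermediate models in a way that is compatible with the closed relations $\sqsubset_n$. This is where the full strength of $R$ being a Polish (or at least ``$\sigbar$-definable") relational system of the reals is used --- one needs the absoluteness of ``$x\sqsubset y$'' between the intermediate models and $V_\gamma$, and one needs the approximation structure on $\sqsubset$ (closed, increasing union) to conclude unboundedness of $x$ over the genuine limit $\dot y$ from unboundedness over each approximating $\dot y_k$. I would treat the successor and the cofinality-${\geq}\kappa$ limit cases quickly and spend the bulk of the write-up on carefully setting up the names $\dot y_k$ at a limit of cofinality $\omega$ (using a maximal-antichain/book-keeping enumeration to reflect $\dot y$ onto the $\gamma_k$'s) and on verifying the final forced-unboundedness statement.
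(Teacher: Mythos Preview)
Your outline is correct and follows the standard Judah--Shelah/Brendle argument, which is exactly what the paper's cited reference contains; indeed the paper gives no self-contained proof of this theorem, and the same interpretation/approximation machinery you sketch for the limit step of small cofinality is spelled out in the paper's proof of the closely related theorem immediately following (preservation of $\theta$-$R$-unbounded families).

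One small phrasing issue in your successor step: the $\kappa$-cc of $\Por_\alpha$ does not give you a ground-model set $H\in[Y]^{<\kappa}$ containing $\dot H_1$, but rather a ground-model set of fewer than $\kappa$ many $\Por_\alpha$-\emph{names} $\{\dot h_i:i<\mu\}$ with $\Vdash_\alpha \dot H_1\subseteq\{\dot h_i[G]:i<\mu\}$; it is to each of these names $\dot h_i$ that you then apply $\kappa$-$R$-goodness of $\Por_\alpha$, obtaining sets $H_i^*\in[Y]^{<\kappa}$ in $V$, and the final witness is $H:=\bigcup_{i<\mu}H_i^*$. Your parenthetical ``shrink/union using goodness of $\Por_\alpha$'' suggests you have this in mind, but the sentence as written conflates ground-model reals with ground-model names.
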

\begin{proof}
See e.g.~\cite[Thm.~4.15]{CM}.
\end{proof}

This result can be weakened as follows.

\begin{theorem}
    Let $\kappa$ and $\theta$ be uncountable cardinals such that $\kappa$ is regular and $\cf(\theta)\geq\kappa$. Then, any FS iteration of $\kappa$-cc posets preserving $\theta$-$R$-unbounded families, preserves $\theta$-$R$-unbounded families.
\end{theorem}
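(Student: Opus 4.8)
The plan is to prove, by induction on $\alpha\le\pi$, that $\Por_\alpha$ preserves $\theta$-$R$-unbounded families, mimicking the proof of \autoref{goodit} (see \cite[Thm.~4.15]{CM}) but carrying the weaker invariant directly. Fix a $\theta$-$R$-unbounded family $\bar x=\set{x_i}{i\in I}\subseteq X$ in $V$. Since $\kappa$ is regular uncountable and $\cf(\theta)\ge\kappa$, every $\Por_\alpha$ is $\kappa$-cc, so $\theta$ stays an uncountable cardinal with $\cf(\theta)\ge\kappa$ in each $V_\alpha$, and both ``$|A|<\theta$'' (for $A\subseteq I$) and ``$x_i\sqsubset y$'' are absolute between $V$ and every $V_\alpha$ (the latter because $X$, $Y$ and ${\sqsubset}$ are given by absolutely-definable sets). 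Hence it suffices to show that for every $\Por_\alpha$-name $\dot y$ for a member of $Y$ one has $\Vdash_\alpha|\set{i\in I}{x_i\sqsubset\dot y}|<\theta$. The base case is vacuous, and for the successor $\Por_{\beta+1}=\Por_\beta\ast\Qnm_\beta$: by the induction hypothesis $\bar x$ is $\theta$-$R$-unbounded in $V_\beta$, and since $\Vdash_\beta$ ``$\Qnm_\beta$ preserves $\theta$-$R$-unbounded families'', $\bar x$ remains $\theta$-$R$-unbounded in $V_{\beta+1}$.

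For a limit $\gamma$ with $\cf(\gamma)\ge\kappa$ the argument is the standard name-reflection: a $\Por_\gamma$-name $\dot y$ for a member of $Y$ depends on only countably many maximal antichains of $\Por_\gamma$, each of size ${<}\kappa$ (as $\Por_\gamma$ is $\kappa$-cc), so the union of the finite supports of the conditions involved has size ${<}\kappa$ and is therefore bounded below $\gamma$; thus $\dot y$ is (equivalent to) a $\Por_\alpha$-name for some $\alpha<\gamma$, and $\Vdash_\gamma|\set{i}{x_i\sqsubset\dot y}|<\theta$ follows from the induction hypothesis for $\Por_\alpha$ together with the absoluteness of this statement between $V_\alpha$ and $V_\gamma$.

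The substantive case is a limit $\gamma$ with $\mu:=\cf(\gamma)<\kappa$, where names genuinely fail to reflect. Write $\gamma=\sup_{\xi<\mu}\gamma_\xi$ with $\seq{\gamma_\xi}{\xi<\mu}$ continuous increasing and $\gamma_0=0$, fix a $\Por_\gamma$-name $\dot y$ for a member of $Y$ and $p\in\Por_\gamma$, and suppose toward a contradiction that $p\Vdash_\gamma|\set{i\in I}{x_i\sqsubset\dot y}|\ge\theta$. Following the corresponding step of the proof of \autoref{goodit}, for each $\xi<\mu$ one works in $V_{\gamma_\xi}$, isolates the ``$\Por_{\gamma_\xi}$-part'' of the information about $\dot y$ decided by $p$ and the generic up to $\gamma_\xi$, and --- using the decomposition ${\sqsubset}=\bigcup_{m<\omega}{\sqsubset_m}$ into closed relations with closed nowhere dense $y$-sections, together with the induction hypothesis that $\bar x$ is $\theta$-$R$-unbounded in $V_{\gamma_\xi}$ --- produces a $\Por_\gamma$-name $\dot B_\xi$ for an element of $[I]^{<\theta}$ with $p\Vdash_\gamma\set{i\in I}{x_i\sqsubset\dot y}\subseteq\bigcup_{\xi<\mu}\dot B_\xi$. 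Since $\mu<\kappa\le\cf(\theta)$ and $p$ forces each $|\dot B_\xi|<\theta$, it forces $|\bigcup_{\xi<\mu}\dot B_\xi|<\theta$ (here $\theta$ being a cardinal of cofinality $>\mu$ is used), contradicting the choice of $p$. This closes the induction.

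I expect the obstacle to be precisely this last case: pinning down exactly how a $\Por_\gamma$-name for a member of $Y$ is captured by the initial segments $\Por_{\gamma_\xi}$ when $\cf(\gamma)<\kappa$, and how the Prs structure of $R$ lets one turn the induction hypothesis at each $\gamma_\xi$ into the sets $\dot B_\xi$ whose $\mu$-union still contains $\set{i}{x_i\sqsubset\dot y}$. Here Polishness of $R$ and regularity of $\kappa$ play the same roles as in \autoref{goodit}, while the inequality $\cf(\gamma)<\kappa\le\cf(\theta)$ is the quantitative heart --- exactly as $|H|<\kappa\le\cf(\theta)$ is in the proof of \autoref{mainpres} --- so that the argument of \autoref{goodit} in fact yields this weaker conclusion from the weaker hypothesis on the iterands.
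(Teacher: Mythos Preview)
Your overall structure matches the paper's proof: induction on $\alpha$, with the successor step and the limit of cofinality ${\geq}\kappa$ handled exactly as you say. You correctly locate the entire difficulty in limits of cofinality $\mu<\kappa$, and you are candid that your sketch there is a placeholder. That is indeed a genuine gap: ``isolating the $\Por_{\gamma_\xi}$-part of $\dot y$'' and ``producing $\dot B_\xi$'' is precisely the content of the argument, and appealing to the proof of \autoref{goodit} does not help, since in this paper that proof is deferred to a citation.

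Here is what the paper actually does. Instead of working with $\dot y$ directly, use that $Y$ is analytic to fix a continuous surjection $f\colon\omega^\omega\to Y$ and a $\Por_\gamma$-name $\dot z$ with $\Vdash_\gamma f(\dot z)=\dot y$. For each $\xi<\mu$, choose $\Por_{\gamma_\xi}$-names $\dot z^\xi\in\omega^\omega$ and a decreasing sequence $\seq{\dot p^\xi_n}{n<\omega}$ in $\Por_\gamma/\Por_{\gamma_\xi}$ with $\dot p^\xi_n\Vdash_{\Por_\gamma/\Por_{\gamma_\xi}}\dot z\frestr n=\dot z^\xi\frestr n$; this is the standard ``interpretation'' of a name for a real in a complete subforcing. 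By the induction hypothesis, $\Vdash_{\gamma_\xi}|\set{i}{x_i\sqsubset f(\dot z^\xi)}|<\theta$, and these are your $\dot B_\xi$. The point where the Prs structure is genuinely used is the transfer: if $p\in\Por_{\gamma_\xi}$ and $p\Vdash_\gamma x_i\sqsubset_\ell f(\dot z)$, then $p\Vdash_{\gamma_\xi} x_i\sqsubset_\ell f(\dot z^\xi)$. For otherwise, in some generic extension by $\Por_{\gamma_\xi}$ below $p$ we have $x_i\not\sqsubset_\ell f(\dot z^\xi)$; since $\sqsubset_\ell$ is closed and $f$ is continuous, some finite initial segment $\dot z^\xi\frestr n$ already witnesses this, and then $\dot p^\xi_n$ forces $x_i\not\sqsubset_\ell f(\dot z)$ in the quotient, contradicting the assumption on $p$ (this is the content of the reference to~\cite[Lem.~4.9]{CM}). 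From this transfer one gets $\Vdash_\gamma\set{i}{x_i\sqsubset\dot y}\subseteq\bigcup_{\xi<\mu}\dot B_\xi$ outright, and $\mu<\kappa\leq\cf(\theta)$ finishes as you said. (There is no need to begin with a bad condition $p$ for contradiction; the containment is forced unconditionally.)
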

\begin{proof}
   Let $\la \Por_\alpha,\Qnm_\beta\colon \alpha\leq\pi,\ \beta<\pi\ra$ be a FS iteration of $\kappa$-cc posets preserving $\theta$-$R$-unbounded families, and let $\set{x_i}{i\in I}$ be a $\theta$-$R$-unbounded family (in $V$).   
   We show by recursion on $\alpha\leq\pi$ that $\Por_\alpha$ forces that $\set{x_i}{i\in I}$ is $\theta$-$R$-unbounded. This is clear for $\alpha=0$ and for the successor step. 
   
   The limit step $\alpha$ with $\cf(\alpha)\geq\kappa$ is easy: if $y\in Y\cap V_\alpha$ then $y\in V_\xi$ for some $\xi<\alpha$ (becase a nice name of the real $y$ depends on ${<}\kappa$-many conditions, cf.~\autoref{FSnoreal}), so $V_\xi\models |\set{i\in I}{x_i\sqsubset y}|<\theta$ by inductive hypothesis. The same is satisfied in $V_\alpha$.
   
   We have to work more when $\cf(\alpha)<\kappa$. Fix an increasing cofinal sequence $\set{\alpha_\xi}{\xi<\cf(\alpha)}$ in $\alpha$. Since $Y$ is analytic, there is some continuous surjection $f\colon\baire\to Y$. Let $\dot y$ be a $\Por_\alpha$-name of a member of $Y$ and pick some $\Por_\alpha$-name $\dot z$ of a real in $\baire$ such that $\Vdash_\alpha f(\dot z) = \dot y$. For each $\xi<\cf(\alpha)$ pick $\Por_{\alpha_\xi}$-names $\dot z^\xi$ and $\seq{\dot p^\xi_n}{n<\omega}$ such that $\Por_{\alpha_\xi}$ forces that $\seq{\dot p^\xi_n}{n<\omega}$ is a decreasing sequence in $\Por_\alpha/\Por_{\alpha_\xi}$, $\dot z^\xi\in\baire$, and $\dot p^\xi_n\Vdash_{\Por_\alpha/\Por_{\alpha_\xi}} \dot z\frestr n = \dot z^\xi\frestr n$ for all $n<\omega$. By induction hypothesis, $\Vdash_{\alpha_\xi} |\set{i\in I}{x_i\sqsubset f(\dot z^\xi)}|<\theta$. 
   
   To finish the proof, if is enough to show that, for any $i\in I$, $\Por_\alpha$ forces that $x_i\not\sqsubset f(\dot z)$ whenever $x_i$ is $R$-unbounded over $\set{f(\dot z^\xi)}{\xi<\cf(\alpha)}$ (because $\cf(\alpha)<\kappa\leq\cf(\theta)$). Fix $p\in\Por_\alpha$, $\ell<\omega$ and assume that $p\Vdash x_i\sqsubset_\ell f(\dot z)$. Find some $\xi<\cf(\alpha)$ such that $p\in\Por_{\alpha_\xi}$. Let $G_{\alpha_\xi}$ be a $\Por_{\alpha_\xi}$-generic set containing $p$ and work in $ V_{\alpha_\xi} = V[G_{\alpha_\xi}]$. Since $\Vdash_{\Por_\alpha/\Por_{\alpha_\xi}} x_i\sqsubset_\ell f(\dot z)$, we have that $x\sqsubset_\ell f(\dot z^\xi[G_{\alpha_\xi}])$ (otherwise, by~\cite[Lem.~4.9]{CM}, there would be some $n<\omega$ such that $\dot p^\xi_n[G_{\alpha_\xi}]$ forces $x_i\not\sqsubset_\ell f(\dot z)$). Therefore, in $V$, $p\Vdash_{\alpha_\xi} x_i\sqsubset_\ell f(\dot z^\xi)$.   
\end{proof}

We now turn to particular cases. One very useful fact is that small posets are good.

\begin{lemma}\label{smallgood}
    Any poset $\Por$ is $\kappa$-$R$-good for any infinite $\kappa>|\Por|$. 
    
    In particular, Cohen forcing is $\kappa$-$R$-good for all uncountable $\kappa$.
\end{lemma}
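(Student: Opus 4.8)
The plan is to show that if $|\Por| < \kappa$ and $\kappa$ is infinite, then for every $\Por$-name $\dot y$ for a member of $Y$, the witnessing set $H$ can simply be taken to be the set of all possible values of $\dot y$. First I would fix a $\Por$-name $\dot y$ for an element of $Y$. For each condition $p \in \Por$, by the maximal principle (or just by passing to names below $p$), there is a value attached to $p$; more carefully, I would define $H$ to be a set of elements of $Y$ such that $\Vdash \dot y \in H$, obtained as follows: for each $p\in\Por$ that decides enough of $\dot y$... actually the cleanest route is to note that since $\dot y$ names a real (an element of an analytic subset of a Polish space) and $\Por$ is small, there are at most $|\Por|^{\aleph_0}$-many candidates, but that bound may exceed $\kappa$. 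So instead I would argue directly: let $H := \set{y \in Y}{\exists\, p\in\Por\colon p \Vdash \dot y = \check y}$ together with a fixed default element so that $H\neq\varnothing$. Then $|H| \leq |\Por| < \kappa$. Crucially, $\Por$ forces $\dot y \in \check H$: given any generic $G$, by density there is $p \in G$ deciding $\dot y$, i.e.\ $p\Vdash \dot y = \check y$ for some particular $y$, and that $y$ is in $H$ by definition, so $\dot y[G] = y \in H$.

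Next, I would verify the goodness condition. Suppose $x \in X$ is $R$-unbounded over $H$, meaning $x \not\sqsubset y$ for all $y \in H$. I claim $\Vdash x \not\sqsubset \dot y$. Indeed, working in any generic extension $V[G]$, we have $\dot y[G] \in H$ by the previous paragraph, and since $x \not\sqsubset y$ for every $y \in H$, in particular $x \not\sqsubset \dot y[G]$. Since the statement ``$x \sqsubset \dot y[G]$'' is absolute (as $R$ is a relational system of the reals, using $\sigbar$), this is forced. Hence $\Por$ is $\kappa$-$R$-good.

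For the second assertion, apply the first part with $\Por = \Cor_I$ for $I$ countable (the basic Cohen forcing $\Cor$ is countable), so $|\Cor| = \aleph_0 < \kappa$ for any uncountable $\kappa$, giving that Cohen forcing is $\kappa$-$R$-good for all uncountable $\kappa$. (If one wants this for the product $\Cor_\lambda$ with $\lambda$ large, that does not follow from this lemma and is instead obtained via \autoref{goodit} together with \autoref{FSccc}-style arguments; but for the single Cohen poset $\Cor$ the statement as written is immediate.)

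The main obstacle — really the only subtle point — is making sure the default element is handled so that $H$ is non-empty as required by \autoref{def:good}, and being careful that ``$p$ decides $\dot y$'' genuinely happens densely: this is standard, since for a name of an element of an analytic set $Y \subseteq Z$ one can first force $\dot y$ to be a specific real via a nice name, and then the set of conditions deciding it outright is dense precisely because $\Por$ is small enough that we may WLOG assume $\dot y$ is given by a canonical name with at most $|\Por|$ stated values. Everything else is a routine absoluteness argument.
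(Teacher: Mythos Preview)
Your argument has a fatal gap at the key step. You claim that ``by density there is $p\in G$ deciding $\dot y$'', but this is simply false: small posets can add new reals. Concretely, take $\Por=\Cor$ to be Cohen forcing (which is countable) and let $\dot y$ be the canonical name for the Cohen real itself, viewed as an element of $Y=\omega^\omega$. No condition decides $\dot y$, so your set $H=\set{y\in Y}{\exists\, p\in\Por\colon p\Vdash \dot y=\check y}$ is empty apart from the default element. Being $R$-unbounded over a singleton is a very weak hypothesis on $x$, far too weak to conclude $\Vdash x\not\sqsubset\dot y$. Your final paragraph acknowledges this is the delicate point but then hand-waves it away; smallness of $\Por$ does not make the set of deciding conditions dense.

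The correct argument (see the cited \cite[Lem.~4]{M}, \cite[Lem.~4.10]{CM}) genuinely uses the Prs structure of $R$, which you never invoke. Since $Y$ is analytic, fix a continuous surjection $f\colon\omega^\omega\to Y$ and a $\Por$-name $\dot z$ with $\Vdash f(\dot z)=\dot y$. For each $p\in\Por$, build a decreasing sequence $p=p_0\geq p_1\geq\cdots$ with $p_k$ deciding $\dot z\frestr k$; this yields a ground-model real $z_p\in\omega^\omega$, and one sets $y_p:=f(z_p)$. Then $H:=\set{y_p}{p\in\Por}$ has size ${\leq}|\Por|<\kappa$. The verification uses that each $\sqsubset_n$ is closed: if $p\Vdash x\sqsubset_n f(\dot z)$, then the closed set $\set{z'}{x\sqsubset_n f(z')}$ contains $z_p$ (since every finite initial segment of $z_p$ is forced by some $p_k\leq p$ to be an initial segment of $\dot z$), hence $x\sqsubset_n y_p$. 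So if $x$ is $R$-unbounded over $H$, no such $p,n$ can exist.
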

\begin{proof}
    See~\cite[Lem.~4]{M}, also~\cite[Lem.~4.10]{CM}.
\end{proof}

More concrete examples of $R$-good posets come from the connection between the combinatorics of a forcing and $R$. We formalize this with the following notions.

\begin{definition}[{\cite{mejiavert}}]\label{gamma-lk}
  We say that $\Gamma$ is a \emph{linkedness property} if $\Gamma(\Por)\subseteq \pts(\Por)$ for any poset $\Por$.

  Let $\mu$ and $\kappa$ be infinite cardinals.
  \begin{enumerate}[label = \normalfont (\arabic*)]
    \item A poset $\Por$ is \emph{$\mu$-$\Gamma$-linked} if it can be covered by ${\leq}\mu$-many subsets in $\Gamma(\Por)$.

    When $\mu = \aleph_0$, we write \emph{$\sigma$-$\Gamma$-linked}.
    \item A poset $\Por$ is \emph{$\kappa$-$\Gamma$-Knaster} if
        $\forall\, B\in[\Por]^\kappa\ \exists\, A\in[B]^\kappa\colon A\in\Gamma(\Por).$

        When $\kappa=\aleph_1$, we just write \emph{$\Gamma$-Knaster}.
  \end{enumerate}
\end{definition}

If $\Gamma$ satisfies that $Q'\subseteq Q\in\Gamma(\Por)$ implies $Q'\in\Gamma(\Por)$, then any $\mu$-$\Gamma$-linked poset is $\mu^+$-$\Gamma$-Knaster. A more concrete discussion about linkedness properties and iterations can be found in~\cite[Sec.~5]{mejiavert}.

\begin{example}
    The following are examples of linkedness properties. 
    Here, $\Por$ denotes an arbitrary poset.
    \begin{enumerate}[label = \normalfont (\arabic*)]
        \item $\Lambda_{<\omega}$: \emph{Centered}. $Q\in\Lambda_{<\omega}(\Por)$ iff $Q$ is a centered subset of $\Por$, i.e.\ for any finite $F\subseteq Q$, there is a $q\in\Por$ stronger than all members of $F$.

        Then, $\mu$-$\Lambda_{<\omega}$-linked means $\mu$-centered, and $\kappa$-$\Lambda_{<\omega}$-Knaster means precaliber $\kappa$.

        \item $\Lambda_{\inter}$: \emph{Positive intersection number}. For $n<\omega$ non-zero and $s\in\Por^n$, define
        \[\iota_*(s):= \max\lset{|e|}{e\subseteq n \text{ and }\set{s_i}{i\in e} \text{ has a lower bound in }\Por}.\]
        For $Q\subseteq\Por$, define \emph{the intersection number of $Q$ in $\Por$} by
        \[\inter_\Por(Q):=\inf\lset{\frac{\iota_*(s)}{n}}{s\in Q^n,\ 0<n<\omega}.\]

        We say that $Q\in\Lambda_{\inter}(\Por)$ iff $\inter_\Por(Q)>0$.
    \end{enumerate}
    Notice that $\Lambda_{<\omega}(\Por)\subseteq \Lambda_{\inter}(\Por)$ because any centered poset has intersection number $1$.
\end{example}

According to the following result, \emph{$\Lambda_{<\omega}$ is good for $\Cn$}:

\begin{theorem}[Brendle~{\cite{Br}}]\label{centCn}
    Any $\mu$-centered poset is $\mu^+$-$\Cn$-good.
    In particular, any $\sigma$-centered poset is $\Cn$-good.
\end{theorem}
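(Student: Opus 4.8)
## Proof plan for Theorem (Brendle: $\mu$-centered posets are $\mu^+$-$\Cn$-good)

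The plan is to unpack the definition of $\mu^+$-$\Cn$-goodness and extract the required family $H$ directly from a centered cover of $\Por$. Recall that $\Cn = \la \Omega_{\bar\varp}, 2^\omega, \relCn\ra$ with $\bar c \relCn y$ iff $y\notin N(\bar c)$, so a $\Por$-name $\dot y$ here is a name for an element of $2^\omega$, and a real $x=\bar c\in\Omega_{\bar\varp}$ is $\Cn$-unbounded over $H\subseteq 2^\omega$ iff $\bar c\not\relCn y$ for every $y\in H$, i.e.\ $y\in N(\bar c)$ for every $y\in H$. So the goal, given a name $\dot y$ for a point of $2^\omega$, is to produce $H\in[2^\omega]^{\leq\mu}$ such that whenever $\bar c\in\Omega_{\bar\varp}$ satisfies $H\subseteq N(\bar c)$, then $\Vdash \dot y\in N(\bar c)$.

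First I would fix a centered cover $\Por = \bigcup_{j<\mu} Q_j$ with each $Q_j$ centered. For each $j<\mu$ and each $n<\omega$, the set $D_{j,n} := \set{q\in Q_j}{q \text{ decides } \dot y\frestr n}$; using centeredness of $Q_j$ one checks that the finitely many possible decisions made by conditions in $Q_j$ about $\dot y\frestr n$ are pairwise compatible as finite sequences — in fact they form a decreasing chain, so there is a single $s_{j,n}\in 2^{\leq n}$ that is an initial segment of $\dot y\frestr n$ as forced by any deciding condition in $Q_j$, and these cohere as $n$ grows. This yields, for each $j$, a point $y_j\in 2^\omega$ (the ``limit of what $Q_j$ says about $\dot y$'', padding arbitrarily on coordinates never decided). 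Set $H := \set{y_j}{j<\mu}$, which has size $\leq\mu$.

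Now I would verify the goodness condition. Suppose $\bar c\in\Omega_{\bar\varp}$ with $H\subseteq N(\bar c)$; I must show $\Vdash \dot y\in N(\bar c)$. Take any $q\in\Por$ and any $m<\omega$; I want $q'\leq q$ and $n\geq m$ with $q'\Vdash \dot y\frestr (\text{some length within } c_n) \in c_n$ — more precisely, recalling $N(\bar c) = \bigcap_m\bigcup_{n\geq m}[c_n]$, it suffices to find, below each $q$ and each $m$, a condition forcing $\dot y\in[c_n]$ for some $n\geq m$. Pick $j$ with $q\in Q_j$. Since $y_j\in N(\bar c)$, there is $n\geq m$ with $y_j\in [c_n]$, i.e.\ $y_j\frestr\ell \in c_n$ for an appropriate $\ell$ (where $\ell$ is bounded by the lengths of strings in the finite set $c_n$). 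By construction $y_j\frestr\ell$ agrees with what some condition $q^*\in Q_j$ forces about $\dot y\frestr\ell$; since $Q_j$ is centered, $q$ and $q^*$ have a common extension $q'$, and $q'\Vdash \dot y\frestr\ell = y_j\frestr\ell\in c_n$, hence $q'\Vdash \dot y\in[c_n]$. This shows the set of conditions forcing $\dot y\in\bigcup_{n\geq m}[c_n]$ is dense, for every $m$, hence $\Vdash\dot y\in N(\bar c)$.

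The main obstacle, and the only step requiring care, is the coherence claim: that the decisions made by conditions within a single centered piece $Q_j$ about the initial segments of $\dot y$ genuinely assemble into one real $y_j$. The point is that if $q_0, q_1\in Q_j$ force $\dot y\frestr n = s_0$ and $\dot y\frestr n = s_1$ respectively, then a common lower bound forces both, so $s_0 = s_1$; and compatibility of lengths $n$ versus $n'$ follows the same way. This is exactly where centeredness (not merely linkedness) is used, and it is the standard mechanism behind ``$\sigma$-centered posets are good for'' results. The $\mu^+$ (rather than $\mu$) in the statement is because $H$ is indexed by the $\mu$-sized cover, and $|H|\leq\mu<\mu^+$; the special case $\mu=\aleph_0$ gives that $\sigma$-centered posets are $\aleph_1$-$\Cn$-good, i.e.\ $\Cn$-good.
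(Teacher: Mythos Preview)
Your overall strategy is the standard one and the verification paragraph is exactly right, but there is a genuine gap in the construction of the $y_j$'s. The sentence ``By construction $y_j\frestr\ell$ agrees with what some condition $q^*\in Q_j$ forces about $\dot y\frestr\ell$'' is precisely what fails when $y_j\frestr\ell$ contains padded coordinates. A centered piece $Q_j$ need not contain any condition deciding $\dot y(k)$ for a given $k$ (take $Q_j=\{1_\Por\}$ as an extreme case), so after padding you may have $y_j\in[c_n]$ witnessed by some $s\in c_n$ while no $q^*\in Q_j$ forces $s\subseteq\dot y$; then there is no way to produce the required $q'\leq q$, and the density argument breaks.

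The fix is to define $y_j$ so that no padding is needed. One clean way: extend each $Q_j$ to an ultrafilter $U_j$ on the Boolean completion of $\Por$ and set $y_j(k)=i$ iff $\llbracket\dot y(k)=i\rrbracket\in U_j$; since $\llbracket\dot y(k)=0\rrbracket\vee\llbracket\dot y(k)=1\rrbracket=1$, this is total. Then $\llbracket s\subseteq\dot y\rrbracket\in U_j$ as a finite meet, and together with $q\in Q_j\subseteq U_j$ this gives compatibility, hence the desired $q'\leq q$. An equivalent elementary formulation, avoiding completions, is to build $y_j$ by recursion on $k$ maintaining the invariant that every finite $F\subseteq Q_j$ has a common lower bound forcing $y_j\frestr k\subseteq\dot y$: given this for $k$, if neither value of $y_j(k)$ worked there would be finite $F_0,F_1\subseteq Q_j$ witnessing failure, but a common lower bound of $F_0\cup F_1$ forcing $y_j\frestr k\subseteq\dot y$ can be extended to decide $\dot y(k)$, contradicting one of them. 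With $y_j$ defined this way, your verification goes through verbatim with $q^*$ replaced by the lower bound guaranteed by the invariant. (The paper itself does not give a proof and simply cites Brendle, so there is no further comparison to make.)
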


Inspired by a result of Kamburelis~\cite{Ka}, we have that \emph{$\Lambda_{\inter}$ is good for $\Lc(\omega,H_*)$}. Recall that $H_* = \set{\id^{k+1}}{k<\omega}$.

\begin{theorem}\label{interLc}
    Any $\mu$-$\Lambda_{\inter}$-linked poset is $\mu^+$-$\Lc(\omega,H_*)$-good. 
\end{theorem}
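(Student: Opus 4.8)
The plan is to show directly that a $\mu$-$\Lambda_{\inter}$-linked poset $\Por$ witnesses the goodness requirement for $R = \Lc(\omega, H_*) = \la \omega^\omega, \Swf(\omega,H_*), \in^*\ra$ with bound $\mu^+$. So fix a covering $\Por = \bigcup_{\eta<\mu}\Por_\eta$ with each $\Por_\eta \in \Lambda_{\inter}(\Por)$, i.e.\ $\inter_\Por(\Por_\eta) = c_\eta > 0$. Given a $\Por$-name $\dot y$ for a slalom in $\Swf(\omega, H_*)$, we must produce a set $H \subseteq \Swf(\omega, H_*)$ of size ${\leq}\mu$ in the ground model such that any $x \in \omega^\omega$ which is $\Lc(\omega,H_*)$-unbounded over $H$ (i.e.\ $x \notin^* S$ for every $S \in H$) is forced not to be localized by $\dot y$.

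First I would fix $\eta < \mu$ and, working with the single linked piece $\Por_\eta$, extract a ground-model slalom $S_\eta$ that ``captures'' the possible values of $\dot y$ along $\Por_\eta$. The idea, following Kamburelis, is this: since $\inter_\Por(\Por_\eta) = c_\eta > 0$, for any $n$-tuple of conditions in $\Por_\eta$ a $c_\eta$-fraction of them have a common lower bound; dually, if a set of conditions in $\Por_\eta$ is pairwise compatible-in-the-strong-sense too rarely, it cannot be large. For each coordinate $i < \omega$, look at all values $v$ such that some condition in $\Por_\eta$ forces $v \in \dot y(i)$ — since $\dot y(i)$ has size ${\leq}\id^{k+1}(i)$ for the relevant $k$, but $k$ itself is only bounded by a name, one first passes to a sub-piece on which the ``$H_*$-level'' $k$ of $\dot y$ is decided (this uses $H_*$ countable, so refine the $\mu$-cover to a $\mu$-cover where $k$ is constant on each piece — still ${\leq}\mu$ pieces since $\mu \geq \aleph_0$). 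On such a piece, for each $i$ let $S_\eta(i)$ collect, for each of the ${\leq} k{+}1$ ``slots'' of $\dot y(i)$, the value forced by conditions in a maximal strongly-linked subfamily; a counting argument using $\inter_\Por(\Por_\eta) \geq c_\eta$ bounds $|S_\eta(i)|$ by some $g_\eta(i)$ with $g_\eta \to \infty$ slowly enough to lie under a power of $\id$ — this is where the positive intersection number does the real work, exactly as in Kamburelis's argument that $\sigma$-linked (or finite-intersection) forcings are ``$\Lc$-nice.'' Set $H := \set{S_\eta}{\eta < \mu}$, which has size ${\leq}\mu$.

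Then I would verify the goodness conclusion: suppose $x \in \omega^\omega$ is $R$-unbounded over $H$, so for every $\eta$ there are infinitely many $i$ with $x(i) \notin S_\eta(i)$. Fix a condition $p \in \Por$ and $\ell$; it lies in some $\Por_\eta$ (after the refinement, in a piece on which $\dot y$'s level is decided). Suppose toward contradiction $p \Vdash x \in^*_\ell \dot y$ — recalling $\in^* = \bigcup_\ell \in^*_\ell$ from the Prs structure, where $\in^*_\ell$ means $\forall i \geq \ell\colon x(i) \in \dot y(i)$. Then for every $i \geq \ell$, every extension of $p$ forces $x(i) \in \dot y(i)$, so $x(i)$ is among the values that some condition in $\Por_\eta$ (namely extensions of $p$, still in $\Por_\eta$) forces into $\dot y(i)$; by construction of $S_\eta(i)$ this means $x(i) \in S_\eta(i)$ for all large $i$ — contradicting $R$-unboundedness of $x$ over $H$ for the index $\eta$. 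Hence $p \not\Vdash x \in^*_\ell \dot y$; since $p, \ell$ were arbitrary, $\Vdash x \not\sqsubset \dot y$, as required.

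The main obstacle — and the heart of the argument — is the counting step bounding $|S_\eta(i)|$ by a function under some power of the identity: one must show that the set of values $v$ for which ``many'' (relative to the intersection number $c_\eta$) conditions in $\Por_\eta$ force $v \in \dot y(i)$ cannot be too large, given that $\dot y(i) \subseteq [\omega]^{\leq \id^{k+1}(i)}$ and that any $n$-tuple from $\Por_\eta$ has a $c_\eta$-fraction with a common lower bound. This is essentially a pigeonhole/fractional-Helly-type estimate: if there were $N$ such values at coordinate $i$, pick one witnessing condition for each; a common lower bound of a $c_\eta N$-sized subset would force all those values simultaneously into $\dot y(i)$, forcing $|\dot y(i)| \geq c_\eta N$, hence $c_\eta N \leq \id^{k+1}(i)$, i.e.\ $N \leq \id^{k+1}(i)/c_\eta$. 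Summing the ${\leq}k{+}1$ slots and absorbing the constant $1/c_\eta$ into a larger power of $\id$ (legitimate since $H_*$ is closed under increasing the exponent up to a shift, and the definition of $\Swf(\omega,H_*)$ only needs \emph{some} $h \in H_*$) gives $S_\eta \in \Swf(\omega, H_*)$. Everything else is bookkeeping: the refinement of the $\mu$-cover to decide the $H_*$-level, and the straightforward forcing argument in the last paragraph. I would cite~\cite{Ka} and the analogous treatment in the localization-forcing literature for the precise form of the counting lemma rather than reprove it in full.
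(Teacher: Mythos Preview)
Your approach is essentially the paper's: for each linked piece with intersection number $c_\eta>0$, collect at coordinate $i$ the values some condition in the piece forces into $\dot\varphi(i)$, and use the intersection-number pigeonhole to bound this set by $i^{k+1}/c_\eta\leq i^{k+2}$. The only structural difference is in how the $H_*$-level $k$ is handled: the paper fixes a maximal antichain $A$ (of size ${\leq}\mu$, using that $\Por$ is $\mu^+$-cc) on which $k_p$ is decided and indexes the witnessing slaloms by $\mu\times A$, whereas you refine the cover by the countably many sets $D_k:=\{p:p\Vdash\dot\varphi\in\Scal(\omega,\id^{k+1})\}$; both yield ${\leq}\mu$ witnesses.

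Two small loose ends in your write-up. First, your refined cover only covers the dense set $\bigcup_k D_k$, so in the verification you must first extend an arbitrary $p$ into this set before locating it in a refined piece. Second, absorbing $1/c_\eta$ into a higher power of $\id$ works only for $i\geq 1/c_\eta$; the paper handles this explicitly by setting $\varphi_{\alpha,p}(n):=\emptyset$ for $n<m_\alpha$ where $1/m_\alpha<c_\eta$, which is harmless since $\in^*$ ignores finitely many coordinates. Finally, your remark about ``${\leq}k{+}1$ slots'' of $\dot\varphi(i)$ is off --- there are no slots, $\dot\varphi(i)$ is a single set of size ${\leq}i^{k+1}$ --- but your actual counting paragraph is correct and does not use that description.
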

\begin{proof}
	 Let $\Por$ be a $\mu$-$\Lambda_{\inter}$-linked poset witnessed by $\seq{Q_\alpha}{\alpha<\mu}$ and let $\dot \varphi$ be a $\Por$-name of a member of $\Scal(\omega,H_*)$. Notice that $\Por$ is $\mu^+$-cc since no member of $\Lambda_{\inter}(\Por)$ contains infinite antichains. So find a maximal antichain $A\subseteq \Por$ and a sequence $\seq{k_p}{p\in A}$ of natural numbers such that $p\Vdash\dot\varphi\in \Scal(\omega,\id^{k_p+1})$ for all $p\in A$. 
    
    Fix $\alpha<\mu$ and $p\in A$. Since $\inter_\Por(Q_\alpha)>0$, there is some $m_\alpha<\omega$ satisfying $\frac{1}{m_\alpha}< \inter_\Por(Q_\alpha)$. Define
    \[\varphi_{\alpha,p}(n):=\left\{
    \begin{array}{ll}
         \set{i<\omega}{\exists\, q\in Q_\alpha\colon q\leq p,\ q\Vdash i\in\dot\varphi(n)} & \text{if $n\geq m_\alpha$,}\\
         \emptyset & \text{if $n<m_\alpha$.}
    \end{array}\right.\]
    We show that $|\varphi_{\alpha,p}(n)|\leq n^{k_p+2}$ for all $n<\omega$, i.e.\ $\varphi_{\alpha,p}\in\Scal(\omega,H_*)$. This is clear for $n<m_\alpha$. In the case $n\geq m_\alpha$, for each $i\in\varphi_{\alpha,p}(n)$ pick some $q_i\in Q_\alpha$, stronger than $p$, forcing $i\in\dot\varphi(n)$, and let $\bar q:=\seq{q_i}{i\in\varphi_{\alpha,p}(n)}$. If $|\varphi_{\alpha,p}(n)|\geq n^{k_p+2}$ then 
    \[\iota_*(\bar q)\geq \inter_\Por(Q) |\varphi_{\alpha,p}(n)|\geq \inter_\Por(Q) n^{k_p+2} > n^{k_p+1},\]
    so there is some $q'\in \Por$ such that $a:=\set{i\in\varphi_{\alpha,p}(n)}{q'\leq q_i}$ has size ${>}\, n^{k_p+1}$. Then, $q'\Vdash a\subseteq \dot\varphi(n)$, which contradicts that $q'\Vdash |\dot \varphi(n)|\leq n^{k_p+1}$ (because $q'\leq p$). Therefore, $|\varphi_{\alpha,p}(n)|<n^{k_p+2}$.

    We show that $S:=\set{\varphi_{\alpha,p}}{\alpha<\mu,\ p\in A}$ (which has size ${\leq}\mu$) witnesses goodness. Let $x\in\baire$ be $\Lc(\omega,H_*)$-unbounded over $S$, $n_0<\omega$ and $p_0\in\Por$. Then $p_0$ is compatible with some $p\in A$, so there is a common stronger condition $q\in\Por$. Now, pick an $\alpha<\mu$ such that $q\in Q_\alpha$. Then, there is some $n\geq\max\{m_\alpha,n_0\}$ such that $x(n)\notin\varphi_{\alpha,p}(n)$, so $q\nVdash x(n)\in\dot\varphi(n)$, i.e.\ there is some $q'\leq q$ forcing $x(n)\notin \dot\varphi(n)$. This shows that $\Vdash x\notin^* \dot \varphi$.
\end{proof}

\begin{corollary}\label{centLc}
    Any $\mu$-centered poset is $\mu^+$-$\Lc(\omega,H_*)$-good.
\end{corollary}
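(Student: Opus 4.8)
The plan is to derive this immediately from \autoref{interLc} by checking that $\mu$-centered implies $\mu$-$\Lambda_{\inter}$-linked.

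First I would recall that if $\Por$ is $\mu$-centered, then by definition it is covered by a family $\seq{Q_\alpha}{\alpha<\mu}$ of centered subsets, i.e.\ $Q_\alpha\in\Lambda_{<\omega}(\Por)$ for each $\alpha<\mu$. As noted right after the definition of the linkedness properties, any centered subset has intersection number $1$, so $\Lambda_{<\omega}(\Por)\subseteq\Lambda_{\inter}(\Por)$; in particular each $Q_\alpha$ lies in $\Lambda_{\inter}(\Por)$. Hence $\seq{Q_\alpha}{\alpha<\mu}$ witnesses that $\Por$ is $\mu$-$\Lambda_{\inter}$-linked.

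Then I would simply invoke \autoref{interLc}, which gives that every $\mu$-$\Lambda_{\inter}$-linked poset is $\mu^+$-$\Lc(\omega,H_*)$-good, and conclude. There is essentially no obstacle here: the only thing to be careful about is the small index shift (covering by $\mu$-many pieces yields $\mu^+$-goodness), which is already built into the statement of \autoref{interLc}, so nothing extra needs to be verified.

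\begin{proof}
    Since $\Por$ is $\mu$-centered, it is covered by a family $\seq{Q_\alpha}{\alpha<\mu}$ with each $Q_\alpha\in\Lambda_{<\omega}(\Por)$. As $\Lambda_{<\omega}(\Por)\subseteq\Lambda_{\inter}(\Por)$ (a centered subset has intersection number $1$), this same family witnesses that $\Por$ is $\mu$-$\Lambda_{\inter}$-linked. By \autoref{interLc}, $\Por$ is $\mu^+$-$\Lc(\omega,H_*)$-good.
\end{proof}
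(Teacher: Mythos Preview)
Your proof is correct and matches the paper's intended derivation: the corollary is stated immediately after \autoref{interLc} with no separate proof, relying precisely on the inclusion $\Lambda_{<\omega}(\Por)\subseteq\Lambda_{\inter}(\Por)$ that you spell out.
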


Other examples are obtained using Boolean algebras with finitely additive measures.

\begin{theorem}[Kelley~\cite{Kelley}]\label{thm:kelley}
    Let $\Bor$ be a Boolean algebra. Then $\Bor\menos\{0_\Bor\}$ is $\sigma$-$\Lambda_{\inter}$-linked iff there is a strictly positive fam $\Xi\colon \Bor\to[0,1]$ (i.e. $\Xi(b)=0$ iff $b=0$).
\end{theorem}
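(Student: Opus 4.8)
The plan is to prove both directions of the equivalence, starting with the easy implication and then outlining the classical Hahn--Banach / mean-value argument for the hard direction.

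First I would handle the forward-trivial direction ($\Leftarrow$): suppose $\Xi\colon\Bor\to[0,1]$ is a strictly positive fam. For each $k<\omega$ put $Q_k:=\set{b\in\Bor\menos\{0_\Bor\}}{\Xi(b)\geq 2^{-k}}$. These sets cover $\Bor\menos\{0_\Bor\}$ since every nonzero $b$ has $\Xi(b)>0$. I would check $\inter_{\Bor\menos\{0_\Bor\}}(Q_k)\geq 2^{-k}>0$: given $s=(b_0,\dots,b_{n-1})\in Q_k^n$, finite additivity of $\Xi$ forces some point of the ``box'' $\prod_{i<n}\{0,1\}$-worth of atoms to be covered with multiplicity at least $2^{-k}n$ on average, i.e.\ there is a subfamily of size $\geq 2^{-k}n$ with a common nonzero meet (here a lower bound in $\Bor\menos\{0_\Bor\}$ is just a nonzero element below all the $b_i$); a clean way to phrase this is: $\sum_{i<n}\Xi(b_i)\geq 2^{-k}n$, and a counting/averaging argument over the finite subalgebra generated by $b_0,\dots,b_{n-1}$ produces an atom of that subalgebra lying below at least $2^{-k}n$ of the $b_i$. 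Hence $\iota_*(s)/n\geq 2^{-k}$, so $Q_k\in\Lambda_{\inter}(\Bor\menos\{0_\Bor\})$, and $\Bor\menos\{0_\Bor\}$ is $\sigma$-$\Lambda_{\inter}$-linked.

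For the nontrivial direction ($\Rightarrow$), suppose $\Bor\menos\{0_\Bor\}=\bigcup_{k<\omega}Q_k$ with each $Q_k\in\Lambda_{\inter}(\Bor\menos\{0_\Bor\})$, say $\inter(Q_k)\geq r_k>0$. The idea is the standard Kelley ``intersection number'' construction. For each $k$ I would first produce a finitely additive measure $\mu_k$ on $\Bor$ with $\mu_k(b)\geq r_k$ for all $b\in Q_k$ and $\mu_k(1_\Bor)=1$: this is exactly Kelley's theorem that a family with positive intersection number is supported by a fam giving all its members mass bounded below, obtained by a Hahn--Banach separation argument in the space of finitely additive measures on $\Bor$ (equivalently, $\ell^\infty$-duality on the Stone space), where the intersection-number hypothesis is what makes the relevant convex sets disjoint. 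Then I would set $\Xi:=\sum_{k<\omega}2^{-(k+1)}\mu_k$. This is a fam with $\Xi(1_\Bor)=1$, and it is strictly positive: given $b\neq 0_\Bor$, choose $k$ with $b\in Q_k$, so $\mu_k(b)\geq r_k>0$, hence $\Xi(b)\geq 2^{-(k+1)}r_k>0$.

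The main obstacle is the existence of $\mu_k$, i.e.\ Kelley's lemma itself. The plan there is: work in the (real) vector space $M$ of bounded finitely additive signed measures on $\Bor$, or dually identify elements of $\Bor$ with $\{0,1\}$-valued continuous functions $\chi_b$ on the Stone space $S(\Bor)$; consider the convex hull $C$ of $\set{\chi_b}{b\in Q_k}$ inside $\ell^\infty(S(\Bor))$ (or $C(S(\Bor))$) and show that $\inter(Q_k)\geq r_k$ forces every member of $C$ to be $\geq r_k$ at some point, so that the constant function $r_k$ is not strictly above every element of $C$ in sup-norm order; then a Hahn--Banach separating functional, after normalization and replacing it by its positive part (using that $\chi_b\geq 0$), yields a positive normalized functional $\mu_k$ with $\int \chi_b\,d\mu_k\geq r_k$ for all $b\in Q_k$, i.e.\ $\mu_k(b)\geq r_k$. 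The bookkeeping that the separating functional can be taken positive and normalized, and the precise translation between ``intersection number $\geq r_k$'' and ``$\inf_{f\in C}\sup f\geq r_k$,'' is the only delicate point; everything else is routine. I would cite Kelley's original paper~\cite{Kelley} for this lemma rather than reproducing the separation argument in full.
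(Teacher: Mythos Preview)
The paper does not include a proof of this theorem; it is stated with a citation to Kelley~\cite{Kelley} and used as a black box (to derive \autoref{spfamLc} from \autoref{interLc}). Your sketch is the classical argument and is correct in both directions: the averaging-over-atoms computation for ($\Leftarrow$) and the Hahn--Banach construction of the $\mu_k$ followed by $\Xi=\sum_k 2^{-(k+1)}\mu_k$ for ($\Rightarrow$) are exactly Kelley's original proof, so there is nothing to compare beyond noting that you have supplied what the paper deliberately omits.
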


In combination with \autoref{interLc}, we obtain

\begin{corollary}\label{spfamLc}
    If $N$ is a transitive model of $\thzfc$, then $(\Bwf(2^\omega)\menos \Nwf(2^\omega))^N$ is $\Lc(\omega,H_*)$-good.
\end{corollary}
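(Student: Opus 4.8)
The plan is to recognize the poset in question as the random forcing of $N$ and to show that it is $\sigma$-$\Lambda_{\inter}$-linked, so that \autoref{interLc} (with $\mu=\aleph_0$) delivers $\aleph_1$-$\Lc(\omega,H_*)$-goodness, i.e.\ $\Lc(\omega,H_*)$-goodness. Write $\mathbb{B}:=(\Bwf(2^\omega)\menos\Nwf(2^\omega))^N$, ordered by $\subseteq$; concretely $\mathbb{B}$ consists of the (codes in $N$ for) Borel subsets of $2^\omega$ that $N$ sees as having positive $\Lb_2$-measure. The first observation, which I would record up front, is an absoluteness remark: a finite tuple $B_0,\dots,B_{k-1}\in\mathbb{B}$ has a common lower bound in $\mathbb{B}$ iff $\Lb_2\bigl(\bigcap_{j<k}B_j\bigr)>0$, and both Borel codes and the statement ``$\Lb_2(B)>q$'' are absolute between the transitive models $N$ and $V$. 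Hence $\mathbb{B}$, its order, its incompatibility relation, and the value $\inter_{\mathbb{B}}(Q)$ of any fixed $Q\subseteq\mathbb{B}$ are computed the same way in $N$ and in $V$. This is what lets us do all the combinatorics inside $N$ while drawing a conclusion about $\mathbb{B}$ as a poset in $V$ (goodness being a $V$-side property).

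Next I would exhibit the witnessing cover. For $m\geq 1$ put $Q_m:=\set{B\in\mathbb{B}}{\Lb_2(B)>1/m}$; then $\mathbb{B}=\bigcup_{m\geq 1}Q_m$, and the key claim is $\inter_{\mathbb{B}}(Q_m)\geq 1/m>0$, so each $Q_m\in\Lambda_{\inter}(\mathbb{B})$ and $\mathbb{B}$ is $\sigma$-$\Lambda_{\inter}$-linked. The claim is precisely the easy half of Kelley's criterion (\autoref{thm:kelley}) applied to the strictly positive fam $\Lb_2$ on the measure algebra, and I would sketch it as a short counting/integration argument rather than spell it out: given $s=\seq{B_j}{j<k}\in Q_m^k$, the function $f:=\sum_{j<k}\mathds{1}_{B_j}$ is integer-valued, $0\leq f\leq k$, and $\int f\,d\Lb_2=\sum_{j<k}\Lb_2(B_j)>k/m$; letting $j_0$ be the largest $j$ with $\Lb_2(\{f\geq j\})>0$ we get $k/m<\int f\leq j_0$, and since $\{f\geq j_0\}$ has positive measure and is contained in $\bigcup_{|e|=j_0}\bigcap_{i\in e}B_i$, some $\bigcap_{i\in e}B_i$ with $|e|=j_0>k/m$ has positive measure, i.e.\ $\{B_i\colon i\in e\}$ has a common lower bound in $\mathbb{B}$. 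Thus $\iota_*(s)\geq j_0>k/m$, so $\iota_*(s)/k>1/m$, and taking the infimum over all such $s$ gives $\inter_{\mathbb{B}}(Q_m)\geq 1/m$.

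Finally, combining the last paragraph with the absoluteness remark: $\mathbb{B}$ is $\sigma$-$\Lambda_{\inter}$-linked in $V$, so by \autoref{interLc} (with $\mu=\aleph_0$) it is $\aleph_1$-$\Lc(\omega,H_*)$-good, which is exactly $\Lc(\omega,H_*)$-goodness. If one prefers to avoid reproving the counting lemma, one can instead note that the separative quotient of $\mathbb{B}$ is $(\Bwf(2^\omega)/\Nwf(2^\omega))^N$, on which $\Lb_2$ induces a strictly positive fam, invoke \autoref{thm:kelley} to get $\sigma$-$\Lambda_{\inter}$-linkedness of that algebra, and transfer it along the surjection $B\mapsto[B]$ (which preserves the existence of common lower bounds of finite tuples), reaching the same conclusion. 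The only place a subtlety could hide, and hence what I would treat most carefully, is this $N$-versus-$V$ bookkeeping: one must make sure the cover $\seq{Q_m}{m\geq 1}$ picked inside $N$ still witnesses $\sigma$-$\Lambda_{\inter}$-linkedness from the standpoint of $V$, which comes down precisely to the absoluteness of Borel sets, of $\Lb_2$, and therefore of the relation ``a finite subset of $\mathbb{B}$ has a lower bound'' and of $\inter_{\mathbb{B}}(\cdot)$. Everything else is a direct application of the cited results.
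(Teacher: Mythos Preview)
Your proposal is correct and follows essentially the same route as the paper: use (the easy direction of) Kelley's criterion, \autoref{thm:kelley}, to see that $(\Bwf(2^\omega)\menos\Nwf(2^\omega))^N$ is $\sigma$-$\Lambda_{\inter}$-linked, and then apply \autoref{interLc} with $\mu=\aleph_0$. The paper leaves the absoluteness bookkeeping and the counting argument implicit, whereas you spell them out, but the strategy is identical.
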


In the next section, we will present a good linkedness property for $\la \omega^\omega,\leq^*\ra$. For the moment, we present the following examples.

\begin{theorem}[Miller~{\cite{Mi1981}}]\label{Egood}
    $\Ebb$ is $\la\omega^\omega,\leq^*\ra$-good.
\end{theorem}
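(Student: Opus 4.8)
The plan is to verify \autoref{def:good} directly for $\kappa=\aleph_1$, using only the combinatorics of $\Ebb$. Here $R=\la\baire,\leq^*\ra$ has $X=Y=\baire$ and ${\sqsubset}={\leq^*}$, so ``$x$ is $R$-unbounded over $H$'' means $x\not\leq^* h$ for every $h\in H$. Fix an $\Ebb$-name $\dot y$ for an element of $\baire$ and, for $s\in\omega^{<\omega}$, $m<\omega$ and $n<\omega$, set
\[
h_{s,m}(n):=\sup\bigl\{v<\omega : \exists\,\varphi\in\Scal(\omega,m)\colon (s,\varphi)\Vdash_\Ebb\dot y(n)\geq v\bigr\}.
\]
The two things to establish are \textbf{(i)} $h_{s,m}(n)<\infty$ for all $s,m,n$ (so that $h_{s,m}\in\baire=Y$) and \textbf{(ii)} that the non-empty \emph{countable} set $H:=\set{h_{s,m}}{s\in\omega^{<\omega},\ m<\omega}$ witnesses goodness; together these show $\Ebb$ is $\aleph_1$-$R$-good.

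\textbf{Step (ii)} is the routine half. Suppose $x\in\baire$ satisfies $x\not\leq^* h$ for every $h\in H$, yet $\Vdash x\not\leq^*\dot y$ fails. Then some $(s,\varphi)\in\Ebb$ and some $k<\omega$ force $x(n)\leq\dot y(n)$ for all $n\geq k$; fix $m$ with $\varphi\in\Scal(\omega,m)$ and put $M(n):=\min\set{v}{\exists q\leq(s,\varphi)\colon q\Vdash\dot y(n)=v}$. Some extension of $(s,\varphi)$ forces $\dot y(n)=M(n)$, hence forces $x(n)\leq M(n)$, so $x(n)\leq M(n)$ for all $n\geq k$; and $(s,\varphi)\Vdash\dot y(n)\geq M(n)$ for every $n$ (no extension forces a smaller value), so $M(n)\leq h_{s,m}(n)$. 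Thus $x\leq^* h_{s,m}\in H$, a contradiction; therefore $\Vdash x\not\leq^*\dot y$.

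\textbf{Step (i)} is the heart of the matter. Fix $s,m,n$ and write $\dot z:=\dot y(n)$. Pick a maximal antichain $A$ below $(s,\varnothing)$ (with $\varnothing$ the slalom constantly $\emptyset$) consisting of conditions $a=(t_a,\psi_a)$ deciding $\dot z$, say $a\Vdash\dot z=v_a$ (such $A$ exists since the conditions deciding $\dot z$ are dense below $(s,\varnothing)$); then $t_a\supseteq s$ and $A$ is predense below $(s,\varnothing)$. The pivotal combinatorial fact about $\Ebb$ is the compatibility criterion: for $\varphi\in\Scal(\omega,m)$, the condition $(s,\varphi)$ is compatible with $a$ iff $\varphi$ does not \emph{block} $t_a$, where ``$\varphi$ blocks $t$'' means $t(i)\in\varphi(i)$ for some $i\in[|s|,|t|)$ --- if $\varphi$ blocks $t_a$, any common extension would have to extend $t_a$ while avoiding $\varphi$ below $|t_a|$, which is impossible; if not, $(t_a,\varphi\cup\psi_a)$ is a common extension. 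By predensity it follows that $(s,\varphi)\Vdash\dot z\geq v$ iff $\varphi$ blocks $t_a$ for every $a\in A$ with $v_a<v$. Now assume for a contradiction $h_{s,m}(n)=\infty$: then for each $v$ some $\varphi_v\in\Scal(\omega,m)$ blocks $T_v:=\set{t_a}{a\in A,\ v_a<v}$. Since blocking is monotone and every finite subset of $\set{t_a}{a\in A}$ lies in some $T_v$, every finite subset of $\set{t_a}{a\in A}$ is blocked by some slalom in $\Scal(\omega,m)$. By the Blocking Lemma below, a single $\varphi\in\Scal(\omega,m)$ then blocks all of $\set{t_a}{a\in A}$; but since $(s,\varphi)\leq(s,\varnothing)$, this makes $(s,\varphi)$ incompatible with every member of $A$, contradicting predensity of $A$ below $(s,\varnothing)$. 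Hence $h_{s,m}(n)<\infty$.

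\textbf{Blocking Lemma:} if $T\subseteq\omega^{<\omega}$ and every finite subset of $T$ is blocked by some $\varphi\in\Scal(\omega,m)$, then so is $T$. I would prove this by a finite-intersection-property argument. Blocking is a \emph{clopen} constraint on $\varphi$: ``$\varphi$ blocks $t$'' depends only on $\varphi\frestr|t|$ and, at each coordinate $i<|t|$, only on whether the fixed natural number $t(i)$ belongs to $\varphi(i)$. Replacing the value space $[\omega]^{\leq m}$ at each coordinate by a suitable closed subspace of $[\,\omega\cup\{*\}\,]^{\leq m}$ --- with $\omega\cup\{*\}$ the one-point compactification of discrete $\omega$ --- presents a width-$m$ slalom as a point of a \emph{compact} product space, and from a limit point one recovers a genuine value in $[\omega]^{\leq m}$ by deleting the entries equal to $*$. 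The sets $B_t:=\set{\varphi}{\varphi\text{ blocks }t}$ are then closed in this compact space and, by hypothesis, have the finite intersection property, so $\bigcap_{t\in T}B_t\neq\varnothing$ gives the required slalom. I expect this compactification to be the one genuinely delicate point: the naive tree of partial blocking slaloms is in general infinitely branching (one coordinate may carry arbitrarily large values $t_a(i)$), so plain K\"onig's lemma does not suffice; alternatively, one may simply appeal to Miller's original argument in~\cite{Mi1981}.
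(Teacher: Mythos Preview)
Your argument is correct. The paper does not give a self-contained proof at \autoref{Egood}; it simply cites Miller and later derives the result from the $\Fr$-linked machinery of Section~4: once one knows that each $E_{s,m}=\set{(s,\varphi)}{\varphi\in\Scal(\omega,m)}$ is $\Fr$-linked (which follows from \autoref{Euflim} via \autoref{lem:Frccc}), \autoref{lemFr} gives a bound $y'(k)$ that no $p\in E_{s,m}$ can push $\dot y(k)$ above, and \autoref{Frunb}/\autoref{Frgood} assemble these into the goodness witness. Your Step~(ii) is exactly the argument of \autoref{Frunb}, and your $h_{s,m}$ is (up to $\pm 1$) the paper's $y'$ for $Q=E_{s,m}$.

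The genuine difference is Step~(i). The paper obtains finiteness of $h_{s,m}(n)$ by going through the ultrafilter limit $\lim^D\colon E_{s,m}^\omega\to E_{s,m}$ of \autoref{Euflim} and then invoking \autoref{lemFr}. You instead prove finiteness directly via the Blocking Lemma and a compactness argument. That works, and in fact you are over-complicating the compactification step: $[\omega]^{\leq m}$ is already closed in the compact space $2^\omega$ (the complement $\bigcup_{i_0<\cdots<i_m}\set{x}{x(i_0)=\cdots=x(i_m)=1}$ is open), so $\Scal(\omega,m)=\prod_i[\omega]^{\leq m}$ is compact outright; since each $B_t$ is clopen, the finite-intersection property yields a common blocker with no need for $\omega\cup\{*\}$. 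Note that this compactness is precisely what underlies the ultrafilter limit in \autoref{Euflim}: in a compact Hausdorff space every ultrafilter converges, and $\lim^D\bar p$ is exactly that topological limit. So your ``elementary'' route and the paper's ``ultrafilter'' route are two faces of the same coin; yours avoids introducing the $\Fr$-linked vocabulary, while the paper's buys a reusable notion that also feeds into \autoref{FrKpres} and the iteration theorems.
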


\begin{theorem}\label{Bgood}
    Random forcing is $\la\omega^\omega,\leq^*\ra$-good.\footnote{This easily follows from the fact that random forcing is ccc and $\omega^\omega$-bounding.}
\end{theorem}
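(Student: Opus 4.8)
The plan is to unwind \autoref{def:good} for $R=\la\omega^\omega,\leq^*\ra$ and exploit that random forcing is $\omega^\omega$-bounding. Goodness here means: for every random-name $\dot y$ for a member of $\omega^\omega$ there is a nonempty countable $H\subseteq\omega^\omega$ such that whenever $x\in\omega^\omega$ satisfies $x\not\leq^* h$ for all $h\in H$, then $\Vdash x\not\leq^*\dot y$. The key observation is that a single ground-model function will suffice as $H$.

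First I would establish that for any random-name $\dot y$ for an element of $\omega^\omega$ there is a ground-model $g\in\omega^\omega$ with $\Vdash\dot y\leq^* g$. This follows from random forcing being ccc and $\omega^\omega$-bounding: below each condition there is a stronger one forcing $\dot y$ to be $\leq^*$-dominated by some ground-model function, so by ccc we may take a maximal antichain $\seq{q_n}{n<\omega}$ with witnesses $g_n$ (i.e.\ $q_n\Vdash\dot y\leq^* g_n$) and set $g(k):=\max\set{g_n(k)}{n\leq k}$, so that $g\geq^* g_n$ for every $n$. Then each $q_n$ forces $\dot y\leq^* g$, and maximality of the antichain gives $\Vdash\dot y\leq^* g$.

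Next I would put $H:=\{g\}$ and verify the goodness requirement by an elementary computation in an arbitrary generic extension. Assume $x\not\leq^* g$, that is, $x(n)>g(n)$ for infinitely many $n$. Working in the extension, fix $m$ with $\dot y(n)\leq g(n)$ for all $n\geq m$ (possible since $\Vdash\dot y\leq^* g$). For infinitely many $n\geq m$ we then get $x(n)>g(n)\geq\dot y(n)$, so $x\not\leq^*\dot y$ holds in the extension; as the extension was arbitrary, $\Vdash x\not\leq^*\dot y$. Hence $H$ witnesses that random forcing is $\aleph_1$-$\la\omega^\omega,\leq^*\ra$-good, i.e.\ $\la\omega^\omega,\leq^*\ra$-good.

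I do not expect a real obstacle here; the only mildly technical step is passing from ``$\omega^\omega$-bounding'' to a \emph{single} fixed dominating function $g$ for the name $\dot y$, which is exactly where ccc enters, and the rest is the definition-chase together with the two-line inequality above. Alternatively, one can skip the antichain argument by quoting directly the familiar fact that over a ccc $\omega^\omega$-bounding poset every name for an element of $\omega^\omega$ is forced to be $\leq^*$ some ground-model element of $\omega^\omega$.
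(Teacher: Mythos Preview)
Your proposal is correct and follows precisely the route the paper indicates in its footnote: use ccc together with $\omega^\omega$-bounding to produce a single ground-model $g$ with $\Vdash\dot y\leq^* g$, and take $H=\{g\}$. There is nothing to add; the paper itself gives no further details beyond that hint.
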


We are finally ready to conclude the proof of \autoref{main1}.

\begin{proof}[Proof of \autoref{main1}, part~2.]
    It remains to show that, in $V_\lambda$, $\add(\Nwf) \leq \theta_1$, $\cov(\Nwf)\leq \theta_2$, $\non(\Mwf)\leq \theta_4$ and $\lambda\leq \cov(\Mwf)$.

    By \autoref{Cohenthetaunb}, the Cohen reals added at step~1 give us $\aleph_1$-$R$-unbounded families of size $\lambda$ for any Polish relational system $R$, in particular, we obtain in $V^{\Cor_\lambda}$ an $\aleph_1$-$\Lc(\omega,H_*)$-unbounded $\set{c^1_\alpha}{\alpha<\lambda}$, an $\aleph_1$-$\Cn$-unbounded $\set{c^2_\alpha}{\alpha<\lambda}$, and an $\aleph_1$-$\Mg$-unbounded $\set{c^4_\alpha}{\alpha<\lambda}$. Now, if we prove that the iteration of step~2 is $\theta_1$-$\Lc(\omega,H_*)$-good, $\theta_2$-$\Cn$-good and $\theta_4$-$\Mg$-good, we obtain by \autoref{mainpres} that the previous families are, in the final extension, $\theta_1$-$\Lc(\omega,H_*)$-unbounded, $\theta_2$-$\Cn$-unbounded, and $\theta_4$-$\Mg$-unbounded, respectively. Therefore, by \autoref{strongTukey}, $\add(\Nwf)=\bfrak(\Lc(\omega,H_*))\leq\theta_1$, $\cof(\Nwf)=\bfrak(\Cn)\leq\theta_2$, $\non(\Mwf) = \bfrak(\Mg) \leq \theta_4$ and $\cov(\Mwf) =\dfrak(\Mg)\geq \cov([\lambda]^{<\theta_4}) = \lambda$.

    By virtue of \autoref{goodit}, it is enough to prove that all the iterands used in step~2 are $\theta_1$-$\Lc(\omega,H_*)$-good, $\theta_2$-$\Cn$-good and $\theta_4$-$\Mg$-good. Indeed, for:
    \begin{enumerate}[label = $\alpha\in K_{\arabic*}$:, left=0pt]
        \item $\Qor_\alpha = \Loc_\id^{N_\alpha}$ has size ${<\theta_1}$ because $|N_\alpha|<\theta_1$, so it is $\theta_1$-$R$-good (and $\kappa$-$R$-good for any $\kappa\geq\theta_1$) for any Polish relational system $R$ (by \autoref{smallgood}).

        \item $\Qor_\alpha = (\Bwf(2^\omega)/\Nwf(2^\omega))^{N_\alpha}$ has size ${<}\theta_2$, so it is $\theta_2$-$R$-good for any Polish relational system $R$. On the other hand, by \autoref{spfamLc}, $\Qor_\alpha$ is $\Lc(\omega,H_*)$-good.

        \item $\Qor_\alpha = \Dor^{N_\alpha}$ has size ${<}\theta_3$, so it is $\theta_3$-$R$-good for any Polish relational system $R$. On the other hand, $\Qor_\alpha$ is $\Lc(\omega,H_*)$-good and $\Cn$-good by \autoref{centCn} and~\autoref{centLc}, respectively.

        \item $\Qor_\alpha = \Ebb^{N_\alpha}$ has size ${<}\theta_4$, so it is $\theta_4$-$R$-good for any Polish relational system $R$. On the other hand, $\Qor_\alpha$ is $\Lc(\omega,H_*)$-good and $\Cn$-good by \autoref{centCn} and~\autoref{centLc}, respectively.\qedhere
    \end{enumerate}    
\end{proof}

In the previous proof, we have that $\Qor_\alpha$ is $\theta_3$-$\la\omega^\omega,\leq^*\ra$-good for $\alpha\in K_1\cup K_2\cup K_3$. However, although $\Ebb$ is $\la\omega^\omega,\leq^*\ra$-good, there are examples of restrictions of the form $\Ebb^N$ for transitive models $N$ of $\thzfc$ that are not $\la\omega^\omega,\leq^*\ra$-good. 

\begin{theorem}[Pawlikowski~{\cite{Paw-Dom}}]\label{Pawctr}
    There is a proper-$\omega^\omega$-bounding generic extension $W$ of $V$ in which $\Ebb^V$ and $(\Bwf(2^\omega)\menos \Nwf(2^\omega))^V$ add dominating reals over $W$.
\end{theorem}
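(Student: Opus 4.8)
The plan, following Pawlikowski, is to realize $W$ as a generic extension $V^{\mathbb{Q}}$ by a single forcing $\mathbb{Q}$ that is proper and $\omega^\omega$-bounding but whose generic real destroys, over $W$, the goodness of the ground-model posets $\Ebb^V$ and $(\Bwf(2^\omega)\menos\Nwf(2^\omega))^V$ for $\langle\omega^\omega,\leq^*\rangle$ in the strongest way, making each of them add a dominating real. A convenient first reduction: since $\mathbb{Q}$ is $\omega^\omega$-bounding, $\omega^\omega\cap V$ stays $\leq^*$-cofinal in $\omega^\omega\cap W$, so it suffices to arrange that each of the two posets, forced over $W$, adds a real dominating every $x\in\omega^\omega\cap V$.

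The first task is to build $\mathbb{Q}$. It will be a tree/creature forcing: conditions are subtrees of a suitable tree carrying norms on their splitting levels that are required to tend to infinity, so that a fusion argument delivers properness and the $\omega^\omega$-bounding property (a name for a real in $\omega^\omega$ is, below a fusion condition, confined to finitely many values at each coordinate). The design point is that the blocks of a condition also carry, along a bookkeeping of $\omega^\omega\cap V$, coding data for the real $x$ currently under attention: on the random side, $V$-Borel sets assembling into a sequence $\langle A^x_n\rangle_n$ with $\sum_n\mu(A^x_n)<\infty$ (so that $\limsup_n A^x_n$ is $V$-Borel and null); on the eventually-different side, data assembling into a single bounded-width $V$-slalom. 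These are laid out so that, in $W$, the $\mathbb{Q}$-generic yields for each $x\in\omega^\omega\cap V$ a witnessing scheme coupling the event ``the later generic misses $A^x_n$'' (resp.\ ``$e$ avoids the $n$-th coded slalom piece'') with ``the $n$-th value of a name $\dot d$ for the new real exceeds $x(n)$''. The essential phenomenon is that $\mathbb{Q}$'s generic makes $(\Bwf(2^\omega)\menos\Nwf(2^\omega))^V$ (resp.\ $\Ebb^V$), viewed over $W$, no longer bounding/good — e.g.\ it adds new maximal antichains of $(\Bwf\menos\Nwf)^V$ on which the $V$-measure mass stays bounded away from $1$, so that the usual argument extracting a $V$-bound from $\dot d$ breaks down — which is exactly what leaves room for such a $\dot d$.

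Next, I would verify the two domination statements in $W$. Forcing with $(\Bwf(2^\omega)\menos\Nwf(2^\omega))^V$ adds $r$ random over $V$, so by Borel--Cantelli $r$ eventually misses $A^x_n$ for all but finitely many $n$, and the coupling forces $\dot d(n)>x(n)$ eventually; hence $\Vdash\dot d\geq^*\check x$ for every $x\in\omega^\omega\cap V$, so $\dot d$ is dominating over $W$. Forcing with $\Ebb^V$ adds a real $e$ that eventually avoids every bounded-width $V$-slalom (the dense sets forcing this already live in $V$), in particular the one coded for $x$, and the coupling again yields eventual domination of $x$.

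The main obstacle is Step~1: one needs norms growing fast enough to secure properness and $\omega^\omega$-bounding through the fusion, yet blocks long enough (and a bookkeeping flexible enough) to host the coding for reals appearing arbitrarily late, with the assembled data genuinely converging — summable measures, honest bounded-width slaloms — however fast $x$ grows, which is possible precisely because the $n$-th requirement may be spread over a long block and only eventual avoidance is exploited. Showing that these competing demands are jointly satisfiable, that the resulting $\mathbb{Q}$ is proper and $\omega^\omega$-bounding, and that the witnessing schemes survive into $W$, is the technical heart; the Step~2 verifications are then routine density plus Borel--Cantelli / eventually-different arguments.
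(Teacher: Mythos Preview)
The paper does not prove this theorem; it is stated with a citation to Pawlikowski~\cite{Paw-Dom} and no argument is given. So there is no ``paper's own proof'' to compare against --- the result is quoted as a black box to illustrate that restrictions $\Ebb^N$ and random${}^N$ need not be $\langle\omega^\omega,\leq^*\rangle$-good.

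As for your sketch: the overall shape is right --- a proper $\omega^\omega$-bounding tree/creature forcing $\mathbb{Q}$ whose generic encodes, for each $x\in\omega^\omega\cap V$, data that the later random (resp.\ eventually different) real must interact with in a way that reveals a value above $x(n)$ --- and your reduction ``it suffices to dominate $\omega^\omega\cap V$'' is correct. But the crucial content is the \emph{coupling}: you assert that ``the event `$r\notin A^x_n$' is coupled with `$\dot d(n)>x(n)$'\,'' without saying what $\dot d$ is or why that equivalence holds. In Pawlikowski's argument the name $\dot d$ is defined \emph{in $W$} from the $\mathbb{Q}$-generic data (e.g.\ via a fast-growing interval structure the generic lays down), and one must check that the dense sets in $\Ebb^V$ or random${}^V$ that force $\dot d(n)$ large really are dense \emph{as computed in $W$} --- this is where the ground-model nature of the poset matters and where a careless design of $\mathbb{Q}$ fails. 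Your write-up reads as if the coding and the coupling are separate ingredients one can assemble freely; in fact, specifying $\dot d$ concretely and verifying the density in $W$ is the whole proof, and your sketch does not yet contain it.
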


Even more, in the case of random forcing:

\begin{theorem}[Judah and Shelah~{\cite{JShDOm}}]\label{JSctr}
    There is a ccc forcing extension $W$ of $V$ such that $(\Bwf(2^\omega)\menos \Nwf(2^\omega))^V$ adds dominating reals over $W$.
\end{theorem}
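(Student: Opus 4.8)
The plan is to exploit the fact that $P:=(\Bwf(2^\omega)\menos\Nwf(2^\omega))^V$, used as a forcing over a larger model $W$, is really forcing with the Boolean algebra $\Bor^*$ obtained by completing the $V$-measure algebra \emph{inside $W$}: its separative quotient is still the $V$-measure algebra (for $V$-Borel sets, being internally null is the same as having measure $0$), but its completion in $W$ acquires new elements — the $W$-countable suprema of $V$-Borel sets — and the finitely additive measure that $\Bor^*$ inherits need not remain $\sigma$-additive on the ``new'' partitions $\{\llbracket\dot g(n)=k\rrbracket:k<\omega\}$ coming from $W$-names $\dot g$. It is exactly this defect that can make $P$ fail $\omega^\omega$-bounding over $W$, even though random forcing over its own universe is $\omega^\omega$-bounding. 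So I would reduce the theorem to the following: construct a ccc poset $\Por$ so that, in $W=V^\Por$, there are a Borel function $\sigma\colon2^\omega\to\omega^\omega$ (coded in $W$, assembled from $V$-Borel pieces) and a dominating family $\{g_\alpha:\alpha<\omega_1\}\subseteq\omega^\omega\cap W$ such that for every $\alpha$ the failure set $C_\alpha:=\{x\in2^\omega:\sigma(x)\text{ does not dominate }g_\alpha\}$ is \emph{$V$-internally null}, i.e.\ contains no positive $V$-Borel subset. Granting this, $C_\alpha$ is (the interpretation of) a $\Bor^*$-Borel code whose value in $\Bor^*$ is $0$; since the $P$-generic real $r$ is random over $V$ and hence avoids every $V$-internally null Borel-coded set, $\sigma(r)$ dominates each $g_\alpha$, so it dominates all of $\omega^\omega\cap W$, and $\sigma(\dot r)$ is the desired $P$-name for a dominating real over $W$.

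Concretely, I would build $\Por$ as follows. We may assume $V\models\thzfc+\mathrm{CH}$ — otherwise pass first to a ccc extension $V'$ satisfying $\mathrm{CH}$ and use that, since $\omega^{<\omega}\in V$, the sets $C_\alpha$ are built from $V$-Borel pieces and ``$V'$-internally null'' implies ``$V$-internally null'', so the conclusion transfers back to $(\Bwf(2^\omega)\menos\Nwf(2^\omega))^V$. Take $\Por=\Por_{\omega_1}$ a finite support iteration $\la\Por_\alpha,\Qnm_\beta:\alpha\le\omega_1,\ \beta<\omega_1\ra$ of $\sigma$-centered posets, governed by a bookkeeping that enumerates all reals that appear. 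Fix in $V$ a template for the reading map in which $\sigma(x)(n)$ is produced by \emph{searching $x$} for successive occurrences of finite ``targets'' $\tau_n$, so that $\sigma(x)(n)$ is unbounded as $x$ varies and, for random $x$, is as large as the length of $\tau_n$ forces. At step $\alpha$, with $\omega^\omega\cap V_\alpha$ enumerated, let $\Qnm_\alpha$ be a $\sigma$-centered poset adding both a real $g_\alpha$ dominating $\omega^\omega\cap V_\alpha$ and a fresh block of the target sequence, where a condition carries finitely much target data together with finitely many \emph{promises}: for each positive $V$-Borel set $B$ that has been committed to, densely many positive $V$-Borel $B'\subseteq B$ are to lie inside $\{x:\sigma(x)\text{ dominates }g_\alpha\}$, and later targets are always chosen so as to respect the standing promises. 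I would then check: (i) the iteration is $\sigma$-centered, hence ccc; (ii) as $\cf(\omega_1)>\omega$, every real of $W$ lies in some $V_\alpha$ and is dominated by $g_\alpha$, so $\{g_\alpha:\alpha<\omega_1\}$ is dominating in $W$; (iii) for each $\alpha$, $C_\alpha$ is $V$-internally null in $W$, because the promises at step $\alpha$ are honoured densely by the generic and the later targets never destroy them.

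The hard part will be the design and analysis of the single-step poset $\Qnm_\alpha$, which has to simultaneously add a $g_\alpha$ dominating the entire current continuum and keep $C_\alpha$ free of positive $V$-Borel subsets. These requirements pull against each other: forcing $\sigma(r)$ past fast functions wants long targets (large read-off values), whereas the requirement that each slice $\{x:\sigma(x)(n)<g_\alpha(n)\}$ be avoidable \emph{inside every} positive $V$-Borel set is a genuine constraint tying the target sequence to the $V$-measure algebra, and it must be met for all $g_\alpha\in W$ at once, including those appearing only at late stages. Showing that the promises can be kept mutually consistent (so $\Qnm_\alpha$ is nontrivial and $\sigma$-centered), and then running the fusion/density argument that the ``$V$-internally null'' promises really do survive the finite-support tail of the iteration (ccc-ness at limit stages, the finite target-stem structure at successors), is the technical core; the bookkeeping and the computation of the surrounding cardinal characteristics are routine by comparison.
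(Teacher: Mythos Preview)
The paper does not prove this theorem; it is stated with a citation to Judah and Shelah and used only as background to motivate the difficulty of controlling $\bfrak$ with restrictions of random forcing. So there is no proof in the paper to compare your proposal against.

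On your proposal itself: the overall architecture is reasonable and is indeed in the spirit of the Judah--Shelah construction---build a ccc extension $W$ together with a Borel reading map $\sigma$ assembled from $V$-Borel pieces, and arrange that the ``bad'' sets $C_\alpha=\{x:\sigma(x)\not\geq^* g_\alpha\}$ contain no positive $V$-Borel subset, so that the $V$-random real lands outside every $C_\alpha$. Two points, however. First, your notion of ``$V$-internally null'' (no positive $V$-Borel subset) is not by itself what you need: to force $r\notin C_\alpha$ you must show that \emph{below every} positive $V$-Borel $B$ there is a positive $V$-Borel $B'\subseteq B\smallsetminus C_\alpha$, i.e.\ that $2^\omega\smallsetminus C_\alpha$ is $V$-Borel-dense, which is strictly stronger than $C_\alpha$ merely having no positive $V$-Borel subset. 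Your ``promises'' mechanism is aimed at exactly this, but the write-up conflates the two conditions. Second, and more importantly, you explicitly leave the construction of the step poset $\Qnm_\alpha$ as a black box (``the hard part will be\ldots''). That design---making a $\sigma$-centered poset that simultaneously adds a dominating $g_\alpha$ and, for every positive $V$-Borel $B$, commits densely to some positive $V$-Borel $B'\subseteq B$ on which the eventual $\sigma$ will beat $g_\alpha$---is the entire content of the theorem; what you have written is a plan rather than a proof. The CH reduction is also shakier than you suggest: you need $\sigma$ itself to be assembled from $V$-Borel (not merely $V'$-Borel) pieces for the $V$-random real to see them, so the passage through an intermediate $V'$ has to be argued more carefully.
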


In the next section, we modify the forcing construction of \autoref{main1} for $\alpha\in K_4$ to guarantee that the set of Cohen reals $\set{c^3_\alpha}{\alpha<\lambda}$ added by $\Cor_\lambda$ stays $\theta_3$-$\la\omega^\omega,\leq^*\ra$-unbounded in the final extension.

\section{FS iterations with measures and ultrafilters on the natural numbers}\label{sec:uflim}

\newcommand{\Fr}{\mathrm{Fr}}

We show how to modify the iteration in \autoref{main1} to force, in addition, $\bfrak\leq\theta_3$. We start by introducing the following good property for $\la \omega^\omega,\leq^*\ra$.

\begin{definition}[{\cite{mejiavert,BCM}}]\label{def:Fr-l}
    Let $F\subseteq\pts(\omega)$ be a filter. We assume that all filters are \emph{free}, i.e.\ they contain the \emph{Frechet filter} $\Fr:=\set{\omega\menos a}{a\in[\omega]^{<\aleph_0}}$. A set $a\subseteq \omega$ is \emph{$F$-positive} if it intersects every member of $F$. Denote by $F^+$ the collection of $F$-positive sets. 

    We define the linkedness property  $\Lambda_F$, which we call \emph{$F$-linked}: given a poset $\Por$ and $Q\subseteq \Por$, $Q$ is $F$-linked if, for any $\la p_n\colon n<\omega\ra\in Q^\omega$, there is some $q\in \Por$ such that
    \[q\Vdash \{n<\omega\colon p_n\in \dot G\} \in F^+,\text{ i.e.\ it intersects every member of $F$.}\]
    Note that, in the case $F=\Fr$, the previous equation is ``$q\Vdash \{n<\omega\colon p_n\in \dot G\}$ is infinite".

    We also define $\Lambda_{\rm uf}$, which we call \emph{uf-linked} (\emph{ultrafilter-linked}): $Q\in \Lambda_{\rm uf}(\Por)$ if $Q\in\Lambda_F(\Por)$ for every (ultra)filter $F$ on $\omega$.
\end{definition}

If $F\subseteq F'$ are filters on $\omega$, it is clear that $\Lambda_{\rm uf}(\Por) \subseteq \Lambda_{F'}(\Por) \subseteq \Lambda_{F}(\Por) \subseteq \Lambda_\Fr(\Por)$. But, for ccc posets:

\begin{lemma}[{\cite{mejiavert}}]\label{lem:Frccc}
    If $\Por$ is ccc then $\Lambda_{\rm uf}(\Por) = \Lambda_\Fr(\Por)$.
\end{lemma}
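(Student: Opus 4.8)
The plan is to establish the non‑trivial inclusion $\Lambda_\Fr(\Por)\subseteq\Lambda_{\rm uf}(\Por)$; the reverse inclusion holds for every poset, as noted just before the lemma. So fix $Q\in\Lambda_\Fr(\Por)$ and a filter $F$ on $\omega$, and aim to show $Q\in\Lambda_F(\Por)$. This already suffices: since $F\subseteq F'$ implies $(F')^+\subseteq F^+$ and hence $\Lambda_{F'}(\Por)\subseteq\Lambda_F(\Por)$, treating all filters (equivalently, just the ultrafilters) covers $\Lambda_{\rm uf}$. Given an arbitrary $\la p_n\colon n<\omega\ra\in Q^\omega$, write $\dot A$ for the $\Por$‑name $\set{n<\omega}{p_n\in\dot G}$; the goal is to produce a \emph{single} $q\in\Por$ forcing $\dot A\in F^+$, i.e.\ forcing that $\dot A$ meets every member of $F$.

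I would argue by contradiction. If no condition forces $\dot A\in F^+$, then the set of $s\in\Por$ for which there is some $B\in F$ with $s\Vdash\dot A\cap\check B=\vacio$ is dense (given $q$, pass to $q'\leq q$ forcing $\dot A\notin F^+$, i.e.\ forcing $\exists B\in\check F\colon\dot A\cap B=\vacio$, and then decide such a $B$ by a maximal‑principle/density argument below $q'$). This is the one place ccc is used: choose a \emph{countable} maximal antichain $\set{r_n}{n<\omega}$ inside this dense set, with witnesses $C_n\in F$ such that $r_n\Vdash\dot A\cap\check C_n=\vacio$. Unpacking $\dot A$, this says $r_n\Vdash p_m\notin\dot G$ for every $m\in C_n$, whence $r_n$ is incompatible with $p_m$ for all $m\in C_n$ (a common extension would force $p_m\in\dot G$). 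Now exploit that $F$ is free: each $C_n$, and each finite intersection $\bigcap_{n\leq k}C_n\in F$, is infinite, so one can recursively pick pairwise distinct $m_k\in\bigcap_{n\leq k}C_n$. Apply $\Fr$‑linkedness of $Q$ to $\la p_{m_k}\colon k<\omega\ra\in Q^\omega$ to obtain $q\in\Por$ with $q\Vdash\set{k<\omega}{p_{m_k}\in\dot G}$ infinite. By maximality, $q$ is compatible with some $r_{n_0}$; let $s$ be a common extension. For every $k\geq n_0$ we have $m_k\in C_{n_0}$, so $s\perp p_{m_k}$ and hence $s\Vdash p_{m_k}\notin\dot G$. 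Thus $s$ forces $\set{k<\omega}{p_{m_k}\in\dot G}\subseteq n_0$, a finite set, contradicting $s\leq q$. The contradiction gives the desired $q$, so $Q\in\Lambda_F(\Por)$, and as $F$ was arbitrary, $Q\in\Lambda_{\rm uf}(\Por)$.

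The main (and only mildly delicate) point is the diagonalization: ccc is exactly what yields \emph{countably} many conditions $r_n$ together with their traces $C_n\in F$, which is what makes the choice $m_k\in\bigcap_{n\leq k}C_n$ possible and forces $r_{n_0}$ to be incompatible with $p_{m_k}$ for all but finitely many $k$ — without countability one could not absorb all the $r_n$ simultaneously. The rest is routine forcing bookkeeping: verifying that the relevant set of conditions is genuinely dense, and checking that the witnesses $C_n$ really belong to the ground‑model filter $F$ rather than merely to the filter it generates in the extension.
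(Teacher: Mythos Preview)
Your proof is correct and follows essentially the same approach as the paper: both arguments use ccc to extract a countable maximal antichain $\{r_n\}$ (respectively $A$) with witnesses $C_n\in F$ (respectively $h(p)\in F$), then diagonalize by choosing $m_k\in\bigcap_{n\leq k}C_n$ to obtain a subsequence on which every condition forces $\{k:p_{m_k}\in\dot G\}$ finite, contradicting $\Fr$-linkedness. The paper frames it as a contrapositive (``not $F$-linked implies not $\Fr$-linked'') rather than a contradiction, and phrases the diagonalization as finding a pseudo-intersection $c\subseteq^* h(p)$, but the explicit construction of $c$ given there is exactly your diagonal choice.
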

\begin{proof}
	Assume that $F$ is a filter on $\omega$ and that $Q\subseteq\Por$ is not $F$-linked in $\Por$. We show that $Q$ is not $\Fr$-linked. Since $Q$ is not $F$-linked, 
there is some sequence $\la p_n\colon n<\omega\ra \in Q^\omega$ such that 	
$\Por$ forces that $\dot w:=\{n<\omega \colon p_n\in \dot G\}$ does not intersect some member of $F$. Since $F$ is in the ground model, we can find a maximal antichain $A$ in $\Por$ deciding a member of $F$ disjoint with $\dot w$, namely, there is some $h\colon A\to F$ such that each $p\in A$ forces $\dot w\cap h(p)=\emptyset$. 
	
	Since $A$ is countable and $\pfrak$ is uncountable, we can find some $c\in[\omega]^{\aleph_0}$ such that $c\subseteq^* h(p)$ for all $p\in A$. In detail: if $A$ is finite, let $c:=\bigcap_{p\in A}h(p)$; otherwise, enumerate $A=\{p_n \colon n<\omega\}$ and, by recursion, construct an increasing sequence $\la k_n \colon n<\omega\ra$ of natural numbers such that $k_n\in\bigcap_{\ell\leq n}h(p)$ for each $n<\omega$, and set $c:=\{k_n\colon n<\omega\}$.
	
	Thus, each $p\in A$ forces $\dot w\cap c \subseteq^* \dot w\cap h(p)=\emptyset$, i.e.\ $\dot w\cap c$ is finite. Since $A$ is a maximal antichain, $\Por$ forces that $\dot w\cap c = \{n\in c\colon p_n\in\dot G\}$ is finite. Therefore, $\la p_n\colon n\in c\ra$ witnesses that $Q$ is not $\Fr$-linked in $\Por$. 
\end{proof}

\begin{example}\label{exm:uf-l}
    \ 
    \begin{enumerate}[label = \normalfont (\arabic*)]
        \item Any singleton is uf-linked. Hence, any poset $\Por$ is $|\Por|$-uf-linked. In particular, Cohen forcing is $\sigma$-uf-linked.

        \item Random forcing is $\sigma$-uf-linked, in fact, any measure algebra is $\sigma$-uf-linked. Indeed, if $\Bor$ is a complete Boolean algebra and $\mu\colon \Bor\to [0,1]$ is a $\sigma$-additive measure such that $\mu(p)\neq 0$ for all $p\neq 0_\Bor$, then,
        for any $\delta>0$, $B_\delta:=\set{p\in\Bor}{\mu(p)\geq \delta}$ is $\Fr$-linked.
        
        We use the characterization in \autoref{lemFr} to show that $B_\delta$ is $\Fr$-linked. Let $\dot m$ be a $\Bor$-name of a natural number. Then $\sum_{n<\omega}\mu(\|\dot m = n\|_\Bor)=1$, so there is some $m'<\omega$ such that $\sum_{n\geq m'}\mu(\|\dot m = n\|_\Bor)<\delta$. As a consequence, $\mu(\|\dot m< m' \|_\Bor)>1-\delta$. Since any $q\in B_\delta$ is compatible with $\|\dot m< m' \|_\Bor$, it follows that $q$ does not force $m'\leq \dot m$.

        \item The forcing $\Ebb$ (see \autoref{edposet}) is $\sigma$-uf-linked. We show later that this poset satisfies a stronger property.
    \end{enumerate}
\end{example}

The following series of results indicate that $\Lambda_\Fr$ is good for $\la\omega^\omega,\leq^*\ra$.

\begin{lemma}\label{lemFr}
    Let $\Por$ be a poset and $Q\subseteq\Por$. Then $Q$ is $\Fr$-linked iff, for any $\Por$-name $\dot m$ for a natural number, there is some $m'\in\omega$ (in the ground model) such that no $p\in Q$ forces $m'\leq \dot m$.
\end{lemma}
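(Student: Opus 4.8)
The plan is to unfold \autoref{def:Fr-l}: since $\Fr^+$ is exactly the collection of infinite subsets of $\omega$, ``$Q$ is $\Fr$-linked'' unwinds to ``for every $\la p_n : n<\omega\ra\in Q^\omega$ there is $q\in\Por$ with $q\Vdash\{n<\omega : p_n\in\dot G\}$ infinite''. With this in hand I would prove each direction by contraposition, the bridge in both cases being the translation between ``infinitely many of the $p_n$ enter the generic filter'' and ``a name for a natural number is forced arbitrarily large''.

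For the forward implication, assume $Q$ is $\Fr$-linked and let $\dot m$ be a $\Por$-name with $\Vdash\dot m\in\omega$. If the conclusion failed, for each $m'\in\omega$ I could choose $p_{m'}\in Q$ with $p_{m'}\Vdash m'\leq\dot m$. Applying $\Fr$-linkedness to the sequence $\la p_{m'} : m'<\omega\ra$ gives $q\in\Por$ forcing $w:=\{m' : p_{m'}\in\dot G\}$ to be infinite; passing to a generic $G\ni q$, every $m'\in w[G]$ satisfies $m'\leq\dot m[G]$, so $\dot m[G]$ dominates an infinite set of naturals, contradicting $\dot m[G]\in\omega$.

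For the converse, assume the name-condition and fix $\la p_n : n<\omega\ra\in Q^\omega$; I must produce $q$ forcing $w:=\{n : p_n\in\dot G\}$ infinite. Suppose not. Then $\{p\in\Por : p\Vdash w\text{ is finite}\}$ is dense (given $p$, since $p\not\Vdash$``$w$ is infinite'' there is $p'\leq p$ with $p'\Vdash$``$w$ is finite''), hence $\Por\Vdash$``$w$ is finite''. Only now is $\dot m$, the name for $\sup\{n+1 : p_n\in\dot G\}$ (with $\sup\vacio=0$), a legitimate $\Por$-name for a natural number. By hypothesis there is $m'$ such that no $p\in Q$ forces $m'\leq\dot m$; but $p_{m'}\in Q$, and whenever $p_{m'}\in G$ we get $m'\in w[G]$, so $\dot m[G]\geq m'+1>m'$, i.e.\ $p_{m'}\Vdash m'\leq\dot m$ --- a contradiction. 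Hence some $q$ forces $w$ infinite, so $Q$ is $\Fr$-linked.

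The one delicate point is exactly the well-definedness of $\dot m$ in the converse: one must first rule out, via the density argument above, that $\Por$ could force $w$ infinite, since otherwise $\sup(w)+1$ would not name a natural number and the hypothesis would not be applicable. Everything else is routine forcing bookkeeping; the degenerate cases ($Q=\vacio$, where both sides hold vacuously, and $m'=0$, where trivially $p_0\Vdash 0\leq\dot m$) are absorbed automatically.
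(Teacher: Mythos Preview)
Your proof is correct and follows essentially the same route as the paper's: both directions proceed by contraposition, bridging via a sequence $\la p_n\colon n<\omega\ra$ and the name for a bound on $\{n<\omega : p_n\in\dot G\}$. You are simply more explicit than the paper about the density argument yielding $\Vdash$``$w$ is finite'' (the paper silently uses that ``no $q$ forces $\phi$'' is equivalent to $\Vdash\neg\phi$) and about why $\dot m$ is a well-defined name for a natural number.
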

\begin{proof}
    $(\Rightarrow)$~\cite{mejiavert} Assume that, for any $n<\omega$, there is some $p_n\in Q$ forcing $n\leq \dot m$. Then, if $G$ is $\Por$-generic over $V$, then $\set{n<\omega}{p_n\in G}$ must be finite because $p_n\in G \imp n\leq \dot m[G]<\omega$. Therefore, in $V$, $Q$ cannot be $\Fr$-linked.

    $(\Leftarrow)$ (with Cardona) Assume that $Q$ is not $\Fr$-linked, so there is some $\la p_n\colon n<\omega\ra \in Q^\omega$ such that $\Vdash$``$\{n<\omega\colon p_n\in \dot G\}$ is finite". So pick some $\Por$-name $\dot m$ of a natural number such that $\Vdash$``$\{n<\omega\colon p_n\in \dot G\}\subseteq \dot m$. Note that $p_n\Vdash n<\dot m$.
\end{proof}

\begin{lemma}\label{Frunb}
    Let $\Por$ be a poset and $Q$ be an $\Fr$-linked subset of $\Por$. If $\dot y$ is a $\Por$-name of a member of $\omega^\omega$, then there is some $y'\in \omega^\omega$ (in the ground model) such that, for any $x\in\omega^\omega$
    \[x\nleq^* y' \imp \forall\, n<\omega\ \forall\, p\in Q\colon p\nVdash \forall\, k\geq n\colon x(k) \leq \dot y(k).\]
\end{lemma}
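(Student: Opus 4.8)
The plan is to read off $y'$ coordinatewise from the characterization of $\Fr$-linkedness in \autoref{lemFr}. For each $k<\omega$, the expression $\dot y(k)$ is a $\Por$-name for a natural number, so \autoref{lemFr} provides some $m_k\in\omega$ in the ground model such that no $p\in Q$ forces $m_k\leq\dot y(k)$. Consequently, whenever $p\in Q$ and $j\in\omega$ satisfy $p\Vdash j\leq\dot y(k)$, we must have $j<m_k$: otherwise $m_k\leq j$ would give $p\Vdash m_k\leq j\leq\dot y(k)$, contradicting the choice of $m_k$. I would then define $y'\in\omega^\omega$ by $y'(k):=m_k$.

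To check that this $y'$ works, I would argue the contrapositive of the displayed implication. Suppose $x\in\omega^\omega$, $n<\omega$ and $p\in Q$ satisfy $p\Vdash\forall\,k\geq n\colon x(k)\leq\dot y(k)$; the goal is $x\leq^* y'$. For every $k\geq n$ we have in particular $p\Vdash x(k)\leq\dot y(k)$, and since $x(k)$ is a fixed ground-model natural number this is precisely the statement ``$p$ forces $j\leq\dot y(k)$'' with the concrete witness $j=x(k)$. By the observation in the first paragraph this forces $x(k)<m_k=y'(k)$, hence $x(k)\leq y'(k)$ for all $k\geq n$, i.e.\ $x\leq^* y'$. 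Taking contrapositives, if $x\nleq^* y'$ then no pair $(n,p)$ with $p\in Q$ can satisfy $p\Vdash\forall\,k\geq n\colon x(k)\leq\dot y(k)$, which is exactly the assertion of the lemma.

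The only point requiring the slightest care is the trivial but load-bearing remark that $p\Vdash x(k)\leq\dot y(k)$ literally asserts that $p$ forces the ground-model natural number $x(k)$ to lie below $\dot y(k)$, so that \autoref{lemFr} applies with $j=x(k)$; beyond that the argument is pure bookkeeping. No property of $Q$ is used except \autoref{lemFr}, the argument is uniform over $p\in Q$ so uncountability of $Q$ is irrelevant, and the degenerate case $Q=\varnothing$ makes the conclusion vacuous. I therefore do not anticipate any real obstacle here: the substantive content was already extracted in \autoref{lemFr}, and this lemma is the expected ``$\Fr$-linked implies $\la\omega^\omega,\leq^*\ra$-good''-type consequence.
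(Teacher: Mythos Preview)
Your proof is correct and follows essentially the same approach as the paper: define $y'$ coordinatewise via \autoref{lemFr} and then verify the implication. The only cosmetic difference is that you argue the contrapositive while the paper argues directly (picking $k\geq n$ with $x(k)>y'(k)$ and finding $q\leq p$ forcing $\dot y(k)<y'(k)<x(k)$), but the content is identical.
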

\begin{proof}
    Using \autoref{lemFr}, for each $k<\omega$ find $y'(k)<\omega$ such that no $p\in Q$ forces $y'(k) \leq \dot y(k)$. This defines $y'\in \omega^\omega$.

    Now assume that $x\in \omega^\omega$ and $x\nleq^* y'$. Let $n<\omega$ and $p\in Q$, so there is some $k\geq n$ such that $x(k) > y'(k)$. On the other hand, $p\nVdash y'(k) \leq \dot y(k)$, so there is some $q\leq p$ forcing $\dot y(k) < y'(k) < x(k)$, so $p\nVdash \forall\, k\geq n\colon x(k) \leq \dot y(k)$.
\end{proof}

\begin{theorem}[{\cite{mejiavert}}]\label{Frgood}
    Any $\mu$-$\Fr$-linked poset is $\mu^+$-$\omega^\omega$-good.
\end{theorem}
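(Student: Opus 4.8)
The plan is to obtain the theorem directly from Lemma~\ref{Frunb}, by taking a union of the witnesses it provides over an $\Fr$-linked cover of the poset. No new combinatorics is needed beyond what is already isolated in Lemmas~\ref{lemFr} and~\ref{Frunb}; the work is purely the bookkeeping of Definition~\ref{def:good} for the relational system $R = \la\omega^\omega,\leq^*\ra$.

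First I would fix a $\mu$-$\Fr$-linked poset $\Por$, witnessed by a family $\seq{Q_\alpha}{\alpha<\mu}$ with $\Por = \bigcup_{\alpha<\mu}Q_\alpha$ and $Q_\alpha\in\Lambda_\Fr(\Por)$ for each $\alpha$, and let $\dot y$ be a $\Por$-name for a member of $\omega^\omega$. Applying Lemma~\ref{Frunb} to $Q_\alpha$ with this same $\dot y$, I get for each $\alpha<\mu$ a ground-model real $y'_\alpha\in\omega^\omega$ such that, for every $x\in\omega^\omega$,
\[x\nleq^* y'_\alpha \ \imp\ \forall\, n<\omega\ \forall\, p\in Q_\alpha\colon p\nVdash \forall\, k\geq n\colon x(k)\leq \dot y(k).\]
I then set $H:=\set{y'_\alpha}{\alpha<\mu}$. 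This is a subset of $\omega^\omega$ of size ${\leq}\mu<\mu^+$, and it is non-empty since $\mu\geq 1$ (so $Q_0$, and hence $y'_0$, exists). This $H$ is the witness demanded by Definition~\ref{def:good} for $\kappa=\mu^+$.

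Next I would check that $H$ works. Suppose $x\in\omega^\omega$ is $\la\omega^\omega,\leq^*\ra$-unbounded over $H$, i.e.\ $x\nleq^* y'_\alpha$ for all $\alpha<\mu$; the goal is $\Vdash x\not\sqsubset\dot y$, which here means $\Vdash x\nleq^*\dot y$. If this failed, some condition would force $x\leq^*\dot y$, and by extending it and deciding the threshold we would obtain $p\in\Por$ and $n<\omega$ with $p\Vdash \forall\, k\geq n\colon x(k)\leq\dot y(k)$. Pick $\alpha<\mu$ with $p\in Q_\alpha$; since $x\nleq^* y'_\alpha$, the displayed implication yields $p\nVdash \forall\, k\geq n\colon x(k)\leq\dot y(k)$ --- a contradiction. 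Hence $\Vdash x\nleq^*\dot y$, so $\Por$ is $\mu^+$-$\omega^\omega$-good.

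I do not anticipate a real obstacle: everything delicate (the passage from ``$\Fr$-linked'' to control of the name, via the $\Fr$-linkedness characterization of Lemma~\ref{lemFr}) has already been carried out in Lemma~\ref{Frunb}. The only points that need a moment's attention are that $H$ is non-empty and of the right cardinality, and --- the one genuinely load-bearing observation --- that ``$x$ is $R$-unbounded over $H$'' supplies the clause $x\nleq^* y'_\alpha$ for \emph{every} index $\alpha$ simultaneously, which is exactly what lets the argument handle whichever piece $Q_\alpha$ of the cover happens to contain the offending condition.
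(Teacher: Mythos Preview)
Your proof is correct and follows exactly the approach the paper indicates: it states that the theorem ``is an easy consequence of \autoref{Frunb}'' without spelling out the details, and your argument is precisely the intended expansion of that remark.
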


This theorem is an easy consequence of \autoref{Frunb}. However, we do not know how to modify the construction in \autoref{main1} to obtain a $\theta_3$-$\omega^\omega$-good iteration. But we have some other way to preserve unbounded families, as in the following result.

\begin{theorem}[{\cite{BCM}}]\label{FrKpres}
   Let $\theta$ be an uncountable regular cardinal. Then any $\theta$-$\Fr$-Knaster poset preserves $\theta$-$\omega^\omega$-unbounded families.
\end{theorem}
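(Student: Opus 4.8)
The plan is to argue by contradiction, with \autoref{Frunb} as the engine. Fix a $\theta$-$\Fr$-Knaster poset $\Por$ and a $\theta$-$\omega^\omega$-unbounded family $\set{x_i}{i\in I}$ in the ground model, and suppose some condition forces it is no longer $\theta$-$\omega^\omega$-unbounded. First I would note that $\theta$-$\Fr$-Knaster implies $\theta$-cc: an infinite antichain, enumerated as $\la p_n\colon n<\omega\ra$, forces $\set{n<\omega}{p_n\in\dot G}$ to have at most one element, so it is not $\Fr$-linked; hence no antichain of size $\theta\geq\aleph_1$ can admit a $\Fr$-linked subset of size $\theta$. Thus $\theta$ stays regular and $|I|\geq\theta$ is preserved, so the failure of preservation is witnessed by a condition $p^*$ and a $\Por$-name $\dot y$ for a member of $\omega^\omega$ with $p^*\Vdash\left|\set{i\in I}{x_i\leq^*\dot y}\right|\geq\theta$.

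Second, I would extract ground-model data. Let $J$ be the set of $i\in I$ for which there are $p\leq p^*$ and $n<\omega$ with $p\Vdash\forall\, k\geq n\colon x_i(k)\leq\dot y(k)$; a routine strengthening argument shows that $i\in J$ iff some $p\leq p^*$ forces $x_i\leq^*\dot y$. If $i\notin J$ then no extension of $p^*$ forces $x_i\leq^*\dot y$, hence $p^*\Vdash x_i\not\leq^*\dot y$; consequently $p^*\Vdash\set{i\in I}{x_i\leq^*\dot y}\subseteq J$, and together with the choice of $p^*$ and $\dot y$ this forces $|J|\geq\theta$, so $|J|\geq\theta$ already in the ground model. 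Shrinking if necessary I assume $|J|=\theta$, and for each $i\in J$ I fix $p_i\leq p^*$ and $n_i<\omega$ with $p_i\Vdash\forall\, k\geq n_i\colon x_i(k)\leq\dot y(k)$.

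Third, I would pass to a $\Fr$-linked set of conditions. If $\set{p_i}{i\in J}$ has size $\theta$, the $\theta$-$\Fr$-Knaster hypothesis yields a $\Fr$-linked $Q\in\left[\set{p_i}{i\in J}\right]^\theta$, and picking one index per element of $Q$ gives $B_0\in[J]^\theta$ with $\set{p_i}{i\in B_0}=Q$. Otherwise $\set{p_i}{i\in J}$ has size $<\theta$, so by regularity of $\theta$ some single condition $p$ satisfies $p=p_i$ for $\theta$-many $i\in J$; since singletons are $\Fr$-linked (even uf-linked, see \autoref{exm:uf-l}), put $Q:=\{p\}$ and $B_0:=\set{i\in J}{p_i=p}$. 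In either case $B_0\in[I]^\theta$ and $Q$ is a $\Fr$-linked subset of $\Por$ containing $p_i$ for every $i\in B_0$.

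Finally, I would apply \autoref{Frunb} to $Q$ and $\dot y$ to obtain $y'\in\omega^\omega$ in the ground model such that, for any $x\in\omega^\omega$, $x\not\leq^* y'$ implies that no $p\in Q$ forces $\forall\, k\geq n\colon x(k)\leq\dot y(k)$ for any $n$. For $i\in B_0$ the condition $p_i\in Q$ forces exactly this with $n=n_i$, so necessarily $x_i\leq^* y'$. Hence $B_0\subseteq\set{i\in I}{x_i\leq^* y'}$, a set of size $<\theta$ since $\set{x_i}{i\in I}$ is $\theta$-$\omega^\omega$-unbounded in the ground model and $y'$ lies there — contradicting $|B_0|=\theta$. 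The only fiddly point is the bookkeeping in the thinning step (the ``many distinct conditions'' versus ``one repeated condition'' cases, together with checking that $\theta$-cc genuinely secures the $|I|\geq\theta$ clause after forcing); the substance of the argument is \autoref{Frunb}, which converts the name $\dot y$ into a single ground-model function dominating the $x_i$ over an $\Fr$-linked piece of the poset.
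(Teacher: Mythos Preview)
Your proof is correct and follows essentially the same route as the paper's: assume preservation fails, extract $\theta$-many indices $i$ with witnesses $p_i\leq p^*$ and $n_i$, pass to a $\Fr$-linked set $Q$ of such $p_i$'s, and apply \autoref{Frunb} to $Q$ and $\dot y$ to produce a ground-model $y'$ that $\leq^*$-dominates $\theta$-many $x_i$, contradicting $\theta$-unboundedness. Two minor bookkeeping differences: the paper first thins to make all $n_i$ equal to a fixed $n$, which you correctly observe is unnecessary since \autoref{Frunb} already quantifies over all $n$; and you handle the case split on whether $\set{p_i}{i\in J}$ has size $\theta$ more explicitly than the paper does (the paper simply asserts the existence of $I'$ without separating the cases). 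Your added remark that $\theta$-$\Fr$-Knaster implies $\theta$-cc is correct and makes the argument slightly more self-contained, though the paper leaves this implicit.
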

\begin{proof}
    Assume that $\set{x_i}{i\in I}$ is a $\theta$-$\omega^\omega$-unbounded family, and that there is some $p\in\Por$ forcing that it is not, i.e.\ for some $\Por$-name $\dot y$ of a member of $\omega^\omega$, $p$ forces that $|\set{i\in I}{x_i\leq^* \dot y}|\geq \theta$. This implies that the set
    \[I_0 := \lset{i\in I}{\exists\, p'\leq p\colon p'\Vdash x_i \leq^* \dot y}\]
    has size ${\geq}\theta$. Pick $I_1\subseteq I_0$ of size $\theta$ and, for each $i\in I_1$, choose $p_i\leq p$ and $n_i<\omega$ such that $p_i\Vdash x_i(k) \leq \dot y(k)$ for all $k\geq n_i$. Since $\cf(\theta)>\omega$, we can find $n<\omega$ and $I_2\subseteq I_1$ of size $\theta$ such that $n_i = n$ for all $i\in I_2$.

    Since $\theta$ is regular and $\Por$ is $\theta$-$\Fr$-Knaster, there is some $I'\subseteq I_2$ such that the set $Q:=\set{p_i}{i\in I'}$ is $\Fr$-linked. Now find $y'\in\omega^\omega$ as in \autoref{Frunb} for $\dot y$ and $Q$. Then, $|\set{i\in I}{x_i\leq^* y'}|<\theta$, so there is some $i\in I'$ such that $x_i\nleq^* y'$. Hence, by \autoref{Frunb}, no $p\in Q$ forces $\forall\, k\geq n\colon x_i(k) \leq \dot y(k)$. But $p_i$ forces this, a contradiction.
\end{proof}

Because of the previous theorem, the plan now is to modify the construction of \autoref{main1} to obtain a $\theta_3$-$\Fr$-Knaster poset. To achieve this, we strengthen ultrafilter-linkedness as follows.

\begin{definition}[cf.~{\cite{GMS}}]\label{defuflim}
    Given a (non-principal) ultrafilter $D$ on $\omega$, define the linkedness property
    $\Lambda^{\lim}_D$, called \emph{$D$-$\lim$-linked}: $Q\in\Lambda^{\lim}_D(\Por)$ if there are a $\Por$-name $\dot D'$ of an ultrafilter on $\omega$ extending $D$ and a map $\lim^D\colon Q^\omega\to \Por$ such that, whenever $\bar p = \la p_n\colon n<\omega\ra\in Q^\omega$,
    \[{\lim}^D\, \bar p \Vdash \{n<\omega\colon p_n\in \dot G\} \in \dot D'.\]
    Define the linkedness property $\Lambda^{\lim}_{\rm uf}$, called \emph{uf-$\lim$-linked}, by $Q\in \Lambda^{\lim}_{\rm uf}(\Por)$ iff $Q \in \Lambda^{\lim}_{D}(\Por)$ for any ultrafilter $D$ on $\omega$.

    In addition, for an infinite cardinal $\mu$, we say that a poset $\Por$ is \emph{uniformly $\mu$-$D$-$\lim$-linked} if if is $\mu$-$\Lambda^{\lim}_D$-linked witnessed by some $\la Q_\alpha\colon \alpha<\mu\ra$, but the $\dot D'$ above can be the same for any $Q_\alpha$. And we say that $\Por$ is \emph{uniformly $\mu$-uf-$\lim$-linked} if there is some $\la Q_\alpha\colon \alpha<\mu\ra$ witnessing that $\Por$ is uniformly $\mu$-$D$-$\lim$-linked for any ultrafilter $D$ on $\omega$.
\end{definition}

\begin{example}\label{exm:singleton}
    Any singleton is uf-$\lim$-linked. As a consequence, any poset $\Por$ is uniformly $|\Por|$-uf-$\lim$-linked, witnessed by its singletons: for $p\in P$, let $Q_p:=\{p\}$, and $\lim^D$ on $Q_p$ is just the constant map with value $p$, when $D$ is an ultrafilter on $\omega$. Since $\lim^D \bar p\Vdash \{n<\omega\colon p_n\in \dot G\} = \omega$ for all $\bar p \in Q_p^\omega$, $\dot D'$ can be any $\Por$-name of an ultrafilter extending $D$.
\end{example}

\begin{theorem}[{\cite{GMS,BCM}}]\label{Euflim}
    $\Ebb$ is uniformly $\sigma$-uf-$\lim$-linked.
\end{theorem}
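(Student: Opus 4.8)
The plan is to exhibit an explicit countable family $\la Q_s \colon s\in\omega^{<\omega}\ra$ covering $\Ebb$, together with, for each ultrafilter $D$ on $\omega$, a name $\dot D'$ and limit maps $\lim^D$ on each $Q_s$, such that Definition~\ref{defuflim} is satisfied with the \emph{same} $\dot D'$ for all pieces. Recall a condition in $\Ebb$ is a pair $(s,\varphi)$ with $s\in\omega^{<\omega}$ and $\varphi$ a slalom bounded by a constant; two conditions with the same stem $s$ are compatible, and in fact the sub-poset of conditions with stem $s$ is $\sigma$-centered once we further split according to (finite approximations of) $\varphi$. So first I would set $Q_s := \set{(s,\varphi)\in\Ebb}{\stem = s}$ for $s\in\omega^{<\omega}$; this is a countable cover of $\Ebb$ by $\Fr$-linked (indeed centered) sets. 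The real content is producing the limit operation.

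Next, fix an ultrafilter $D$ on $\omega$ and a sequence $\bar p = \la p_n \colon n<\omega\ra \in Q_s^\omega$, say $p_n = (s,\varphi_n)$. The idea, following \cite{GMS}, is to build a single condition $q = \lim^D\bar p$ by taking a ``$D$-limit'' of the slaloms: since for each $i$ the values $\varphi_n(i)$ are subsets of $\omega$ of size bounded (along a $D$-large set of $n$) by a fixed bound, one can use $D$ to stabilize, for each coordinate $i$ and each candidate element, whether it lies in $\varphi_n(i)$ for $D$-many $n$; this yields a limit slalom $\varphi_\infty$ with $\varphi_\infty(i) = \set{k}{\set{n}{k\in\varphi_n(i)}\in D}$, and one checks $|\varphi_\infty(i)|$ stays within the allowed bound. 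Put $q := (s,\varphi_\infty)$. The key point to verify is that $q$ forces $\set{n}{p_n\in\dot G}\in\dot D'$ for a suitable $\dot D'$: below $q$, the generic real $e$ extends $s$ and eventually avoids $\varphi_\infty$, hence eventually avoids $\varphi_n$ for $D$-many $n$ at each relevant coordinate; the standard density argument then shows that $\set{n}{(s,\varphi_n)\in\dot G}$ is forced to be $D$-positive, and by genericity we can arrange it to actually lie in the name $\dot D'$ for an ultrafilter extending $D$. Crucially, the construction of $\dot D'$ does not depend on $s$ or on $\bar p$ — it is a canonical $\Ebb$-name for an ultrafilter extending $D$ generated by $D$ together with the ``trace sets'' $\set{n}{(\stem(q),\varphi_n)\in\dot G}$ — so uniformity across the pieces $Q_s$ holds, and taking the intersection over all $D$ gives uniform $\sigma$-uf-$\lim$-linkedness.

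I would organize the write-up as: (1) define the cover $\{Q_s\}$ and note $|\omega^{<\omega}| = \aleph_0$; (2) for fixed $D$, define $\varphi_\infty$ from $\bar p$ via the $D$-limit and check it is a legitimate slalom with the correct constant bound; (3) define $\lim^D\bar p := (s,\varphi_\infty)$ and define the name $\dot D'$ uniformly; (4) prove the forcing statement $\lim^D\bar p \Vdash \set{n}{p_n\in\dot G}\in\dot D'$ by a genericity/density argument on the stems of conditions below $\lim^D\bar p$; (5) observe $\dot D'$ is independent of $s$ and $\bar p$, giving uniformity, and conclude by ranging over all $D$.

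The main obstacle I anticipate is step (4) together with verifying $\dot D'$ is (forced to be) an ultrafilter: one must check that the filter generated by $D$ and the trace sets is proper in the extension (no finite intersection of trace sets with a $D$-set is empty) — this uses that any finitely many conditions $(s,\varphi_{n_1}),\dots,(s,\varphi_{n_k})$ with the same stem are compatible, so their common trace is forced nonempty along a generic — and that it can be extended to an ultrafilter by a name chosen once and for all. The bookkeeping that the single name $\dot D'$ works simultaneously for every piece $Q_s$ is exactly the delicate part that distinguishes ``uniformly uf-$\lim$-linked'' from the weaker unindexed notion, and I expect that to require the most care.
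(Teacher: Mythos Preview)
Your overall strategy matches the paper's, but there is a genuine gap in your choice of cover. You set $Q_s := \set{(s,\varphi)\in\Ebb}{\text{stem}=s}$, indexed only by the stem. With this decomposition your $D$-limit $\varphi_\infty$ need not be a condition in $\Ebb$: if $p_n=(s,\varphi_n)$ with $\varphi_n\in\Scal(\omega,m_n)$ and the bounds $m_n$ are unbounded (say $m_n=n$ and $\varphi_n(i)=\{0,1,\dots,n-1\}$ for all $i$), then for every $k$ the set $\set{n}{k\in\varphi_n(i)}$ is cofinite, hence in $D$, so $\varphi_\infty(i)=\omega$. Your parenthetical ``bounded along a $D$-large set of $n$ by a fixed bound'' is exactly what fails: for a non-principal $D$ there need be no $m$ with $\set{n}{m_n\leq m}\in D$. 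Thus step~(2) of your plan breaks down and $\lim^D\bar p$ is not defined.

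The fix is precisely what the paper does: refine the cover to $E_{s,m}:=\set{(s,\varphi)\in\Ebb}{\varphi\in\Scal(\omega,m)}$, indexed by pairs $(s,m)\in\omega^{<\omega}\times\omega$ (still countable). On $E_{s,m}$ all slaloms are uniformly bounded by $m$, and then your $D$-limit lands back in $E_{s,m}$: if $|\varphi_\infty(i)|>m$, pick $k_0,\dots,k_m\in\varphi_\infty(i)$; the intersection $\bigcap_{j\leq m}\set{n}{k_j\in\varphi_n(i)}$ lies in $D$ but is empty since $|\varphi_n(i)|\leq m$. With this correction your steps~(3)--(5) are exactly the paper's argument: define $\dot D'$ as (a name for an ultrafilter extending) the filter generated by $D$ together with all trace sets $\set{n}{p_n\in\dot G}$ for $\bar p\in E_{s,m}^\omega$ with $\lim^D\bar p\in\dot G$, and verify the finite intersection property using that finitely many conditions with the same stem are compatible.
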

\begin{proof}
    We only indicate the components and the limit functions. For $s\in\omega^{<\omega}$ and $m\in\omega$, consider the set $E_{s,m}$ of conditions in $\Ebb$ of the form $(s,\varphi)$ with $\varphi\in \Scal(\omega,m)$. If $D$ is an ultrafilter on $\omega$ and $\bar p = \la p_n\colon n<\omega\ra\in E^\omega_{s,m}$, $p_n = (s,\varphi_n)$, define $\lim^D \bar p := (s,\varphi)$ where
    \[k\in \varphi(i) \text{ iff }\set{n<\omega}{k\in\ \varphi_n(i)}\in D.\]
    It is clear that $(s,\varphi)\in E_{s,m}$.

    The sequence $\la E_{s,m}\colon s\in\omega^{<\omega},\ m<\omega\ra$ witnesses that $\Ebb$ is uniformly $\sigma$-$D$-$\lim$-linked for any ultrafilter $D$ on $\omega$. This is proved by showing that, whenever $G$ is $\Por$-generic over $V$, the set
    \[D\cup\bigcup_{s,m}\lset{\{n<\omega\colon p_n\in G\}}{\bar p\in E_{s,m}^\omega\cap V,\ {\lim}^D\, \bar p\in G}\]
    has the finite intersection property.
\end{proof}

Based on~\cite{GMS,Uribethesis},
we present a framework to construct FS iterations that allow ultrafilter limits. The candidates for such iterations can be presented in a more general fashion. For an infinite cardinal $\theta$, denote
\[\theta^- = 
\left\{
\begin{array}{ll}
    \theta_0 &  \text{if $\theta = \theta_0^+$ for some cardinal $\theta_0$,}\\
    \theta & \text{if $\theta$ is not a successor cardinal.}
\end{array}
\right.\]

\begin{definition}\label{Gamma-it}
Let $\theta$ be an uncountable cardinal. 
A FS iteration $\la\Por_\alpha,\Por^-_\alpha,\Qnm_\xi\colon \alpha\leq\pi,\ \xi<\pi\ra$ is a \emph{$\theta$-$\Gamma$-iteration} if it satisfies:
\begin{enumerate}[label=\normalfont(\roman*)]
\item $\Por^-_\xi \subsetdot \Por_\xi$ for all $\xi<\pi$, and

\item $\Por^-_\xi$ forces that $\Qnm_\xi$ is $\mu_\xi$-$\Gamma$-linked witnessed by a sequence of $\Por^-_\xi$-names $\la \dot Q_{\xi,\zeta}\colon \zeta<\mu_\xi\ra$, where $\mu_\xi < \theta$ (known from the ground model).
\end{enumerate}
Associated with this iteration,
we define the following notions.
\begin{enumerate}[label = \normalfont (\arabic*)]
    \item A function $h\colon d_h\to\theta^-$ with $\pi\subseteq d_h$ is usually called a \emph{guardrail for the iteration}.\footnote{The assumption $\pi\subseteq d_h$ is for practicality, e.g.\ to have that it is still a guardrail of the iteration up to some $\xi<\pi$.}
    \item For $\alpha\leq\pi$ and $h$ as above, let
$\Por_\alpha^h$ be the set of conditions $p\in\Por_\alpha$ \emph{following $h$}, i.e.\ for $\xi\in\dom p$, $h(\xi)<\mu_\xi$, $p(\xi)$ is a $\Por^-_\xi$-name and $\Vdash_{\Por^-_\xi} p(\xi)\in \dot Q_{\xi,h(\xi)}$.
	
			A guardrail $h$ can have coordinates $\xi$ satisfying $h(\xi)\geq\mu_\xi$. Although they are not interesting, allowing them considerably simplifies writing the details of the construction of a $\theta$-$\Gamma$-iteration.

    \item $\Por^*_\alpha := \bigcup_{h\in {\theta^-}^\pi}\Por^h_\alpha$.

    \item Let $L$ be a linear order and $\seq{p_\ell}{\ell\in L}$ a sequence of conditions in $\Por_\pi$. We say that $\seq{p_\ell}{\ell\in L}$ is a \emph{uniform $\Delta$-system} if it satisfies the following:
    \begin{enumerate}[label = \normalfont (\roman*)]        
        \item All $\dom p_\ell$ ($\ell\in L$) have the same size $n$: $\dom p_\ell = \set{\alpha_{\ell,k}}{k<n}$ (increasing enumeration).

        \item There is some $v\subseteq n$ such that, for each $k\in v$, the sequence $\seq{\alpha_{\ell,k}}{\ell \in L}$ is constant with value $\alpha_{*,k}$.

        \item $\seq{\dom p_\ell}{\ell\in L}$ forms a $\Delta$-system with root $\set{\alpha_{*,k}}{k\in v}$.

        \item For $k\in n\menos v$, the sequence $\seq{\alpha_{\ell,k}}{\ell\in L}$ is increasing.

        \item There is some guardrail $h$ such that $\set{p_\ell}{\ell\in L} \subseteq \Por^h_\pi$.
    \end{enumerate}
\end{enumerate}
\end{definition}

By recursion on $\alpha\leq\pi$, we can show:

\begin{fact}\label{guard-dense}
    For any $\theta$-$\Gamma$-iteration as in \autoref{Gamma-it}, $\Por^*_\alpha$ is dense in $\Por_\alpha$.
\end{fact}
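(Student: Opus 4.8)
The plan is to prove this by induction on $\alpha \le \pi$, mirroring the recursive construction of the iteration. The base case $\alpha = 0$ is immediate: $\Por_0$ is the trivial poset, whose unique condition follows every guardrail vacuously, so $\Por^*_0 = \Por_0$. For limit $\alpha$, since $\Por_\alpha = \limdir_{\beta<\alpha}\Por_\beta = \bigcup_{\beta<\alpha}\Por_\beta$, any $p \in \Por_\alpha$ lies in some $\Por_\beta$ with $\beta < \alpha$; applying the induction hypothesis inside $\Por_\beta$ yields $q \le p$ with $q \in \Por^h_\beta$ for some guardrail $h$, and since $\dom q \subseteq \beta$ and the clauses defining ``following $h$'' only involve the coordinates in $\dom q$, we get $q \in \Por^h_\alpha \subseteq \Por^*_\alpha$ with $q \le p$ in $\Por_\alpha$.

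The successor step $\alpha = \xi+1$ carries the content. Let $p = (p\frestr\xi, \dot q) \in \Por_{\xi+1} = \Por_\xi \ast \Qnm_\xi$, so $p\frestr\xi \in \Por_\xi$ and $p\frestr\xi \Vdash_\xi \dot q \in \Qnm_\xi$. First I would replace $\dot q$ by a $\Por^-_\xi$-name: since $\Por^-_\xi \subsetdot \Por_\xi$, a standard quotient-forcing argument produces $r_1 \le p\frestr\xi$ in $\Por_\xi$ and a $\Por^-_\xi$-name $\dot t$ with $r_1 \Vdash_\xi \dot q = \dot t$, and, modifying $\dot t$ where necessary so as to keep $r_1 \Vdash_\xi \dot q = \dot t$, we may assume $\Vdash_{\Por^-_\xi}\dot t \in \Qnm_\xi$. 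As $\Por^-_\xi$ forces $\Qnm_\xi = \bigcup_{\zeta<\mu_\xi}\dot Q_{\xi,\zeta}$, extend $r_1$ to $r_2$ deciding some $\zeta_* < \mu_\xi$ with $r_2 \Vdash_\xi \dot t \in \dot Q_{\xi,\zeta_*}$. Assuming, as a harmless normalization (the relevant linkedness properties $\Gamma$ survive adjoining a top element, and one may insist that each $\dot Q_{\xi,\zeta}$ contains it), that $\Qnm_\xi$ has a top element lying in every $\dot Q_{\xi,\zeta}$, define a $\Por^-_\xi$-name $q(\xi)$ equal to $\dot t$ wherever $\dot t \in \dot Q_{\xi,\zeta_*}$ and equal to that top element otherwise; then $\Vdash_{\Por^-_\xi} q(\xi) \in \dot Q_{\xi,\zeta_*}$, while $r_2 \Vdash_\xi q(\xi) = \dot t = \dot q$.

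Now apply the induction hypothesis to $r_2 \in \Por_\xi$ to get $r_3 \le r_2$ with $r_3 \in \Por^h_\xi$ for a guardrail $h$; since $\xi \notin \dom r_3$, replacing $h$ by the guardrail $h'$ that agrees with $h$ except $h'(\xi) = \zeta_*$ (still a guardrail, as $\zeta_* < \mu_\xi < \theta$ forces $\zeta_* < \theta^-$) does not disturb $r_3 \in \Por^{h'}_\xi$. Then $q := r_3 {}^\frown \la q(\xi)\ra$ lies in $\Por^{h'}_{\xi+1} \subseteq \Por^*_{\xi+1}$ and refines $p$, since $q\frestr\xi = r_3 \le p\frestr\xi$ and $r_3 \Vdash_\xi q(\xi) = \dot q \le \dot q$. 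The step I expect to be the main obstacle is the reduction of $\dot q$ to (the value of) a $\Por^-_\xi$-name concentrated on a single $\dot Q_{\xi,\zeta_*}$: this is exactly where the hypothesis $\Por^-_\xi \subsetdot \Por_\xi$ together with $\Qnm_\xi$ and the $\dot Q_{\xi,\zeta}$ being $\Por^-_\xi$-names is used, and it must be performed \emph{before} invoking the induction hypothesis at $\xi$ so that the guardrail produced there can be amended at coordinate $\xi$; the remaining bookkeeping with guardrails is routine.
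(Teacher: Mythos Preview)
Your inductive argument is correct and is exactly what the paper has in mind when it writes ``by recursion on $\alpha\le\pi$'' without giving any further detail. The one soft spot is the top-element normalization in the successor step: it (or the equivalent hypothesis that each $\dot Q_{\xi,\zeta}$ is forced nonempty by $\Por^-_\xi$) is not literally part of \autoref{Gamma-it}, but it holds in every application in the paper, and your identification of this reduction---replacing $\dot q$ by a $\Por^-_\xi$-name landing in a single $\dot Q_{\xi,\zeta_*}$---as the crux of the successor step is exactly right.
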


We focus on the case $\Gamma = \Lambda^{\lim}_{\rm uf}$. We plan to construct a $\theta$-$\Lambda^{\lim}_{\rm uf}$-iteration which is $\theta$-$\Fr$-Knaster (in our case, $\theta=\theta_3$).

\begin{lemma}\label{mainuf}
    For a $\theta$-$\Lambda_{\Fr}$-iteration as in \autoref{Gamma-it}: Let $H$ be a set of guardrails, $\theta'\geq\theta$ regular, and assume:
    \begin{enumerate}[label = \normalfont (\roman*)]
        \item\label{mainuf1} Any countable partial function from $\pi$ into $\theta^-$ can be extended by some $h\in H$.
        \item\label{mainuf2} If $h\in H$ and $\bar p =\la p_n\colon n<\omega\ra\subseteq \Por^h_\pi$ forms a uniform $\Delta$-system, then there is some $q\in\Por_\pi$ forcing that $\{n<\omega\colon p_n\in\dot G\}$ is infinite.
    \end{enumerate}
    Then $\Por_\pi$ is $\theta'$-$\Fr$-Knaster.
\end{lemma}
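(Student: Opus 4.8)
The plan is to prove directly that $\Por_\pi$ is $\theta'$-$\Fr$-Knaster, i.e.\ that every $B\in[\Por_\pi]^{\theta'}$ contains some $A\in[B]^{\theta'}$ which is $\Fr$-linked in $\Por_\pi$. First I would reduce to the case $B\subseteq\Por^*_\pi$. By \autoref{guard-dense} each $p\in B$ admits $p^*\le p$ in $\Por^*_\pi$; if some fibre of $p\mapsto p^*$ has size $\theta'$ we are done, since all those conditions lie above a single $q$ and then $q$ forces every one of them into $\dot G$ (a trivial witness of $\Fr$-linkedness). Otherwise, as $\theta'$ is regular, $B^*:=\{p^*:p\in B\}$ has size $\theta'$, and any $\Fr$-linked $A^*\in[B^*]^{\theta'}$ pulls back along a section to an $\Fr$-linked $A\in[B]^{\theta'}$, using $p^*\le p$ and the fact that $p^*\in\dot G\Rightarrow p\in\dot G$. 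So from now on assume $B\subseteq\Por^*_\pi$; then each $p\in B$ follows a guardrail, and I write $g_p:=h_p\restriction\dom p$, a finite function with $g_p(\xi)<\mu_\xi$ and $\Vdash_{\Por^-_\xi}p(\xi)\in\dot{Q}_{\xi,g_p(\xi)}$ for $\xi\in\dom p$.

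Next I would thin $B$ to a set $A\in[B]^{\theta'}$ of uniform combinatorial shape, which is legitimate since $\theta'$ is regular uncountable and all domains are finite: by the $\Delta$-system lemma and repeated pigeonhole I may assume all $\dom p$ ($p\in A$) have the same size $n$ and the same ``type'' over the root, that $\{\dom p:p\in A\}$ forms a $\Delta$-system with root $\Delta_0$ and pairwise intersections exactly $\Delta_0$, and -- the one genuinely forcing-specific ingredient -- that the guardrail values on the root are frozen, i.e.\ there is a fixed $g_*\colon\Delta_0\to\theta^-$ with $g_p\restriction\Delta_0=g_*$ for all $p\in A$ (possible because $\prod_{\xi\in\Delta_0}\mu_\xi$ has size $<\theta\le\theta'$). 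Writing $\dom p=\{\alpha_{p,k}:k<n\}$ increasingly and $v:=\{k<n:\alpha_{p,k}\in\Delta_0\}$ (the same for all $p\in A$), distinct $p,p'\in A$ have $\alpha_{p,k}\ne\alpha_{p',k}$ lying outside $\Delta_0$ for every $k\in n\setminus v$.

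Finally I would verify that $A$ is $\Fr$-linked in $\Por_\pi$. Fix $\bar p=\la p_n:n<\omega\ra\in A^\omega$. If $\{p_n:n<\omega\}$ is finite, some value repeats infinitely and by itself forces $\{n:p_n\in\dot G\}$ infinite; otherwise re-enumerate the infinite set $Z:=\{p_n:n<\omega\}$ injectively. The key observation is that one must \emph{not} try to make $A$, or even an arbitrary $\omega$-subsequence of it, into a uniform $\Delta$-system: clause~(iv) of \autoref{Gamma-it} forces each non-root column to be a strictly increasing $\omega$-sequence, which is incompatible with $|A|>\omega$ as soon as $n>|v|$. Instead, $\Fr$-linkedness of $A$ only needs, for this particular $Z$, \emph{some} condition forcing $Z\cap\dot G$ infinite; so I would extract from $Z$ an $\omega$-subsequence $\la q_i:i<\omega\ra$ strictly increasing in every non-root column, by iterating over the finitely many $k\in n\setminus v$ the elementary fact that every $\omega$-sequence of ordinals has an infinite non-decreasing subsequence (equivalently: $\pi^{\,n-|v|}$ with the product order is well-founded, hence a well-quasi-order), noting that ``non-decreasing $+$ $\Delta$-system'' upgrades to ``strictly increasing'' since equal column entries would lie in $\Delta_0$. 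Then $\la q_i:i<\omega\ra$ satisfies (i)--(iv); for (v) put $g:=g_*\cup\bigcup_{i<\omega}\bigl(g_{q_i}\restriction(\dom q_i\setminus\Delta_0)\bigr)$, a well-defined countable partial function from $\pi$ to $\theta^-$ (the pieces agree on $\Delta_0$ and the non-root domains are pairwise disjoint), extend it to some $h\in H$ by hypothesis~\ref{mainuf1}, and observe that every $q_i$ follows $h$. Thus $\la q_i:i<\omega\ra\subseteq\Por^h_\pi$ is a uniform $\Delta$-system, so hypothesis~\ref{mainuf2} yields $q\in\Por_\pi$ forcing $\{i<\omega:q_i\in\dot G\}$ infinite; since the $q_i$ are distinct and occur among the $p_n$, $q$ forces $\{n<\omega:p_n\in\dot G\}$ infinite, as required.

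The main obstacle is precisely the tension exploited in the last paragraph: recognising that clause~(iv) makes ``$A$ is a uniform $\Delta$-system'' outright impossible for uncountable $A$ with non-trivial stems, and that the correct replacement is the well-quasi-order argument which, inside any infinite subset of $A$, produces an $\omega$-indexed uniform $\Delta$-system to which hypothesis~\ref{mainuf2} can be applied. The surrounding bookkeeping -- the reduction to $\Por^*_\pi$, the $\Delta$-system thinning, and freezing the guardrail on the root before invoking hypothesis~\ref{mainuf1} -- is routine by comparison.
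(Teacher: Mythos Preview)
Your proof is correct and essentially follows the paper's (two-sentence) argument: thin to a uniform $\Delta$-system and observe that conditions~\ref{mainuf1} and~\ref{mainuf2} together make it $\Fr$-linked. You supply the details the paper suppresses---the reduction to $\Por^*_\pi$, freezing the root guardrail, and the passage from an arbitrary $\omega$-sequence in $B$ to one satisfying clause~(iv).

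One point deserves correction, though it does not affect your argument. Your final paragraph asserts that ``clause~(iv) \ldots\ is incompatible with $|A|>\omega$ as soon as $n>|v|$.'' This is a misreading: the index set $L$ of a uniform $\Delta$-system is an \emph{arbitrary} linear order, so with $L=\theta'$ the non-root columns are increasing $\theta'$-sequences of ordinals in $\pi$, which is perfectly possible. Indeed the very WQO observation you invoke---that $\pi^{n-|v|}$ has no infinite antichain---combined with Dushnik--Miller $\theta'\to(\theta',\omega)^2$ yields a $\theta'$-sized chain in the product order, i.e.\ a uniform $\Delta$-system of size $\theta'$ as the paper claims (its guardrail need not lie in $H$; condition~(v) only asks for \emph{some} guardrail). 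So the paper's approach is not impossible; you have simply relocated the WQO step to the countable level, which is a harmless and arguably cleaner choice since condition~\ref{mainuf2} only speaks of $\omega$-indexed systems and condition~\ref{mainuf1} only produces guardrails in $H$ for countable data anyway.
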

\begin{proof}
    Let $A\subseteq \Por_\pi$ have size $\theta'$. Since $\theta'$ is regular uncountable, we can find an uniform $\Delta$-system $B\subseteq A$ of size $\theta'$. Condition \ref{mainuf2} implies that $B$ is $\Fr$-linked.
\end{proof}

The $q$ in~\ref{mainuf2} is found as an \emph{ultrafilter limit} similar to \autoref{defuflim}, so this requires to construct ultrafilters along the iteration. For the successor step, the following lemma is useful.

\begin{lemma}[{\cite[Lem.~3.20]{BCM}}]\label{amalguf}
    Let $M\subseteq N$ be transitive models of $\thzfc$ and $\Qor\in M$ be a poset. Assume that $M\models$``$D^-$ is an ultrafilter on $\omega$", $M\models$``$\dot D^+$ is a $\Qor$-name of an ultrafilter on $\omega$ extending $D^-$", and $N\models$``$D$ is an ultrafilter on $\omega$ extending $D^-$". Then, in $N$, $\Qor$ forces that $D\cup \dot D^+$ has the finite intersection property, i.e.\ it can be extended to an ultrafilter (see \autoref{fig:diaguf}).
\end{lemma}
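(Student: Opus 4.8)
The plan is to reduce the finite‑intersection property of $D\cup\dot D^+$ to a single statement about one set $a\in D$ and one name $\dot b$ for a member of $\dot D^+$, and then to read the conclusion off from the fact that $D^-$, being an ultrafilter \emph{in $M$}, decides every subset of $\omega$ that belongs to $M$. So I would first note that, since $D$ is a filter in $N$ and $\Qor$ forces $\dot D^+$ to be a filter, both families are closed under finite intersections; hence it is enough to show that for every $a\in D$ and every $\Qor$‑name $\dot b$ with $\Qor\Vdash_N\dot b\in\dot D^+$ one has $\Qor\Vdash_N a\cap\dot b\neq\emptyset$. Since $\dot D^+[G]\subseteq M[G]$ for any generic $G$, I may moreover assume that $\dot b$ is a $\Qor$‑name lying in $M$ (replacing a name $\dot b_0$ with $q_0\Vdash\dot b_0\in\dot D^+$ by one that agrees with $\dot b_0$ below $q_0$ and equals $\check\omega$ elsewhere); and, by absoluteness of bounded statements about objects of $M[G]$ between $M[G]$ and $N[G]$ together with the facts that $\Qor\in M$ and every $N$‑generic filter is $M$‑generic, "$\Qor\Vdash_N\dot b\in\dot D^+$" is then equivalent to "$\Qor\Vdash_M\dot b\in\dot D^+$".

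Next I would introduce, for a fixed condition $q\in\Qor$, the \emph{set of possible values of $\dot b$ below $q$} computed inside $M$,
\[ b^*_q:=\{\,n<\omega : \exists\,q'\leq q\ \text{ with }\ q'\Vdash_M n\in\dot b\,\}\in M, \]
noting that $\dot b[G]\subseteq b^*_q$ whenever $q\in G$. The key claim is $b^*_q\in D^-$. To see this, suppose some $c\in D^-$ were disjoint from $b^*_q$; then $q$ would force over $M$ that $\dot b\cap c=\emptyset$, which contradicts that $q\Vdash_M\dot b\in\dot D^+$ and $c\in D^-\subseteq\dot D^+$ (as $\dot D^+$ is forced to extend $D^-$), whence $q\Vdash_M\dot b\cap c\in\dot D^+$ and in particular $q\Vdash_M\dot b\cap c\neq\emptyset$. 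Thus $b^*_q$ meets every member of $D^-$; since $D^-$ is an ultrafilter in $M$ and $b^*_q\in M$, this forces $b^*_q\in D^-$.

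To finish I would argue in $N$: we have $b^*_q\in D^-\subseteq D$ and $a\in D$, so $a\cap b^*_q\in D$ and in particular $a\cap b^*_q\neq\emptyset$. Picking any $n\in a\cap b^*_q$, the definition of $b^*_q$ yields $q'\leq q$ with $q'\Vdash_M n\in\dot b$, hence $q'\Vdash_N n\in\dot b$ by absoluteness again; since $n\in a$ this gives $q'\Vdash_N n\in a\cap\dot b$, so $q'\Vdash_N a\cap\dot b\neq\emptyset$. As $q$ ranged over all conditions, the set of conditions forcing $a\cap\dot b\neq\emptyset$ is dense, so $\Qor\Vdash_N a\cap\dot b\neq\emptyset$, as required; unravelling the reduction, $\Qor$ forces that $D\cup\dot D^+$ has the finite‑intersection property.

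The proof involves no hard combinatorics; the point that needs genuine care — and where I expect to have to be careful — is the bookkeeping of \emph{which} model each instance of the forcing relation is computed in. Everything rests on the fact that, for an $M$‑name and a bounded formula, the forcing relation is absolute between $M$ and $N$ (because $\Qor$ and all the relevant names live in $M$, and $N$‑generics are $M$‑generic), so that statements like "$n\in\dot b[G]$" and "$\dot b[G]\in\dot D^+[G]$", which speak only about objects of $M[G]$, transfer freely between the two extensions. It is also worth flagging what is actually used: only that $\dot D^+$ is forced to be a \emph{filter} extending $D^-$ and that $D$ is a filter extending $D^-$ — the ultrafilter hypotheses on $\dot D^+$ and on $D$ are not needed — whereas the ultrafilter‑ness of $D^-$ in $M$ is essential, since it is precisely what promotes "$b^*_q$ is $D^-$‑positive" to "$b^*_q\in D^-$".
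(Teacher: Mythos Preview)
The paper does not supply its own proof of this lemma; it is merely quoted from \cite[Lem.~3.20]{BCM}. Your argument is correct and is essentially the standard one: the heart of the matter is the set $b^*_q=\{n<\omega:\exists\,q'\leq q\ q'\Vdash_M n\in\dot b\}\in M$, which is $D^-$-positive (since $q\Vdash_M\dot b\subseteq b^*_q$ and $q\Vdash_M\dot b\in\dot D^+$), hence lies in $D^-$ by the ultrafilter hypothesis on $D^-$, and therefore meets $a$ inside $N$. Your closing observation---that only $D^-$ genuinely needs to be an ultrafilter, while $D$ and $\dot D^+$ need only be filters extending $D^-$---is also correct and worth recording.

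The one place where your write-up is slightly compressed is the reduction to $M$-names. From ``for every $M$-name $\dot b$ with $\Qor\Vdash_M\dot b\in\dot D^+$ we have $\Qor\Vdash_N a\cap\dot b\neq\emptyset$'' one does not literally recover the same statement for arbitrary $N$-names, since an $N$-name forced into $\dot D^+$ need not coincide with a single $M$-name globally. What your later density argument actually delivers, and what suffices, is the local version: for every $q$ and every $M$-name $\dot b$ with $q\Vdash_M\dot b\in\dot D^+$, some $q'\leq q$ forces $a\cap\dot b\neq\emptyset$. Combined with the fact that below any condition one can pass to an $M$-name (exactly as your parenthetical indicates), this yields the conclusion. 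This is a harmless compression rather than a gap.
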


\begin{figure}[ht]
\centering
\begin{tikzpicture}
\small{
  \node (M) at (0,0) {$M$};
  \node (N) at (0,2) {$N$}; 
  \node (MG) at (2,0) {$M^\Qor$};
  \node (NG) at (2,2) {$N^\Qor$};
  \node (D0) at (-1,0) {$D^-\in$};
  \node (D) at (-1,2) {$D^{\phantom{-}} \in $};
  \node (D2) at (3,0) {$\ni D^+$};
\foreach \from/\to in {
M/N, M/MG, N/NG, MG/NG}
{
\path[->,] (\from) edge (\to);
}
}
\end{tikzpicture}
\caption{The situation in \autoref{amalguf}}\label{fig:diaguf}
\end{figure}

\begin{definition}\label{ituflim}
A $\theta$-$\Lambda^{\lim}_{\rm uf}$-iteration as in \autoref{Gamma-it} \emph{has ultrafilter limits for $H$} when:
\begin{enumerate}[label = \normalfont (\roman*)]
\item $H$ is a set of guardrails,
\item for $h\in H$, $\la \dot D^h_\xi\colon  \xi\leq\pi\ra$ is a sequence such that $\dot D^h_\xi$ is a $\Por_\xi$ name of a non-principal ultrafilter on $\omega$,
\item if $\xi<\eta\leq \pi$ then $\Vdash_{\Por_\eta}\dot D^h_\xi \subseteq \dot D^h_\eta$,
\item\label{ituflim4} $\Por_\xi$ forces that $\dot D^h_\xi\cap V^{\Por^-_\xi}\in V^{\Por^-_\xi}$,
\end{enumerate}
and whenever $h\in H$, $\la\xi_n\colon  n<\omega\ra\subseteq \pi$ and $\Vdash_{\Por^-_{\xi_n}}\dot q_n \in \dot Q_{\xi_n,h(\xi_n)}$:
\begin{enumerate}[resume*]
\item if $\la\xi_n\colon  n<\omega\ra$ is constant with value $\xi$ then
\[\Vdash_{\Por_\xi}{\lim}^{\dot D^h_\xi}_{\, n}\dot q_n \Vdash_{\Qnm_\xi} \{n<\omega\colon  \dot q_n \in \dot G(\xi)\} \in \dot D^h_{\xi+1},\]
\item and if $\la\xi_n\colon  n<\omega\ra$ is increasing, then
\[\Vdash_{\Por_\pi} \{n<\omega\colon  \dot q_n \in \dot G(\xi_n)\}\in \dot D^h_\pi\]
\end{enumerate}
\end{definition}

\begin{lemma}\label{main-uflim}
    Any iteration as in \autoref{ituflim} satisfies~\ref{mainuf2} of \autoref{mainuf} for $H$.
\end{lemma}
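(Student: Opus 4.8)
The plan is to build the condition $q$ demanded by~\ref{mainuf2} of \autoref{mainuf} as an \emph{ultrafilter limit} of the given uniform $\Delta$-system, assembled coordinatewise along its root, and then to check that $q$ forces the relevant membership set into $\dot D^h_\pi$.

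First I would unpack the data. Fix $h\in H$ and a uniform $\Delta$-system $\bar p = \la p_n\colon n<\omega\ra\subseteq\Por^h_\pi$ in the sense of \autoref{Gamma-it}; write $\dom p_n = \set{\alpha_{n,k}}{k<N}$ in increasing order, and let $v\subseteq N$ be the set of \emph{root coordinates}, so that $\alpha_{n,k}=\alpha_{*,k}$ does not depend on $n$ for $k\in v$, while for $k\in N\menos v$ the sequence $\la\alpha_{n,k}\colon n<\omega\ra$ is strictly increasing; note the non-root coordinates are disjoint from the root. Define $q\in\Por_\pi$ with $\dom q=\set{\alpha_{*,k}}{k\in v}$ by putting, for $\xi=\alpha_{*,k}$ with $k\in v$,
\[q(\xi):={\lim}^{\dot D^h_\xi}_{\,n}\la p_n(\xi)\colon n<\omega\ra.\]
Here each $p_n(\xi)$ is a $\Por^-_\xi$-name forced into $\dot Q_{\xi,h(\xi)}$ (because $p_n\in\Por^h_\pi$), and by~\ref{ituflim4} of \autoref{ituflim} the ultrafilter $\dot D^h_\xi\cap V^{\Por^-_\xi}$ lies in $V^{\Por^-_\xi}$, so the limit map on $\dot Q_{\xi,h(\xi)}^\omega$ furnished by the $\Lambda^{\lim}_{\rm uf}$-linkedness of $\dot Q_{\xi,h(\xi)}$ over $\Por^-_\xi$ is available as a $\Por^-_\xi$-name; thus $\Vdash_{\Por^-_\xi}q(\xi)\in\dot Q_{\xi,h(\xi)}$, and since $\dom q$ is finite, $q$ is a genuine condition of $\Por_\pi$, in fact $q\in\Por^h_\pi$.

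The core claim is that $q$ forces $W:=\set{n<\omega}{p_n\in\dot G}\in\dot D^h_\pi$; as $\dot D^h_\pi$ is a non-principal ultrafilter this yields at once that $q$ forces $W$ infinite, which is exactly~\ref{mainuf2}. Fix a generic $G_\pi$ with $q\in G_\pi$. By the standard characterisation of membership in the generic of an FS iteration (finite supports make limit stages automatic), $p_n\in G_\pi$ iff $p_n(\alpha_{n,k})[G_{\alpha_{n,k}}]\in\dot G(\alpha_{n,k})$ for every $k<N$; hence $W=\bigcap_{k<N}W^k$ where $W^k:=\set{n<\omega}{p_n(\alpha_{n,k})\in\dot G(\alpha_{n,k})}$, and since $N$ is finite and $\dot D^h_\pi$ is an ultrafilter it suffices to show $W^k\in\dot D^h_\pi$ for each $k<N$. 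For $k\in v$, with $\xi=\alpha_{*,k}$: from $q\in G_\pi$ we get $q(\xi)\in\dot G(\xi)$, and the clause of \autoref{ituflim} governing \emph{constant} sequences $\la\xi_n\rangle$, applied with $\dot q_n=p_n(\xi)$ and $\xi_n=\xi$, says precisely that $q(\xi)={\lim}^{\dot D^h_\xi}_{\,n}p_n(\xi)$ forces $W^k\in\dot D^h_{\xi+1}$; by clause (iii) of \autoref{ituflim} we have $\dot D^h_{\xi+1}\subseteq\dot D^h_\pi$, so $W^k\in\dot D^h_\pi$. For $k\in N\menos v$: the sequence $\la\alpha_{n,k}\colon n<\omega\ra$ is increasing and $\Vdash_{\Por^-_{\alpha_{n,k}}}p_n(\alpha_{n,k})\in\dot Q_{\alpha_{n,k},h(\alpha_{n,k})}$, so the clause of \autoref{ituflim} governing \emph{increasing} sequences $\la\xi_n\rangle$ gives $W^k\in\dot D^h_\pi$ directly. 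This proves the claim, and hence the lemma.

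I expect the obstacles to be organisational rather than conceptual: verifying that $q$ really is a condition following $h$ — which is where~\ref{ituflim4} of \autoref{ituflim} is essential, placing the limit maps in the correct intermediate extension $V^{\Por^-_\xi}$; matching the hypotheses of the two limit clauses of \autoref{ituflim} to the root/non-root dichotomy of the uniform $\Delta$-system (in particular that the root coordinates are exactly where the sequences $\la\alpha_{n,k}\rangle$ are constant and the others where they are increasing, and that the two families of coordinates are disjoint); and spelling out the FS-iteration membership characterisation that turns ``$p_n\in\dot G$'' into the finite conjunction over $k<N$ of coordinatewise statements. With these in place, closure of $\dot D^h_\pi$ under finite intersections closes the argument.
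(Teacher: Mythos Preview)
Your proof is correct and follows exactly the same approach as the paper: define $q$ on the root of the $\Delta$-system by taking the $\dot D^h_\xi$-limit at each root coordinate, then argue that $q$ forces $\set{n}{p_n\in\dot G}\in\dot D^h_\pi$. The paper merely sketches this in two sentences, while you have spelled out the details the paper suppresses (the coordinatewise decomposition $W=\bigcap_{k<N}W^k$, the role of~\ref{ituflim4} in making $q(\xi)$ a $\Por^-_\xi$-name, and the separate appeals to the constant and increasing clauses of \autoref{ituflim} for root and non-root coordinates respectively).
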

\begin{proof}
    Let $\seq{p_n}{n<\omega}$ be an uniform $\Delta$-system in $\Por^h_\pi$. Let $\Delta$ be the root of the $\Delta$-system and define $q\in \Por_\pi$ with $\dom q := \Delta$ such that $q(\xi)$ is a $\Por^-_\xi$-name of $\lim^{\dot D^h_\xi}_n p_n(\xi)$ for $\xi\in \Delta$. Then $q$ forces that $\{n<\omega\colon p_n\in \dot G_\pi\} \in \dot D^h_\pi$.
\end{proof}

To obtain~\ref{mainuf1} of \autoref{ituflim} we could basically use $H = {\theta^-}^\pi$. However, there are steps $\xi<\pi$ of the iteration where we want $\Por^-_\xi$ to be quite small, so to guarantee~\ref{ituflim4} of \autoref{ituflim} we need that $H$ is also small. This is guaranteed by the following result.

\begin{theorem}[{\cite{EK,RinotEK}}]\label{k20}
    Let $\nu$, $\kappa$ be infinite cardinals and $L$ be a set such that  $\nu \leq \kappa \leq |L| \leq 2^{\kappa}.$ Then  there exists an $H \subseteq {}^{L} \kappa$ such that $| H | \leq \kappa^{< \nu},$ and any partial function from $L$ into $\kappa$ with domain of size ${<}\nu$ can be extended by a function in $H.$ 
\end{theorem}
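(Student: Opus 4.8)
The plan is to run the Engelking--Kar\l owicz construction directly, in its sharp form. Since $|L|\le 2^{\kappa}=|{}^{\kappa}2|$, I would first fix an injection of $L$ into ${}^{\kappa}2$ and identify $L$ with its range; then the members of $L$ are functions from $\kappa$ to $2$, and any two distinct members of $L$ disagree at some coordinate (an $H$ found for the range transfers back to $L$ by precomposition with the bijection). As the candidate family I would take
\[H:=\set{h_{s,t}}{s\in[\kappa]^{<\nu}\text{ and }t\text{ is a partial function from }{}^{s}2\text{ into }\kappa\text{ with }|\dom t|<\nu},\]
where $h_{s,t}\colon L\to\kappa$ is defined by $h_{s,t}(x):=t(x\frestr s)$ when $x\frestr s\in\dom t$ and $h_{s,t}(x):=0$ otherwise, so that $H\subseteq{}^{L}\kappa$.

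Next I would check that $|H|\le\kappa^{<\nu}$. There are $|[\kappa]^{<\nu}|=\kappa^{<\nu}$ choices of $s$, and for a fixed $s$ with $|s|=\mu<\nu$ we have $|{}^{s}2|=2^{\mu}\le\kappa^{<\nu}$, so the number of partial functions from ${}^{s}2$ into $\kappa$ with domain of size $<\nu$ is at most $\sum_{\rho<\nu}2^{\mu\cdot\rho}\cdot\kappa^{\rho}$. Since $\mu<\nu$ and $\rho<\nu$ give $\mu\cdot\rho<\nu$, each summand satisfies $2^{\mu\cdot\rho}\cdot\kappa^{\rho}\le\kappa^{\mu\cdot\rho}\cdot\kappa^{\rho}\le\kappa^{<\nu}$, whence the sum is $\le\nu\cdot\kappa^{<\nu}=\kappa^{<\nu}$ (using $\nu\le\kappa$). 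Therefore $|H|\le\kappa^{<\nu}\cdot\kappa^{<\nu}=\kappa^{<\nu}$; note that no regularity of $\nu$ is needed.

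Then I would verify the extension property. Let $p$ be a partial function from $L$ into $\kappa$ with $D:=\dom p$ of size $<\nu$. For each pair of distinct $x,y\in D$ pick $\xi_{x,y}<\kappa$ with $x(\xi_{x,y})\ne y(\xi_{x,y})$, and set $s:=\set{\xi_{x,y}}{x,y\in D\text{ and }x\ne y}$; then $|s|<\nu$ (it is finite if $D$ is finite and has size $|D|$ if $D$ is infinite), so $s\in[\kappa]^{<\nu}$. The map $x\mapsto x\frestr s$ is injective on $D$, hence $t:=\set{(x\frestr s,p(x))}{x\in D}$ is a well-defined partial function from ${}^{s}2$ into $\kappa$ with $|\dom t|=|D|<\nu$. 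Consequently $h_{s,t}\in H$ and $h_{s,t}(x)=t(x\frestr s)=p(x)$ for every $x\in D$, i.e.\ $h_{s,t}$ extends $p$.

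I do not expect a genuine obstacle here. The conceptual content is just the elementary remark that a subset of ${}^{\kappa}2$ of size $<\nu$ can already be separated using fewer than $\nu$ coordinates, after which a single function on ${}^{s}2$ interpolates the prescribed values; the only point demanding a little care is the cardinal arithmetic in the size estimate, namely checking that restricting to coordinate sets $s$ of size $<\nu$ keeps both $|{}^{s}2|$ and the number of relevant partial functions on ${}^{s}2$ bounded by $\kappa^{<\nu}$.
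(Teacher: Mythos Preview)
Your proof is correct and complete; it is precisely the classical Engelking--Kar\l owicz construction in its sharp form, and every step (the injection into ${}^{\kappa}2$, the separation of a small subset using ${<}\nu$ coordinates, and the cardinal arithmetic bounding $|H|$ by $\kappa^{<\nu}$) goes through as you wrote it.

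There is nothing to compare against here: the paper does not give its own proof of this theorem but merely cites it from \cite{EK,RinotEK}. Your write-up is exactly the argument those references contain, so in effect you have reproduced the intended proof.
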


The following two theorems indicate how to construct iterations as in \autoref{ituflim}.

\begin{theorem}\label{limsucc}
    Let $\Por_{\pi+1}$ be a  $\theta$-$\Lambda^{\lim}_{\rm uf}$-iteration of length $\pi+1$ and $H$ be a set of guardrails such that, up to $\pi$, it has ultrafilter limits for $H$.

    Assume that $\Por^-_\pi\subsetdot \Por_\pi$ and $\Por_\pi$ forces $\dot D^h_\pi \cap V^{\Por^-_\pi}\in V^{\Por^-_\pi}$. Then, for each $h\in H$, we can find a $\Por_{\pi+1}$-name $\dot D^h_{\pi+1}$ of an ultrafilter extending $\dot D^h_\pi$, which together make $\Por_{\pi+1}$ have ultrafilter limits for $H$.
\end{theorem}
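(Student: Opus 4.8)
The plan is to handle each guardrail $h\in H$ separately and to let the substantive work be done by the amalgamation lemma \autoref{amalguf}, applied to the models $M:=V^{\Por^-_\pi}$ and $N:=V^{\Por_\pi}$ (with $M\subseteq N$) and the poset $\Qnm_\pi$. Fix $h\in H$. If $h(\pi)\geq\mu_\pi$ then $\dot Q_{\pi,h(\pi)}$ is undefined, so no sequence triggers clause~(v) of \autoref{ituflim} at the coordinate $\pi$, and it suffices to take $\dot D^h_{\pi+1}$ to be any $\Por_{\pi+1}$-name of a non-principal ultrafilter on $\omega$ extending $\dot D^h_\pi$. So assume $h(\pi)<\mu_\pi$ and write $\dot Q:=\dot Q_{\pi,h(\pi)}$, a $\Por^-_\pi$-name with $\Vdash_{\Por^-_\pi}\dot Q\in\Lambda^{\lim}_{\rm uf}(\Qnm_\pi)$. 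The first step is to isolate the trace of $\dot D^h_\pi$ on the small model: by hypothesis there is a $\Por^-_\pi$-name $\dot D^{-,h}$ with $\Vdash_{\Por_\pi}\dot D^h_\pi\cap V^{\Por^-_\pi}=\dot D^{-,h}$, and a routine absoluteness argument (using that $\dot D^h_\pi$ is a non-principal ultrafilter) shows that $\Por^-_\pi$ forces $\dot D^{-,h}$ to be a non-principal ultrafilter on $\omega$.

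Working in $M=V^{\Por^-_\pi}$ and using $\dot Q\in\Lambda^{\lim}_{\rm uf}(\Qnm_\pi)\subseteq\Lambda^{\lim}_{\dot D^{-,h}}(\Qnm_\pi)$, I would fix a $\Qnm_\pi$-name $\dot D^{+,h}$ of an ultrafilter on $\omega$ extending $\dot D^{-,h}$, together with the witnessing limit map ${\lim}^{\dot D^{-,h}}\colon\dot Q^\omega\to\Qnm_\pi$, so that $\Por^-_\pi$ forces, for every $\bar q=\la q_n\colon n<\omega\ra\in\dot Q^\omega$,
\[{\lim}^{\dot D^{-,h}}_{\,n} q_n\ \Vdash_{\Qnm_\pi}\ \{n<\omega\colon q_n\in\dot G(\pi)\}\in\dot D^{+,h}.\]
Under the reading that ${\lim}^{\dot D^h_\pi}$ applied to sequences in $\dot Q^\omega$ denotes precisely this map — legitimate because, by clause~(iv) of the hypothesis, $\dot D^h_\pi\cap V^{\Por^-_\pi}\in V^{\Por^-_\pi}$ — the left-hand side above is ${\lim}^{\dot D^h_\pi}_{\,n}q_n$. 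Next I would invoke \autoref{amalguf} with $D^-:=\dot D^{-,h}$, $\dot D^+:=\dot D^{+,h}$, $D:=\dot D^h_\pi$ (the third hypothesis holding because $\dot D^h_\pi$ is forced to be a non-principal ultrafilter extending $\dot D^{-,h}=\dot D^h_\pi\cap V^{\Por^-_\pi}$): in $N=V^{\Por_\pi}$ the poset $\Qnm_\pi$ forces that $\dot D^h_\pi\cup\dot D^{+,h}$ has the finite intersection property. Hence I may choose a $\Por_{\pi+1}$-name $\dot D^h_{\pi+1}$ forced to be an ultrafilter on $\omega$ extending $\dot D^h_\pi\cup\dot D^{+,h}$; it is non-principal, since it extends $\dot D^h_\pi$.

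It then remains to check that the sequences $\la\dot D^h_\xi\colon\xi\leq\pi+1\ra$, $h\in H$, make $\Por_{\pi+1}$ have ultrafilter limits for $H$, i.e.\ the clauses of \autoref{ituflim}. Clauses~(i)--(iii) are immediate, (iii) because $\dot D^h_{\pi+1}$ extends $\dot D^h_\pi$ and hence, transitively, each $\dot D^h_\xi$ with $\xi\leq\pi$; clause~(iv) at $\xi\leq\pi$ is part of the hypothesis, and at $\xi=\pi+1$ it is trivial under the convention $\Por^-_{\pi+1}=\Por_{\pi+1}$. Clause~(v) for a constant sequence of value $\xi<\pi$ is literally among the hypotheses ``up to $\pi$'', and clause~(vi) for an increasing sequence $\la\xi_n\colon n<\omega\ra$ — which necessarily lies inside $\pi$, since a strictly increasing $\omega$-sequence has no maximum — follows from the corresponding statement at stage $\pi$ via $\Por_\pi\subsetdot\Por_{\pi+1}$ and $\dot D^h_\pi\subseteq\dot D^h_{\pi+1}$. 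The new point is clause~(v) at $\xi=\pi$: given $\dot q_n$ with $\Vdash_{\Por^-_\pi}\dot q_n\in\dot Q$, the displayed forcing statement, valid over $M$, transfers to $N=V^{\Por_\pi}$ because ``$\{n\colon q_n\in\dot G(\pi)\}\in\dot D^{+,h}$'' is absolute between $M$ and $N$ (it is computed from the generic and the names); combined with $\dot D^{+,h}\subseteq\dot D^h_{\pi+1}$, this gives exactly $\Vdash_{\Por_\pi}\bigl({\lim}^{\dot D^h_\pi}_{\,n}\dot q_n\Vdash_{\Qnm_\pi}\{n\colon\dot q_n\in\dot G(\pi)\}\in\dot D^h_{\pi+1}\bigr)$.

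I expect the main obstacle to be the bookkeeping between $V^{\Por^-_\pi}$ and $V^{\Por_\pi}$: pinning down that ${\lim}^{\dot D^h_\pi}$ in clause~(v) is the limit map computed inside $V^{\Por^-_\pi}$ from the trace $\dot D^h_\pi\cap V^{\Por^-_\pi}$, and verifying that the forcing statements about $\Qnm_\pi$ involved are absolute enough to persist when the ground model is enlarged from $V^{\Por^-_\pi}$ to $V^{\Por_\pi}$. Once this is settled, all the substantive content has been delegated to \autoref{amalguf}.
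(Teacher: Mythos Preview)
Your proposal is correct and follows essentially the same approach as the paper: the paper's proof is a one-line invocation of \autoref{amalguf} with $M=V[G\cap\Por^-_\pi]$ and $N=V[G]$, and you have simply unpacked that application in full detail, including the case split on $h(\pi)$, the extraction of the trace $\dot D^{-,h}$, the choice of $\dot D^{+,h}$ from the uf-$\lim$-linkedness of $\dot Q_{\pi,h(\pi)}$, and the verification of clauses~(i)--(vi) of \autoref{ituflim}. One minor quibble: clause~(iv) only needs to be checked at $\xi\leq\pi$ (since $\Por^-_\xi$ is defined for $\xi$ strictly below the length), so your remark about $\xi=\pi+1$ is unnecessary, but this does no harm.
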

\begin{proof}
    Direct application of \autoref{amalguf} to $M=V[G\cap\Por^-_\pi]$ and $N=V[G]$, where $G$ is $\Por_\pi$-generic over $V$.
\end{proof}

\begin{theorem}\label{limlim}
    Assume that $\pi$ is a limit ordinal and $\Por_\pi$ is a $\theta$-$\Lambda^{\lim}_{\rm uf}$-iteration of length $\pi$. Further assume that $h$ is a guardrail and $\la \dot D^h_\xi\colon \xi<\pi\ra$ is a sequence witnessing that, for any $\xi<\pi$, $\Por_\xi$ is an iteration with uf-limits for $h$.

    If, for any $\xi<\pi$, $\Por^-_\xi$ forces that $\dot Q_{\xi,h(\xi)}$ is centered, then we can find a $\dot D^h_\pi$ that makes $\Por_\pi$ have uf-limits for $h$. 
\end{theorem}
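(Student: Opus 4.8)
Here is the plan.

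The plan is to keep the given names $\dot D^h_\xi$ ($\xi<\pi$) and to let $\dot D^h_\pi$ be a $\Por_\pi$-name for any ultrafilter extending the filter $\dot{\mathcal{F}}$ generated by
\[
\bigcup_{\xi<\pi}\dot D^h_\xi\ \cup\ \lset{\{n<\omega\colon \dot q_n\in\dot G(\xi_n)\}}{\la\xi_n\colon n<\omega\ra\text{ increasing in }\pi,\ \Vdash_{\Por^-_{\xi_n}}\dot q_n\in\dot Q_{\xi_n,h(\xi_n)}}.
\]
With this choice the monotonicity clause \autoref{ituflim}(iii) is immediate; the constant-sequence clause of \autoref{ituflim} concerns only $\dot D^h_\xi$ and $\dot D^h_{\xi+1}$ with $\xi+1<\pi$, so it is inherited from the hypothesis that each $\Por_\eta$ ($\eta<\pi$) has uf-limits for $h$; and the increasing-sequence clause holds because every set of that shape is a generator of $\dot{\mathcal{F}}\subseteq\dot D^h_\pi$. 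So everything reduces to showing that $\Por_\pi$ forces $\dot{\mathcal{F}}$ to have the finite intersection property; concretely, it suffices that for every $p\in\Por_\pi$, every $m<\omega$ and every finite list of generators of $\dot{\mathcal{F}}$, there are $q\leq p$ and $N>m$ with $q$ forcing $N$ into the intersection of the listed generators (this forces every finite intersection to be infinite, hence nonempty, so $\dot{\mathcal{F}}$ extends to an ultrafilter).

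If $\cf(\pi)>\omega$ this is trivial: an increasing $\la\xi_n\colon n<\omega\ra$ in $\pi$ is bounded by some $\xi<\pi$, and the increasing-sequence clause applied to the initial iteration $\Por_{\xi+1}$ puts the corresponding set in $\dot D^h_{\xi+1}\subseteq\dot D^h_\pi$; hence $\dot{\mathcal{F}}$ is the filter generated by the directed union $\bigcup_{\xi<\pi}\dot D^h_\xi$, which has the finite intersection property. So assume $\cf(\pi)=\omega$; this is the only place the centeredness hypothesis is used.

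Given $p$, $m$, and finitely many generators: by the remark just made, every listed increasing-sequence generator whose index sequence is bounded below $\pi$ already lies in some $\dot D^h_\xi$, so by monotonicity we may absorb all of those, together with the listed generators coming from $\bigcup_{\xi<\pi}\dot D^h_\xi$, into a single $a$ with $\Vdash_{\xi^*}a\in\dot D^h_{\xi^*}$ for some $\xi^*<\pi$ (and $a$ is forced infinite, since $\dot D^h_{\xi^*}$ is free). The remaining generators are $w_j=\{n<\omega\colon \dot q^j_n\in\dot G(\xi^j_n)\}$, $j<J$, with each $\la\xi^j_n\colon n<\omega\ra$ increasing and cofinal in $\pi$. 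Since $\dom p\cup\{\xi^*\}$ is finite and each $\xi^j_\cdot$ tends to $\pi$, pick $N>m$ with $\xi^j_N>\max(\dom p\cup\{\xi^*\})$ for all $j<J$; enlarging $N$ if necessary and using that $a$ is forced infinite, also fix $p'\leq p\frestr\xi^*$ with $p'\Vdash_{\xi^*}N\in a$. For each coordinate $\xi$ occurring among the $\xi^j_N$, the names $\dot q^j_N$ with $\xi^j_N=\xi$ are $\Por^-_\xi$-names forced by $\Por^-_\xi$ to lie in $\dot Q_{\xi,h(\xi)}$, which $\Por^-_\xi$ forces to be centered; so there is a $\Por^-_\xi$-name $\dot r_\xi$ of a condition in $\Qnm_\xi$ that is (forcibly) below all of them. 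Let $q$ have $q\frestr\xi^*:=p'$, $q(\eta):=p(\eta)$ for $\eta\in\dom p$ with $\eta\geq\xi^*$, and $q(\xi):=\dot r_\xi$ at the coordinates $\xi$ among the $\xi^j_N$ (all new, being above $\xi^*$ and outside $\dom p$). Then $q\leq p$, $q$ forces $N\in a$ via $p'$, and for each $j<J$ the condition $q$ forces $\dot r_{\xi^j_N}\in\dot G(\xi^j_N)$ and $\dot r_{\xi^j_N}\leq\dot q^j_N$, hence $\dot q^j_N\in\dot G(\xi^j_N)$, i.e.\ $N\in w_j$. Thus $q\leq p$ forces $N\in a\cap\bigcap_{j<J}w_j$ with $N>m$, as required.

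The main obstacle is the $\cf(\pi)=\omega$ case above: one must see that the ultrafilter limits taken along cofinally many coordinates at stage $\pi$ are all simultaneously compatible with one ultrafilter. Three features make this go through --- supports are finite (so the coordinates $\xi^j_N$ can always be pushed above $\xi^*$ and out of $\dom p$), the set $a$ is forced infinite (so a single large witness $N$ can be chosen already at stage $\xi^*$), and, crucially, along the guardrail $h$ the iterands $\dot Q_{\xi,h(\xi)}$ are centered, which is exactly what lets finitely many of the $\dot q^j_N$ landing on a common coordinate be amalgamated into the single condition $\dot r_\xi$.
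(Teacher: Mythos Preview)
Your proof is correct and follows essentially the same approach as the paper: define $\dot D^h_\pi$ as any ultrafilter extending $\bigcup_{\xi<\pi}\dot D^h_\xi$ together with the increasing-sequence sets, and verify the finite intersection property by picking a single index $N$ (the paper's $k$) large enough that all the relevant coordinates $\xi^j_N$ lie above the support of $p$ and the stage $\xi^*$ where the $\dot D^h$-generators live, then using centeredness of $\dot Q_{\xi,h(\xi)}$ to amalgamate the finitely many $\dot q^j_N$ landing on a common coordinate. Your explicit case distinction on $\cf(\pi)$ and your separate treatment of bounded versus cofinal increasing sequences is in fact a bit more careful than the paper's write-up, which tacitly uses that each set $\{n:\xi^i_n\geq\alpha\}$ is cofinite---something that is only automatic once bounded sequences have been absorbed into the $\dot D^h_\xi$-part exactly as you do.
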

\begin{proof}
	    Let $E$ be the collection of sequences $\tau=\seq{(\xi_n,\dot q_n)}{n<\omega}$ such that $\seq{\xi_n}{n<\omega}$ is an increasing sequence of ordinals with limit $\pi$ and $\Vdash_{\xi_n}\dot q_n\in \dot Q_{\xi_n,h(\xi_n)}$ for all $n<\omega$. For such a $\tau$, let $\dot d_\tau$ be a $\Por_\pi$-name of the set $\set{ n<\omega }{ \dot{q}_{n} \in \dot{G}(\xi_n) }$. It is enough to show that $\Por_\pi$ forces that $\bigcup_{\xi<\pi}\dot D^h_\xi\cup\set{\dot d_\tau}{\tau\in E}$ has the finite intersection property. 
    
    Fix $p\in\Por_\pi$, a $\Por_\pi$-name $\dot d$ of a member of $\bigcup_{\xi<\pi}\dot D^h_\xi$, $i^*<\omega$ and, for each $i<i^*$, $\tau^i = \seq{(\xi^i_n,\dot q^i_n)}{n<\omega}\in E$. Denote $\dot d_i:=\dot d_{\tau^i}$. Without loss of generality, by strengthening $p$ if necessary, we may assume that, for some $\alpha<\pi$, $\dot d$ is a $\Por_\alpha$-name of a member of $\dot D^h_\alpha$. We can also increase $\alpha$ and say that $p\in\Por_\alpha$. Since $\dot D^h_\alpha$ is forced non--principal, $p$ forces in $\Por_\alpha$ that $\dot d\cap\bigcap_{i<i^*}\set{n<\omega}{\xi^i_n\geq\alpha}\in \dot D^h_\alpha$, so there are $p'\leq p$ in $\Por_\alpha$ and $k<\omega$ such that $\xi^i_k\geq\alpha$ for all $i<i^*$ and $p'\Vdash_\alpha k\in\dot d$. To conclude the proof, it is enough to find some $q\leq p'$ in $\Por_\pi$ such that $q\Vdash \dot q^i_k\in\dot G(\xi^i_k)$ for all $i<i^*$.

    Let $L:=\set{\xi_j}{j<m}$ be the increasing enumeration of $\set{\xi^i_k}{i<i^*}$. 
    For $j<m$, let $I_j:=\set{i<i^*}{\xi^i_k = \xi_j}$. Since $\Por_{\xi_j}$ forces that $\dot Q_{\xi_j,h(\xi_j)}$ is centered, there is some condition in $\Qnm_{\xi_j}$ stronger than $\dot q^i_k$ for all $i\in I_j$. This allows to define $q\in\Por_\pi$ with $\dom q = \dom p'\cup L$ such that $q(\xi):=p'(\xi)$ for $\xi\in\dom p'$ and, for $j<m$ and $i\in I_j$, $\Vdash_{\xi_j}q(\xi_j)\leq \dot q^i_k$. It is clear that $q$ is as required.
\end{proof}

We are now ready to present the main forcing construction of this section.

\begin{theorem}[cf.~{\cite{GMS,GKS}}]\label{main2}
    Let $\aleph_1\leq \theta_1\leq \theta_2\leq \theta_3 \leq \theta_4$ be regular cardinals, and assume $\lambda$ is a cardinal such that $\lambda = \lambda^{\aleph_0}$ and $\cf([\lambda]^{<\theta_i}) = \lambda$ for $i=1,\ldots,4$. Further assume that one of the following holds:
    \begin{enumerate}[label=\normalfont(\roman*)]
        \item\label{main2i} $\theta_3=\theta_4$.
        \item\label{main2ii} 
        $\theta_3^-<\theta_4$, 
        $\theta^{\aleph_0}<\theta_4$ for every cardinal $\theta<\theta_4$, and $\lambda \leq 2^\kappa$ for some cardinal $\kappa<\theta_4$.
    \end{enumerate}    
    Then, we can construct a FS iteration of length (and size) $\lambda$ of ccc posets forcing $\add(\Nwf) = \theta_1$, $\cov(\Nwf) = \theta_2$, $\bfrak = \theta_3$, $\non(\Ewf) =\non(\Mwf) = \theta_4$ and $\cov(\Mwf) = \cfrak = \lambda$ (see \autoref{fig:main1}). 
\end{theorem}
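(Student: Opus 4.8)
The plan is to split on the two alternative hypotheses. Under~\ref{main2i}, i.e.\ $\theta_3=\theta_4$, there is nothing to add to \autoref{main1}: that construction forces $\non(\Mwf)=\theta_4$ and $\bfrak\geq\theta_3$, and since ZFC proves $\bfrak\leq\non(\Mwf)$ we get $\theta_3\leq\bfrak\leq\non(\Mwf)=\theta_4=\theta_3$, so $\bfrak=\theta_3$.

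So assume~\ref{main2ii}; we may also assume $\theta_3<\theta_4$, since otherwise case~\ref{main2i} applies. The idea is to run the exact two-step skeleton of \autoref{main1} — first add $\lambda$ Cohen reals with $\Cor_\lambda$, then over $V^{\Cor_\lambda}$ (where the hypotheses on $\lambda,\theta_3,\theta_4$ persist, $\Cor_\lambda$ being ccc) perform a length-$\lambda$ FS iteration which, via book-keeping along a partition $\lambda=K_1\cup K_2\cup K_3\cup K_4$, forces at $\alpha\in K_i$ with $\Loc^{N_\alpha}_\id$, $(\Bwf(2^\omega)\menos\Nwf(2^\omega))^{N_\alpha}$, $\Dor^{N_\alpha}$ and $\Ebb^{N_\alpha}$ for $i=1,2,3,4$ respectively, $|N_\alpha|<\theta_i$ — but now I would organize Step~2 as a $\theta_3$-$\Lambda^{\lim}_{\rm uf}$-iteration $\la\Por_\alpha,\Por^-_\alpha,\Qnm_\xi\ra$ in the sense of \autoref{Gamma-it} that \emph{has ultrafilter limits} (\autoref{ituflim}) for a small set $H$ of guardrails, so that \autoref{mainuf} yields that $\Por_\lambda$ is $\theta_3$-$\Fr$-Knaster. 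This repackaging leaves everything carried over from \autoref{main1} intact: the iteration is still ccc, the book-keeping still gives $\cfrak=\lambda$, and it is still $\theta_1$-$\Lc(\omega,H_*)$-good, $\theta_2$-$\Cn$-good and $\theta_4$-$\Mg$-good (by \autoref{smallgood}, \autoref{spfamLc}, \autoref{centCn}, \autoref{centLc}, \autoref{goodit}), since those depend only on the iterands and their sizes and not on the $\Por^-$/guardrail structure; hence, as in \autoref{main1}, we get $\add(\Nwf)=\theta_1$, $\cov(\Nwf)=\theta_2$, $\non(\Ewf)=\non(\Mwf)=\theta_4$, $\cov(\Mwf)=\cfrak=\lambda$ and $\bfrak\geq\theta_3$. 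The only new point is $\bfrak\leq\theta_3$.

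The key steps for the $\Lambda^{\lim}_{\rm uf}$-structure, in order: (1)~verify each iterand is $\mu_\xi$-$\Lambda^{\lim}_{\rm uf}$-linked with $\mu_\xi<\theta_3$ — the posets at $\alpha\in K_1\cup K_2\cup K_3$, having size $|N_\alpha|<\theta_3$, are uniformly $|N_\alpha|$-uf-$\lim$-linked via singletons (\autoref{exm:singleton}), while $\Ebb^{N_\alpha}$ is uniformly $\sigma$-uf-$\lim$-linked by (the relativization to $N_\alpha$ of) \autoref{Euflim}; (2)~using \autoref{k20} with $\nu=\aleph_1$ and the cardinal arithmetic in~\ref{main2ii} — pick $\kappa$ with $\theta_3\leq\kappa<\theta_4$ and $\lambda\leq 2^\kappa$, e.g.\ $\kappa=\max\{\theta_3,\kappa_0\}$ for any $\kappa_0<\theta_4$ with $\lambda\leq 2^{\kappa_0}$; replacing the codomain $\theta_3^-$ of guardrails by $\kappa$ is harmless since all $\mu_\xi<\theta_3\leq\kappa$ — fix a set $H$ of guardrails with $|H|\leq\kappa^{\aleph_0}<\theta_4$ such that every countable partial function from $\lambda$ into $\kappa$ extends to some $h\in H$; this is~\ref{mainuf1} of \autoref{mainuf}; (3)~construct by recursion, for each $h\in H$, names $\la\dot D^h_\xi\colon\xi\leq\lambda\ra$ of non-principal ultrafilters witnessing ultrafilter limits for $H$ — successor steps by \autoref{amalguf}/\autoref{limsucc}, limits of countable cofinality by \autoref{limlim} (applicable since every component in play, a singleton or an $E^{N_\xi}_{s,m}$, is centered: finitely many members of $E_{s,m}$ have, as a lower bound in $\Ebb$, the pointwise union of their slalom parts), limits of uncountable cofinality by union (already ultrafilters by \autoref{FSnoreal}); the delicate part here is choosing each $\Por^-_\xi$ as a complete subposet small enough (of size $<\theta_4$, built around $N_\xi$ and the previously recorded data) that~\ref{ituflim4} of \autoref{ituflim} holds for \emph{all} $h\in H$ simultaneously — so that in particular the uf-limit functions of $\Ebb^{N_\xi}$ land back inside $\Ebb^{N_\xi}$; (4)~conclude via \autoref{main-uflim} (which gives~\ref{mainuf2} of \autoref{mainuf}), \autoref{guard-dense} (density of $\Por^*_\lambda$) and \autoref{mainuf} with $\theta'=\theta_3$ that $\Por_\lambda$ is $\theta_3$-$\Fr$-Knaster.

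Finally, by \autoref{FrKpres} the iteration preserves $\theta_3$-$\omega^\omega$-unbounded families; the Cohen reals $\set{c^3_\alpha}{\alpha<\lambda}$ added by $\Cor_\lambda$ form an $\aleph_1$-, hence $\theta_3$-, $\la\omega^\omega,\leq^*\ra$-unbounded family in $V^{\Cor_\lambda}$ by \autoref{Cohenthetaunb} (using $\aleph_1\leq\theta_3\leq\lambda$), so this family remains $\theta_3$-$\la\omega^\omega,\leq^*\ra$-unbounded in the final model, whence $\bfrak\leq\theta_3$ by \autoref{strongTukey} and therefore $\bfrak=\theta_3$. I expect the main obstacle to be step~(3): maintaining the ultrafilter names $\dot D^h_\xi$ for all $h\in H$ at once, together with a workable choice of the small complete subposets $\Por^-_\xi$ verifying~\ref{ituflim4} — this is precisely where the smallness of $H$ (hence \autoref{k20} and the arithmetic of~\ref{main2ii}) is essential, and where one has to make sure that organizing the iteration around the small models $N_\xi$ is compatible with the uf-limit structure. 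Checking that none of the \autoref{main1} conclusions is disturbed by the repackaging is routine.
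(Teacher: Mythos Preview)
Your overall strategy matches the paper's: build a $\theta_3$-$\Lambda^{\lim}_{\rm uf}$-iteration with ultrafilter limits for a small set $H$ of guardrails (obtained via \autoref{k20}), conclude $\theta_3$-$\Fr$-Knaster from \autoref{mainuf} and \autoref{main-uflim}, and deduce $\bfrak\leq\theta_3$ via \autoref{FrKpres}. Case~\ref{main2i}, the recursion for the $\dot D^h_\xi$ via \autoref{limsucc} and \autoref{limlim}, and the preservation of all the \autoref{main1} conclusions are handled just as in the paper.

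The gap is exactly where you flag it, but it is conceptual rather than merely technical: keeping $\Ebb^{N_\alpha}$ as the iterand at the $K_4$ stages does not work, and no choice of $\Por^-_\xi$ rescues it. The uf-limit of a sequence $\seq{(s,\varphi_n)}{n<\omega}$ from $E^{N_\alpha}_{s,m}$ is $(s,\varphi)$ with $\varphi(i)=\set{k}{\set{n}{k\in\varphi_n(i)}\in D^h_\xi}$; since $D^h_\xi$ lives in $V^{\Por_\xi}$ (where the continuum is already $\lambda$) while $|N_\alpha|<\theta_4\leq\lambda$, the trace $D^h_\xi\cap N_\alpha$ need not belong to $N_\alpha$, so $\varphi$ need not lie in $N_\alpha$. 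The subposet $\Por^-_\xi$ controls how much of the \emph{previous} iteration the components see; it cannot force closure of the transitive model $N_\alpha$ under limits taken against an external ultrafilter. So ``the uf-limit functions of $\Ebb^{N_\xi}$ land back inside $\Ebb^{N_\xi}$'' is precisely what fails.

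The paper's fix is to \emph{abandon} $N_\alpha$ at the $K_4$ stages and instead set $\Por^-_\xi:=\Por_\xi\cap M$ for a countably closed $M\prec H_\chi$ (in $V$) of size ${<}\theta_4$ containing $\Por_\xi$, the names enumerating $\dot F_\alpha$, and all the names $\dot D^h_\xi$ for $h\in H$ (this is where $|H|<\theta_4$ is used), and then to take $\Qnm_\xi:=\Ebb^{V^{\Por^-_\xi}}$ --- the full $\Ebb$ of the intermediate extension --- as the iterand. Now the components $E_{s,m}$ are $\Por^-_\xi$-names, $\dot D^h_\xi\cap V^{\Por^-_\xi}$ has a $\Por^-_\xi$-name (because $\dot D^h_\xi\in M$ and $M$ is countably closed), and the uf-limits, computed inside $V^{\Por^-_\xi}$ against this trace, automatically stay in $\Ebb^{V^{\Por^-_\xi}}$. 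The book-keeping and the $\theta_4$-$\Mg$-goodness argument from \autoref{main1} still go through because $|\Ebb^{V^{\Por^-_\xi}}|\leq |\Por^-_\xi|^{\aleph_0}<\theta_4$ (using $\theta^{\aleph_0}<\theta_4$ from~\ref{main2ii}) and because the names for the members of $\dot F_\alpha$ are $\Por^-_\xi$-names.
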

\begin{proof}
    In case~\ref{main2i} the result follows directly from \autoref{main1}, so we focus on the assumptions of case~\ref{main2ii}, in which we can further assume that $\kappa\geq \theta_3^-$. We proceed exactly as in the proof of \autoref{main1} to construct a FS iteration of length $\pi:=\lambda+\lambda$, using Cohen forcing at the first $\lambda$ stages, but we modify the construction for $\alpha\in K_4$, the steps where we increase $\non(\Mcal)$ (and even $\non(\Ecal)$) using $\Ebb$, to obtain a $\theta_3$-$\Lambda^{\lim}_{\rm uf}$-iteration with ultrafilter limits on some $H$ of size ${<}\theta_4$. We aim to apply \autoref{mainuf} and~\ref{main-uflim} to conclude that the iteration is $\theta_3$-$\Fr$-Knaster, hence ensuring, by \autoref{FrKpres}, that the first $\lambda$-many Cohen reals added in the iteration form a $\theta_3$-$\omega^\omega$-unbounded family in the final extension, so the remaining $\bfrak\leq\theta_3$ will be forced.

    Using that $\theta^-_3\leq \kappa<\theta_4$ and $\lambda \leq 2^\kappa$, by \autoref{k20} we can find $H_0\subseteq \kappa^{\pi}$ of size ${\leq}\kappa^{\aleph_0}<\theta_4$ (by \ref{main2ii}) such that any countable partial function from $\pi$ into $\kappa$ can be extended by a function in $H_0$. 
    For any $g\in \kappa^\pi$ define $g'\in {\theta_3^-}^\pi$ by $g'(\xi):= g(\xi)$ if $g(\xi)<\theta_3^-$, and $g'(\xi):=0$ otherwise. 
    Then $H:=\set{g'}{g\in H_0}\subseteq {\theta_3^-}^\pi$ has size ${<}\theta_4$ and any countable partial function from $\pi$ into $\theta^-_3$ can be extended by a function in $H$. 
    This guarantees requirement~\ref{mainuf1} of \autoref{mainuf}.
    

    To construct the iteration, proceed by recursion, starting with an ultrafilter $D^h_0$ on $\omega$ for $h\in H$. In the successor step $\xi\to \xi+1$, we do some work in the case $\xi=\lambda+\alpha$ with $\alpha\in K_4$
    because in other cases we proceed as in \autoref{main1} and just pick $\mu_\xi<\theta_3$ such that $\Vdash_{\Por_\xi}\Qnm_\xi = \{\dot q^\xi_\zeta\colon \zeta<\mu_\xi\}$, so we let $\Por^-_\xi:= \Por_\xi$ and $\dot Q_{\xi,\zeta}$ be a $\Por_\xi$-name of $\{\dot q^\xi_\zeta\}$, so any $\Por_{\xi+1}$-name $\dot D^h_{\xi+1}$ of an ultrafilter extending $\dot D^h_\xi$ is suitable.
    
    Using the book-keeping for $K_4$, in stage $\xi=\lambda+\alpha$ we have picked some $\Por_{\xi}$-name $\dot F_\alpha$ of a subset of $\omega^\omega$ of size ${<}\theta_4$, and aim to add an eventually different real over $\dot F_\alpha$ in the following step by using a restriction of $\Ebb$. Since $\Por_{\xi}$ has the ccc, we can find some $\nu_\alpha<\theta_4$ such that $\dot F_\alpha$ is represented by $\{\dot x_{\alpha,i}\colon i<\nu_\alpha\}$. Using the assumption~\ref{main2ii}, for large enough $\chi$ we can find $M\prec H_\chi$ of size ${<}\theta_4$, closed under countable sequences, such that $\Por_{\xi}$ and each $\dot x_{\alpha,i}$ ($i<\nu_\alpha$) and $\dot D^h_{\xi}$ ($h\in H$) are in $M$. Consider $\Por^-_{\xi}:= \Por_{\xi}\cap M$, which is a complete suborder of $\Por_{\xi}$ because the latter has the ccc and $M$ is closed under countable sequences. Then, we force with $\Qnm_\xi:=\Ebb^{V^{\Por^-_{\xi}}}$ to advance to the next stage. Note that this is a $\Por^-_\xi$-name (for $\Ebb$). Enumerate $\omega^{<\omega}\times \omega = \set{(s_k,m_k)}{k<\omega}$ and let $\mu_\xi:=\omega$ and $\dot Q_{\xi,k}$ be a $\Por^-_\xi$-name of $E_{s_k,m_k}$ for $k<\omega$.

    By the construction of $\Por^-_{\xi}$, for any $h\in H$ we can find a $\Por^-_{\xi}$-name $\dot D^{h,-}_{\xi}$ of $\dot D^h_\xi \cap V^{\Por^-_\xi}$ (which exists because $M$ is countably closed). 
    Then, \autoref{limsucc} applies.


    Limit steps are guaranteed by \autoref{limlim}, since all the components $\dot Q_{\xi,\zeta}$ are centered.
\end{proof}

Notice that the condition ``$2^\kappa\geq\lambda$ for some $\kappa<\theta_4$" in~\ref{main2ii} of \autoref{main2} is incompatible with GCH in general, which would be a problem for applying the methods to force Cicho\'n's maximum in the following sections. This requirement can be weakened to a condition compatible with GCH, as below. 
Recall the poset $\Fn_{<\kappa}(I,B)$ of partial functions from $I$ into $B$ of size ${<}\kappa$, ordered by $\supseteq$.

\begin{theorem}[{\cite{GKS}}]\label{main2'}
    The conclusion of \autoref{main2} is valid for the case:
    \begin{enumerate}[label=\normalfont (\roman*')]
    \setcounter{enumi}{1}
        \item $\theta_3^-<\theta_4$, $\theta^{\aleph_0}<\theta_4$ for every cardinal $\theta<\theta_4$, and $\kappa^{<\kappa} = \kappa$ for some uncountable cardinal $\kappa<\theta_4$.
    \end{enumerate}
\end{theorem}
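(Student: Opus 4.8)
The plan is to reduce to \autoref{main2}\ref{main2ii} by prefixing a harmless preparatory forcing that turns the hypothesis ``$\kappa^{<\kappa}=\kappa$'' into ``$\lambda\le 2^\kappa$''. First I would pinpoint where $\lambda\le 2^\kappa$ was actually used in the proof of \autoref{main2}: it enters \emph{only} through the appeal to \autoref{k20} (with $\nu=\aleph_1$ and $L=\pi$ of size $\lambda$), in order to obtain a set $H$ of guardrails of size ${<}\theta_4$ through which every countable partial function from $\pi$ into $\theta_3^-$ factors; the rest of that argument needs only $\theta_3^-\le\kappa<\theta_4$ and ``$\theta^{\aleph_0}<\theta_4$ for all cardinals $\theta<\theta_4$''. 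So it is enough to pass to an extension in which $\lambda\le 2^\kappa$ holds while retaining all the hypotheses of \autoref{main2}.

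Next I would take $\Sbb:=\Fn_{<\kappa}(\kappa\times\lambda,2)$. Since $\kappa^{<\kappa}=\kappa$ forces $\kappa$ to be regular with $2^{<\kappa}\le\kappa$, a routine $\Delta$-system argument shows $\Sbb$ is $\kappa^+$-Knaster (in particular $\kappa^+$-cc), and $\Sbb$ is ${<}\kappa$-closed because $\kappa$ is regular. As $\kappa$ is uncountable, $\Sbb$ adds no new $\omega$-sequences of ordinals — hence no new reals — and, being $\kappa^+$-cc and ${<}\kappa$-closed, it preserves all cardinals and cofinalities. Its generic produces $\lambda$ many distinct subsets of $\kappa$, so $V^{\Sbb}\models 2^{\kappa}\ge\lambda$. (Note $\kappa<\lambda$ always holds here: $\kappa<\theta_4\le\lambda$, the latter because $\cf([\lambda]^{<\theta_4})=\lambda>1$ rules out $[\lambda]^{<\theta_4}$ having a maximum; so the index set $\kappa\times\lambda$ has size $\lambda$.)

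Then I would check that the hypotheses of \autoref{main2} survive in $V^{\Sbb}$. Cardinal preservation together with the absence of new $\omega$-sequences immediately yield $\lambda=\lambda^{\aleph_0}$, ``$\theta^{\aleph_0}<\theta_4$ for every cardinal $\theta<\theta_4$'', and $\theta_3^-<\theta_4$; moreover $[\lambda]^{<\theta_i}$ is literally unchanged whenever $\theta_i\le\kappa$. For $\theta_i>\kappa$ I would use that the $\kappa^+$-cc lets one cover any set of ordinals of size ${<}\theta_i$ from $V^{\Sbb}$ by a set of $V$ of size ${<}\theta_i$; hence a family witnessing $\cf([\lambda]^{<\theta_i})=\lambda$ in $V$ stays cofinal in $V^{\Sbb}$ and, dually, no strictly smaller cofinal family can appear, so $\cf([\lambda]^{<\theta_i})=\lambda$ persists. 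Combined with $2^\kappa\ge\lambda$, this means \autoref{main2}\ref{main2ii} applies over $V^{\Sbb}$ with the same $\kappa$, producing a length-$\lambda$ FS iteration of ccc posets over $V^{\Sbb}$ that forces $\add(\Nwf)=\theta_1$, $\cov(\Nwf)=\theta_2$, $\bfrak=\theta_3$, $\non(\Ewf)=\non(\Mwf)=\theta_4$ and $\cov(\Mwf)=\cfrak=\lambda$. Composing with $\Sbb$ yields the theorem; the composite has size $\lambda^{<\kappa}$, which equals $\lambda$ in the configurations of interest, and in any case one may read the conclusion as describing the ccc FS iteration of length $\lambda$ performed over the auxiliary ground model $V^{\Sbb}$.

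I do not expect a genuine obstacle. The one point requiring care is the preservation of $\cf([\lambda]^{<\theta_i})=\lambda$ for those $\theta_i$ exceeding $\kappa$, which is exactly where the combination ``${<}\kappa$-closed $+$ $\kappa^+$-cc'' is needed; and, if one wants the total forcing to have size exactly $\lambda$, verifying $\lambda^{<\kappa}=\lambda$ under the ambient hypotheses (or else rephrasing the conclusion as above).
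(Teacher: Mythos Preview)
Your reduction is exactly the paper's: force with $\Fn_{<\kappa}(\lambda,\kappa)$ (your $\Sbb$ is forcing-equivalent) to get $2^\kappa\ge\lambda$, check the hypotheses of \autoref{main2}\ref{main2ii} persist, and invoke that theorem. Your verification of persistence is correct and more detailed than the paper's one-line ``the assumptions of this theorem are preserved.''

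The paper does add one refinement you stop short of. Rather than composing $\Sbb$ with the FS iteration and accepting that the result is only a ccc iteration over $V^{\Sbb}$, the paper observes that the iteration $\Por=\Por_{\lambda+\lambda}$ itself can be built in $V$: the book-keeping and the choice of iterands live in $V$, while only the set $H$ of guardrails and the sequences $\langle\dot D^h_\xi\rangle$ of ultrafilter names live in $V'=V^{\Sbb}$. Since $\Sbb$ adds no countable sequences, $\omega^\omega\cap V^{\Por}=\omega^\omega\cap (V')^{\Por}$, so the $\theta_3$-$\omega^\omega$-unboundedness of the Cohen reals---verified via \autoref{mainuf} and \autoref{main-uflim} inside $V'$---transfers down to $V^{\Por}$. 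This yields a genuinely ccc poset over $V$ forcing the constellation, which is exactly what the Boolean-ultrapower argument in \autoref{mainBUP1} needs (it starts from ``the ccc poset obtained in \autoref{main1Tukey}''). Your composed forcing $\Sbb\ast\dot\Por$ is not ccc, so while your argument proves the theorem as literally stated, it would not feed into the later sections without this extra step.
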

\begin{proof}
    Let $V'$ be a generic extension of $\Fn_{<\kappa}(\lambda,\kappa)$. Notice that $2^\kappa\geq\lambda$ holds in $V'$, and that the assumptions of this theorem are preserved. Then, the construction of \autoref{main2} can be executed in $V'$. However, it is possible to construct the iteration $\Por=\Por_{\lambda+\lambda}$ in $V$ even though the set of guardrails and the sequences of (names of) ultrafilters live in $V'$. The book-keeping can be executed in $V$, and the strong unbounded families live in $V^\Por$, so $V^\Por$ forces the desired conclusion.
\end{proof}

\begin{remark}
    \autoref{main2} has been improved in~\cite{BCM} without need of the requirements~\ref{main2i} and~\ref{main2ii}, even obtaining a model where $\theta_4\leq\cov(\Mwf)=\theta_5 \leq \dfrak=\non(\Nwf)=\cfrak$ for a given regular $\theta_5$. The construction comes from a two-dimensional iteration with ultrafilters.
\end{remark}

A method preceding ultrafilter-limits, which is more powerful, is the method of iterations with finitely additive measures (fams). Shelah~\cite{ShCov} introduced this method for random forcing to prove the consistency of ZFC with $\cf(\cov(\Nwf)) = \omega$, and it was formalized in~\cite{KST} with applications in Cicho\'n's diagram. Recently, Andr\'es Uribe-Zapata~\cite{Uribethesis} formalized the general framework of iterations with fam-limits, which was further refined with Cardona and the author in~\cite{CMU}.

\begin{definition}
    Let $\Por$ be a poset.
    \begin{enumerate}[label = \normalfont (\arabic*)]
        \item\label{faml1} Let $\Xi\colon \pts(\omega) \to [0,1]$ be a fam (with $\Xi(\omega)=1$ and $\Xi(\{n\})=0$ for all $n<\omega$), $I = \la I_n\colon n<\omega\ra$ be a partition of $\omega$ into finite sets, and $\varp>0$.

        A set $Q\subseteq\Por$ is \emph{$(\Xi,I,\varp)$-linked} if there is a function $\lim\colon Q^\omega\to \Por$ and a $\Por$-name $\dot \Xi'$ of a fam on $\pts(\omega)$ extending $\Xi$ such that, for any $\bar p = \seq{ p_\ell}{\ell<\omega} \in Q^\omega$,
        \[\lim \bar p \Vdash \int_\omega \frac{|\set{\ell \in I_k}{p_\ell \in \dot G}|}{|I_k|}d\dot \Xi' \geq 1-\varp.\]

        \item\label{faml2} The poset $\Por$ is \emph{$\mu$-FAM-linked}, witnessed by $\la Q_{\alpha,\varp}\colon \alpha<\mu,\ \varp\in(0,1)\cap \Q\ra$, if:
        \begin{enumerate}[label = \rm (\roman*)]
            \item Each $Q_{\alpha,\varp}$ is $(\Xi,I,\varp)$-linked for any $\Xi$ and $I$.
            \item For $\varp\in(0,1)\cap \Q$, $\bigcup_{\alpha<\omega} Q_{\alpha,\varp}$ is dense in $\Por$.
        \end{enumerate}

        \item\label{faml3} The poset $\Por$ is \emph{uniformly $\mu$-FAM-linked} if there is some $\la Q_{\alpha,\varp}\colon \alpha<\mu,\ \varp\in(0,1)\cap \Q\ra$ as above, such that in~\ref{faml1} the name $\dot \Xi'$ only depends on $\Xi$ (and not on any $Q_{\alpha,\varp}$).
    \end{enumerate}
\end{definition}

\begin{example}
    \ 
    \begin{enumerate}[label = \normalfont (\arabic*)]
        \item Any singleton is $(\Xi,I,\varp)$-linked. Hence, any poset $\Por$ is uniformly $|\Por|$-FAM-linked. In particular, Cohen forcing is uniformly $\sigma$-FAM-linked.

        \item Shelah~\cite{ShCov} proved, implicitly, that random forcing is uniformly $\sigma$-FAM-linked.        
        More generally, any measure algebra with Maharam type $\mu$ is uniformly $\mu$-FAM-linked~\cite{MUrandom}.

        \item The creature ccc forcing from~\cite{HSh} adding eventually different reals is (uniformly) $\sigma$-FAM-linked. This is proved in~\cite{KST} and also in~\cite{Egorro} in a more general setting.
    \end{enumerate}
\end{example}

Iterations with fam limits can be constructed as in~\autoref{ituflim}. For all the details, see~\cite{Uribethesis,CMU}. These can be used to prove:

\begin{theorem}[{\cite{KST}}]\label{altKST}
Let $\aleph_1\leq \theta_1\leq \theta_2\leq \theta_3 \leq \theta_4$ be regular cardinals, and assume $\lambda$ is a cardinal such that $\lambda = \lambda^{\aleph_0}$ and $\cf([\lambda]^{<\theta_i}) = \lambda$ for $i=1,\ldots,4$. Further assume that one of the following holds:
    \begin{enumerate}[label=\normalfont(\roman*)]
        \item $\theta_2=\theta_3$.
        \item\label{KSTii} 
        $\theta_2^-<\theta_3$, 
        $\theta^{\aleph_0}<\theta_i$ for every cardinal $\theta<\theta_i$ and $i\in\{3,4\}$, and $\kappa = \kappa^{<\kappa}$ for some uncountable cardinal $\kappa<\theta_3$.
    \end{enumerate}    
    Then, we can construct a FS iteration of length (and size) $\lambda$ of ccc posets forcing $\add(\Nwf) = \theta_1$, $\bfrak = \theta_2$, $\cov(\Nwf) = \theta_3$, $\non(\Ewf) =\non(\Mwf) = \theta_4$ and $\cov(\Mwf) = \cfrak = \lambda$ (see \autoref{fig:altKST}).
\end{theorem}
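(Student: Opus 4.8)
The plan is to follow the proof of \autoref{main2}, replacing the ultrafilter-limit construction of \autoref{sec:uflim} by a construction with fam-limits (finitely additive measures), set up as in \autoref{ituflim} with FAM-linkedness in place of $\Lambda^{\lim}_{\rm uf}$ (see~\cite{Uribethesis,CMU}); this is precisely what permits $\cov(\Nwf)$ to sit \emph{above} $\bfrak$ instead of below it. Case~(i), $\theta_2=\theta_3$, needs no new ideas: $\bfrak$ and $\cov(\Nwf)$ are then merged and the random restrictions pushing $\cov(\Nwf)$ up have size $<\theta_3=\theta_2$, so one concludes by \autoref{main1}, part~2 (when moreover $\theta_2=\theta_4$, all iterands being small relative to $\theta_2$; cf.\ \autoref{smallgood}, \autoref{Bgood}, \autoref{Egood}) or by the ultrafilter-limit construction of \autoref{main2} at the $\Ebb$-steps otherwise. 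So from now on I assume~\ref{KSTii}; as in the step from \autoref{main2} to \autoref{main2'}, after first forcing with $\Fn_{<\kappa}(\lambda,\kappa)$ I may also assume $2^\kappa\geq\lambda$, the hypotheses being preserved.

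The skeleton is that of \autoref{main1}: an FS iteration of length $\pi=\lambda+\lambda$ that forces with $\Cor_\lambda$ on the first $\lambda$ coordinates and then, over $V_\lambda$, iterates with book-keeping along a partition $\lambda=K_1\cup K_2\cup K_3\cup K_4$ into pieces of size $\lambda$, where at stage $\alpha\in K_i$ one forces with a ccc poset of size $<\theta_i$ built over a transitive $N_\alpha\ni F_{h(\alpha)}$: at $K_1$ with $\Loc^{N_\alpha}_\id$, at $K_2$ with $\Dor^{N_\alpha}$, at $K_3$ with a restriction of random forcing, and at $K_4$ with $\Ebb^{N_\alpha}$ (which by \autoref{edE} adds both an eventually different and a $\Cbf_\Ewf$-dominating real over $N_\alpha$). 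As in \autoref{main1}, the book-keeping together with $\cf([\lambda]^{<\theta_i})=\lambda$ makes every $F\in[X]^{<\theta_i}$ of $V_\pi$ appear at some $K_i$-stage, yielding $\add(\Nwf)\geq\theta_1$, $\bfrak\geq\theta_2$, $\cov(\Nwf)\geq\theta_3$ and $\non(\Ewf)\geq\non(\Mwf)\geq\theta_4$; since every iterand has size $\leq\lambda$ and $\lambda$-many Cohen reals are added, $\cfrak=\lambda$, and then \autoref{FScfpi} gives $\cov(\Mwf)=\lambda$, hence $\dfrak=\non(\Nwf)=\cof(\Mwf)=\cof(\Nwf)=\lambda$.

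For the upper bounds other than $\bfrak$ I would repeat the preservation argument of \autoref{main1}, part~2: in $V_\lambda$ the Cohen reals form $\aleph_1$-$R$-unbounded, hence $\theta_i$-$R$-unbounded, families for $R\in\{\Lc(\omega,H_*),\Cn,\Mg\}$, and every Step-2 iterand is $\theta_1$-$\Lc(\omega,H_*)$-good, $\theta_3$-$\Cn$-good and $\theta_4$-$\Mg$-good — by \autoref{smallgood} when it is small relative to the relevant target (the restricted iterands at $K_3$ and $K_4$ being kept small by the countably-closed submodels of the next paragraph), by \autoref{spfamLc} for the random restrictions against $\Lc(\omega,H_*)$, and by \autoref{centCn} and \autoref{centLc} for the $K_4$-iterand against $\Cn$ and $\Lc(\omega,H_*)$ — so by \autoref{goodit} the whole Step-2 iteration has these three goodness properties, and \autoref{mainpres} with \autoref{strongTukey} give $\add(\Nwf)\leq\theta_1$, $\cov(\Nwf)\leq\theta_3$ and $\non(\Mwf)\leq\theta_4$; finally $\non(\Ewf)\leq\non(\Mwf)=\theta_4$ by \autoref{thm:E}.

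The crux is $\bfrak\leq\theta_2$. The obstruction is that the random restrictions at $K_3$ have size $<\theta_3$, possibly $\geq\theta_2$, so — unlike in \autoref{main2}, where every iterand below the $\Ebb$-steps was small relative to the target — they cannot be taken as singletons; this is what forces the use of fam-limits. Concretely, I would run Step~2 as an iteration with fam-limits for a set $H$ of guardrails (the fam analogue of \autoref{Gamma-it}/\autoref{ituflim}), using that random forcing is uniformly $\sigma$-FAM-linked (Shelah~\cite{ShCov}) and that $\Ebb$ is $\sigma$-FAM-linked (\cite{KST,Egorro}): at the $K_3$ and $K_4$ steps I would force with the restriction $\Qnm_\xi$ of the iterand to $V^{\Por^-_\xi}$, where $\Por^-_\xi=\Por_\xi\cap M$ for a countably-closed $M\prec H_\chi$ of size $<\theta_3$ (resp.\ $<\theta_4$) having among its elements $\Por_\xi$, the book-kept family, and the names of the finitely additive measures attached to the guardrails, so that the fam-limits of conditions in the linked pieces stay inside $\Qnm_\xi$; at $K_1,K_2$ (iterands of size $<\theta_2$) singletons suffice. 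I would take $H$ into $\theta_2^-$ of size $<\theta_3$ by \autoref{k20} (possible since, by~\ref{KSTii} and $2^\kappa\geq\lambda$, there is $\kappa\geq\theta_2^-$ with $\lambda\leq 2^\kappa$ and $\kappa^{\aleph_0}<\theta_3$), and maintain along the iteration a coherent system of $\Por_\xi$-names $\dot\Xi^h_\xi$ of fams having fam-limits for $H$: the successor step by an amalgamation lemma for fams in the spirit of \autoref{amalguf} and \autoref{limsucc}, the limit step as in \autoref{limlim} (using that the linked pieces along any fixed guardrail — random conditions, resp.\ the sets $E_{s,m}$ — are centered). Then the fam form of \autoref{mainuf} and \autoref{main-uflim} applies: any $B\in[\Por_\pi]^{\theta_2}$ refines to a uniform $\Delta$-system $B'$ of size $\theta_2$ lying below a single guardrail, and the fam-limit of any uniform $\Delta$-subsequence $\seq{p_n}{n<\omega}$ of $B'$ forces $\int_\omega\frac{|\{n<\omega\colon n\in I_k,\ p_n\in\dot G\}|}{|I_k|}\,d\dot\Xi'\geq 1-\varp>0$, so $\{n<\omega\colon p_n\in\dot G\}$ is infinite; hence $B'$ is $\Fr$-linked and Step~2 is $\theta_2$-$\Fr$-Knaster. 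By \autoref{FrKpres} the $\theta_2$-$\omega^\omega$-unbounded family of Cohen reals present in $V_\lambda$ then survives to $V_\pi$, giving $\bfrak\leq\theta_2$, which with $\bfrak\geq\theta_2$ completes the proof. The main obstacle is exactly this fam-limit bookkeeping: keeping $H$ small enough that the size-$<\theta_3$ (resp.\ $<\theta_4$) submodels $M$ can see all the names $\dot\Xi^h_\xi$, yet rich enough to capture every countable partial function into $\theta_2^-$ — which is what the hypotheses $\theta^{\aleph_0}<\theta_3$ and $\theta^{\aleph_0}<\theta_4$ for $\theta$ below $\theta_3$, resp.\ $\theta_4$, provide — and checking that the fam-limits of uniform $\Delta$-systems genuinely lie in the $\Qnm_\xi$ and cohere through the limit stages, i.e.\ that the iteration "has fam-limits for $H$", the fam counterpart of \autoref{ituflim}. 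Everything else is a transcription of the arguments already given for \autoref{main1} and \autoref{main2}.
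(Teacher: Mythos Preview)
Your proposal is essentially the approach the paper gestures at: the paper gives no proof of \autoref{altKST} beyond the sentence ``Iterations with fam limits can be constructed as in \autoref{ituflim}. For all the details, see~\cite{Uribethesis,CMU}. These can be used to prove:'', and your sketch fills this in correctly --- the iteration skeleton with $\Loc$, $\Dor$, random, eventually-different at $K_1,\ldots,K_4$, the goodness arguments for $\add(\Nwf)\leq\theta_1$, $\cov(\Nwf)\leq\theta_3$, $\non(\Mwf)\leq\theta_4$, and the fam-limit bookkeeping yielding $\theta_2$-$\Fr$-Knasterness for $\bfrak\leq\theta_2$ are all right.

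One technical point deserves flagging: you assert that $\Ebb$ is $\sigma$-FAM-linked, but the paper does not claim this. Its example of a $\sigma$-FAM-linked forcing adding eventually different reals is the creature ccc poset from~\cite{HSh} (see the example list just before \autoref{altKST}), and this is the iterand used at the $K_4$ steps in the~\cite{KST} construction. Whether $\Ebb$ itself is uniformly $\sigma$-FAM-linked is a separate question; the safe route is to replace $\Ebb^{N_\alpha}$ by the Horowitz--Shelah creature forcing restricted via $\Por^-_\xi$. This does not affect the structure of your argument, but it does mean that the $\non(\Ewf)\geq\theta_4$ clause needs a word: either one checks that the creature forcing also adds $\Cbf_\Ewf$-dominating reals, or one invokes the later result (mentioned in the Remark following \autoref{altKST}) that fam-limits themselves control $\non(\Ewf)$.
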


\begin{figure}[ht]
\centering
\begin{tikzpicture}
\small{
 \node (aleph1) at (-2,2.5) {$\aleph_1$};
 \node (addn) at (0,2.5){$\add(\Nwf)$};
 \node (covn) at (2,5){$\cov(\Nwf)$};
 \node (cove) at (7,3.75) {$\cov(\Ecal)$};
 \node (b) at (2,0) {$\bfrak$};
 \node (nonm) at (4,2.5) {$\non(\Mcal)$} ;
 \node (none) at (3,1.25) {$\non(\Ecal)$};
 \node (d) at (8,5) {$\dfrak$};
 \node (covm) at (6,2.5) {$\cov(\Mcal)$} ;
 \node (nonn) at (8,0) {$\non(\Ncal)$} ;
 \node (cfn) at (10,2.5) {$\cof(\Ncal)$} ;
  \node (c) at (12,2.5) {$\cfrak$};

\foreach \from/\to in {
aleph1/addn, addn/covn, addn/b, covn/nonm, b/none, none/nonm, covm/nonn, covm/cove, cove/d, nonn/cfn, d/cfn, cfn/c}
{
\path[-,draw=white,line width=3pt] (\from) edge (\to);
\path[->,] (\from) edge (\to);
}
\path[->,draw=red,line width=1pt] (nonm) edge (covm);

\draw[color=sug,line width=1]
(-1,0)--(-1,5)
(-1,5)--(3,0)
(0,0)--(4,5)
(5,5)--(5,0);

\draw[circle, fill=suy,color=suy] 
(0,1) circle (0.3)
(2,3.75) circle (0.3)
(1,0) circle (0.3)
(4,3.75) circle (0.3)
(8,2.5) circle (0.3);

\node at (0,1) {$\theta_1$};
\node at (2,3.75) {$\theta_3$};
\node at (1,0) {$\theta_2$};
\node at (4,3.75) {$\theta_4$};
\node at (8,2.5) {$\lambda$};

}
\end{tikzpicture}
\caption{The constellation of Cicho\'n's diagram forced in \autoref{altKST}.}\label{fig:altKST}
\end{figure}
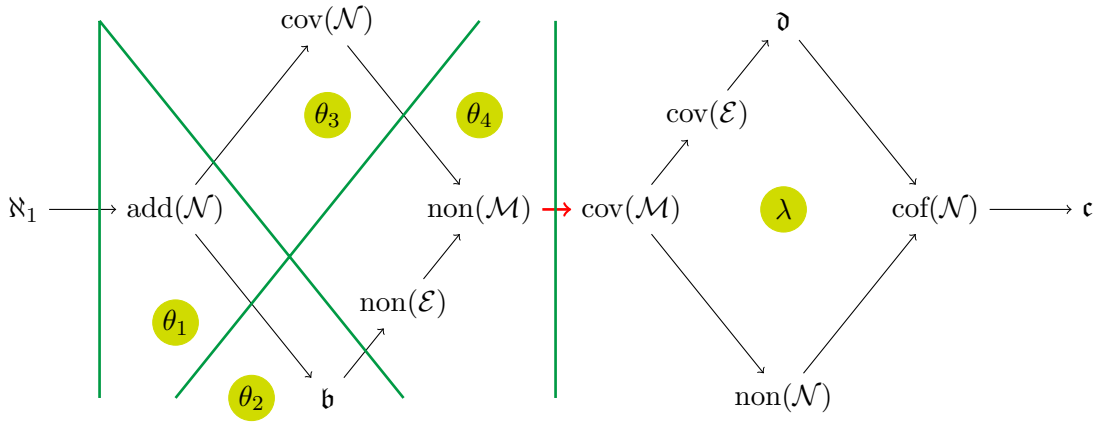

\begin{remark}
    Several extensions of \autoref{main2'} and \autoref{altKST} (and Cicho\'n's Maximum) have been obtained by separating more cardinals simultaneously, like $\mfrak$, $\pfrak$, $\hfrak$, $\sfrak$ and $\rfrak$~\cite{GKMSnoreal,GKMSsplit,GKMScol}. Very recently, Yamazoe~\cite{Yamazoe} developed iterations with \emph{closed ultrafilter limits} to separate $\efrak$ (and its dual) in additon. He defines the linkedness notion $\Lambda^{\lim}_{{\rm c}D}$ and $\Lambda^{\lim}_{\rm cuf}$ as a variation of \autoref{defuflim} where $\lim^D\colon Q^\omega\to Q$ (i.e., $Q$ is closed under the ultrafilter limit), modifies the iteration theory with ultrafilters for \emph{closed-ultrafilter limits} (i.e.\ \autoref{ituflim} for $\Lambda^{\lim}_{\rm cuf}$), and proves that closed-ultrafilter limits help control $\efrak$.

    Preceding Yamazoe's work, the author with Goldstern, Kellner and Shelah~\cite{GKMSe} claimed the same result, using that fam-limits help to control $\efrak$ (and its dual). For their forcing iteration costruction, they claimed that ultrafilter-limits at limit steps (of countable cofinality) of the iteration can be obtained by weaking the ``centered" requirement for $\dot Q_{\xi,h(\xi)}$ in \autoref{limlim}. However, this way to proceed is still unclear, which represents a hole in their argument.

    As a follow up, the author with Cardona~\cite{CardonaRIMS,CMU} proved that fam-limits help to control $\non(\Ewf)$ and $\cov(\Ewf)$.
\end{remark}

\section{Boolean ultrapowers}

Goldstern, Kellner and Shelah~\cite{GKS} proved that applying Boolean ultrapowers to the model from \autoref{main2'} yields a ccc poset that forces Cicho\'n's maximum.

The effect of Boolean ultrapowers to Tukey connections between relational systems is very relevant to this work. Indeed, we see how the main theorems proved in the previous sections can be reformulated in terms of Tukey connections. First notice:

\begin{lemma}\label{5.1}
    Let $K$ be a relational system, $\kappa$ be an uncountable regular cardinal and let $\Por$ be a $\kappa$-cc poset.
    \begin{enumerate}[label = \normalfont (\alph*)]
        \item If $\kappa\leq \bfrak(K)^V$ then $\Por$ forces $\bfrak(K) = \bfrak(K)^V$.

        \item If $\kappa\leq \dfrak(K)^V$ then $\Por$ forces $\dfrak(K) = \dfrak(K)^V$.

        \item\label{5.1c} If $\kappa\leq\theta\leq\lambda$ are cardinals, then $\Por$ forces $[\lambda]^{<\theta} \eqT [\lambda]^{<\theta}\cap V$ and $\Cbf_{[\lambda]^{<\theta}} \eqT \Cbf_{[\lambda]^{<\theta}\cap V}$.
    \end{enumerate}
\end{lemma}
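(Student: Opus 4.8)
The plan is to prove all three parts by the standard ``$\kappa$-cc forcing reflects small sets of reals/ordinals'' argument. The single fact that drives everything is: if $\Por$ is $\kappa$-cc and $\kappa$ is regular uncountable, then for any $\Por$-name $\dot A$ of a subset of the ground model of size ${<}\kappa$, there is a ground-model set $B$ with $|B|<\kappa$ such that $\Vdash \dot A\subseteq B$ (cover each of the ${<}\kappa$ possible values by a maximal antichain and take the union, using regularity of $\kappa$ to keep the bound below $\kappa$). I would state this as an opening observation and then use it three times.

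For part (a): suppose $\kappa\le\bfrak(K)^V$, and work in $V^\Por$. Given $F\subseteq X$ with $|F|<\bfrak(K)^V$ (in particular $|F|<\kappa$), by the covering fact there is $B\in V$ with $F\subseteq B$ and $|B|<\kappa\le\bfrak(K)^V$, so $B$ is $K$-bounded in $V$ by some $y\in Y$, and the same $y$ bounds $F$ in $V^\Por$ (boundedness ``$x\sqsubset y$'' is a statement about the relation, and $F\subseteq B$). Hence $\bfrak(K)^{V^\Por}\ge\bfrak(K)^V$; conversely a witnessing $K$-unbounded family in $V$ remains $K$-unbounded in $V^\Por$ since no new elements of $Y$ of the relevant kind help unless $\dfrak$ drops — more cleanly, any $V$-family that is $K$-unbounded of size $\bfrak(K)^V$ is still unbounded in $V^\Por$ because if some $y\in Y\cap V^\Por$ bounded it, then... here I need a small argument: an unbounded family of size $\lambda<\kappa$ cannot be covered, but the family itself has size exactly $\bfrak(K)^V$ which may be $\ge\kappa$. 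The clean route: $\bfrak(K)$ can only go down under forcing in general, but the covering fact shows it cannot go below $\kappa$, and it cannot go down while staying $\ge\kappa$ either — actually the safest is to argue both inequalities via the covering fact applied to a minimal unbounded family. I would write: in $V^\Por$ take a $K$-unbounded $F$ of size $\bfrak(K)$; if $\bfrak(K)<\kappa$ then $F$ is covered by $B\in V$ with $|B|<\kappa\le\bfrak(K)^V$, so $B$ (hence $F$) is bounded, contradiction; thus $\bfrak(K)^{V^\Por}\ge\kappa$, and then any $V$-unbounded family of size $\bfrak(K)^V\ge\kappa$ stays unbounded (a bounding $y$ would, restricted to the first $\kappa$-many members and covered appropriately, contradict $V$-unboundedness — or simply: unboundedness of a family of regular size $\ge\kappa$ is $\kappa$-cc-absolute because being bounded is witnessed by a single $y$ and $\kappa$-cc adds no cofinal maps). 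Part (b) is entirely dual, using that a $K$-dominating family stays dominating and that the covering fact keeps $\dfrak(K)$ from rising above where $V$ put it while $\kappa$-cc keeps it from dropping below $\kappa$; I would just say ``by the dual argument''.

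For part (c): $[\lambda]^{<\theta}\cap V$ is a suborder of $[\lambda]^{<\theta}$ in $V^\Por$; the inclusion map $A\mapsto A$ witnesses $[\lambda]^{<\theta}\cap V\leqT[\lambda]^{<\theta}$ trivially, and for the other direction $A\in[\lambda]^{<\theta}\mapsto$ (some $B\in V$, $|B|<\theta$, $A\subseteq B$) is a well-defined map by the covering fact (applied with $\theta$ in place of $\kappa$; note $\theta$ is regular uncountable as $\theta\ge\kappa>\aleph_0$ — wait, $\theta$ need only satisfy $\kappa\le\theta$, so $\theta$ has uncountable cofinality but might be singular, in which case the covering fact still applies with the bound $|B|<\theta$ by taking unions over the ${<}\kappa$-many antichains, since $\kappa\le\theta$). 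So $A\mapsto B_A$ together with the inclusion on the other side gives $[\lambda]^{<\theta}\leqT[\lambda]^{<\theta}\cap V$, proving $\eqT$. For $\Cbf_{[\lambda]^{<\theta}}\eqT\Cbf_{[\lambda]^{<\theta}\cap V}$, recall $\Cbf_{\Iwf}=\la\lambda,\Iwf,\in\ra$: the pair (identity on $\lambda$, inclusion on the ideal) gives one Tukey connection, and (identity on $\lambda$, $B\mapsto B_B$ with $\alpha\in B\Rightarrow\alpha\in B_B$) gives the other — the defining implication ``$\alpha\in B\Rightarrow\alpha\in\Psi_+(B)=B_B$'' holds since $B\subseteq B_B$. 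The main obstacle, and the only place demanding care, is the direction of parts (a)/(b) showing $\bfrak(K)$ and $\dfrak(K)$ do not change *within* the range $[\kappa,\lambda]$ — i.e. combining ``can't drop below $\kappa$'' with ``the old witness still works'' — so I would make sure to phrase the absoluteness of a single $\sqsubset$-statement and of ``no new cofinal sequences of length $<\kappa$ into a $V$-set'' precisely. Everything else is bookkeeping with Tukey maps.
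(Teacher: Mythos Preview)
The paper states Lemma~5.1 without proof, so there is nothing to compare against directly. Your covering-based approach is the right one, and your argument for part~(c) is essentially complete. However, your treatment of~(a) (and by duality~(b)) has a genuine gap and a point of confusion.

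The gap: you write ``$|F|<\bfrak(K)^V$ (in particular $|F|<\kappa$)'', but this is false---the hypothesis is $\kappa\le\bfrak(K)^V$, so $|F|$ can sit anywhere in $[\kappa,\bfrak(K)^V)$. You notice the problem and try to patch it, but the patch never lands: showing $\bfrak(K)^{V^\Por}\ge\kappa$ is not enough, and the vague appeals to ``$\kappa$-cc adds no cofinal maps'' do not close the case $\kappa\le|F|<\bfrak(K)^V$. The clean fix is simply to run the covering argument for arbitrary $\mu:=|F|<\bfrak(K)^V$: enumerate $F$ via a name $\dot g\colon\mu\to A$, and for each $\xi<\mu$ collect the ${<}\kappa$ possible values of $\dot g(\xi)$. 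If $\mu<\kappa$ the union has size $<\kappa$ by regularity; if $\mu\ge\kappa$ it has size $\le\mu$. Either way you get $B'\in V$ with $F\subseteq B'$ and $|B'|<\bfrak(K)^V$, so $B'$ (and hence $F$) is bounded.

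The confusion: you hedge about ``new elements of $Y$'' and whether a $V$-unbounded family stays unbounded. But $K=\langle A,B,\lhd\rangle$ here is a fixed ground-model relational system, not reinterpreted in the extension (this is exactly the point the paper makes a few lines later about the role of $K$). Hence $B$ acquires no new elements, and any $V$-unbounded family is trivially still unbounded in $V^\Por$---there is nothing to argue. Once you internalize this, the direction $\bfrak(K)^{V^\Por}\le\bfrak(K)^V$ is immediate, and the whole proof of~(a) collapses to the single covering step above.
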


Due to~\ref{5.1c}, since we mainly work with ccc forcing extensions, we can use $[\lambda]^{<\theta}$ with $\theta$ uncountable without specifying the model we are looking at.

\begin{lemma}[{\cite[Fact~3.8]{CM23}}]
    Let $\theta\leq\lambda$ be infinite cardinals. Then $\Cbf_{[\lambda]^{<\theta}}\eqT [\lambda]^{<\theta}$ iff $\theta$ is regular and $\cof([\lambda]^{<\theta}) = \lambda$.
\end{lemma}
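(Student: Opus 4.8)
**Proof plan for: $\Cbf_{[\lambda]^{<\theta}}\eqT [\lambda]^{<\theta}$ iff $\theta$ is regular and $\cof([\lambda]^{<\theta}) = \lambda$.**

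The plan is to prove both directions directly from the definitions of the two relational systems involved. Recall $[\lambda]^{<\theta}$ denotes the directed partial order $\la [\lambda]^{<\theta},\subseteq\ra$, so by \autoref{examSdir} it is the relational system $\la [\lambda]^{<\theta},[\lambda]^{<\theta},\subseteq\ra$, whereas $\Cbf_{[\lambda]^{<\theta}} = \la \lambda,[\lambda]^{<\theta},\in\ra$ (using that $[\lambda]^{<\theta}$, as an ideal on $\lambda$, satisfies $\bigcup [\lambda]^{<\theta} = \lambda$, since $\theta$ is infinite). By \autoref{ex:trivialTukey} we always have $\Cbf_{[\lambda]^{<\theta}}\leqT [\lambda]^{<\theta}$; the content of the statement is thus an iff for the reverse Tukey inequality $[\lambda]^{<\theta}\leqT \Cbf_{[\lambda]^{<\theta}}$.

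\textbf{The easy direction ($\Leftarrow$).} Assume $\theta$ is regular and $\cof([\lambda]^{<\theta})=\lambda$. I would first compute the invariants of both sides. For $\Cbf_{[\lambda]^{<\theta}}$: $\dfrak = \cov([\lambda]^{<\theta})$ and $\bfrak = \non([\lambda]^{<\theta})$; since $\theta$ is regular, $\non([\lambda]^{<\theta}) = \theta$, and $\cov([\lambda]^{<\theta})=\lambda$ when $\theta<\lambda$ (and $=\cf(\theta)=\theta=\lambda$ when $\theta=\lambda$), in all cases equal to $\lambda$ under our hypothesis that $\cof([\lambda]^{<\theta})=\lambda$ — wait, more carefully: if $\theta=\lambda$ then $\cof([\lambda]^{<\lambda})=\lambda$ forces $\lambda$ regular and $[\lambda]^{<\lambda}$ is then $\eqT \lambda$ trivially. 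So assume $\theta<\lambda$. For $[\lambda]^{<\theta}$: $\bfrak([\lambda]^{<\theta}) = \add([\lambda]^{<\theta}) = \cf(\theta) = \theta$ and $\dfrak([\lambda]^{<\theta}) = \cof([\lambda]^{<\theta}) = \lambda$. So both relational systems have the same pair of invariants, but that alone does not give Tukey equivalence; I need an explicit connection. The standard move: fix a cofinal family $\set{A_\xi}{\xi<\lambda}\subseteq [\lambda]^{<\theta}$ witnessing $\cof([\lambda]^{<\theta})=\lambda$, and well-order it; define $\Psi_-\colon \lambda\to[\lambda]^{<\theta}$ (the "$X$-side" of $\Cbf$ is $\lambda$, the "$X$-side" of $[\lambda]^{<\theta}$ is also $[\lambda]^{<\theta}$ — careful with directions!). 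Concretely, to show $[\lambda]^{<\theta}\leqT \Cbf_{[\lambda]^{<\theta}}$ I need $\Psi_-\colon [\lambda]^{<\theta}\to \lambda$ and $\Psi_+\colon [\lambda]^{<\theta}\to[\lambda]^{<\theta}$ with $\Psi_-(A)\in B \Rightarrow A\subseteq \Psi_+(B)$. Since $\theta$ is regular, any $B\in[\lambda]^{<\theta}$ is covered: using the cofinal family, set $\Psi_+(B) := \bigcup\set{A_\xi}{\xi\in B}$, which lies in $[\lambda]^{<\theta}$ precisely because $\theta$ is regular and $|B|<\theta$ and each $|A_\xi|<\theta$. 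And set $\Psi_-(A) := \set{\xi<\lambda}{A_\xi\cap A\neq\vacio}$? That need not be small. Better: enumerate $A = \set{\gamma_j}{j<\mu}$, $\mu<\theta$, pick for each $j$ some $\xi_j$ with $\gamma_j\in A_{\xi_j}$, and let $\Psi_-(A) := \set{\xi_j}{j<\mu}\in[\lambda]^{<\theta}$ — but this must land in $\lambda$, not $[\lambda]^{<\theta}$. The resolution is that $\Cbf_{[\lambda]^{<\theta}}$ should first be replaced by $\Cbf_{[\lambda]^{<\theta}}\eqT \Cbf_{[\lambda]^{<\theta}}$ restricted to the cofinal base; the cleanest route is to use $\Cbf_{[\lambda]^{<\theta}}\eqT \la [\lambda]^{<\theta}, [\lambda]^{<\theta}, \text{(one set meets the other... )}\ra$... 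Actually the honest clean approach: show $[\lambda]^{<\theta}\leqT \big([\lambda]^{<\theta}\big)^{<\theta}$-type object, or directly observe $\Cbf_{[\lambda]^{<\theta}}^{[\theta]} := \la \lambda, [\lambda]^{<\theta},\in\ra$ and note $[\lambda]^{<\theta}\leqT$ "the $<\theta$-th power of $\Cbf_{[\lambda]^{<\theta}}$" which collapses because $\theta$ is regular. I would present it as: $\Psi_-$ sends $A$ to (a code for) itself via a fixed bijection $[\lambda]^{<\theta}\leftrightarrow\lambda$ is wrong size-wise unless $\cof=\lambda$... the key is $|[\lambda]^{<\theta}| $ could exceed $\lambda$. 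So instead: the map $A\mapsto$ (the $<\theta$-sequence of witnesses into the cofinal family) together with regularity of $\theta$ is exactly what's needed, and it is legitimate because we may compose with $\Cbf_{[\lambda]^{<\theta}}\eqT \la[\lambda]^{<\theta},[\lambda]^{<\theta}, \sqsubset\ra$ where $A\sqsubset B$ iff $A\subseteq B$ — i.e. $\Cbf$ is already $\eqT$ the ideal when the base has size $\lambda$. Hmm — that's circular.

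\textbf{Reassessing: this direction must go through the invariants plus a linearization.} The genuinely correct argument: when $\theta$ is regular and $\cof([\lambda]^{<\theta})=\lambda$, fix an increasing-in-a-suitable-sense cofinal sequence $\la A_\xi : \xi<\lambda\ra$ that is moreover \emph{$<\theta$-closed} (closed under unions of $<\theta$-many of its members that remain of size $<\theta$ — attainable by a closing-off argument using regularity of $\theta$ and $\cof=\lambda$). Then define $\Psi_+(B) = \bigcup_{\xi\in B} A_\xi$ (in $[\lambda]^{<\theta}$ by regularity) and $\Psi_-\colon[\lambda]^{<\theta}\to\lambda$ by $\Psi_-(A) = $ least $\xi$ with $A\subseteq A_\xi$ — this exists by cofinality, but then $\Psi_-(A)\in B$ only says $\xi\in B$, giving $A\subseteq A_\xi\subseteq \Psi_+(B)$. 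That works! The point was to route everything through the single witness $A_\xi$ rather than a $<\theta$-sequence. So: $\Psi_-(A)\in B \Rightarrow A\subseteq A_{\Psi_-(A)}\subseteq\bigcup_{\xi\in B}A_\xi = \Psi_+(B)$. Clean. I would write exactly this.

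\textbf{The converse ($\Rightarrow$).} Assume $\Cbf_{[\lambda]^{<\theta}}\eqT [\lambda]^{<\theta}$. By \autoref{cor:Tukeyval}(c), the invariants agree: $\cof([\lambda]^{<\theta}) = \dfrak([\lambda]^{<\theta}) = \dfrak(\Cbf_{[\lambda]^{<\theta}}) = \cov([\lambda]^{<\theta})$. Now the displayed formula in the excerpt (just after \autoref{strongTukey}) gives $\cov([\lambda]^{<\theta}) = \lambda$ if $\theta<\lambda$ and $=\cf(\theta)$ if $\theta=\lambda$. In the first case $\cof([\lambda]^{<\theta})=\lambda$, done for that half; and from $\bfrak$ agreeing, $\add([\lambda]^{<\theta}) = \bfrak([\lambda]^{<\theta}) = \bfrak(\Cbf_{[\lambda]^{<\theta}}) = \non([\lambda]^{<\theta})$; but $\add([\lambda]^{<\theta}) = \cf(\theta)$ always and $\non([\lambda]^{<\theta}) = \theta$ always, so $\cf(\theta)=\theta$, i.e. $\theta$ regular. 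In the case $\theta=\lambda$: then $\cof([\lambda]^{<\lambda})=\cf(\lambda)$, and we need to conclude this equals $\lambda$, i.e. $\lambda$ regular — but that is exactly $\theta$ regular, which we just derived, and then $\cof([\lambda]^{<\theta})=\cf(\lambda)=\lambda$. So both conclusions follow.

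\textbf{Main obstacle.} The only real subtlety is the $<\theta$-closed cofinal sequence in the easy direction — making sure that when $\theta<\lambda$ one can choose the cofinal family $\la A_\xi:\xi<\lambda\ra$ so that $\Psi_+(B)=\bigcup_{\xi\in B}A_\xi$ stays in $[\lambda]^{<\theta}$, which needs only regularity of $\theta$ (no closure needed after all, since $|B|<\theta$, $|A_\xi|<\theta$, $\theta$ regular $\Rightarrow$ the union has size $<\theta$) — so in fact \emph{no} closing-off is required; regularity alone suffices. I would double-check that edge computation and the degenerate case $\theta=\lambda$, and then the proof is short. A secondary point to state carefully: in the definition of $\Psi_-$ one needs the cofinal family to actually be \emph{cofinal}, i.e. every $A\in[\lambda]^{<\theta}$ is $\subseteq$ some $A_\xi$; that is precisely the hypothesis $\cof([\lambda]^{<\theta})=\lambda$ unpacked.
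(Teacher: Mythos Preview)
Your final argument is correct on both directions. For $(\Leftarrow)$, the maps $\Psi_-(A):=\min\{\xi<\lambda: A\subseteq A_\xi\}$ and $\Psi_+(B):=\bigcup_{\xi\in B}A_\xi$ (using a fixed cofinal family $\la A_\xi:\xi<\lambda\ra$) give the desired Tukey connection $[\lambda]^{<\theta}\leqT\Cbf_{[\lambda]^{<\theta}}$, with regularity of $\theta$ ensuring $\Psi_+(B)\in[\lambda]^{<\theta}$; for $(\Rightarrow)$, equating the invariants via \autoref{cor:Tukeyval}(c) and the identities $\add([\lambda]^{<\theta})=\cf(\theta)$, $\non([\lambda]^{<\theta})=\theta$, together with the displayed formula for $\cov([\lambda]^{<\theta})$, yields both conclusions. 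The paper does not give its own proof here (it simply cites \cite{CM23}), so there is nothing to compare against; your argument is exactly the standard one. One presentational remark: the exploratory detours in your $(\Leftarrow)$ discussion (the $<\theta$-sequence-of-witnesses idea, the closing-off remark) are unnecessary and should be deleted in a final write-up---the clean two-line construction you arrive at needs no preparation beyond fixing the cofinal family.
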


\begin{lemma}\label{brel}
    For any relational system $R=\la X,Y,\sqsubset\ra$, if $\bfrak(R)$ exists, then $R \leqT \Cbf_{[X]^{<\bfrak(R)}}$.
\end{lemma}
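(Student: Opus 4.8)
The plan is to exhibit an explicit Tukey connection $(\Psi_-,\Psi_+)\colon R\to \Cbf_{[X]^{<\bfrak(R)}}$. Write $\kappa:=\bfrak(R)$, so by definition every subset of $X$ of size $<\kappa$ is $R$-bounded, while $\kappa$ itself is the least cardinality of an $R$-unbounded set. Recall that $\Cbf_{[X]^{<\kappa}} = \la X, [X]^{<\kappa}, \in\ra$, so I need functions $\Psi_-\colon X\to X$ and $\Psi_+\colon [X]^{<\kappa}\to Y$ such that for all $x\in X$ and all $A\in [X]^{<\kappa}$, if $\Psi_-(x)\in A$ then $x\sqsubset \Psi_+(A)$.

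First I would take $\Psi_-$ to be the identity on $X$. Next, for each $A\in[X]^{<\kappa}$, since $|A|<\kappa=\bfrak(R)$, the set $A$ is $R$-bounded, i.e.\ there is some $y\in Y$ with $x\sqsubset y$ for all $x\in A$; using some choice of such a witness, define $\Psi_+(A):=y_A$ to be one such bound. (In the degenerate case $A=\vacio$ one may set $\Psi_+(\vacio)$ to be any fixed element of $Y$, which is non-empty; note that if $\bfrak(R)$ exists then in particular $R$-unbounded sets exist, so $\dfrak(R)\neq 1$ and $Y$ is genuinely available, though all we really need is $Y\neq\vacio$ from the definition of a relational system.) Then the Tukey inequality $\Psi_-(x)\in A\Rightarrow x\sqsubset\Psi_+(A)$ holds: if $x=\Psi_-(x)\in A$ then $x\sqsubset y_A=\Psi_+(A)$ by the choice of $y_A$. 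This is exactly a Tukey connection from $R$ into $\Cbf_{[X]^{<\kappa}}$, so $R\leqT\Cbf_{[X]^{<\bfrak(R)}}$.

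There is essentially no obstacle here; the only point requiring a moment's care is making sure the definition of $\Psi_+$ is legitimate, which is precisely where the hypothesis ``$\bfrak(R)$ exists'' is used: it guarantees that every $A$ of size $<\bfrak(R)$ really is $R$-bounded (equivalently, $X$ itself need not be $R$-bounded, but all small subsets are), so a witness $y_A$ exists for each such $A$, and one invokes the axiom of choice to select them simultaneously. If instead $\bfrak(R)$ were undefined (i.e.\ $X$ itself is $R$-bounded), the statement would be vacuous or need separate interpretation, which is why the hypothesis is stated. One could also phrase the conclusion via \autoref{cor:Tukeyval}(b) as a sanity check: $R\leqT\Cbf_{[X]^{<\bfrak(R)}}$ yields $\bfrak(\Cbf_{[X]^{<\bfrak(R)}})=\non([X]^{<\bfrak(R)})\leq\bfrak(R)$, and indeed $\non([X]^{<\kappa})=\kappa$ when $\kappa\leq|X|$, consistent with the bound.
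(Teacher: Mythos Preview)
Your proof is correct and follows exactly the same approach as the paper: take $\Psi_-$ to be the identity on $X$ and set $\Psi_+(A)=y_A$ to be any $R$-bound of $A$, which exists since $|A|<\bfrak(R)$. The paper's proof is a one-line version of what you wrote, without the extra commentary on the empty set and the sanity check.
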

\begin{proof}
    Use the maps $x\mapsto x$ and $B\in [X]^{<\bfrak(R)} \mapsto y_B$ where $y_B$ is an upper bound of $B$.
\end{proof}

Also recall from \autoref{strongTukey} that $\Cbf_{[I]^{<\theta}}\leqT R$ iff there is some $\theta$-$R$-unbounded family indexed by $I$. As a consequence, we can reformulate \autoref{main2'}:

\begin{theorem}\label{main1Tukey}
    Let $\aleph_1\leq \theta_1\leq \theta_2\leq \theta_3 \leq \theta_4$ be regular cardinals, and assume $\lambda$ is a cardinal such that $\lambda = \lambda^{\aleph_0}$ and $\cf([\lambda]^{<\theta_i}) = \lambda$ for $i=1,\ldots,4$. Further assume that one of the following holds:
    \begin{enumerate}[label=\normalfont(\roman*)]
        \item $\theta_3=\theta_4$.
        \item\label{mm2} 
        $\theta_3^-<\theta_4$, 
        $\theta^{\aleph_0}<\theta_4$ for every cardinal $\theta<\theta_4$, and $\kappa = \kappa^{<\kappa}$ for some uncountable cardinal $\kappa<\theta_4$.
    \end{enumerate}    
    Then, we can construct a FS iteration of length (and size) $\lambda$ of ccc posets forcing $\cfrak=\lambda$, $\Nwf \eqT \Cbf_{[\lambda]^{<\theta_1}}$, $\Cbf_\Nwf^\perp\eqT \Cbf_{[\lambda]^{<\theta_2}}$, $\omega^\omega \eqT \Cbf_{[\lambda]^{<\theta_3}}$ and $\Cbf_\Mwf \eqT \Cbf_\Ewf \eqT \Cbf_{[\lambda]^{<\theta_4}}$ and, for $i=1,\ldots,4$, $\Cbf_{[\lambda]^{<\theta_i}} \eqT \Cbf_{[\lambda]^{<\theta_i}\cap V} \eqT [\lambda]^{<\theta_i}\cap V \eqT [\lambda]^{<\theta_i}$.
\end{theorem}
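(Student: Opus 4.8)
The plan is to reuse, verbatim, the finite-support iteration constructed for \autoref{main2'} (the one from the proof of \autoref{main2}, preceded by the $\Fn_{<\kappa}(\lambda,\kappa)$-extension when only~\ref{mm2} holds), and to upgrade the cardinal equalities it already forces to the stated Tukey equivalences. Recall that this iteration forces $\cfrak=\lambda$, $\add(\Nwf)=\theta_1$, $\cov(\Nwf)=\theta_2$, $\bfrak=\theta_3$ and $\non(\Ewf)=\non(\Mwf)=\theta_4$, and that — this is the substance already established there — its final model carries four strong unbounded families indexed by~$\lambda$: a $\theta_1$-$\Lc(\omega,H_*)$-unbounded one, a $\theta_2$-$\Cn$-unbounded one, a $\theta_3$-$\la\omega^\omega,\leq^*\ra$-unbounded one and a $\theta_4$-$\Mg$-unbounded one. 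All four are descendants of the $\aleph_1$-$R$-unbounded family of the first $\lambda$ Cohen reals (\autoref{Cohenthetaunb}): the first, second and fourth survive because the remainder of the iteration is $\theta_1$-$\Lc(\omega,H_*)$-good, $\theta_2$-$\Cn$-good and $\theta_4$-$\Mg$-good (so \autoref{mainpres} applies); the third survives because the iteration is arranged to be $\theta_3$-$\Fr$-Knaster (so \autoref{FrKpres} applies) — in case~\ref{mm2} this is the whole point of the $\Lambda^{\lim}_{\rm uf}$-construction, and in case~(i), where $\theta_3=\theta_4$, the iteration is even $\theta_3$-$\la\omega^\omega,\leq^*\ra$-good (every iterand being small), so \autoref{mainpres} already suffices. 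I will also use that the left domains of $\Lc(\omega,H_*)$, $\Cn$, $\la\omega^\omega,\leq^*\ra$, $\Mg$ and $\Cbf_\Ewf$ are perfect Polish spaces, hence of size $\cfrak=\lambda$ in the final model, so that $\Cbf_{[X]^{<\theta}}\eqT\Cbf_{[\lambda]^{<\theta}}$ by transport along a bijection.

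First I would settle the last clause of the theorem. Since the whole iteration is ccc and $\aleph_1\le\theta_i\le\lambda$, \autoref{5.1}\ref{5.1c} gives, in the final model, $[\lambda]^{<\theta_i}\eqT[\lambda]^{<\theta_i}\cap V$ and $\Cbf_{[\lambda]^{<\theta_i}}\eqT\Cbf_{[\lambda]^{<\theta_i}\cap V}$; and since $\theta_i$ is regular and $\cof([\lambda]^{<\theta_i})=\lambda$ holds in~$V$, \cite[Fact~3.8]{CM23} gives $\Cbf_{[\lambda]^{<\theta_i}\cap V}\eqT[\lambda]^{<\theta_i}\cap V$ in~$V$, and this equivalence persists to the final model because it is witnessed by explicit monotone maps lying in~$V$. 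Concatenating these yields $\Cbf_{[\lambda]^{<\theta_i}}\eqT\Cbf_{[\lambda]^{<\theta_i}\cap V}\eqT[\lambda]^{<\theta_i}\cap V\eqT[\lambda]^{<\theta_i}$ for $i=1,\dots,4$.

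Then, in the final model, for each of the four systems I would verify the two Tukey inequalities. The inequality $\Cbf_{[\lambda]^{<\theta_i}}\leqT R$ is \autoref{strongTukey} applied to the corresponding $\theta_i$-$R$-unbounded family; composing with $\Nwf\eqT\Lc(\omega,H_*)$ (\autoref{Nchar}), $\Cbf_\Nwf^\perp\eqT\Cn$ (\autoref{Cn=CN}) and $\Cbf_\Mwf\eqT\Mg$ (\autoref{M=CM}) this gives $\Cbf_{[\lambda]^{<\theta_1}}\leqT\Nwf$, $\Cbf_{[\lambda]^{<\theta_2}}\leqT\Cbf_\Nwf^\perp$, $\Cbf_{[\lambda]^{<\theta_3}}\leqT\la\omega^\omega,\leq^*\ra$ and $\Cbf_{[\lambda]^{<\theta_4}}\leqT\Cbf_\Mwf$. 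Conversely, $\bfrak(\Lc(\omega,H_*))=\add(\Nwf)=\theta_1$, $\bfrak(\Cn)=\cov(\Nwf)=\theta_2$, $\bfrak(\la\omega^\omega,\leq^*\ra)=\bfrak=\theta_3$ and $\bfrak(\Mg)=\non(\Mwf)=\theta_4$ there, so \autoref{brel} (with left domain of size~$\lambda$) gives $R\leqT\Cbf_{[X]^{<\bfrak(R)}}\eqT\Cbf_{[\lambda]^{<\theta_i}}$, hence $\Nwf\leqT\Cbf_{[\lambda]^{<\theta_1}}$, $\Cbf_\Nwf^\perp\leqT\Cbf_{[\lambda]^{<\theta_2}}$, $\la\omega^\omega,\leq^*\ra\leqT\Cbf_{[\lambda]^{<\theta_3}}$ and $\Cbf_\Mwf\leqT\Cbf_{[\lambda]^{<\theta_4}}$. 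This yields $\Nwf\eqT\Cbf_{[\lambda]^{<\theta_1}}$, $\Cbf_\Nwf^\perp\eqT\Cbf_{[\lambda]^{<\theta_2}}$, $\la\omega^\omega,\leq^*\ra\eqT\Cbf_{[\lambda]^{<\theta_3}}$ and $\Cbf_\Mwf\eqT\Cbf_{[\lambda]^{<\theta_4}}$. For $\Cbf_\Ewf$ I would use $\bfrak(\Cbf_\Ewf)=\non(\Ewf)=\theta_4$ and $|\R|=\lambda$, so \autoref{brel} gives $\Cbf_\Ewf\leqT\Cbf_{[\lambda]^{<\theta_4}}$, while $\Ewf\subseteq\Mwf$ gives the trivial Tukey connection $\Cbf_\Mwf\leqT\Cbf_\Ewf$ (the identity on $\R$ together with the inclusion $\Ewf\hookrightarrow\Mwf$); combining with $\Cbf_\Mwf\eqT\Cbf_{[\lambda]^{<\theta_4}}$ yields $\Cbf_\Ewf\eqT\Cbf_{[\lambda]^{<\theta_4}}$. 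Together with the already-forced $\cfrak=\lambda$, this is the whole statement.

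The one substantial ingredient is inherited from \autoref{main2'}: arranging the iteration to be $\theta_3$-$\Fr$-Knaster, equivalently keeping the first $\lambda$ Cohen reals $\theta_3$-$\la\omega^\omega,\leq^*\ra$-unbounded in the final model, which is exactly what upgrades the equality $\bfrak=\theta_3$ to the inequality $\Cbf_{[\lambda]^{<\theta_3}}\leqT\la\omega^\omega,\leq^*\ra$. Everything else is bookkeeping; the points that still deserve attention are that the relevant left domains have cardinality exactly $\cfrak=\lambda$ in the final model (so $\Cbf_{[X]^{<\theta}}$ may be replaced by $\Cbf_{[\lambda]^{<\theta}}$), and that the $V$-computed equivalences $\Cbf_{[\lambda]^{<\theta_i}\cap V}\eqT[\lambda]^{<\theta_i}\cap V$ transfer upward since their witnessing maps are explicit and monotone.
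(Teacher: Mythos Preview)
Your proposal is correct and matches the paper's intended argument: the theorem is stated as a reformulation of \autoref{main2'}, with the Tukey equivalences extracted from the strong unbounded families via \autoref{strongTukey} on one side and \autoref{brel} on the other, while the last clause comes from \autoref{5.1}\ref{5.1c} together with \cite[Fact~3.8]{CM23}. One cosmetic remark: in case~\ref{mm2} the $\Fn_{<\kappa}(\lambda,\kappa)$-extension is not actually prepended to the poset---as recorded in the proof of \autoref{main2'}, the iteration $\Por$ itself lives in $V$ (the auxiliary extension only supplies the guardrails and the ultrafilters used in the construction), so the resulting poset is literally a ccc FS iteration in $V$ and the strong unbounded families sit in $V^\Por$ as you need.
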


Tukey connections have certain types of witnesses as follows.

\begin{lemma}
    Let $R = \la X,Y,\sqsubset\ra$ and $K = \la A,B,\ltrg\ra$ be relational systems. Then:
    \begin{enumerate}[label = \normalfont (\alph*)]
    \item $R\leqT K$ iff there is some $\la y_b\colon b\in B\ra \subseteq Y$ satisfying:
        \[\forall x\in X\, \exists\, a\in A\, \forall\, b\in B\colon a \ltrg b \imp x \sqsubset y_b.\]
    
        \item $K\leqT R$ iff there is some $\la x_a\colon a\in A\ra \subseteq X$ satisfying:
        \[\forall\, y\in Y\, \exists\, b\in B\, \forall\, a\in A\colon a\not\ltrg b \Rightarrow x_a \not\sqsubset y.\]
    \end{enumerate}
\end{lemma}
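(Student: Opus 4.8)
The plan is to simply unfold \autoref{def:Tukey}: a Tukey connection is a pair of maps running in opposite directions between the coordinate sets, and the two clauses in the statement merely re-express such a pair as one indexed family together with an (implicit) choice function. Part (b) will follow from part (a) by dualizing, so the only genuine content is part (a), and even there the argument is bookkeeping with quantifiers.

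For part (a), first suppose $R\leqT K$ is witnessed by $(\Psi_-,\Psi_+)\colon R\to K$, so $\Psi_-\colon X\to A$, $\Psi_+\colon B\to Y$, and $\Psi_-(x)\ltrg b$ implies $x\sqsubset\Psi_+(b)$ for all $x\in X$, $b\in B$. I would put $y_b:=\Psi_+(b)$; given $x\in X$, take $a:=\Psi_-(x)$, so that for every $b\in B$ with $a\ltrg b$ we get $x\sqsubset\Psi_+(b)=y_b$, which is exactly the displayed condition. Conversely, given $\la y_b\colon b\in B\ra\subseteq Y$ with the displayed property, define $\Psi_+(b):=y_b$ and, for each $x\in X$, use the property (and choice) to pick some $a_x\in A$ with $a_x\ltrg b\imp x\sqsubset y_b$ for all $b\in B$, and set $\Psi_-(x):=a_x$. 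Then $(\Psi_-,\Psi_+)$ is a Tukey connection from $R$ into $K$ by construction, so part (a) holds.

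For part (b), the cleanest route is to observe that $K\leqT R$ iff $R^\perp\leqT K^\perp$: this is \autoref{cor:Tukeyval}(a) applied once to the pair $(K,R)$ and once to $(R^\perp,K^\perp)$, together with $(R^\perp)^\perp=R$ and $(K^\perp)^\perp=K$ from \autoref{fct:dual}(a). Applying part (a) to the relational systems $R^\perp=\la Y,X,\sqsubset^\perp\ra$ and $K^\perp=\la B,A,\ltrg^\perp\ra$, the witnessing family is precisely an $\la x_a\colon a\in A\ra\subseteq X$ satisfying $\forall\, y\in Y\ \exists\, b\in B\ \forall\, a\in A\colon b\ltrg^\perp a\imp y\sqsubset^\perp x_a$; unwinding $b\ltrg^\perp a\sii a\not\ltrg b$ and $y\sqsubset^\perp x_a\sii x_a\not\sqsubset y$ turns this into exactly the displayed condition of (b). The only point requiring care — and the closest thing to an obstacle — is keeping the polarities straight under dualization, since $\perp$ simultaneously swaps the two coordinate sets and negates the relation; once that is tracked, everything is immediate.
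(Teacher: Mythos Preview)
Your proof is correct. The paper states this lemma without proof, so there is nothing to compare against; your argument is exactly the routine unfolding of \autoref{def:Tukey} together with the dualization via \autoref{fct:dual} and \autoref{cor:Tukeyval} that the authors presumably had in mind.
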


In the context of the previous lemma, the role of $K$ is a constant relational system which is not interpreted (in principle) in forcing generic extensions, i.e.\ we keep its meaning with respect to $V$. On the other hand, $R$ is interpreted according to its definition.

Elementary embeddings modify Tukey connections as follows. Recall that a model $M$ is \emph{${<}\kappa$-closed} if it is closed under sequences of length ${<}\kappa$.

\begin{lemma}\label{BUP-Tukey}
    Let $\theta$ be an uncountable regular cardinal and $\Por$ a $\theta$-cc poset. 
    Let $M\subseteq V$ be a ${<}\theta^+$-closed transitive model, $j\colon V\to M$ an elementary embedding with critical point ${>}\theta$, $R=\la X,Y,\sqsubset\ra$ a relational system of the reals (\autoref{realsys}) and let $K = \la A,B,\ltrg\ra$ be a relational system. Then:
    \begin{enumerate}[label = \normalfont (\alph*)]
        \item\label{jPccc} $j(\Por)$ has the $\theta$-cc and $\Por\subsetdot j(\Por)$.
        \item\label{BUPa} If $\Vdash_\Por R\leqT K$ is witnessed by $\bar y = \la \dot y_b \colon b\in B \ra$, then $\Vdash_{j(\Por),V} R\leqT j(K)$ is witnessed by $j(\bar y) = \la \dot y^*_{b'} \colon b'\in j(B) \ra$.
        
        \item\label{BUPb} If $\Vdash_\Por K\leqT R$ is witnessed by $\bar x = \la\dot x_a\colon a\in A\ra$, then $\Vdash_{j(\Por),V} j(K)\leqT R$ is witnessed by $j(\bar x) = \la \dot x^*_{a'}\colon a'\in j(A)\ra$.
    \end{enumerate}
\end{lemma}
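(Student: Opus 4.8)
The plan is to verify \ref{jPccc} first using elementarity and the countable closure of $M$, then to push the Tukey witnesses through $j$ and check the defining implications hold in the $j(\Por)$-extension, arguing in $V$ via the Boolean-ultrapower point of view.

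\emph{Part \ref{jPccc}.} Since $\Por$ has the $\theta$-cc in $V$ and $M$ is ${<}\theta^+$-closed, $M$ and $V$ agree on ``$\Por$ has the $\theta$-cc'': any maximal antichain of $\Por$ has size ${<}\theta$ and lies in $M$. By elementarity, $M\models$``$j(\Por)$ has the $j(\theta)$-cc''; but the critical point of $j$ is ${>}\theta$, so $j(\theta)=\theta$ (indeed $j$ fixes everything ${\leq}\theta$), whence $M\models$``$j(\Por)$ is $\theta$-cc''. Again by ${<}\theta^+$-closure of $M$, a witnessing-sized object to a failure of the $\theta$-cc in $V$ would be in $M$, so $j(\Por)$ is $\theta$-cc in $V$. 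For $\Por\subsetdot j(\Por)$: elementarity gives $j\restriction\Por\colon\Por\to j(\Por)$ a complete embedding in $M$; since $\Por$ has size that $j$ fixes pointwise (or at least since $\Por\subseteq V_\kappa$ with $\kappa$ above the critical point, so $j\restriction\Por=\mathrm{id}$ on conditions), $\Por$ is literally a suborder of $j(\Por)$, and preservation of incompatibility and predensity transfers from $M$ to $V$ by countable closure. This is the standard Boolean-ultrapower computation; I would cite the relevant setup from \cite{GKS} rather than redo it.

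\emph{Parts \ref{BUPa} and \ref{BUPb}.} These are dual, so I describe \ref{BUPa}. Assume $\Vdash_\Por ``R\leqT K$'' is witnessed by $\bar y=\la\dot y_b\colon b\in B\ra$, meaning $\Vdash_\Por\forall x\in X\,\exists a\in A\,\forall b\in B\,(a\ltrg b\imp x\sqsubset y_b)$. Apply $j$: since this is a statement true in $V$ with parameter $\bar y$, elementarity gives $M\models``\Vdash_{j(\Por)}\forall x\in j(X)\,\exists a\in j(A)\,\forall b\in j(B)\,(a\, j(\ltrg)\, b\imp x\, j(\sqsubset)\, y^*_b)$'', where $j(\bar y)=\la\dot y^*_{b'}\colon b'\in j(B)\ra$. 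Now I use that $R$ is a \emph{relational system of the reals}: the statements ``$x\in X$'', ``$x\sqsubset y$'' are $\sigbar$-definable hence absolute between $V$ (and its generic extensions) and $M$ (and its generic extensions), and since the critical point is ${>}\theta\geq\aleph_1$, $M$ contains all reals of $V$ and all $\Por$-names for reals, so $X^{j(\Por)}=X$, $Y^{j(\Por)}=Y$, $j(X)=X$, $j(Y)=Y$, and $j(\sqsubset)=\sqsubset$. Thus the $M$-statement, reflected to $V$ by countable closure, says exactly $\Vdash_{j(\Por)}\forall x\in X\,\exists a\in j(A)\,\forall b'\in j(B)\,(a\, j(\ltrg)\, b'\imp x\sqsubset y^*_{b'})$, which is the assertion $\Vdash_{j(\Por),V}R\leqT j(K)$ witnessed by $j(\bar y)$. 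Part \ref{BUPb} is the same computation applied to the witness characterization in the other direction, using that $x\not\sqsubset y$ is also absolute.

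\emph{Main obstacle.} The delicate point is the interplay between ``interpreted'' and ``constant'' relational systems: $R$ must be re-interpreted in the $j(\Por)$-extension while $K$ is carried rigidly to $j(K)$, and one must check that $j$ does not disturb $X$, $Y$, $\sqsubset$. This is exactly where the hypothesis that $R$ is a relational system of the reals (so $X,Y,\sqsubset$ are $\sigbar$-definable and thus absolute) and that the critical point exceeds $\theta\geq\aleph_1$ (so all relevant reals and nice names are fixed by $j$ and lie in $M$) are both used; getting the bookkeeping of ``which model interprets which symbol'' right, rather than any hard combinatorics, is the real content. The ${<}\theta^+$-closure of $M$ is what lets every intermediate verification transfer between $M$ and $V$.
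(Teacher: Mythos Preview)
Your approach is essentially the paper's: elementarity to push the statement into $M$, absoluteness of $R$ (as a relational system of the reals) so that $j(X)=X$, $j(\sqsubset)={\sqsubset}$, and ${<}\theta^+$-closure of $M$ so that every nice $j(\Por)$-name of a real lies in $M$, letting the forcing statement reflect to $V$. Part~\ref{BUPb} via duality is exactly what the paper does.

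One correction in your treatment of \ref{jPccc}: the claim that $j{\upharpoonright}\Por$ is the identity (``$\Por$ has size that $j$ fixes pointwise'' or ``$\Por\subseteq V_\kappa$ with $\kappa$ above the critical point'') is wrong in the intended applications---there $|\Por|=\lambda_4$, which is far above the critical point $\kappa_\ell$. What you actually need, and what you already wrote first, is only that $j{\upharpoonright}\Por\colon\Por\to j(\Por)$ is a complete embedding: order and incompatibility transfer by elementarity, and if $A\subseteq\Por$ is a maximal antichain then $|A|<\theta<\mathrm{crit}(j)$ gives $j(A)=j[A]$, so $j[A]$ is maximal in $j(\Por)$ in $M$, hence in $V$ by closure. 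The statement $\Por\subsetdot j(\Por)$ is to be read through this embedding. Drop the parenthetical about the identity and your argument is clean.
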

\begin{proof}
    \ref{jPccc}: If $C\subseteq j(\Por)$ has size $\theta$, then $C\in M$ because $M$ is ${<}\theta^+$-closed. Then, $M\models$``$C\subseteq j(\Por)$". On the other hand, since $j\colon V\to M$ is an elementary embedding and $\theta$ is smaller than its critical point, $M\models$``$j(\Por)$ has the $\theta$-cc", so $C$ is not an antichain in $j(\Por)$ (both in $M$ and in $V$). 
    A similar argument shows that $\Por\subsetdot j(\Por)$.

    \ref{BUPa}: Let $\dot x$ be a (nice) $j(\Por)$-name of a real in $X$. By~\ref{jPccc}, $\dot x\in M$. On the other hand, by elementarity, in $M$ we have that $R\leqT J(K)$ is witnessed by $j(\bar y)$, so $\exists\, a'\in j(A)\ \forall\, b'\in j(B)\colon a' \mathrel{j(\ltrg)} b' \imp \dot x \sqsubset \dot y^*_{b'}$. The same holds in $V$ by absoluteness, which concludes the proof.
    
    \ref{BUPb} follows by~\ref{BUPa} applied to $R^\perp$ and $K^\perp$. 
\end{proof}

Given the previous result, we plan to apply elementary embeddings to the poset of \autoref{main1Tukey} to obtain a forcing that modifies Tukey equivalences in such a way that Cicho\'n's maximum is forced.

\newcommand{\BUP}{\mathrm{BUP}}

The elementary embeddings come from Boolean ultrapowers. Let $\kappa$ be a strongly compact cardinal and $\Bor$ a ${<}\kappa$-distributive $\kappa^+$-cc complete Boolean algebra containing an antichain of size $\kappa$. Let $V_-^\Bor$ be the class of nice $\Bor$-names of members of the ground model $V$, i.e.\ they are coded by a function $h\colon A\to V$ with domain a maximal antichain $A$ in $\Bor$. Given a $\kappa$-complete ultrafilter $D$ on $\Bor$, define the following relations on $V^\Bor_-$:
\begin{align*}
  \sigma =_D \tau & \text{ iff } \|\sigma = \tau \|\in D,\\
  \sigma \in_D \tau & \text{ iff } \|\sigma \in \tau \|\in D.
\end{align*}
It is easy to show that $=_D$ is an equivalence relation on $V^\Bor_-$, which defines the \emph{Boolean ultrapower} $\BUP(\Bor,D):= V^\Bor_-/{=_D}$. The relation obtained from $\in_D$ makes $\BUP(\Bor,D)$ a well-founded model of ZFC, and $\iota\colon V\to\BUP(\Bor,D)$ is an elementary embedding, where $\iota(x)$ is the equivalence class of $\check{x} = \set{(\check{y},1_\Bor)}{y\in x}$. 
Let $c\colon \BUP(\Bor,D)\to M$ be the Mostowski collapse of $\BUP(\Bor,D)$. Then, $j:=c\circ \iota\colon V\to M$ is an elementary embedding. The class $M$ is a ${<}\kappa$-closed (even ${<}\kappa^+$-closed) transitive model of ZFC. Details can be found in~\cite{KTT}.

\begin{lemma}[cf.~\cite{KTT}]\label{tonti}
    Let $\kappa$ be a strongly compact cardinal and $\lambda\geq \kappa$ a cardinal such that $\lambda^\kappa = \lambda$. Then, there is a $\kappa$-complete ultrafilter on the completion of $\Fn_{<\kappa}(\lambda,\kappa)$ such that its corresponding elementary embedding $j\colon V\to M$ satisfies:
    \begin{enumerate}[label = \normalfont (\alph*)]
        \item $M$ is ${<}\kappa$-closed (and even ${<}\kappa^+$-closed).
        \item The critical point of $j$ is $\kappa$.
        \item If $|a|<\kappa$ then $j(a) = j[a]$.
        \item If $\lambda'\geq\kappa$ then $\max\{\lambda,\lambda'\} \leq |j(\lambda')| \leq \max\{\lambda,\lambda'\}^\kappa$.
        \item If $S$ is a directed preorder and $\bfrak(S)>\kappa$, then $j[S]$ is cofinal in $j(S)$. In particular, $S\eqT j(S)$.
    \end{enumerate}
\end{lemma}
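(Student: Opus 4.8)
The plan is to pick the right $\kappa$-complete ultrafilter $D$ on the completion $\Bor$ of $\Fn_{<\kappa}(\lambda,\kappa)$ and then read off each clause from the general theory of Boolean ultrapowers (\cite{KTT}), adding a few computations particular to this algebra. First one checks that $\Bor$ satisfies the standing requirements stated just before the lemma. Since $\kappa$ is strongly compact it is inaccessible, so $\kappa^{<\kappa}=\kappa$, and $\lambda^\kappa=\lambda$ gives $\lambda^{<\kappa}=\lambda$; hence $\Fn_{<\kappa}(\lambda,\kappa)$ is ${<}\kappa$-closed (so $\Bor$ is ${<}\kappa$-distributive), it has the $\kappa^+$-cc by a $\Delta$-system argument, $|\Bor|\le|\Fn_{<\kappa}(\lambda,\kappa)|^\kappa=(\lambda^{<\kappa})^\kappa=\lambda$, and $\set{\{(0,\xi)\}}{\xi<\kappa}$ is an antichain of size $\kappa$. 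Write $\dot g$ for the canonical name of the generic function from $\lambda$ into $\kappa$.

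Next, I would consider the ${<}\kappa$-complete filter $F$ on $\Bor$ generated by the tail conditions $\bigvee_{\eta\ge\xi}\|\dot g(0)=\check\eta\|$ ($\xi<\kappa$) on the $0$-th coordinate, together with the disequalities $\|\dot g(\alpha)\ne\dot g(\beta)\|$ ($\alpha\ne\beta<\lambda$). A subfamily of ${<}\kappa$ of these generators mentions only ${<}\kappa$ coordinates and imposes ${<}\kappa$ constraints, jointly satisfiable by a partial function into $\kappa$ taking a value on $0$ in the prescribed tail (the graph whose edges are the relevant pairs $\{\alpha,\beta\}$ has ${<}\kappa$ edges, hence is ${<}\kappa$-colourable); so $F$ is proper and ${<}\kappa$-complete. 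By strong compactness of $\kappa$ it extends to a $\kappa$-complete ultrafilter $D$ on $\Bor$, and $j=c\circ\iota\colon V\to M$ is the induced embedding as in the paragraph before the lemma. Clause~(a), that $M$ is ${<}\kappa^+$-closed, I would take directly from~\cite{KTT} (general machinery for a ${<}\kappa$-distributive $\kappa^+$-cc $\Bor$ and $\kappa$-complete $D$, $\kappa$ strongly compact). For~(b): $\kappa$-completeness of $D$ forces $j\frestr\kappa=\id$, so $\mathrm{crit}(j)\ge\kappa$; conversely $\dot g(0)$, read off the antichain $\set{\{(0,\xi)\}}{\xi<\kappa}$, denotes in $M$ an ordinal which is ${<}j(\kappa)$ (as $\|\dot g(0)<\check\kappa\|=1_\Bor$) yet above every $\xi<\kappa$ (each tail condition is in $D$), so $j(\kappa)>\kappa$ and $\mathrm{crit}(j)=\kappa$. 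Clause~(c) is then the routine fact that an elementary embedding with critical point above $|a|$ satisfies $j(a)=j[a]$ (enumerate $a=\set{a_\xi}{\xi<\mu}$ with $\mu<\kappa$, apply $j$ coordinatewise, use $j(\mu)=\mu$).

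For~(d), the upper bound is a counting of names: every element of $j(\lambda')$ is the value of a nice $\Bor$-name for an ordinal ${<}\lambda'$, which by the $\kappa^+$-cc is coded by a function from a maximal antichain of size ${\le}\kappa$ into $\lambda'$, and there are at most $|\Bor|^\kappa\cdot(\lambda')^\kappa=\lambda\cdot(\lambda')^\kappa=\max\{\lambda,\lambda'\}^\kappa$ such functions. For the lower bound, $j(\lambda')\ge\lambda'$ holds for any elementary embedding, while the $\lambda$ names $\dot g(\alpha)$ ($\alpha<\lambda$) are pairwise $=_D$-inequivalent (their disequalities are in $D$) and all denote ordinals ${<}j(\kappa)\le j(\lambda')$, so $j(\lambda')\ge\max\{\lambda,\lambda'\}$. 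For~(e): given $s^*\in j(S)$, pick a nice name $\dot s$ for it; by the $\kappa^+$-cc it is coded by some $\varphi\colon A\to S$ with $A$ a maximal antichain, $|A|\le\kappa<\bfrak(S)$, so $\ran\varphi$ is bounded in $S$ by some $s$, whence $\|\dot s\le_S\check s\|\ge\bigvee A=1_\Bor$ and thus $M\models s^*\le_{j(S)}j(s)$. So $j[S]$ is cofinal in $j(S)$; as $j$ is also an order-embedding of $S$ into $j(S)$, choosing for each $t\in j(S)$ some $\psi(t)\in S$ with $j(\psi(t))\ge_{j(S)}t$ yields Tukey connections $(j,\psi)\colon S\to j(S)$ and $(\psi,j)\colon j(S)\to S$, hence $S\eqT j(S)$.

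I expect the main points needing care to be the verification that $F$ is proper and ${<}\kappa$-complete (the colouring estimate), and --- for clause~(a) --- the ${<}\kappa^+$-closedness of $M$, which I would cite from~\cite{KTT} rather than reprove here.
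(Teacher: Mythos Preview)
The paper does not actually prove this lemma: it is stated with the attribution ``cf.~\cite{KTT}'' and immediately followed by \autoref{BUPtool} with no intervening proof. So there is no paper proof to compare against; your plan is a genuine attempt to supply what the paper merely cites.

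Your sketch is sound and hits the right points. A few remarks on places to tighten. First, when you invoke strong compactness to extend $F$ to a $\kappa$-complete ultrafilter on $\Bor$, note that the usual formulation is for filters on \emph{sets}; you should either pass through the Stone space or observe that since $\Bor$ is ${<}\kappa$-complete, a $\kappa$-complete filter on $\Bor$ yields a $\kappa$-complete filter on the underlying set which can be extended and then pulled back --- this is routine but worth a sentence. Second, in clause~(e) your phrase ``nice name'' is slightly loose: the point is just that any $s^*\in j(S)$ is represented by some $\tau\in V^\Bor_-$ with $\|\tau\in\check S\|\in D$, and by the $\kappa^+$-cc the coding function $h\colon A\to V$ has $|A|\leq\kappa$; restricting to the sub-antichain where $h$ lands in $S$ (whose join is in $D$) gives you the ${\leq}\kappa$-sized subset of $S$ to bound. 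Third, the ${<}\kappa^+$-closure of $M$ in~(a) really does need the full strength of strong compactness (not just the existence of a $\kappa$-complete ultrafilter), so citing \cite{KTT} there is appropriate rather than a dodge.

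The construction of $F$ via tail conditions on $\dot g(0)$ and pairwise disequalities $\|\dot g(\alpha)\neq\dot g(\beta)\|$ is a clean way to get both $\mathrm{crit}(j)=\kappa$ and $|j(\kappa)|\geq\lambda$ simultaneously, and your verification that ${<}\kappa$ many such conditions are jointly satisfiable is correct.
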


\begin{corollary}\label{BUPtool}
    Under the framework of \autoref{tonti}: if $\theta\leq\lambda$ are infinite cardinals, then:
    \begin{enumerate}[label = \normalfont (\alph*)]
        \item\label{BUPtool1} If $\theta<\kappa$ then $j([\lambda]^{<\theta}) = [j(\lambda)]^{<\theta}$.
        \item\label{BUPtool2} If $\cf(\theta)>\kappa$ then $j([\lambda]^{<\theta}) \eqT [\lambda]^{<\theta}$.
    \end{enumerate}
\end{corollary}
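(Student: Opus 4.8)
The plan is to obtain both items from elementarity of $j$ together with the structural facts about $M$ and $j$ collected in \autoref{tonti}: for part~\ref{BUPtool1}, the ${<}\kappa$-closure of $M$ and the fact that $\kappa$ is the critical point of $j$; for part~\ref{BUPtool2}, the cofinality-preservation clause \autoref{tonti}(e) together with the computation of $\add([\lambda]^{<\theta})$.

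For part~\ref{BUPtool1}, the first step is to apply elementarity. Since $\theta<\kappa=\mathrm{crit}(j)$ we have $j(\theta)=\theta$, so $j([\lambda]^{<\theta})$ is, as computed inside $M$, the directed partial order $\langle([j(\lambda)]^{<\theta})^M,\subseteq\rangle$, the relation $\subseteq$ being absolute. It then remains to check that $M$ and $V$ compute $[j(\lambda)]^{<\theta}$ identically. The inclusion $([j(\lambda)]^{<\theta})^M\subseteq([j(\lambda)]^{<\theta})^V$ is immediate, as a bijection in $M$ witnessing $|x|<\theta$ still lies in $V\supseteq M$. Conversely, given $x\subseteq j(\lambda)$ with $|x|<\theta<\kappa$ in $V$: enumerating $x$ as a sequence of length $|x|<\kappa$ with entries in $j(\lambda)\subseteq M$, the ${<}\kappa$-closure of $M$ gives $x\in M$; running the same argument on a bijection of $x$ onto the ordinal $|x|^V<\theta$ (again a sequence of length ${<}\kappa$ with entries in $M$) shows $|x|^M<\theta$. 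Hence $x\in([j(\lambda)]^{<\theta})^M$, the two relational systems coincide, and $j([\lambda]^{<\theta})=[j(\lambda)]^{<\theta}$.

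For part~\ref{BUPtool2}, I would first note that since $\theta\leq\lambda$ the partial order $\langle[\lambda]^{<\theta},\subseteq\rangle$ is directed, is an ideal on $\lambda$, and has no maximum, so its cardinal invariants are defined and, by \autoref{exm:Iwf}, $\bfrak([\lambda]^{<\theta})=\add([\lambda]^{<\theta})$. A routine computation then gives $\add([\lambda]^{<\theta})=\cf(\theta)$: fewer than $\cf(\theta)$ sets of size ${<}\theta$ have union of size ${<}\theta$, while an increasing sequence of ordinals cofinal in $\theta$ is a $\subseteq$-increasing family of length $\cf(\theta)$ in $[\theta]^{<\theta}\subseteq[\lambda]^{<\theta}$ with union $\theta\notin[\lambda]^{<\theta}$. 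Since $\cf(\theta)>\kappa$ by hypothesis, $\bfrak([\lambda]^{<\theta})>\kappa$, so \autoref{tonti}(e), applied to the directed preorder $S=[\lambda]^{<\theta}$, yields $[\lambda]^{<\theta}\eqT j([\lambda]^{<\theta})$.

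The only step needing genuine care is the cardinality-absoluteness argument in part~\ref{BUPtool1}: one must check that the ${<}\kappa$-closure of $M$ is strong enough to pull into $M$ both $x$ and a witness to $|x|<\theta$ — which is exactly why the hypothesis is $\theta<\kappa$ rather than merely $\theta\le\kappa$. Everything else is bookkeeping with elementarity and the already-established \autoref{tonti}.
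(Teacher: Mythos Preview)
Your argument is correct and is exactly the intended derivation from \autoref{tonti}; the paper states this result as a corollary without proof, and your write-up supplies precisely the details one expects—elementarity plus ${<}\kappa$-closure of $M$ for~\ref{BUPtool1}, and the computation $\bfrak([\lambda]^{<\theta})=\cf(\theta)>\kappa$ feeding into \autoref{tonti}(e) for~\ref{BUPtool2}.
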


We have developed enough material to prove Cicho\'n's maximum, first by using large cardinals.

\begin{theorem}\label{mainBUP1}
    Assume that $\aleph_1 <\kappa_0 < \theta_1 < \kappa_1 <\theta_2 <\kappa_2 < \theta_3 < \kappa_3 < \theta_4 \leq \lambda_4 \leq \lambda_3 \leq \lambda_2 \leq \lambda_1 \leq \lambda_0$ such that each $\theta_i$ is regular, $\cf([\lambda_4]^{{<}\theta_i}) = \lambda_4 = \lambda_4^{\aleph_0}$, condition~\ref{mm2} of \autoref{main1Tukey} holds, $\kappa_\ell$ is strongly compact and $\lambda_\ell^{\kappa_\ell} = \lambda_\ell$ for $\ell\leq 3$. Then there is a ccc poset of size $\lambda_0$ forcing $\cfrak=\lambda_0$, $\Nwf \eqT \Cbf_{[\lambda_1]^{<\theta_1}}$, $\Cbf^\perp_\Nwf \eqT \Cbf_{[\lambda_2]^{<\theta_2}}$, $\omega^\omega \eqT \Cbf_{[\lambda_3]^{<\theta_3}}$ and $\Cbf_\Mwf \eqT \Cbf_{\Ewf} \eqT \Cbf_{[\lambda_4]^{<\theta_4}}$. In particular (see \autoref{fig:CMax1}),
    \begin{align*}
        \aleph_1 & < \add(\Nwf) = \theta_1 < \cov(\Nwf) = \theta_2 < \bfrak = \theta_3 <\non(\Ewf) = \non(\Mwf) = \theta_4\\
        & \leq \cov(\Mwf) = \cov(\Ewf) = \lambda_4 \leq \dfrak=\lambda_3 \leq  \non(\Nwf) = \lambda_2 \leq \cof(\Nwf) = \lambda_1 \leq \cfrak =\lambda_0.
    \end{align*}
\end{theorem}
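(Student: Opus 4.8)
The plan is to start from the iteration of \autoref{main1Tukey} and then hit it with a short chain of Boolean ultrapowers, one for each strongly compact $\kappa_\ell$. Since $\lambda_4=\lambda_4^{\aleph_0}$, $\cf([\lambda_4]^{<\theta_i})=\lambda_4$ and condition~\ref{mm2} holds, \autoref{main1Tukey} (with $\lambda:=\lambda_4$) gives a ccc poset $\Por$ of size $\lambda_4$ forcing $\cfrak=\lambda_4$ and, for each of the relational systems of the reals $\Nwf,\Cbf_\Nwf^\perp,\omega^\omega,\Cbf_\Mwf,\Cbf_\Ewf$ (with associated exponents $\vartheta=\theta_1,\theta_2,\theta_3,\theta_4,\theta_4$), the equivalences $R\eqT\Cbf_{[\lambda_4]^{<\vartheta}}\eqT\Cbf_{[\lambda_4]^{<\vartheta}\cap V}$. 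For $\ell=3,2,1,0$ fix, by \autoref{tonti} and the hypothesis $\lambda_\ell^{\kappa_\ell}=\lambda_\ell$, a $\kappa_\ell$-complete ultrafilter on the completion of $\Fn_{<\kappa_\ell}(\lambda_\ell,\kappa_\ell)$ and let $j_\ell\colon V\to M_\ell$ be its elementary embedding, with $\mathrm{crit}(j_\ell)=\kappa_\ell$ and $M_\ell$ being ${<}\kappa_\ell^+$-closed. Put $\Qbb_4:=\Por$ and $\Qbb_\ell:=j_\ell(\Qbb_{\ell+1})$ for $\ell=3,2,1,0$ (so the strongly compacts are used in \emph{decreasing} order); the poset witnessing the theorem is $\Qbb:=\Qbb_0$.

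The core is a downward induction on $\ell$ showing that $\Qbb_\ell$ is ccc of size ${\leq}\lambda_\ell$ and forces $\cfrak=\lambda_\ell$ together with $R\eqT\Cbf_{[c_\ell(\vartheta_R)]^{<\vartheta_R}}$ for each $R$ above, where $c_\ell$ is defined recursively on $\{\theta_1,\dots,\theta_4\}$ by $c_4\equiv\lambda_4$ and, for $\ell<4$, $c_\ell(\vartheta)=\lambda_\ell$ if $\vartheta<\kappa_\ell$ while $c_\ell(\vartheta)=c_{\ell+1}(\vartheta)$ if $\cf(\vartheta)>\kappa_\ell$. The arrangement $\aleph_1<\kappa_0<\theta_1<\kappa_1<\theta_2<\kappa_2<\theta_3<\kappa_3<\theta_4$ ensures that for every $\ell$ exactly one clause applies to each $\vartheta\in\{\theta_1,\dots,\theta_4\}$ (and to $\aleph_1$, which is always in the first clause), so $c_\ell$ is well defined and a direct computation gives $c_0(\theta_1)=\lambda_1$, $c_0(\theta_2)=\lambda_2$, $c_0(\theta_3)=\lambda_3$, $c_0(\theta_4)=\lambda_4$. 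Thus the case $\ell=0$ is exactly the list of equivalences claimed.

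For the successor step from $\ell+1$ to $\ell$, note first that in $V^{\Qbb_{\ell+1}}$ we have $\cfrak=\lambda_{\ell+1}$, so a bijection $\R\to\lambda_{\ell+1}$ gives $\Cbf_{[\R]^{<\aleph_1}}\eqT\Cbf_{[\lambda_{\ell+1}]^{<\aleph_1}}$; for the other $R$, $\Qbb_{\ell+1}$ forces $R\eqT\Cbf_{[\mu]^{<\vartheta}}$ with $\mu=c_{\ell+1}(\vartheta)\in\{\lambda_4,\dots,\lambda_{\ell+1}\}$, so $\kappa_\ell<\lambda_4\leq\mu\leq\lambda_{\ell+1}\leq\lambda_\ell$. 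By part~\ref{5.1c} of \autoref{5.1} (in the ccc extension by $\Qbb_{\ell+1}$, using $\aleph_1\leq\vartheta\leq\mu$) the equivalences also hold with the constant systems $\Cbf_{[\mu]^{<\vartheta}\cap V}$ on the right. Since $\Qbb_{\ell+1}$ is ccc, $\kappa_\ell>\aleph_1$ and $M_\ell$ is ${<}\kappa_\ell^+$-closed, parts~\ref{jPccc}, \ref{BUPa} and~\ref{BUPb} of \autoref{BUP-Tukey} give that $\Qbb_\ell$ is ccc, $\Qbb_{\ell+1}\subsetdot\Qbb_\ell$, and $\Qbb_\ell$ forces $R\eqT j_\ell(\Cbf_{[\mu]^{<\vartheta}\cap V})=\Cbf_{j_\ell([\mu]^{<\vartheta})}$. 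Now \autoref{BUPtool}: if $\cf(\vartheta)>\kappa_\ell$ then $j_\ell([\mu]^{<\vartheta})\eqT[\mu]^{<\vartheta}=[c_\ell(\vartheta)]^{<\vartheta}$; if $\vartheta<\kappa_\ell$ then $j_\ell([\mu]^{<\vartheta})=[j_\ell(\mu)]^{<\vartheta}$ and, by the size bound of \autoref{tonti} with $\kappa_\ell\leq\mu\leq\lambda_\ell=\lambda_\ell^{\kappa_\ell}$, $|j_\ell(\mu)|=\lambda_\ell$, so $j_\ell([\mu]^{<\vartheta})\eqT[\lambda_\ell]^{<\vartheta}=[c_\ell(\vartheta)]^{<\vartheta}$. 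The $\aleph_1$-instance is the second case, giving $\Cbf_{[\R]^{<\aleph_1}}\eqT\Cbf_{[\lambda_\ell]^{<\aleph_1}}$, hence $\cfrak=\cov([\lambda_\ell]^{<\aleph_1})=\lambda_\ell$. Passing back to interpreted systems via part~\ref{5.1c} of \autoref{5.1} closes the induction, and $|\Qbb_\ell|\leq|j_\ell(|\Qbb_{\ell+1}|)|\leq\lambda_\ell^{\kappa_\ell}=\lambda_\ell$.

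Finally, at $\Qbb=\Qbb_0$ one reads off the cardinals using \autoref{cor:Tukeyval}, \autoref{exm:Iwf}, \autoref{fct:dual} and the identities $\non([\lambda]^{<\theta})=\theta$, $\cov([\lambda]^{<\theta})=\lambda$ (for regular $\theta\leq\lambda$): $\add(\Nwf)=\theta_1$, $\cof(\Nwf)=\lambda_1$, $\cov(\Nwf)=\theta_2$, $\non(\Nwf)=\lambda_2$, $\bfrak=\theta_3$, $\dfrak=\lambda_3$, $\non(\Mwf)=\non(\Ewf)=\theta_4$, $\cov(\Mwf)=\cov(\Ewf)=\lambda_4$, $\cfrak=\lambda_0$; the displayed chain then follows from $\theta_1<\theta_2<\theta_3<\theta_4\leq\lambda_4\leq\lambda_3\leq\lambda_2\leq\lambda_1\leq\lambda_0$ and the ZFC-provable arrows of Cicho\'n's diagram. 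I expect the main obstacle to be the bookkeeping of the index-set sizes along the four composed embeddings: one must check that the interleaving of the $\kappa_\ell$ with the $\theta_i$ (and $\aleph_1$) always puts every relevant cardinal into exactly one clause of \autoref{BUPtool}, and that $\lambda_\ell^{\kappa_\ell}=\lambda_\ell$ together with $\lambda_4\leq\cdots\leq\lambda_0$ pins down $|j_\ell(\mu)|=\lambda_\ell$; a secondary point is carrying ``ccc'' and the closure of $M_\ell$ through the induction so that the hypotheses of \autoref{BUP-Tukey} are met at every step.
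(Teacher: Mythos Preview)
Your proposal is correct and follows essentially the same approach as the paper: start from the poset of \autoref{main1Tukey} with $\lambda=\lambda_4$, then successively apply the Boolean ultrapowers $j_3,j_2,j_1,j_0$ (in decreasing order of the $\kappa_\ell$), using \autoref{BUP-Tukey} and \autoref{BUPtool} at each step to track the Tukey equivalences. Your function $c_\ell$ makes explicit the same bookkeeping the paper records in a table. The only mild difference is the handling of $\cfrak$: the paper argues directly by elementarity and countable closure of $M_\ell$ that $\Qbb_\ell$ forces $\cfrak=|j_\ell(\lambda_{\ell+1})|=\lambda_\ell$, whereas you route this through the Tukey framework by treating $\Cbf_{[\R]^{<\aleph_1}}$ as one more relational system of the reals; both arguments are valid.
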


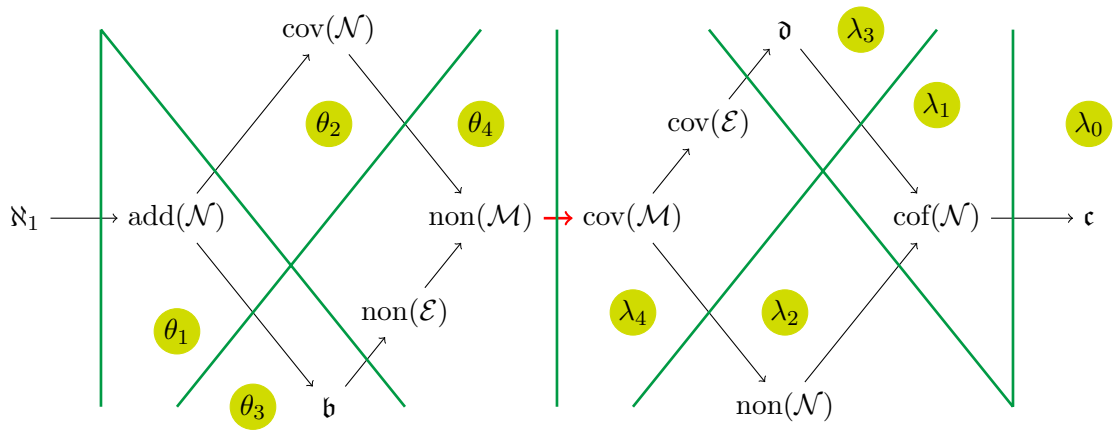
\begin{figure}[ht]
\centering
\begin{tikzpicture}
\small{
 \node (aleph1) at (-2,2.5) {$\aleph_1$};
 \node (addn) at (0,2.5){$\add(\Nwf)$};
 \node (covn) at (2,5){$\cov(\Nwf)$};
 \node (cove) at (7,3.75) {$\cov(\Ecal)$};
 \node (b) at (2,0) {$\bfrak$};
 \node (nonm) at (4,2.5) {$\non(\Mcal)$} ;
 \node (none) at (3,1.25) {$\non(\Ecal)$};
 \node (d) at (8,5) {$\dfrak$};
 \node (covm) at (6,2.5) {$\cov(\Mcal)$} ;
 \node (nonn) at (8,0) {$\non(\Ncal)$} ;
 \node (cfn) at (10,2.5) {$\cof(\Ncal)$} ;
  \node (c) at (12,2.5) {$\cfrak$};

\foreach \from/\to in {
aleph1/addn, addn/covn, addn/b, covn/nonm, b/none, none/nonm, covm/nonn, covm/cove, cove/d, nonn/cfn, d/cfn, cfn/c}
{
\path[-,draw=white,line width=3pt] (\from) edge (\to);
\path[->,] (\from) edge (\to);
}
\path[->,draw=red,line width=1pt] (nonm) edge (covm);

\draw[color=sug,line width=1]
(-1,0)--(-1,5)
(-1,5)--(3,0)
(0,0)--(4,5)
(5,5)--(5,0)
(6,0)--(10,5)
(7,5)--(11,0)
(11,0)--(11,5);

\draw[circle, fill=suy,color=suy] 
(0,1) circle (0.3)
(2,3.75) circle (0.3)
(1,0) circle (0.3)
(4,3.75) circle (0.3)
(6,1.25) circle (0.3)
(9,5) circle (0.3)
(8,1.25) circle (0.3)
(10,4) circle (0.3)
(12,3.75) circle (0.3);

\node at (0,1) {$\theta_1$};
\node at (2,3.75) {$\theta_2$};
\node at (1,0) {$\theta_3$};
\node at (4,3.75) {$\theta_4$};
\node at (6,1.25) {$\lambda_4$};
\node at (9,5) {$\lambda_3$};
\node at (8,1.25) {$\lambda_2$};
\node at (10,4) {$\lambda_1$};
\node at (12,3.75) {$\lambda_0$};

}
\end{tikzpicture}
\caption{The constellation of Cicho\'n's maximum forced in \autoref{mainBUP1}.}\label{fig:CMax1}
\end{figure}

The previous theorem is originally from~\cite{GKS} under the restriction that all $\lambda_i$ are regular. In joint work of the author with Goldstern, Kellner, and Shelah, we discovered~\autoref{BUP-Tukey} and realized that all $\lambda_i$ are allowed to be singular.

\begin{proof}
    Let $\Por_4$ be the ccc poset obtained in \autoref{main1Tukey} for $\lambda=\lambda_4$. For each $\ell\leq 3$ let $j_\ell\colon V\to M_\ell$ be an elementary embedding as in \autoref{tonti} for $\kappa=\kappa_\ell$ and $\lambda = \lambda_\ell$, and define $\Por_\ell := j_\ell(\Por_{\ell+1})$ (by recursion). By using \autoref{BUP-Tukey} and \autoref{BUPtool}, we show that $\Por_\ell$ forces the Tukey-equivalences and the value of $\cfrak$ as illustrated in \autoref{tableBUP}. In particular, $\Por_0$ is the desired ccc poset.
    
\begin{table}[ht]
\centering
\begin{tabular}{|c||c|c|c|c|c|}
\hline
 & $\Por_4$ & $\Por_3$ & $\Por_2$ & $\Por_1$ & $\Por_0$\\
 \hline\hline
 $\Cbf_\Mwf,\ \Cbf_\Ewf$ & $[\lambda_4]^{<\theta_4}$ & $[\lambda_4]^{<\theta_4}$ & $[\lambda_4]^{<\theta_4}$ & $[\lambda_4]^{<\theta_4}$ & $[\lambda_4]^{<\theta_4}$\\
 \hline
 $\omega^\omega$ & $[\lambda_4]^{<\theta_3}$ & $[\lambda_3]^{<\theta_3}$ & $[\lambda_3]^{<\theta_3}$ & $[\lambda_3]^{<\theta_3}$ & $[\lambda_3]^{<\theta_3}$\\
 \hline
 $\Cbf_\Nwf^\perp$ & $[\lambda_4]^{<\theta_2}$ & $[\lambda_3]^{<\theta_2}$ & $[\lambda_2]^{<\theta_2}$ & $[\lambda_2]^{<\theta_2}$ & $[\lambda_2]^{<\theta_2}$\\
 \hline
 $\Nwf$ & $[\lambda_4]^{<\theta_1}$ & $[\lambda_3]^{<\theta_1}$ & $[\lambda_2]^{<\theta_1}$ & $[\lambda_1]^{<\theta_1}$ & $[\lambda_1]^{<\theta_1}$\\
 \hline\hline
 $\cfrak$ & $\lambda_4$ & $\lambda_3$ & $\lambda_2$ & $\lambda_1$ & $\lambda_0$\\
 \hline
\end{tabular}
\caption{This table illustrates that the forcing on the top forces a Tukey-equivalence with the relational systems in the first column for the first four rows, and the value it forces to the continuum in the last row.}\label{tableBUP}
\end{table}
    
By \autoref{main1Tukey}, $\Por_4$ forces the Tukey-equivalences and $\cfrak=\lambda_4$ as illustrated in \autoref{tableBUP}. Then, by \autoref{BUP-Tukey}, $\Por_3=j_3(\Por_4)$ has the ccc and it forces Tukey-equivalences with $j_3([\lambda_4]^{<\theta_i})$ for $1\leq i\leq 4$. Since $\theta_4>\kappa_3$, by \autoref{BUPtool}~\ref{BUPtool2}, $j_3([\lambda_4]^{<\theta_4})\eqT [\lambda_4]^{<\theta_4}$; and for $1\leq i\leq 3$, since $\theta_i<\kappa_3$, by \autoref{BUPtool}~\ref{BUPtool1}, $j_3([\lambda_4]^{<\theta_i}) = [j_3(\lambda_4)]^{<\theta_i}$. On the other hand, by~\autoref{tonti}, $|j_3(\lambda_4)|=\lambda_3^{\kappa_3} = \lambda_3$, so $j_3([\lambda_4]^{<\theta_i}) \eqT [\lambda_3]^{<\theta_i}$. Finally, by elementarity, in $M_3$, $\Por_3$ forces $\cfrak = |j_3(\lambda_4)| = \lambda_3$, which is also forced in $V$ because $j_3(\Por_3)$ has the ccc and $M_3$ is countably closed (so any $\Por_3$-name of a real is in $M_3$).

Similarly, by using the elementary embedding $j_2$, $\Por_2 = j_2(\Por_3)$ has the ccc, $j_2([\lambda_i]^{<\theta_i}) \eqT [\lambda_i]^{<\theta_i}$ for $i=3,4$,  $j_2([\lambda_3]^{<\theta_i}) = [j_2(\lambda_3)]^{<\theta_i} \eqT [\lambda_2]^{<\theta_i}$ for $i=1,2$, and $\Por_2$ forces $\cfrak = j_2(\lambda_3) = \lambda_2$. Proceeding in the same way using $j_1$ and $j_0$, we can check the remaining information collected in \autoref{tableBUP}.
\end{proof}

In the same way, we can apply Boolean ultrapowers to the construction of \autoref{altKST} to force an alternative order of Cicho\'n's maximum. The result is originally from~\cite{KST} for $\lambda_i$ regular, but our methods allow them to be singular.

\begin{theorem}\label{mainBUP2}
    Assume that $\aleph_1 <\kappa_0 < \theta_1 < \kappa_1 <\theta_2 <\kappa_2 < \theta_3 < \kappa_3 < \theta_4 \leq \lambda_4 \leq \lambda_3 \leq \lambda_2 \leq \lambda_1 \leq \lambda_0$ such that each $\theta_i$ is regular, $\cf([\lambda_4]^{{<}\theta_i}) = \lambda_4$, condition~\ref{KSTii} of \autoref{altKST} holds, $\kappa_\ell$ is strongly compact and $\lambda_\ell^{\kappa_\ell} = \lambda_\ell$ for $\ell\leq 3$. Then there is a ccc poset of size $\lambda_0$ forcing $\cfrak=\lambda_0$, $\Nwf \eqT \Cbf_{[\lambda_1]^{<\theta_1}}$, $\omega^\omega \eqT \Cbf_{[\lambda_2]^{<\theta_2}}$, $\Cbf^\perp_\Nwf \eqT \Cbf_{[\lambda_3]^{<\theta_3}}$ and $\Cbf_\Mwf \eqT \Cbf_{\Ewf} \eqT \Cbf_{[\lambda_4]^{<\theta_4}}$. In particular (see \autoref{fig:CMax2}),
    \begin{align*}
        \aleph_1 & < \add(\Nwf) = \theta_1 < \bfrak = \theta_2 < \cov(\Nwf) = \theta_3 < \non(\Ewf) =\non(\Mwf) = \theta_4\\
        & \leq \cov(\Mwf)= \cov(\Ewf)  = \lambda_4 \leq \non(\Nwf)=\lambda_3 \leq \dfrak = \lambda_2 \leq \cof(\Nwf) = \lambda_1 \leq \cfrak =\lambda_0.
    \end{align*}
\end{theorem}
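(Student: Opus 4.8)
The plan is to repeat the proof of \autoref{mainBUP1} almost verbatim, replacing the base iteration coming from \autoref{main1Tukey} (i.e.\ from \autoref{main2'}) by the one from \autoref{altKST}, and interchanging the roles of $\la\omega^\omega,\leq^*\ra$ and $\Cbf_\Nwf^\perp$ throughout the bookkeeping.

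First I would record the Tukey reformulation of \autoref{altKST}, obtained from it exactly as \autoref{main1Tukey} is obtained from \autoref{main2'}. The length-$\lambda_4$ ccc FS iteration $\Por_4$ built in the proof of \autoref{altKST} starts (as in the proof of \autoref{main2}) with $\lambda_4$-many Cohen reals forming, for each of the Polish relational systems $\Lc(\omega,H_*)\eqT\Nwf$, $\la\omega^\omega,\leq^*\ra$, $\Cn\eqT\Cbf_\Nwf^\perp$ and $\Mg\eqT\Cbf_\Mwf$, an $\aleph_1$-$R$-unbounded family that the remainder of the iteration preserves at the corresponding $\theta_i$-level; by \autoref{strongTukey} these witness $\Cbf_{[\lambda_4]^{<\theta_i}}\leqT R$, while the lower bounds together with \autoref{brel} give $R\leqT\Cbf_{[\lambda_4]^{<\theta_i}}$, and since each $\theta_i$ is regular with $\cf([\lambda_4]^{<\theta_i})=\lambda_4$, using \autoref{5.1}\ref{5.1c}, we get $\Cbf_{[\lambda_4]^{<\theta_i}}\eqT[\lambda_4]^{<\theta_i}\cap V$. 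Thus $\Por_4$ forces $\cfrak=\lambda_4$, $\Nwf\eqT\Cbf_{[\lambda_4]^{<\theta_1}}$, $\omega^\omega\eqT\Cbf_{[\lambda_4]^{<\theta_2}}$, $\Cbf_\Nwf^\perp\eqT\Cbf_{[\lambda_4]^{<\theta_3}}$ and $\Cbf_\Mwf\eqT\Cbf_\Ewf\eqT\Cbf_{[\lambda_4]^{<\theta_4}}$, together with $\Cbf_{[\lambda_4]^{<\theta_i}}\eqT[\lambda_4]^{<\theta_i}\cap V$ for $i=1,\dots,4$. Hypothesis~\ref{KSTii} is invoked here exactly as in \autoref{altKST}, to realise the separation $\bfrak=\theta_2<\cov(\Nwf)=\theta_3$.

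Then I would run the Boolean-ultrapower tower as in the proof of \autoref{mainBUP1}: for $\ell\leq 3$ let $j_\ell\colon V\to M_\ell$ be the elementary embedding of \autoref{tonti} for $\kappa=\kappa_\ell$ and $\lambda=\lambda_\ell$ (available since $\lambda_\ell^{\kappa_\ell}=\lambda_\ell$), and set $\Por_\ell:=j_\ell(\Por_{\ell+1})$. By \autoref{BUP-Tukey}\ref{jPccc} each $\Por_\ell$ is ccc with $\Por_{\ell+1}\subsetdot\Por_\ell$, and by \autoref{BUP-Tukey}\ref{BUPa}--\ref{BUPb} the Tukey-equivalences forced by $\Por_{\ell+1}$ with the constant systems $[\lambda_{\ell+1}]^{<\theta_i}\cap V$ pass to Tukey-equivalences forced by $\Por_\ell$ with $j_\ell\bigl([\lambda_{\ell+1}]^{<\theta_i}\cap V\bigr)$. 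Since $\theta_\ell<\kappa_\ell<\theta_{\ell+1}$, \autoref{BUPtool}\ref{BUPtool2} freezes the rows with $i>\ell$ (there $\cf(\theta_i)=\theta_i>\kappa_\ell$, so $j_\ell([\lambda_{\ell+1}]^{<\theta_i})\eqT[\lambda_{\ell+1}]^{<\theta_i}$), and \autoref{BUPtool}\ref{BUPtool1} rescales the rows with $i\leq\ell$ ($j_\ell([\lambda_{\ell+1}]^{<\theta_i})=[j_\ell(\lambda_{\ell+1})]^{<\theta_i}$), while \autoref{tonti} gives $|j_\ell(\lambda_{\ell+1})|=\lambda_\ell^{\kappa_\ell}=\lambda_\ell$; by the same elementarity argument, together with the ccc of $j_\ell(\Por_\ell)$ and countable closure of $M_\ell$, $\Por_\ell$ forces $\cfrak=|j_\ell(\lambda_{\ell+1})|=\lambda_\ell$. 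Iterating $\ell=3,2,1,0$ produces the analogue of \autoref{tableBUP} (with the $\Cbf_\Nwf^\perp$- and $\omega^\omega$-rows interchanged), so that $\Por_0$ is a ccc poset of size $\lambda_0$ forcing $\cfrak=\lambda_0$, $\Nwf\eqT\Cbf_{[\lambda_1]^{<\theta_1}}$, $\omega^\omega\eqT\Cbf_{[\lambda_2]^{<\theta_2}}$, $\Cbf_\Nwf^\perp\eqT\Cbf_{[\lambda_3]^{<\theta_3}}$ and $\Cbf_\Mwf\eqT\Cbf_\Ewf\eqT\Cbf_{[\lambda_4]^{<\theta_4}}$; reading off cardinals via \autoref{brel}, \autoref{strongTukey} and $\Cbf_{[\lambda_i]^{<\theta_i}}\eqT[\lambda_i]^{<\theta_i}$ gives the displayed chain of (in)equalities.

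I do not expect a genuine obstacle, since all the machinery (\autoref{BUP-Tukey}, \autoref{BUPtool}, \autoref{tonti}, \autoref{altKST}) is already in place; the only points requiring care are the index bookkeeping — verifying that at step $j_\ell$ exactly the rows $i>\ell$ are frozen and the rows $i\leq\ell$ are rescaled to $\lambda_\ell$ — and, as in \autoref{mainBUP1}, that nothing forces the $\lambda_i$ to be regular, which holds because \autoref{BUP-Tukey} uses only the $\theta$-cc of the poset and the closure of $M_\ell$, never regularity of $\lambda_\ell$.
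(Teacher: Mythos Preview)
Your proposal is correct and follows exactly the approach the paper intends: the paper's ``proof'' of \autoref{mainBUP2} is just the sentence ``In the same way, we can apply Boolean ultrapowers to the construction of \autoref{altKST}'', and you have spelled out precisely this, first recording the Tukey reformulation of \autoref{altKST} (the analogue of \autoref{main1Tukey}) and then running the Boolean-ultrapower tower $j_3,j_2,j_1,j_0$ with the same freeze/rescale analysis via \autoref{BUPtool}. Two cosmetic slips: when you pass from $\Por_{\ell+1}$ to $\Por_\ell$ the constant systems are not all $[\lambda_{\ell+1}]^{<\theta_i}$ but already $[\lambda_i]^{<\theta_i}$ for $i>\ell+1$ (your freeze/rescale description and the reference to the analogue of \autoref{tableBUP} show you understand this); and ``the ccc of $j_\ell(\Por_\ell)$'' should read ``the ccc of $\Por_\ell=j_\ell(\Por_{\ell+1})$''.
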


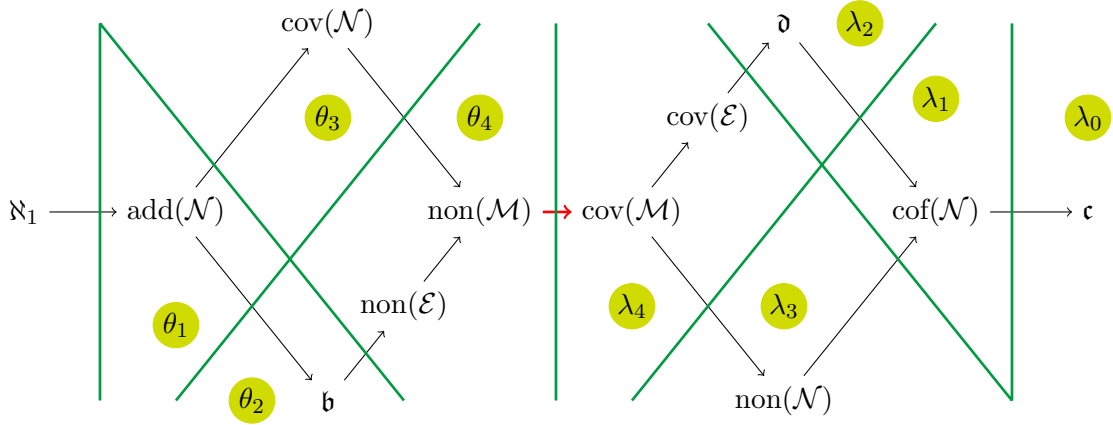
\begin{figure}[ht]
\centering
\begin{tikzpicture}
\small{
 \node (aleph1) at (-2,2.5) {$\aleph_1$};
 \node (addn) at (0,2.5){$\add(\Nwf)$};
 \node (covn) at (2,5){$\cov(\Nwf)$};
 \node (cove) at (7,3.75) {$\cov(\Ecal)$};
 \node (b) at (2,0) {$\bfrak$};
 \node (nonm) at (4,2.5) {$\non(\Mcal)$} ;
 \node (none) at (3,1.25) {$\non(\Ecal)$};
 \node (d) at (8,5) {$\dfrak$};
 \node (covm) at (6,2.5) {$\cov(\Mcal)$} ;
 \node (nonn) at (8,0) {$\non(\Ncal)$} ;
 \node (cfn) at (10,2.5) {$\cof(\Ncal)$} ;
  \node (c) at (12,2.5) {$\cfrak$};

\foreach \from/\to in {
aleph1/addn, addn/covn, addn/b, covn/nonm, b/none, none/nonm, covm/nonn, covm/cove, cove/d, nonn/cfn, d/cfn, cfn/c}
{
\path[-,draw=white,line width=3pt] (\from) edge (\to);
\path[->,] (\from) edge (\to);
}
\path[->,draw=red,line width=1pt] (nonm) edge (covm);

\draw[color=sug,line width=1]
(-1,0)--(-1,5)
(-1,5)--(3,0)
(0,0)--(4,5)
(5,5)--(5,0)
(6,0)--(10,5)
(7,5)--(11,0)
(11,0)--(11,5);

\draw[circle, fill=suy,color=suy] 
(0,1) circle (0.3)
(2,3.75) circle (0.3)
(1,0) circle (0.3)
(4,3.75) circle (0.3)
(6,1.25) circle (0.3)
(9,5) circle (0.3)
(8,1.25) circle (0.3)
(10,4) circle (0.3)
(12,3.75) circle (0.3);

\node at (0,1) {$\theta_1$};
\node at (2,3.75) {$\theta_3$};
\node at (1,0) {$\theta_2$};
\node at (4,3.75) {$\theta_4$};
\node at (6,1.25) {$\lambda_4$};
\node at (9,5) {$\lambda_2$};
\node at (8,1.25) {$\lambda_3$};
\node at (10,4) {$\lambda_1$};
\node at (12,3.75) {$\lambda_0$};

}
\end{tikzpicture}
\caption{The constellation of Cicho\'n's maximum forced in \autoref{mainBUP2}.}\label{fig:CMax2}
\end{figure}

There are four possible constellations of Cicho\'n's maximum with $\non(\Mwf)<\cov(\Mwf)$. We have proved two of them, but the consistency of each of the other two is not known.

\begin{question}
    Is each of the constellations in \autoref{fig:CMaxQ1} and \autoref{fig:CMaxQ2} consistent with $\thzfc$ (even under large cardinals)?
\end{question}

On the other hand, 
no instance of Cicho\'n's maximum with $\cov(\Mwf) < \non(\Mwf)$ is known to be consistent with ZFC.

\begin{question}
    Is Cicho\'n's maximum with $\cov(\Mwf) < \non(\Mwf)$ consistent with $\thzfc$ (even under large cardinals)?
\end{question}

\begin{figure}[ht]
\centering
\begin{tikzpicture}
\small{
 \node (aleph1) at (-2,2.5) {$\aleph_1$};
 \node (addn) at (0,2.5){$\add(\Nwf)$};
 \node (covn) at (2,5){$\cov(\Nwf)$};
 \node (cove) at (7,3.75) {$\cov(\Ecal)$};
 \node (b) at (2,0) {$\bfrak$};
 \node (nonm) at (4,2.5) {$\non(\Mcal)$} ;
 \node (none) at (3,1.25) {$\non(\Ecal)$};
 \node (d) at (8,5) {$\dfrak$};
 \node (covm) at (6,2.5) {$\cov(\Mcal)$} ;
 \node (nonn) at (8,0) {$\non(\Ncal)$} ;
 \node (cfn) at (10,2.5) {$\cof(\Ncal)$} ;
  \node (c) at (12,2.5) {$\cfrak$};

\foreach \from/\to in {
aleph1/addn, addn/covn, addn/b, covn/nonm, b/none, none/nonm, covm/nonn, covm/cove, cove/d, nonn/cfn, d/cfn, cfn/c}
{
\path[-,draw=white,line width=3pt] (\from) edge (\to);
\path[->,] (\from) edge (\to);
}
\path[->,draw=red,line width=1pt] (nonm) edge (covm);

\draw[color=dred,line width=1]
(-1,0)--(-1,5)
(-1,5)--(3,0)
(0,0)--(4,5)
(5,5)--(5,0)
(6,0)--(10,5)
(7,5)--(11,0)
(11,0)--(11,5);

\draw[circle, fill=suy,color=suy] 
(0,1) circle (0.3)
(2,3.75) circle (0.3)
(1,0) circle (0.3)
(4,3.75) circle (0.3)
(6,1.25) circle (0.3)
(10,4) circle (0.3)
(12,3.75) circle (0.3);

\draw[circle, fill=orange, color=orange]
(9,5) circle (0.3)
(8,1.25) circle (0.3);

\node at (0,1) {$\theta_1$};
\node at (2,3.75) {$\theta_2$};
\node at (1,0) {$\theta_3$};
\node at (4,3.75) {$\theta_4$};
\node at (6,1.25) {$\lambda_4$};
\node at (9,5) {$\lambda_2$};
\node at (8,1.25) {$\lambda_3$};
\node at (10,4) {$\lambda_1$};
\node at (12,3.75) {$\lambda_0$};

}
\end{tikzpicture}
\caption{Cicho\'n's maximum's constellation like in \autoref{mainBUP1}, but with the values of $\dfrak$ and $\non(\Nwf)$ interchanged.}\label{fig:CMaxQ1}
\end{figure}

\begin{figure}[ht]
\centering
\begin{tikzpicture}
\small{
 \node (aleph1) at (-2,2.5) {$\aleph_1$};
 \node (addn) at (0,2.5){$\add(\Nwf)$};
 \node (covn) at (2,5){$\cov(\Nwf)$};
 \node (cove) at (7,3.75) {$\cov(\Ecal)$};
 \node (b) at (2,0) {$\bfrak$};
 \node (nonm) at (4,2.5) {$\non(\Mcal)$} ;
 \node (none) at (3,1.25) {$\non(\Ecal)$};
 \node (d) at (8,5) {$\dfrak$};
 \node (covm) at (6,2.5) {$\cov(\Mcal)$} ;
 \node (nonn) at (8,0) {$\non(\Ncal)$} ;
 \node (cfn) at (10,2.5) {$\cof(\Ncal)$} ;
  \node (c) at (12,2.5) {$\cfrak$};

\foreach \from/\to in {
aleph1/addn, addn/covn, addn/b, covn/nonm, b/none, none/nonm, covm/nonn, covm/cove, cove/d, nonn/cfn, d/cfn, cfn/c}
{
\path[-,draw=white,line width=3pt] (\from) edge (\to);
\path[->,] (\from) edge (\to);
}
\path[->,draw=red,line width=1pt] (nonm) edge (covm);

\draw[color=dred,line width=1]
(-1,0)--(-1,5)
(-1,5)--(3,0)
(0,0)--(4,5)
(5,5)--(5,0)
(6,0)--(10,5)
(7,5)--(11,0)
(11,0)--(11,5);

\draw[circle, fill=suy,color=suy] 
(0,1) circle (0.3)
(2,3.75) circle (0.3)
(1,0) circle (0.3)
(4,3.75) circle (0.3)
(6,1.25) circle (0.3)
(10,4) circle (0.3)
(12,3.75) circle (0.3);

\draw[circle, fill=orange, color=orange]
(9,5) circle (0.3)
(8,1.25) circle (0.3);

\node at (0,1) {$\theta_1$};
\node at (2,3.75) {$\theta_3$};
\node at (1,0) {$\theta_2$};
\node at (4,3.75) {$\theta_4$};
\node at (6,1.25) {$\lambda_4$};
\node at (9,5) {$\lambda_3$};
\node at (8,1.25) {$\lambda_2$};
\node at (10,4) {$\lambda_1$};
\node at (12,3.75) {$\lambda_0$};

}
\end{tikzpicture}
\caption{Cicho\'n's maximum's constellation like in \autoref{mainBUP2}, but with the values of $\dfrak$ and $\non(\Nwf)$ interchanged.}\label{fig:CMaxQ2}
\end{figure}


To finish the paper, we briefly mention some facts around 
the method of intersection with submodels, originally from~\cite{GKMS}, which is used to prove the consistency of Cicho\'n's maximum without using large cardinals. 
We only do a very brief presentation for comparison with the method of Boolean ultrapowers, but
we recommend the reader to check and learn the details from~\cite[Sec.~4 \&~5]{CM22}.

 From now on, we fix:
 \begin{enumerate}[label= \normalfont (F\arabic*)]
     \item an uncountable regular cardinal $\kappa$, a $\kappa$-cc poset $\Por$;
     \item a definable relational system $R =\la X,Y,\sqsubset\ra$ of the reals; and
     \item a large enough regular cardinal $\chi$ such that $\Por\in H_\chi$, and $H_\chi$ contains all the parameters defining $R $.
 \end{enumerate}
 

 When intersecting a $\kappa$-cc poset with a ${<}\kappa$-closed model, we obtain a completely embedded subforcing.

 \begin{lemma}
  If $N\preceq H_\chi$ is ${<}\kappa$-closed and $\Por\in N$, then $\Por\cap N\subsetdot\Por$.
 \end{lemma}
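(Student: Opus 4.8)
The plan is to verify, for $Q:=\Por\cap N$ with the order inherited from $\Por$, the three clauses in the definition of $\subsetdot$: that $Q$ is a suborder of $\Por$, that incompatibility is preserved, and that every predense subset of $Q$ is predense in $\Por$. The first clause is immediate. For the second, note that if $p,q\in Q$ have a common lower bound in $\Por$, then $H_\chi\models\exists\, r\in\Por\colon r\leq p\text{ and }r\leq q$; since $p,q,\Por\in N$ and $N\preceq H_\chi$, such an $r$ can be found in $N$, hence $r\in Q$ witnesses compatibility of $p,q$ inside $Q$. So $p,q$ are compatible in $Q$ iff they are compatible in $\Por$.

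The real work is the third clause, and I would handle it by the standard reduction to maximal antichains: given the first two clauses, it suffices to prove that every maximal antichain $A$ of $Q$ is a maximal antichain of $\Por$. (If $D\subseteq Q$ is predense in $Q$, a routine Zorn's lemma argument — using the preservation of incompatibility to keep the refining conditions inside $Q$ — produces a maximal antichain $A$ of $Q$ each of whose elements extends some member of $D$; if such an $A$ is maximal in $\Por$, then $D$ is predense in $\Por$.) So let $A$ be a maximal antichain of $Q$. Since $\Por$ is $\kappa$-cc and $A$ is in particular an antichain of $\Por$, we have $|A|<\kappa$; since $A\subseteq N$ and $N$ is ${<}\kappa$-closed, this gives $A\in N$. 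Now suppose, toward a contradiction, that $A$ is not a maximal antichain of $\Por$: then $H_\chi\models\exists\, q\in\Por\ \forall\, a\in A\colon q\perp a$, and since $\Por,A\in N$, by elementarity there is such a $q$ in $N$, i.e.\ $q\in Q$. By absoluteness of (in)compatibility of elements of $\Por$ between $N$, $H_\chi$ and $V$, this $q$ is genuinely incompatible in $\Por$ — hence in $Q$ — with every element of $A$, contradicting the maximality of $A$ in $Q$. Therefore $A$ is a maximal antichain of $\Por$, and the reduction above yields $Q\subsetdot\Por$.

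The one step that carries all the content — and the place I expect any difficulty to live — is the observation that a maximal antichain of $\Por\cap N$ lands back inside $N$; this is exactly where the two hypotheses cooperate, the $\kappa$-cc of $\Por$ forcing the antichain to have size $<\kappa$ and the ${<}\kappa$-closure of $N$ absorbing it. Once that is in place, the remainder is pure elementarity together with the standard (and purely order-theoretic) fact relating predensity to maximal antichains, so I would state that fact and not belabor the Zorn's lemma argument.
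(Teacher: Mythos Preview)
The paper states this lemma without proof (it merely remarks that the fact was used implicitly in the proof of \autoref{main2}), so there is nothing to compare against. Your argument is correct and is exactly the standard one: the crucial point, which you correctly isolate, is that a maximal antichain of $\Por\cap N$ has size ${<}\kappa$ by the $\kappa$-cc of $\Por$ and therefore belongs to $N$ by closure, after which elementarity does the rest.
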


 Note that this lemma was used implicitly in the proof of \autoref{main2} (to define $\Por_{\lambda+\alpha}^-$ for $\alpha\in K_4$).

 Semantically, there is a correspondence between some $\Por\cap N$-names and $\Por$-names belonging to $N$, and we can also have a correspondence for the forcing relation for some formulas.

 \begin{fact}
   If $N$ is ${<}\kappa$ closed then there is a one-to-one correspondence between:
   \begin{enumerate}[label=(\roman*)]
     \item $\Por$-names $\tau\in N$ and
     \item $\Por\cap N$-names $\sigma$
   \end{enumerate}
   of members of $H_\kappa$ (in particular, reals).  
   Thus, if $G$ is $\Por$-generic over $V$ then $H_\kappa^{V[G]}\cap N[G] =  H_\kappa^{V[G\cap N]}$.
 \end{fact}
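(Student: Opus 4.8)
The plan is to reduce everything to nice names and to exploit two standard closure features of $N$: since $N$ is ${<}\kappa$-closed, every subset of $N$ of size ${<}\kappa$ belongs to $N$; and since $N\preceq H_\chi$ is ${<}\kappa$-closed, every element of $N$ of cardinality ${<}\kappa$ is a subset of $N$. I will also use freely that $\Por\cap N\subsetdot\Por$ (the preceding lemma), so that $\Por\cap N$ is again $\kappa$-cc, a subset of $\Por\cap N$ is an antichain in $\Por\cap N$ iff it is one in $\Por$, a nice $\Por$-name all of whose conditions lie in $\Por\cap N$ is automatically a $\Por\cap N$-name, and $G\cap N:=G\cap(\Por\cap N)$ is $\Por\cap N$-generic over $V$ whenever $G$ is $\Por$-generic over $V$.

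First I would set up the correspondence. By the nice name lemma for $\kappa$-cc forcing (here $\kappa$ regular is used), every $\Por$-name for a member of $H_\kappa$ agrees, below each condition of a suitable maximal antichain, with a nice $\Por$-name for a member of $H_\kappa$; such a nice name is built by a recursion of depth ${<}\kappa$ with ${<}\kappa$-branching, glued by antichains each of size ${<}\kappa$ by $\kappa$-cc. The claim is that a nice $\Por$-name $\tau$ for a member of $H_\kappa$ lies in $N$ iff it is a $\Por\cap N$-name. For $(\Rightarrow)$: if $\tau\in N$ then, by elementarity, each antichain occurring in $\tau$ lies in $N$; being of size ${<}\kappa$ it is a subset of $N$, hence of $\Por\cap N$, so $\tau$ is a $\Por\cap N$-name. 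For $(\Leftarrow)$: if all conditions of $\tau$ lie in $\Por\cap N\subseteq N$, then each gluing antichain is a subset of $N$ of size ${<}\kappa$, hence an element of $N$, and running the recursion on depth while using ${<}\kappa$-closure of $N$ to collect the ${<}\kappa$-many subnames at each node gives $\tau\in N$. Composed with the nice name lemma, the assignment $\tau\mapsto\tau$ is the asserted one-to-one correspondence (up to forced equality) between $\Por$-names in $N$ and $\Por\cap N$-names for members of $H_\kappa$.

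Then I would derive $H_\kappa^{V[G]}\cap N[G]=H_\kappa^{V[G\cap N]}$ for $G$ $\Por$-generic over $V$. The inclusion $\supseteq$ is easy: given $x\in H_\kappa^{V[G\cap N]}$, take a nice $\Por\cap N$-name $\sigma$ for $x$; by the correspondence $\sigma\in N$, and a routine induction on names gives $\sigma^G=\sigma^{G\cap N}=x$ (using that for $p\in\Por\cap N$ one has $p\in G\Leftrightarrow p\in G\cap N$), so $x\in N[G]$, and $x\in H_\kappa^{V[G]}$ since $V[G\cap N]\subseteq V[G]$ and rank is absolute. For $\subseteq$: let $x\in H_\kappa^{V[G]}\cap N[G]$ and pick a $\Por$-name $\tau_0\in N$ with $\tau_0^G=x$. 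Working inside $N$, apply to $\tau_0$ the nice name lemma in the form: there are a maximal antichain $A$ deciding the statement $\tau_0\in H_\kappa$ and nice names $\langle\rho_a:a\in A\rangle$ with $a\Vdash\tau_0=\rho_a$ for every $a\in A$ forcing $\tau_0\in H_\kappa$. The whole package is in $N$, so $A\in N$, $|A|<\kappa$, hence $A\subseteq N$ and each $\rho_a\in N$. Let $a$ be the unique element of $A\cap G$; since $x=\tau_0^G\in H_\kappa^{V[G]}$ and $a$ decides $\tau_0\in H_\kappa$, we get $a\Vdash\tau_0\in H_\kappa$, so $\rho_a$ is a nice $\Por$-name in $N$ for a member of $H_\kappa$, hence a $\Por\cap N$-name, and $x=\rho_a^G=\rho_a^{G\cap N}\in V[G\cap N]$. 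Finally, $\kappa$ remains a regular cardinal in $V[G]$ and in $V[G\cap N]$ (both $\kappa$-cc), so $x\in H_\kappa^{V[G]}$ together with $x\in V[G\cap N]$ yields $x\in H_\kappa^{V[G\cap N]}$.

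The hard part — really the only non-formal part — will be the bookkeeping in the correspondence: checking that ${<}\kappa$-closure of $N$ together with $\kappa$-cc of $\Por$ genuinely forces every antichain appearing in a nice name in $N$ into $\Por\cap N$, and conversely that a nice $\Por\cap N$-name is reassembled inside $N$ along the name recursion; and handling names $\tau_0$ that are not forced to name a member of $H_\kappa$, which is exactly why the $\subseteq$-direction is routed through the antichain-of-nice-names form of the nice name lemma rather than through a single nice name.
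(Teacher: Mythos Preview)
The paper states this result as a Fact without proof (readers are referred to \cite[Sec.~4 \&~5]{CM22} for details), so there is no argument in the paper to compare against. Your proof is correct and is the standard route: reduce to nice names for elements of $H_\kappa$, use $\kappa$-cc to ensure such names have hereditary size ${<}\kappa$, and then use ${<}\kappa$-closure of $N$ together with elementarity to see that a nice $\Por$-name in $N$ has all its antichains contained in $\Por\cap N$ (hence is a $\Por\cap N$-name), and conversely that any nice $\Por\cap N$-name lies in $N$. The derivation of $H_\kappa^{V[G]}\cap N[G]=H_\kappa^{V[G\cap N]}$ is also correct; your handling of the $\subseteq$-direction via a maximal antichain deciding $\tau_0\in H_\kappa$ and selecting the corresponding nice name is exactly the right move, and you correctly observe that the unique $a\in A\cap G$ already lies in $G\cap N$ because $A\subseteq\Por\cap N$ is maximal there as well.

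One small remark: the ``one-to-one correspondence'' in the Fact is, as you implicitly note, literally the identity map on nice names (up to the usual equivalence of names), so the phrase in the statement should be read loosely. Your parenthetical ``up to forced equality'' captures this.
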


 \begin{corollary}
   For absolute $\varphi(\bar x)$ (e.g. Borel on the reals) if $p\in\Por\cap N$, $\bar\tau\in N$ is a finite sequence of $\Por$-names of members of $H_\kappa$, and $\bar\sigma$ is the sequence of $\Por\cap N$-names corresponding to $\bar\tau$, then
   \[p\Vdash_\Por \varphi(\bar\tau) \sii p\Vdash_{\Por\cap N}\varphi(\bar\sigma).\]
 \end{corollary}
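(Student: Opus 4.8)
The plan is to prove both implications by a genericity argument, taking the preceding Fact as a black box. That Fact associates to each $\Por$-name $\tau\in N$ of a member of $H_\kappa$ a $\Por\cap N$-name $\sigma$ (the ``corresponding'' one) with the feature that $\tau[G]=\sigma[G\cap N]$ for every $\Por$-generic $G$ over $V$, where $G\cap N$ abbreviates $G\cap(\Por\cap N)$; the latter is $\Por\cap N$-generic over $V$ because $\Por\cap N\subsetdot\Por$. Moreover the common value lies in $H_\kappa^{V[G\cap N]}=H_\kappa^{V[G]}\cap N[G]$. Throughout I will apply this coordinatewise to the tuples $\bar\tau$, $\bar\sigma$.

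First I would prove ``$p\Vdash_\Por\varphi(\bar\tau)$ implies $p\Vdash_{\Por\cap N}\varphi(\bar\sigma)$''. Let $H$ be $\Por\cap N$-generic over $V$ with $p\in H$. Since $\Por\cap N\subsetdot\Por$, one can factor through the quotient forcing $\Por/H$ to obtain a $\Por$-generic $G$ over $V$ with $G\cap(\Por\cap N)=H$; then $p\in H\subseteq G$, so $\varphi(\bar\tau[G])$ holds in $V[G]$. By the correspondence $\bar\tau[G]=\bar\sigma[H]$, and this finite tuple lies in $H_\kappa^{V[H]}$. Since $V[H]\subseteq V[G]$ are transitive models of (a large fragment of) $\thzf$ and $\varphi$ is absolute between such models — as is the case for Borel statements about reals whose codes lie in $H_\kappa$, and more generally for Shoenfield-absolute formulas — $\varphi(\bar\sigma[H])$ holds in $V[H]$. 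As $H$ was an arbitrary $\Por\cap N$-generic containing $p$, we conclude $p\Vdash_{\Por\cap N}\varphi(\bar\sigma)$.

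The converse is the easy direction: assume $p\Vdash_{\Por\cap N}\varphi(\bar\sigma)$ and let $G$ be $\Por$-generic over $V$ with $p\in G$. Then $H:=G\cap(\Por\cap N)$ is $\Por\cap N$-generic over $V$ and contains $p$, so $\varphi(\bar\sigma[H])$ holds in $V[H]$; by absoluteness it holds in $V[G]$, and $\bar\sigma[H]=\bar\tau[G]$, whence $\varphi(\bar\tau[G])$ holds in $V[G]$. Since $G$ was arbitrary with $p\in G$, we get $p\Vdash_\Por\varphi(\bar\tau)$.

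The only delicate points are: the factorization producing a $\Por$-generic $G$ over $V$ with $G\cap(\Por\cap N)$ equal to a prescribed $\Por\cap N$-generic $H$ — this is the standard passage through $\Por/H$, available precisely because $\Por\cap N\subsetdot\Por$; and the clause $\tau[G]=\sigma[G\cap N]$ for corresponding names, which I would either read off directly from the construction in the Fact (a nice name for a real uses countable antichains, which by ${<}\kappa$-closedness of $N$ lie in $N$ and hence are antichains of $\Por\cap N$) or simply cite as its content. Everything else is routine bookkeeping, and the hypotheses $\kappa$-cc and ${<}\kappa$-closedness of $N$ enter only through the Fact.
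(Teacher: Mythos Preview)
Your argument is correct and is exactly the standard genericity proof one writes out for this kind of transfer result. The paper itself does not supply a proof for this corollary: it is stated immediately after the Fact about the name correspondence and left as an immediate consequence, so there is nothing to compare against beyond noting that your write-up simply fills in the routine details the paper omits.
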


 The following result illustrates the main motivation to intersect $\kappa$-cc posets with ${<}\kappa$-closed models, since it affects the Tukey relations forced by the posets. As before, $K$ denotes a constant relational system from the ground model, while $R$ is interpreted in the model of discourse.

 \begin{lemma}\label{fct:restrN}
   Let $N\preceq H_\chi$ be ${<}\kappa$-closed and let $K=\la A,B,\lhd\ra$ be a relational system. Assume that $\Por$, $K$ and the parameters of $R $ are in $N$.
   \begin{enumerate}[label=\normalfont (\alph*)]
     \item If $\Vdash_{\Por} R \leqT K$ then $\Vdash_{\Por\cap N} R \leqT K\cap N$ where $K\cap N:=\la A\cap N,B\cap N,\lhd\ra$. Moreover, if $\seq{\dot y_b}{b\in B}$ is a sequence of $\Por$-names of a witness of $R\leqT K$, then $\seq{\dot y_b}{b\in B\cap N}$ is forced by $\Por\cap N$ to witness $R\leqT K\cap N$.
     \item If $\Vdash_{\Por} K\leqT R $ then $\Vdash_{\Por\cap N} K\cap N\leqT R $. Moreover, if $\seq{\dot x_a}{a\in A}$ is a sequence of $\Por$-names of a witness of $K\leqT R$, then $\seq{\dot x_a}{a\in A\cap N}$ is forced by $\Por\cap N$ to witness $K\cap N \leqT R$.
   \end{enumerate}
 \end{lemma}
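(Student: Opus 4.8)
I treat part (a) in detail; part (b) is the dual, obtained by applying (a) to $R^\perp$ and $K^\perp$ (note $(K\cap N)^\perp = K^\perp\cap N$ and that $R^\perp$ is again a relational system of the reals, $\sigbar$ being closed under complements). The plan rests on three ingredients already established: $\Por\cap N\subsetdot\Por$; the bijective correspondence between $\Por$-names in $N$ and $\Por\cap N$-names of reals, together with the resulting absoluteness of $\Vdash$ for absolute formulas; and the witness-characterisation of $\leqT$ from the Lemma just above. Throughout, $\sqsubset$ and its complement are absolute since $R$ is a definable relational system of the reals.

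First I would fix a witness inside $N$: since $\Por$, $K=\la A,B,\lhd\ra$ and the parameters of $R$ all lie in $N\preceq H_\chi$, and ``$\Vdash_\Por R\leqT K$'' is absolute to $H_\chi$, elementarity yields a $\Por$-name $\bar y=\seq{\dot y_b}{b\in B}\in N$ with
\[\Vdash_\Por \forall x\in X\ \exists a\in A\ \forall b\in B\colon a\lhd b\Rightarrow x\sqsubset y_b.\]
(For the ``moreover'' clause one simply reruns the argument below with the given witness, assumed — as it is in all applications — to lie in $N$.) For $b\in B\cap N$ we then have $\dot y_b\in N$, so $\dot y_b$ corresponds to a $\Por\cap N$-name, and $\seq{\dot y_b}{b\in B\cap N}$ is the proposed witness for $R\leqT K\cap N$.

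Now fix $p\in\Por\cap N$ and a $\Por\cap N$-name $\dot x$ of a member of $X$; let $\dot\tau\in N$ be the corresponding $\Por$-name. Since ``$\cdot\in X$'' is absolute, the Corollary gives $\Vdash_\Por\dot\tau\in X$, hence $\Vdash_\Por\exists a\in A\ \forall b\in B(a\lhd b\Rightarrow\dot\tau\sqsubset\dot y_b)$. Consequently the set
\[D:=\bigl\{q\in\Por:\exists a^*\in A\ \bigl(q\Vdash_\Por\forall b\in B(a^*\lhd b\Rightarrow\dot\tau\sqsubset\dot y_b)\bigr)\bigr\}\]
is dense in $\Por$; as $D$ is definable from parameters in $N$, elementarity produces $q\in\Por\cap N$ with $q\le p$ and a witness $a^*\in A\cap N$. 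Then transfer pointwise: for each $b\in B\cap N$ with $a^*\lhd b$ — a statement decided in $V$ — we have $q\Vdash_\Por\dot\tau\sqsubset\dot y_b$, hence $q\Vdash_{\Por\cap N}\dot x\sqsubset\dot y_b$ by the Corollary applied to the absolute relation $\sqsubset$ and the names $\dot\tau,\dot y_b\in N$. Therefore $q\Vdash_{\Por\cap N}\exists a\in A\cap N\ \forall b\in B\cap N(a\lhd b\Rightarrow\dot x\sqsubset\dot y_b)$, with $a^*$ as the witness. Since $p$ and $\dot x$ were arbitrary, a density argument gives $\Vdash_{\Por\cap N}R\leqT K\cap N$, witnessed by $\seq{\dot y_b}{b\in B\cap N}$.

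Part (b) now follows by applying (a) to $R^\perp,K^\perp$ (equivalently by the mirror argument: from a witness $\seq{\dot x_a}{a\in A}\in N$ of $\Vdash_\Por K\leqT R$, for $\dot y$ a $\Por\cap N$-name of a member of $Y$ one finds $q\in\Por\cap N$ and $b^*\in B\cap N$ deciding $q\Vdash_\Por\forall a\in A(a\not\lhd b^*\Rightarrow\dot x_a\not\sqsubset\dot\tau)$, then transfers ``$\dot x_a\not\sqsubset\dot y$'' for each $a\in A\cap N$ with $a\not\lhd b^*$, the relation $\neg(\cdot\sqsubset\cdot)$ being absolute). The crux of the proof — and the only place the hypotheses on $N$ and the $\kappa$-cc of $\Por$ enter beyond the black boxes quoted above — is securing the witness $a^*$ (resp. $b^*$) inside $N$: this is why one reflects down the density of the set $D$ of conditions that \emph{decide} an adequate $a^*$, rather than reflecting a bare name, and why the final transfer must be done separately for each $b\in B\cap N$, using that $a^*\lhd b$ is decided in $V$ and that $\dot y_b\in N$ so that the name-correspondence applies.
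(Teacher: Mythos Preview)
The paper does not actually include a proof of this lemma; it merely states it and refers the reader to~\cite[Sec.~4 \&~5]{CM22} for details. Your argument is the standard one and is correct: reflect into $N$ the dense set $D$ of conditions that decide a ground-model witness $a^*$, extract $q\le p$ and $a^*$ in $N$ by elementarity, and then transfer $q\Vdash_\Por\dot\tau\sqsubset\dot y_b$ to $q\Vdash_{\Por\cap N}\dot x\sqsubset\dot y_b$ pointwise for each $b\in B\cap N$ using the name-correspondence and the absoluteness of $\sqsubset$. The duality reduction of (b) to (a) via $R^\perp$, $K^\perp$ is also clean, and your remark that $R^\perp$ remains a relational system of the reals because $\sigbar$ is closed under complements is exactly what is needed.

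One small point you correctly flag: the ``moreover'' clause, as the lemma is phrased, tacitly assumes the given witness sequence $\seq{\dot y_b}{b\in B}$ lies in $N$ (so that $\dot y_b\in N$ for $b\in B\cap N$ and the Corollary applies). This is indeed how the lemma is used in practice --- the witnesses are either produced by elementarity or are canonically definable from parameters already in $N$ --- but it is worth making explicit, as you do.
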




Therefore, if $\Por$ forces $R\eqT K$, then $\Por\cap N$ forces $R\eqT K\cap N$, i.e.\ $\bfrak(R)=\bfrak(K\cap N)$ and $\dfrak(R)=\dfrak(K\cap N)$. This indicates that, to force different values of cardinal characteristics using intersection with submodels, one should construct an $N$ such that $\bfrak(K\cap N)$ and $\dfrak(K\cap N)$ has the values one desires. In the case of Cicho\'n's diagram, 
one constructs a $\sigma$-closed $N\preceq H_\chi$ such that $\Por\cap N$ forces Cicho\'n's maximum, where $\Por$ is the poset obtained from \autoref{main1Tukey} (and \autoref{altKST}).

\begin{theorem}[{\cite{GKMS}}]
 Let $\aleph_1 \leq \theta_1 \leq\theta_2 \leq \theta_3 \leq \theta_4 \leq \lambda_4 \leq \lambda_3 \leq \lambda_2 \leq \lambda_1$ be regular cardinals and $\lambda_0= \lambda_0^{\aleph_0}\geq \lambda_1$. Then, for each one of the constellations in \autoref{fig:CMax1} and \autoref{fig:CMax2}, there is a ccc poset of size $\lambda_0$ forcing it.
\end{theorem}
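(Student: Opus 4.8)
The plan is to follow \cite{GKMS}: we take the ccc poset produced by \autoref{main1Tukey} (for the constellation in \autoref{fig:CMax1}) or by \autoref{altKST} (for \autoref{fig:CMax2}) and then intersect it with a suitably built $\sigma$-closed elementary submodel. After a preliminary cardinal- and cofinality-preserving forcing we may assume that $\mathrm{GCH}$ holds in $V$ on $[\aleph_1,\lambda_0]$, so that the arithmetic hypotheses of \autoref{main1Tukey} ($\lambda=\lambda^{\aleph_0}$, $\cf([\lambda]^{<\theta_i})=\lambda$ and condition~\ref{mm2}) hold for $\lambda:=\lambda_0$ (absorbing, in the case where $\lambda_0$ is singular, the surplus of the continuum through a Cohen-type factor as in the proof of \autoref{main2'}). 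Let $\Por$ be the resulting ccc poset of size $\lambda_0$; it forces $\cfrak=\lambda_0$ together with the Tukey equivalences $\Nwf\eqT\Cbf_{[\lambda_0]^{<\theta_1}}$, $\Cbf_\Nwf^\perp\eqT\Cbf_{[\lambda_0]^{<\theta_2}}$, $\omega^\omega\eqT\Cbf_{[\lambda_0]^{<\theta_3}}$ and $\Cbf_\Mwf\eqT\Cbf_\Ewf\eqT\Cbf_{[\lambda_0]^{<\theta_4}}$; for \autoref{fig:CMax2} one uses instead the poset of \autoref{altKST}, whose proof likewise yields these equivalences with the roles of $\omega^\omega$ and $\Cbf_\Nwf^\perp$ interchanged.

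Next I would fix a large regular $\chi$ with $\Por\in H_\chi$ and construct a $\sigma$-closed $N\preceq H_\chi$ containing $\Por$, $\lambda_0$ and each $\theta_i$, with $|N|=\lambda_0$, so arranged that for each $i=1,\dots,4$ the restricted relational system $\Cbf_{[\lambda_0]^{<\theta_i}}\cap N=\la\lambda_0\cap N,\ [\lambda_0]^{<\theta_i}\cap N,\ \in\ra$ has $\bfrak(\Cbf_{[\lambda_0]^{<\theta_i}}\cap N)=\theta_i$ and $\dfrak(\Cbf_{[\lambda_0]^{<\theta_i}}\cap N)=\lambda_i$, and such that moreover $\Por\cap N$ still adds $\lambda_0$-many reals. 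Since $N$ is $\sigma$-closed (i.e.\ ${<}\aleph_1$-closed) and $\Por\in N$ is ccc, $\Por\cap N$ is a complete suborder of $\Por$ (by the lemma recalled above on intersecting $\kappa$-cc posets with ${<}\kappa$-closed submodels), hence a ccc poset of size $\lambda_0$; and \autoref{fct:restrN} transfers each of the Tukey equivalences above to $\Por\cap N$, which therefore forces $\Nwf\eqT\Cbf_{[\lambda_0]^{<\theta_1}}\cap N$, $\Cbf_\Nwf^\perp\eqT\Cbf_{[\lambda_0]^{<\theta_2}}\cap N$, $\omega^\omega\eqT\Cbf_{[\lambda_0]^{<\theta_3}}\cap N$ and $\Cbf_\Mwf\eqT\Cbf_\Ewf\eqT\Cbf_{[\lambda_0]^{<\theta_4}}\cap N$. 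Reading off the unbounding and dominating numbers (\autoref{cor:Tukeyval}, \autoref{fct:dual} and \autoref{exm:Iwf}) gives, in $V^{\Por\cap N}$, that $\add(\Nwf)=\theta_1$, $\cof(\Nwf)=\lambda_1$, $\cov(\Nwf)=\theta_2$, $\non(\Nwf)=\lambda_2$, $\bfrak=\theta_3$, $\dfrak=\lambda_3$, $\non(\Ewf)=\non(\Mwf)=\theta_4$ and $\cov(\Ewf)=\cov(\Mwf)=\lambda_4$, while $\cfrak=\lambda_0$; the dependent values $\add(\Mwf)=\theta_3$ and $\cof(\Mwf)=\lambda_3$ then follow automatically from $\theta_3\leq\theta_4\leq\lambda_4\leq\lambda_3$. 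This is exactly the constellation of \autoref{fig:CMax1} (respectively \autoref{fig:CMax2} when $\Por$ comes from \autoref{altKST}).

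The heart of the argument — and the step I expect to be the main obstacle — is the construction of the single submodel $N$ realizing, simultaneously for all four indices, that the trace $[\lambda_0]^{<\theta_i}\cap N$ has cofinality exactly $\lambda_i$, for $\lambda_1\geq\lambda_2\geq\lambda_3\geq\lambda_4$. This genuinely exploits that $\theta_1<\theta_2<\theta_3<\theta_4$ are distinct: cofinal witnesses for the coarser ideals $[\lambda_0]^{<\theta_i}$ (larger $\theta_i$) are comparatively scarce, so only $\lambda_i$-many of them need be placed into $N$, and this can be done without being forced to include the far more numerous cofinal witnesses that the finer ideals would require. Concretely, $N$ is produced by a transfinite recursion of length $\lambda_0$, building it as an increasing continuous union of $\sigma$-closed elementary submodels and, at each stage, adjoining carefully chosen ordinals and cofinal families while maintaining $\sigma$-closure, elementarity, and the bound on the number of names of reals that keeps $\cfrak=\lambda_0$. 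The combinatorial bookkeeping underlying this recursion is the substance of \cite{GKMS}, to which I would refer for the details (see also \cite[Sec.~4~\&~5]{CM22}).
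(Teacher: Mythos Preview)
There is a genuine gap in your setup that makes the submodel construction you describe impossible, not merely difficult. You apply \autoref{main1Tukey} (resp.\ \autoref{altKST}) with the \emph{target} parameters $\theta_1,\dots,\theta_4,\lambda_0$, and then look for a $\sigma$-closed $N\preceq H_\chi$ of size $\lambda_0$ with $\dfrak(\Cbf_{[\lambda_0]^{<\theta_i}}\cap N)=\lambda_i$. But for any such $N$, whenever $\theta_i<|\lambda_0\cap N|$ one has
\[
\dfrak(\Cbf_{[\lambda_0]^{<\theta_i}}\cap N)=|\lambda_0\cap N|:
\]
the singletons $\{\alpha\}$ for $\alpha\in\lambda_0\cap N$ lie in $[\lambda_0]^{<\theta_i}\cap N$ and give the upper bound, while each $A\in[\lambda_0]^{<\theta_i}\cap N$ still has $|A|<\theta_i$, giving the lower bound. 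Hence all four covering numbers coincide and you cannot realise distinct $\lambda_1>\lambda_2>\lambda_3>\lambda_4$. (In particular, forcing $\cfrak=\lambda_0$ requires $|\lambda_0\cap N|=\lambda_0$, so you would get $\lambda_i=\lambda_0$ for every $i$.)

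The paper avoids this by running the initial iteration \emph{far above} the target cardinals: one chooses auxiliary regular cardinals $\nu_1<\nu_2<\nu_3<\nu_4<\nu_\infty$, all larger than a fixed $\kappa_0>\lambda_0$, where eventual GCH holds, and applies \autoref{main1Tukey} to those. Then $|N|=\lambda_0<\nu_i$, so a set $A\in[\nu_\infty]^{<\nu_i}\cap N$ may have cardinality up to $\nu_i^-\gg|N|$ without being contained in $N$; this is precisely what allows the traces $[\nu_\infty]^{<\nu_i}\cap N$ to have \emph{different} cofinalities for different $i$. The calibration is achieved not by a single $N$ but by an $\in$-chain $N^\cfrak\in N^\bfrak_1\in N^\dfrak_1\in\cdots\in N^\dfrak_4$, where $N^\dfrak_j$ and $N^\bfrak_j$ are built from chains of length $\lambda_j$ (resp.\ $\theta_j$) of ${<}\nu_j$-closed (resp.\ ${<}\nu_j^-$-closed) submodels of size $\nu_j$ (resp.\ $\nu_j^-$); the varying closure degrees, all strictly above $\lambda_0$, are what separate the four traces. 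Your ``GCH on $[\aleph_1,\lambda_0]$'' is also misplaced: the method needs GCH (or the $L[W]$ trick of \autoref{remGKMS}) \emph{above} $\lambda_0$, not below.
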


We do not prove this theorem in detail but mention how the construction of $N$ goes. 
For details, see~\cite[Sec.~5]{CM22}. We pick a large enough regular cardinal $\kappa_0> \lambda_0$ and assume, for the moment, that GCH holds above $\kappa_0$, i.e.\ $2^\kappa = \kappa^+$ for every cardinal $\kappa\geq\kappa_0$. We explain in \autoref{remGKMS} how to remove this assumption. 

For $j=1,\ldots,4$, pick a regular cardinal $\nu_j>\kappa_0$, which is the successor of another regular cardinal $\nu^-_j$, such that $\nu_j<\nu^-_{j+1}$ for $j=1,2,3$. Finally, pick a regular cardinal $\nu_\infty>\nu_4$.

To force \autoref{fig:CMax1}, we start with a forcing $\Por$ as in \autoref{main1Tukey} but applied to $\nu_1,\nu_2,\nu_3,\nu_4,$ and $\nu_\infty$, i.e.\ it is a ccc poset of size $\nu_\infty$ forcing $\Ncal\eqT [\nu_\infty]^{<\nu_1}$, $\Cbf^\perp_\Ncal\eqT [\nu_\infty]^{<\nu_2}$, $\omega^\omega\eqT [\nu_\infty]^{<\nu_3}$, $\Cbf_\Mcal\eqT\Cbf_\Ecal\eqT [\nu_\infty]^{<\nu_4}$, and $\cfrak=\nu_\infty$. Recall that, for $1\leq j\leq 4$, $\bfrak([\nu_\infty]^{<\nu_j})=\nu_1$ and $\dfrak([\nu_\infty]^{<\nu_j})=\nu_\infty$.

Note that $\Por$ forces the values in Cicho\'n's diagram as in the top part of \autoref{fig:cichoncoll}. Afterward, one constructs a $\sigma$-closed model $N$ such that $\Por\cap N$ forces the constellation at the bottom of \autoref{fig:cichoncoll}.

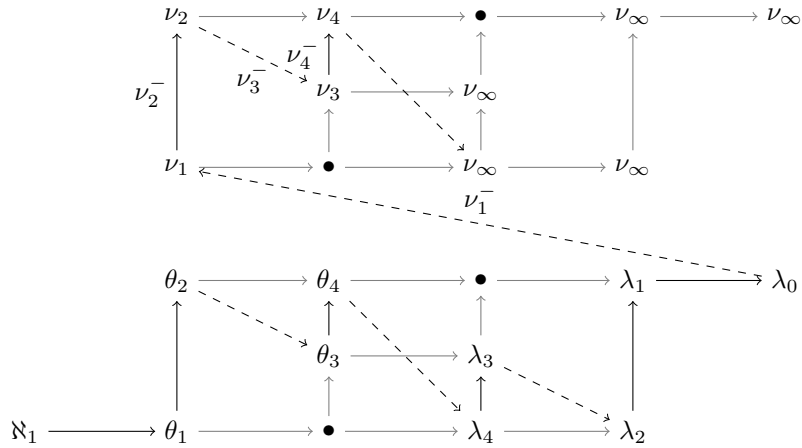
\begin{figure}[ht]
\centering
\begin{tikzpicture}[xscale=2/1]
\footnotesize{
\node (addn) at (0,3.5){$\nu_1$};
\node (covn) at (0,5.5){$\nu_2$};
\node (nonn) at (3,3.5) {$\nu_\infty$} ;
\node (cfn) at (3,5.5) {$\nu_\infty$} ;
\node (addm) at (1,3.5) {$\bullet$} ;
\node (covm) at (2,3.5) {$\nu_\infty$} ;
\node (nonm) at (1,5.5) {$\nu_4$} ;
\node (cfm) at (2,5.5) {$\bullet$} ;
\node (b) at (1,4.5) {$\nu_3$};
\node (d) at (2,4.5) {$\nu_\infty$};
\node (c) at (4,5.5) {$\nu_\infty$};

\draw (addn) edge[->] node[left] {$\nu^-_2$} (covn);
\draw[gray]
      (covn) edge [->] (nonm)
      (nonm)edge [->] (cfm)
      (cfm)edge [->] (cfn)
      (addn) edge [->]  (addm)
      (addm) edge [->]  (covm)
      (addm) edge [->] (b)
      (d)  edge[->] (cfm)
      (b) edge [->] (d)
      (nonn) edge [->]  (cfn)
      (cfn) edge[->] (c)
      (covm) edge [->] (d)
      (covm) edge [->]  (nonn);

\draw (b)  edge [->] node[left] {$\nu^-_4$} (nonm);

\draw[dashed] (covn) edge[->] node[below] {$\nu^-_3$} (b);
\draw[dashed] (nonm) edge[->] (covm);

\node (aleph1) at (-1,0) {$\aleph_1$};
\node (addn-f) at (0,0){$\theta_1$};
\node (covn-f) at (0,2){$\theta_2$};
\node (nonn-f) at (3,0) {$\lambda_2$} ;
\node (cfn-f) at (3,2) {$\lambda_1$} ;
\node (addm-f) at (1,0) {$\bullet$} ;
\node (covm-f) at (2,0) {$\lambda_4$} ;
\node (nonm-f) at (1,2) {$\theta_4$} ;
\node (cfm-f) at (2,2) {$\bullet$} ;
\node (b-f) at (1,1) {$\theta_3$};
\node (d-f) at (2,1) {$\lambda_3$};
\node (c-f) at (4,2) {$\lambda_0$};

\draw[dashed] (c-f) edge[->] node[above] {$\nu^-_1$} (addn);
\draw (aleph1) edge[->] (addn-f)
      (addn-f) edge[->] (covn-f)
      (nonn-f) edge [->]  (cfn-f)
      (b-f)  edge [->] (nonm-f)
      (covm-f) edge [->] (d-f)
      (cfn-f) edge[->] (c-f);

\draw[gray]
   (covn-f) edge [->] (nonm-f)
   (nonm-f)edge [->] (cfm-f)
   (cfm-f)edge [->] (cfn-f)
   (addn-f) edge [->]  (addm-f)
   (addm-f) edge [->]  (covm-f)
   (covm-f) edge [->]  (nonn-f)
   (addm-f) edge [->] (b-f)
   (d-f)  edge[->] (cfm-f)
   (b-f) edge [->] (d-f);

\draw[dashed] (covn-f) edge [->] (b-f)
              (nonm-f) edge [->] (covm-f)
              (d-f) edge [->] (nonn-f);

}
\end{tikzpicture}
\caption[Strategy to force Cicho\'n's maximum.]{Strategy to force Cicho\'n's maximum: we construct a ccc poset $\Por$ forcing the constellation at the top, and find a $\sigma$-closed model $N$ such that $\Por\cap N$ forces the constellation at the bottom.}\label{fig:cichoncoll}
\end{figure}

One constructs a sequence of elementary submodels of $H_\chi$, containing all the relevant information, as below:
\[N^\cfrak\in N^\bfrak_1 \in N^\dfrak_1 \in N^\bfrak_2 \in N^\dfrak_2 \in N^\bfrak_3 \in N^\dfrak_3 \in N^\bfrak_4 \in N^\dfrak_4.\]
For $1\leq j\leq 4$, $N_j^\dfrak$ is constructed from a $\in$-increasing sequence of length $\lambda_j$ of ${<}\nu_j$-closed elementary submodels of $H_\chi$ of size $\nu_j$, and $N_j^\bfrak$ is constructed from a $\in$-increasing sequence of length $\theta_j$ of ${<}\nu^-_j$-closed elementary submodels of $H_\chi$ of size $\nu^-_j$.  The models are constructed by decreasing recursion on $j$, first $N^\dfrak_j$, and $N^\bfrak_j$ afterwards. At the end, $N^\cfrak$ is chosen as a $\sigma$-closed elementary submodel of $H_\chi$ of size $\lambda_0$. 

Finally, $N:=N^\cfrak\cap\bigcap_{j=1}^4 N^\bfrak_j\cap N^\dfrak_j$. For $1\leq i\leq 4$, each model $\bigcap_{j=i}^4 N^\bfrak_j\cap N^\dfrak_j$, after intersection with $\Por$, collapses the values forced at the top of \autoref{fig:cichoncoll} to the values at the bottom, taking care of $\theta_j,\lambda_j$ for $i\leq j\leq 4$ in particular.
The final intersection with $N^\cfrak$ is to force that the continuum is $\lambda_0$ and to conclude $\bfrak([\nu_\infty]^{<\nu_i}\cap N)=\theta_i$ and $\dfrak([\nu_\infty]^{<\nu_i}\cap N)=\lambda_i$ for $1\leq i\leq 4$. Moreover, it can be proved that $[\nu_\infty]^{<\nu_i}\cap N \eqT \prod_{j=i}^4 \theta_j\times\lambda_j$ (with the coordinate-wise order).

\begin{remark}\label{remGKMS}
The method from~\cite{GKMS} (and~\cite{CM22}) uses \emph{eventual GCH} (i.e. for some cardinal $\kappa_0$, $2^\kappa = \kappa^+$ for every cardinal $\kappa\geq\kappa_0$) as hypothesis to force Cicho\'n's maximum. However, thanks to an observation from Elliot Glazer (private communication), this assumption can be removed: Let $\kappa^*$ be a large enough regular cardinal, and let $W$ be a set of ordinals coding $R_{\kappa^*}$, the $\kappa^*$-th level of the Von Neumann hierarchy of the universe of sets. Since $L[W]$ models ZFC with eventual GCH, working inside $L[W]$ we can find a ccc poset $\Qor$ forcing Cicho\'n's maximum, which has size $\lambda_0$, and hence, inside $R_{\alpha}$ for some $\alpha$ relatively small with respect to $\kappa^*$. As $\kappa^*$ is \emph{large enough}, we actually have that the collection of nice $\Qor$-names of reals  are in $R_{\kappa^*}$, so $R_{\kappa^*}$, and hence $V$, satisfies that $\Q$ is as required.
\end{remark}

\begin{remark}
	In contrast with the method of Boolean ultrapowers, we do not know how this method can be improved to allow $\lambda_i$ singular for $1\leq i\leq 4$.
\end{remark}

\begin{acknowledgements}
  These lecture notes correspond to a mini-course of six sessions at the University of Vienna, from November 30th, 2023 to January 25th, 2024. I am very thankful to Professor Vera Fischer for allowing me to teach this mini-course, and to all the participants (in person and online) who joined the sessions. I also thank my student Andres Uribe-Zapata (TU Wien) who checked a great part of these lecture notes before publication.
  
  My visit to Vienna was financially supported by Professor Martin Goldstern's grant (TU Wien) and by the Grants-in-Aid for Scientific Research (C) 23K03198, Japan Society for the Promotion of Science.
  
  I also thank the referee for the careful reading of this manuscript and for many useful suggestions.
\end{acknowledgements}

{\small
\bibliography{left}
\bibliographystyle{alpha}
}

\end{document}